\documentclass{amsart}
\usepackage{amscd,amsmath,xypic,amssymb,combelow,tikz-cd,etoolbox,calligra,mathrsfs,enumitem,mathtools,hyperref,comment,graphicx}

\makeatletter
\patchcmd{\@settitle}{\uppercasenonmath\@title}{}{}{}
\makeatother

\newtheorem{theorem}{Theorem}[section]

\newtheorem{proposition}[theorem]{Proposition}
\newtheorem{lemma}[theorem]{Lemma}

\newtheorem{definition}[theorem]{Definition}
\newtheorem{claim}[theorem]{Claim}

\newtheorem{remark}[theorem]{Remark}

\def\fb{{\mathfrak{b}}}
\def\fg{{\mathfrak{g}}}

\def\fsl{{\mathfrak{sl}}}
\def\fgl{{\mathfrak{gl}}}

\def\BC{{\mathbb{C}}}
\def\BK{{\mathbb{K}}}

\def\BN{{\mathbb{N}}}

\def\BP{{\mathbb{P}}}
\def\BR{{\mathbb{R}}}

\def\BZ{{\mathbb{Z}}}

\def\CA{{\mathcal{A}}}
\def\CB{{\mathcal{B}}}

\def\CO{{\mathcal{O}}}
\def\CP{{\mathcal{P}}}

\def\CR{{\mathcal{R}}}
\def\CS{{\mathcal{S}}}

\def\CV{{\mathcal{V}}}

\def\ph{\varphi}

\def\Sym{\textrm{Sym}}

\def\U{\mathbf{U}}
\def\Up{\mathbf{U}^+}
\def\Upm{\mathbf{U}^\pm}

\def\Um{\mathbf{U}^-}
\def\Ug{\mathbf{U}^\geq}
\def\Ul{\mathbf{U}^\leq}

\def\tU{\mathbf{\widetilde{U}}}
\def\tUp{\mathbf{\widetilde{U}}^+}
\def\tUpm{\mathbf{\widetilde{U}}^\pm}

\def\tUm{\mathbf{\widetilde{U}}^-}
\def\tUg{\mathbf{\widetilde{U}}^\geq}
\def\tUl{\mathbf{\widetilde{U}}^\leq}

\def\UU{U_q(L\fg)}

\def\br{{\mathbf{r}}}

\def\bs{\boldsymbol{\varsigma}}

\def\nn{{\mathbb{N}^I}}
\def\zz{{\mathbb{Z}^I}}
\def\rr{{\mathbb{R}^I}}

\def\bmu{{\boldsymbol{\mu}}}

\def\bpsi{{\boldsymbol{\psi}}}

\def\bm{{\boldsymbol{m}}}
\def\bn{{\boldsymbol{n}}}
\def\bp{{\boldsymbol{p}}}

\def\bx{{\boldsymbol{x}}}

\def\b0{{\boldsymbol{0}}}
\def\bone{{\boldsymbol{1}}}

\def\loccit{\emph{loc.~cit.~}}
\def\loccitt{\emph{loc.~cit.}}

\def\hdeg{\text{hdeg }}
\def\vdeg{\text{vdeg }}

\def\bx{\boldsymbol{x}}

\def\vac{|\varnothing\rangle}

\def\bone{{\boldsymbol{1}}}

\def\binfty{\boldsymbol{\infty}}

\def\Sym{\text{Sym}}

\def\ho{\stackrel{\mathsf{H}}{\otimes}}
\def\vo{\stackrel{\mathsf{V}}{\otimes}}

\def\bla{\boldsymbol{\lambda}}
\def\bmu{\boldsymbol{\mu}}
\def\bga{\boldsymbol{\gamma}}
\def\bth{\boldsymbol{\theta}}

\begin{document}

\title[A new new coproduct on quantum loop algebras]{\Large{\textbf{A new new coproduct on quantum loop algebras}}} 

\author[Andrei Negu\cb t]{Andrei Negu\cb t}

\address{École Polytechnique Fédérale de Lausanne (EPFL), Lausanne, Switzerland \newline \text{ } \ \ Simion Stoilow Institute of Mathematics (IMAR), Bucharest, Romania} 

\email{andrei.negut@gmail.com}

\maketitle
	
\begin{abstract} Quantum loop algebras generalize $U_q(\widehat{\fg})$ for simple Lie algebras $\fg$, and they include examples such as quantum affinizations of Kac-Moody Lie algebras, $K$-theoretic Hall algebras of quivers, and BPS algebras for toric Calabi-Yau threefolds. In the present paper, we define a coproduct on general quantum loop algebras, which coincides with the Drinfeld-Jimbo coproduct in the particular case of $U_q(\widehat{\fg})$. We use our construction to prove fundamental facts about representations of quantum loop algebras, such as the rationality of $R$-matrices, multiplicativity of $q$-characters, and polynomiality of theta series.

\end{abstract}

\bigskip

\section{Introduction}
\label{sec:intro}

\medskip

\subsection{The title}
\label{sub:title}

In the title of this paper, ``new" (unsurprisingly)  stands for something that is different from ``old". Historically, quantum affine algebras $U_q(\widehat{\fg})$ were first endowed with an old coproduct by Drinfeld and Jimbo (\cite{Dr 0, J}), and then Drinfeld defined a new coproduct that was quite different from (and in a certain sense, orthogonal to) the old one. In the present paper, we work with quantum loop algebras in the greater generality of \cite{N Arbitrary}, in which one can define an analogue of Drinfeld's new coproduct, but there is no sense in which the Drinfeld-Jimbo construction applies. Instead, we define a ``new new" coproduct on quantum loop algebras that is different from (and in a certain sense, orthogonal to) the new one. The motivation behind this construction is that
$$
\Big(\text{new new coproduct} \Big) = \Big( \text{old coproduct} \Big)
$$
in the particular case of quantum affine algebras $U_q(\widehat{\fg})$ for simple Lie algebras $\fg$. At a slightly deeper level, one can say that each word ``new" in the title corresponds to rotating the loop root lattice by $90^{\circ}$. Therefore, just like the Drinfeld-Jimbo coproduct differs from Drinfeld's new coproduct by a rotation by $90^{\circ}$, so does Drinfeld's new coproduct from the new new coproduct we introduce in this paper.

\medskip

\subsection{Algebras}
\label{sub:algebras intro}

Fix a finite set $I$, a field $\BK$ of characteristic 0, and rational functions
\begin{equation}
\label{eqn:zeta intro}
\zeta_{ij}(x) \in \frac {\BK[x^{\pm 1}]}{(1-x)^{\delta_{ij}}}
\end{equation}
such that
\begin{equation}
\label{eqn:assumption}
\lim_{x \rightarrow \infty} \frac {\zeta_{ij}(x)}{\zeta_{ji}(x^{-1})} < \infty
\end{equation}
for all $i, j \in I$. Under this assumption, we defined a quantum loop algebra in \cite{N Arbitrary}
\begin{equation}
\label{eqn:quantum loop algebra intro}
\U = \BK \Big \langle e_{i,d}, f_{i,d}, \ph^+_{i,d'}, \ph^-_{i,d'} \Big \rangle_{i \in I, d \in \BZ, d' \geq 0} \Big/ \Big(\text{relations \eqref{eqn:rel quantum 1}-\eqref{eqn:rel quantum 9}}\Big) 
\end{equation}
The motivating example of this construction arises from a simple Lie algebra $\fg$: we take $I$ to be a set of simple roots, $\BK = \BC$ (hereafter fix $q \in \BC^* \backslash \sqrt[\BN]{1}$) and define
\begin{equation}
\label{eqn:zeta intro particular}
\zeta_{ij}(x) = \frac {(q^{-d_{ij}}-x)(-x)^{-\delta_{i>j}}}{(1-x)^{\delta_{ij}}}
\end{equation}
for some total order $<$ on $I$, where
\begin{equation}
\label{eqn:cartan matrix intro}
C = \left(c_{ij} = \frac {2d_{ij}}{d_{ii}} \in \BZ \right)_{i,j \in I}
\end{equation}
is the Cartan matrix of $\fg$. In this particular case, the algebra $\U$ is none other than
\begin{equation}
\label{eqn:quantum affine intro}
\Big( \U \text{ for the choice \eqref{eqn:zeta intro particular}} \Big) = U_q(L\fg) \cong U_q(\widehat{\fg})_{c=1}
\end{equation}
The isomorphism on the right was claimed by Drinfeld (\cite{Dr}) and proved by Beck (\cite{B}, using work of \cite{Da0}). However, the quantum loop algebras \eqref{eqn:quantum loop algebra intro} are significantly more general than $\UU$. They include examples such as quantum affinizations of Kac-Moody Lie algebras, $K$-theoretic Hall algebras of doubled quivers, BPS algebras associated to toric Calabi-Yau threefolds, and Hall algebras of curves over finite fields (which have a long history of study by numerous authors, but we refer to \cite{N Symmetric, N Wheel, N Reduced, NSS} respectively for a treatment in the language of the present paper).

\medskip

\subsection{Simple modules}
\label{sub:simples intro}

In the context of the present paper, loop weights
$$
\bpsi = \left(\psi_i(z) = \sum_{d=0}^{\infty} \frac {\psi_{i,d}}{z^d} \in \BK[[z^{-1}]]^\times \right)_{i \in I}
$$
are $I$-tuples of power series in $\BK$. We are interested in studying simple modules
\begin{equation}
\label{eqn:simple intro}
`` \ \U \curvearrowright L(\bpsi) \ "
\end{equation}
which are generated by a single vector $\vac$ modulo the relations
$$
e_{i,d} \cdot \vac = 0
$$
$$
\ph^+_{i,d'} \cdot \vac = \psi_{i,d'} \vac
$$
for all $i \in I$, $d \in \BZ$, $d' \geq 0$. However, as is already apparent in the situation of quantum affine algebras \eqref{eqn:quantum affine intro} (studied in \cite{CP, HJ}), such modules are not acted on by the entire quantum loop algebra $\U$. Instead, we construct triangular decompositions
\begin{equation}
\label{eqn:triangular intro}
\U = \CA^{\geq \bp} \otimes \CA^{\leq \bp}
\end{equation}
for any $\bp \in \rr$, generalizing the construction of \cite{N Cat, N Char} (which in turn generalizes the triangular decomposition of $U_q(\widehat{\fg})$ into positive and negative Borel subalgebras, in the particular case of \eqref{eqn:quantum affine intro}). For any loop weight $\bpsi$, we will define a simple module
\begin{equation}
\label{eqn:simple actual intro}
\CA^{\geq \bp} \curvearrowright L(\bpsi)
\end{equation}
in Section \ref{sec:modules}, thus generalizing the main construction of \cite{N Cat}, which itself generalizes \cite{HJ}. More systematically, we consider the following analogue of the main construction of \loccitt: a module
$$
\CA^{\geq \bp} \curvearrowright V
$$
is said to be in category $\CO$ if it is diagonalizable with respect to the finite Cartan subalgebra generated by $\{\ph_{i,0}^+\}_{i \in I}$, with finite-dimensional eigenspaces that are non-zero only when the eigenvalues lie in a certain union of cones (see Definition \ref{def:category o}). The prime example of a module in category $\CO$ is the simple module $L(\bpsi)$ of \eqref{eqn:simple actual intro} if all the constituent power series of $\bpsi = (\psi_i(z))_{i\in I}$ are expansions of rational functions. Such loop weights $\bpsi$ will be called rational.

\medskip

\subsection{Coproducts}
\label{sub:coproducts intro}

However, what was missing from \cite{N Cat, N Char} was a notion of tensor product of modules, and this is the main construction of the present paper. 

\medskip

\begin{theorem}
\label{thm:main intro}

(subsumed by Theorem \ref{thm:main}) There is a topological coproduct
\begin{equation}
\label{eqn:coproduct intro}
\U \xrightarrow{\Delta_{\bp}} \U \ho \U
\end{equation}
(see Subsection \ref{sub:gradings} for the definition of $\ho$) which preserves the subalgebra $\CA^{\geq \bp}$.

\end{theorem}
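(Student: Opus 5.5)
The natural route is to build $\Delta_{\bp}$ out of Drinfeld's new coproduct $\Delta$ on $\U$, by conjugating with a suitable universal $R$-matrix factor. This mirrors how, for $U_q(\widehat{\fg})$, the Drinfeld--Jimbo coproduct is obtained from the Drinfeld new coproduct by twisting with (half of) the universal $R$-matrix. Recall that $\Delta$ is given by the familiar formulas
$$
\Delta(e_i(z)) = \ph^+_i(z) \otimes e_i(z) + e_i(z) \otimes 1,
$$
with the analogous formulas for $f_i(z)$ and $\ph^\pm_i(z)$. This $\Delta$ is only topological, with values in a completion of $\U \otimes \U$ with respect to the vertical grading.

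The plan is to use the slope factorization of the positive and negative halves $\U^\pm$ (realized as shuffle algebras) relative to $\bp$. This writes $\U^+ = \CA^{+}_{\geq \bp} \hat{\otimes} \CA^{+}_{< \bp}$, and similarly for $\U^-$, so that $\CA^{\geq \bp}$ is generated by $\CA^{+}_{\geq \bp}$, $\CA^{-}_{< \bp}$ and the Cartan part. I would then take the canonical tensor $R_{<\bp} \in \CA^{+}_{<\bp} \hat{\otimes} \CA^{-}_{<\bp}$ of the Hopf pairing between these halves, and define
$$
\Delta_{\bp}(a) = R_{<\bp}\, \Delta(a)\, R_{<\bp}^{-1}
$$
(or the version with the factors in the opposite order, depending on conventions).

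The key steps, in order, are as follows.

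First, I would check that $R_{<\bp}$ makes sense in $\U \ho \U$. This requires that the horizontal grading control the infinite sums: for each horizontal degree, only finitely many terms appear up to vertical truncation. I expect this to follow from the fact that each slope subalgebra has finite-dimensional graded pieces.

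Second, I would show that $\Delta_{\bp}$ is coassociative. This reduces to the standard cocycle identity
$$
(R_{<\bp})_{12}\, (\Delta \otimes 1)(R_{<\bp}) = (R_{<\bp})_{23}\, (1 \otimes \Delta)(R_{<\bp}),
$$
which in turn follows from the fact that $\CA^{\pm}_{<\bp}$ are (topological) sub-bialgebras for $\Delta$ up to the slope factorization. Concretely, $\Delta(R_{<\bp}) = (R_{<\bp})_{13}(R_{<\bp})_{23}$, and this is the bialgebra property of the pairing.

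Third, and this is where most of the work lies, I would show that $\Delta_{\bp}$ maps $\CA^{\geq \bp}$ into $\CA^{\geq \bp} \ho \CA^{\geq \bp}$. It suffices to check this on the generators. The Cartan elements are essentially immediate. For $a \in \CA^{+}_{\geq \bp}$, I would use the formula for $\Delta(a)$ in terms of the factorization: it is of the form $\sum (\text{Cartan}\cdot a_{\geq}a_{<}) \otimes (\text{stuff})$, with components in all slopes. The conjugation by $R_{<\bp}$ is designed precisely to cancel the components in $\CA^+_{<\bp}$ on the left, and the components in $\CA^-_{\geq\bp}$ on the right. The cleanest way to prove this cancellation is to characterize $\CA^{\geq\bp}$ as the annihilator under the Hopf pairing with the complementary pieces, and then use the standard identities for canonical tensors, namely $\Delta(x)$ paired against $R$ implements left/right multiplication.

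Finally, the comparison with Drinfeld--Jimbo for the choice \eqref{eqn:zeta intro particular} and $\bp$ near $\b0$ would follow from the known factorization of the universal $R$-matrix. That comparison is not needed for the statement as given.

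I expect the main obstacle to be the third step: both the convergence of the conjugation in the topology of $\ho$, and the verification that the unwanted slope components cancel. For the cancellation, I would first try to prove the identity
$$
\Delta_{\bp}(a) \in \CA^{\geq\bp}\ho\CA^{\geq\bp}
$$
by pairing both sides against arbitrary elements of the complementary subalgebras and reducing to the multiplicativity of the pairing, rather than by a direct shuffle computation.
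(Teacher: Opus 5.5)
Your heuristic is consistent with the paper a posteriori, since Proposition \ref{prop:partial universal} proves $\CR_\bp \cdot \Delta_\bp(-) = \Delta(-) \cdot \CR_\bp$. As a proof, however, the proposal has a genuine gap exactly at the step you call the crux, and the twist itself is set up wrongly.

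\textbf{The twist has the wrong legs.} The leg order is forced, not a convention: one needs $\CR_\bp \in \CS^-_{<\bp} \, \bar{\otimes} \, \CS^+_{<\bp}$ and $\Delta_\bp = \CR_\bp^{-1} \Delta \CR_\bp$. With your $R_{<\bp} \in \CS^+_{<\bp} \, \bar{\otimes} \, \CS^-_{<\bp}$, no conjugation works. Take $\bp = \b0$ and $e_{i,0} \in \CS^+_{\geq \b0}$. By \eqref{eqn:coproduct e}, $\Delta(e_{i,0}) = e_{i,0} \otimes 1 + \sum_{k \geq 0} \ph^+_{i,k} \otimes e_{i,-k}$. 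The terms with $k \geq 1$ are bad, because $e_{i,-k} \in \CS^+_{<\b0} \subset \CA^{\leq \b0}$ is not in $\CA^{\geq \b0}$.
\begin{itemize}
\item Every element of $\CS^+_{<\b0}$ has horizontal degree $\geq \b0$, and only scalars have horizontal degree $\b0$.
\item Hence conjugating by anything with $\CS^+_{<\b0}$ in the first leg leaves unchanged the component of $\Delta(e_{i,0})$ whose first leg has horizontal degree $\b0$, so the bad terms survive.
\item What removes them is a correction $\sum_{k \geq 1} f_{i,k} \otimes e_{i,-k}$, via $[f_{i,k}, e_{i,0}] = \ph^+_{i,k}$ from \eqref{eqn:rel quantum 9}.
\end{itemize}
Relatedly, by \eqref{eqn:interact 1} and \eqref{eqn:interact 3}, for $E \in \CS^+_{\geq \bp}$ and $F \in \CS^-_{<\bp}$ the unwanted components of $\Delta(E)$ and $\Delta(F)$ lie only in the \emph{second} leg, not ``in $\CA^+_{<\bp}$ on the left''.

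\textbf{Step 3 has no mechanism.} Even with the correct twist, you do not say how the cancellation is proved. $\CA^{\geq \bp}$ is a subalgebra of the double $\U$, not of a half carrying the pairing, so there is no annihilator description to invoke. Expanding $\CR_\bp^{-1}\Delta(a)\CR_\bp$ also requires reordering through the double relations.

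\textbf{Steps 1 and 2 rest on false claims.}
\begin{itemize}
\item The canonical tensor is not in $\U \ho \U$. In a fixed horizontal degree it has terms in infinitely many vertical degrees (for $\bp = \b0$, all the $f_{i,k} \otimes e_{i,-k}$ with $k \geq 1$, in either leg order). So it lives only in $\bar{\otimes}$, while $\Delta(a)$ lives only in $\vo$. In the example above, the contributions with first-leg horizontal degree $\b0$ occur in infinitely many vertical degrees. The conjugate lies in $\ho$ only because they cancel, so convergence cannot be settled before Step 3.
\item $\CS^\pm_{<\bp}$ are not sub-bialgebras for $\Delta$. By \eqref{eqn:interact 2}--\eqref{eqn:interact 3} they are only one-sided coideals, e.g.\ $\Delta(\CS^+_{<\bp}) \subset \CS^{\geq} \vo \CS^+_{<\bp}$. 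So $(\Delta \otimes 1)(R) = R_{13} R_{23}$ fails for a partial $R$-matrix, and your cocycle argument does not go through.
\end{itemize}

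\textbf{The missing idea is the paper's construction.} Define $\Delta_\bp$ directly on products $EF$ by \eqref{eqn:delta p plus}, with $Y \otimes X$ determined by \eqref{eqn:declare 1 def}. There, the perfect pairing on $\CS^\pm_{<\bp}$ and the projection $[\cdot]_{\geq \bp}$ move the part of the second Sweedler leg lying outside $\CA^{\geq \bp}$ into an element $Y \in \CS^-_{<\bp}$ of the first leg. Membership in $\CA^{\geq \bp} \ho \CA^{\geq \bp}$ is then immediate from \eqref{eqn:interact 1}--\eqref{eqn:interact 4} and degree bounds. The paper then:
\begin{itemize}
\item shows $\langle \cdot, \cdot \rangle_\bp$ is a bialgebra pairing (Claim \ref{claim:bialgebra});
\item checks the Drinfeld double relation on the four pairs \eqref{eqn:check}, using \eqref{eqn:equation 1}--\eqref{eqn:equation 2};
\item concludes that $\U$ is the Drinfeld double of $\CA^{\geq \bp}$ and $\CA^{\leq \bp}$, so $\Delta_\bp$ extends to $\U$.
\end{itemize}
The twist identity \eqref{eqn:intertwine 1} that your Step 3 would need is proved only afterwards, and its proof uses this explicit form of $X$ and $Y$.
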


\medskip

\noindent The completion $\ho$ is such that the coproduct \eqref{eqn:coproduct intro} gives rise to a well-defined $\CA^{\geq \bp}$-module structure on $V \otimes W$ for any modules $\CA^{\geq \bp} \curvearrowright V,W$ in category $\CO$.

\medskip 

\begin{theorem}
\label{thm:affine intro}
	
(Theorem \ref{thm:affine}) In the particular case of $\U$ that appears in \eqref{eqn:quantum affine intro}, the coproduct $\Delta_{\b0}$ matches the Drinfeld-Jimbo coproduct on quantum affine algebras. 

\end{theorem}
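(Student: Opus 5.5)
Since both $\Delta_{\b0}$ and the Drinfeld-Jimbo coproduct $\Delta_{DJ}$ are algebra homomorphisms $\UU \to \UU \ho \UU$ (the latter extends to this completion, since $\ho$ is built from a grading compatible with the Chevalley grading), it suffices to check that they agree on a set of algebra generators. I would use the Drinfeld-Jimbo generators of $U_q(\widehat{\fg})_{c=1}$: $e_i, f_i, k_i^{\pm 1}$ for $i \in I$, and $e_0, f_0, k_0^{\pm1}$ for the affine node. Under the Beck isomorphism \eqref{eqn:quantum affine intro}, these correspond to $e_{i,0}$, $f_{i,0}$, $\ph^{\pm}_{i,0}$ for $i \in I$. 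The affine root vectors $e_0, f_0$ correspond to elements of degree $(-\theta, 1)$ and $(\theta, -1)$ in the loop root lattice, where $\theta$ is the highest root. These are iterated $q$-commutators of the form $[f_{i_1,1}, f_{i_2,0}, \dots, f_{i_k,0}]_q$ and the analogous expression for $e$.

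First, I compute $\Delta_{\b0}$ on the Cartan elements $\ph^\pm_{i,0}$. They should be group-like, matching $\Delta_{DJ}(k_i)=k_i\otimes k_i$. Next, I compute $\Delta_{\b0}(e_{i,0})$ and $\Delta_{\b0}(f_{i,0})$. The coproduct $\Delta_{\bp}$ of Theorem \ref{thm:main} is presumably defined through a factorization of $\U$ into slope subalgebras: the positive half is an ordered product of $\CB_{\mu}$ over slopes $\mu$, and $\Delta_{\bp}$ is the Drinfeld-type coproduct on the slope-$\bp$ piece, twisted by the universal $R$-matrices of the other slopes. For $\bp=\b0$, the element $e_{i,0}$ lies in the slope-$\b0$ subalgebra. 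This subalgebra is the finite quantum group $U_q(\fg)$, or its Borel, in the example \eqref{eqn:zeta intro particular}. Its coproduct there should be primitive up to Cartan factors: $e_{i,0}\otimes 1 + \ph^+_{i,0}\otimes e_{i,0}$, or the mirror convention. I would compare this with the convention for $\Delta_{DJ}$ and fix signs and normalizations of $\ph$ accordingly.

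The main obstacle is the affine node. The element $e_0$ has vertical degree $1$ and lies in slope $\infty$-type or nonzero-slope pieces relative to $\b0$, so its coproduct under $\Delta_{\b0}$ involves the twist by the $R$-matrix factors. One must show that all correction terms cancel, leaving $e_0\otimes 1 + k_0\otimes e_0$. I would handle this by the following steps.

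\begin{enumerate}
\item Show that $\Delta_{\b0}$ preserves $\CA^{\geq\b0}$, which is Theorem \ref{thm:main}. This subalgebra is the positive Borel $U_q(\widehat{\fb}^+)$, and it is generated by $e_i$, $e_0$, and $k$'s.
\item Use a degree argument. The coproduct is graded, so $\Delta_{\b0}(e_0)$ lies in $\bigoplus \CA^{\geq\b0}_{\alpha}\otimes\CA^{\geq\b0}_{\beta}$ with $\alpha+\beta=(-\theta,1)$. Both factors have nonnegative slope degree, so one factor has vertical degree $0$ and the other vertical degree $1$.
\item Examine the possible terms. Elements of $\CA^{\geq\b0}$ with vertical degree $0$ and horizontal degree of the form $-\theta+\gamma$ must be Cartan elements, which forces $\gamma=\theta$ or $\gamma=0$. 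This leaves only the terms $e_0\otimes(\text{Cartan})$ and $(\text{Cartan})\otimes e_0$.
\item Fix the Cartan coefficients using the counit and coassociativity, or by pairing with $f_0$ through the Hopf pairing.
\end{enumerate}

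Dually, the statement for $f_0$ follows via the Hopf pairing or the antiautomorphism exchanging $e$ and $f$.
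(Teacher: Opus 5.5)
Your reduction to generators and your treatment of $e_{i,0}$, $f_{i,0}$, $\kappa_i^\pm$ agree with the paper, and steps 1--2 are fine. At the affine node, however, one justification is false and the decisive step is missing.

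First, the reason given in step 3 is false. The vertical-degree-zero part of $\CA^{\geq \b0}$ is not just the Cartan subalgebra but all of $\Xi(U_q(\fb^+))$: it contains $e_{i,0}$ and every slope $\b0$ shuffle element. So nothing you wrote rules out terms such as $e_{i,0} \otimes B$ with $\deg B = (-\bth - \bs^i, 1)$. What rules them out is the other half of the Borel degree bound, which you never state. The vertical-degree-one part of $\CA^{\geq \b0} \cong U_q(\widehat{\fb}^+)_{c=1}$ is spanned by monomials containing exactly one $e_0$, hence lives in horizontal degrees $-\bth + \nn$, while the vertical-degree-zero part lives in $\nn$. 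Since the two horizontal degrees add up to $-\bth$, both must be extremal. Writing $\Xi(e_0) = c(\kappa^+_{\bth})^{-1} F$ with $F$ as in \eqref{eqn:e bullet}, this shows that $\Delta_{\b0}(F)$ has components only in bidegrees $(-\bth,1) \otimes (\b0,0)$ and $(\b0,0) \otimes (-\bth,1)$. This is the same kind of degree bound the paper invokes.

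Second, and more seriously, step 4 is where the content of the theorem lies, and the first tool you name cannot supply it. Counit and coassociativity hold for $x \otimes g + h \otimes x$ with \emph{any} group-like $g,h$, so they cannot pin down the Cartan factors. Nor are those factors the naive ones: the Drinfeld coproduct gives $\Delta(F) = 1 \otimes F + F \otimes \kappa^-_{\bth} + \dots$, whereas what must be shown is $\Delta_{\b0}(F) = F \otimes \kappa^+_{\bth} + 1 \otimes F$. Some input from the definition of $\Delta_{\b0}$ is unavoidable.

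The paper obtains it directly from \eqref{eqn:two 2}: $\Delta_{\b0}(F) = Y \otimes X F_1$, where $X \langle \tilde{E}', Y \rangle = [\tilde{E}'_1]_{\geq \b0} \langle \tilde{E}'_2, F_2 \rangle$. One checks three things:
\begin{itemize}
\item the summand $1 \otimes F$ of $\Delta(F)$ forces $Y \otimes X = F \otimes \kappa^+_{\bth}$;
\item the summand $F \otimes \kappa^-_{\bth}$ forces $Y \otimes X = 1 \otimes 1$;
\item the intermediate summands contribute nothing, again by degree bounds on the Borels.
\end{itemize}

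Your pairing alternative can be completed, and it would be a genuinely different route that never touches the Drinfeld coproduct of $F$. By Claim \ref{claim:bialgebra}, the two surviving components are detected by pairing with $\kappa^-_{\bm} \otimes \kappa^-_{\bn} E'$ and with $\kappa^-_{\bn} E' \otimes \kappa^-_{\bm}$. Formula \eqref{eqn:footnote} gives $\langle F, \kappa^-_{\bm}\kappa^-_{\bn} E' \rangle_{\b0} = \langle E', F \rangle$ and $\langle F, \kappa^-_{\bn} E' \kappa^-_{\bm} \rangle_{\b0} = \gamma_{\bth,\bm} \langle E', F \rangle = \langle \kappa^+_{\bth}, \kappa^-_{\bm} \rangle \langle E', F \rangle$. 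Two further facts then force the components to be $1 \otimes F$ and $F \otimes \kappa^+_{\bth}$: $\langle E', F \rangle \neq 0$, and the characters $\bm \mapsto \gamma_{\bn,\bm}$ are linearly independent (the non-degeneracy assumption of Subsection \ref{sub:infinite slope 1}). But this computation is the proof, and your proposal does not carry it out.

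Finally, for $f_0$ do not appeal to an $e \leftrightarrow f$ antiautomorphism; its compatibility with $\Delta_{\b0}$ is not established anywhere. Run the same argument for $E'$ inside $\CA^{\leq \b0}$ instead.
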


\medskip

\noindent For any $\bp$, the subalgebra $\CA^{\geq \bp} \subset \U$ contains the positive loop Cartan subalgebra
\begin{equation}
\label{eqn:half loop cartan}
\BK[\ph^+_{i,d}]_{i \in I, d \geq 0}
\end{equation}
which is commutative. We will consider those modules $\CA^{\geq \bp} \curvearrowright V$ in category $\CO$, that decompose into generalized eigenspaces for the subalgebra \eqref{eqn:half loop cartan}
\begin{equation}
\label{eqn:decompose intro}
V = \bigoplus_{\text{loop weights }\bpsi} V_{\bpsi}
\end{equation}
such that every $\ph^+_{i,d}$ acts via the generalized eigenvalue $\psi_{i,d}$ on $V_\bpsi$ (the decomposition \eqref{eqn:decompose intro} exists on general grounds if $\BK$ is algebraically closed). Following \cite{FR}, we define the $q$-character as
\begin{equation}
\label{eqn:q-character intro}
\chi_q(V) = \sum_{\text{loop weights }\bpsi} \dim_{\BK} \left( V_{\bpsi} \right) [\bpsi]
\end{equation}
for various formal symbols $[\bpsi]$ associated to loop weights. If we define the product of these formal symbols component-wise (i.e. $[\bpsi][\bpsi'] = [\bpsi\bpsi']$), then we show in Proposition \ref{prop:multiplicative} that $q$-characters are multiplicative with respect to tensor products, thus generalizing a result of \cite{FR} for quantum affine algebras. More broadly speaking, the coproduct $\Delta_{\bp}$ makes category $\CO$ into a tensor category, and the $q$-character gives an (injective, on general grounds) ring homomorphism from the Grothendieck group of category $\CO$ to the ring of formal linear combinations of the symbols $[\bpsi]$.
 
\medskip 

\subsection{R-matrices}
\label{sub:R-matrices intro}

For any $\bp \in \rr$, Theorem \ref{thm:main} proves that the decomposition
\begin{equation}
\label{eqn:triangular intro 2}
\U = \CA^{\geq \bp} \otimes \CA^{\leq \bp}
\end{equation}
is a particular case of Drinfeld's quantum double construction, with respect to the coproduct $\Delta_{\bp}$ and the pairing \eqref{eqn:footnote}. Thus, there exists a universal $R$-matrix
\begin{equation}
\label{eqn:intro r-matrix 1}
_{\bar{\bp}}\CR_{\bp} \in \U \ \widehat{\otimes} \ \U, \qquad  \left(_{\bar{\bp}}\CR_{\bp}\right) \cdot \Delta_{\bp}(-) = \Delta_{\bp}^{\text{op}}(-) \cdot \left(_{\bar{\bp}}\CR_{\bp}\right)
\end{equation}
where we define completions $\widehat{\otimes}$ and $\bar{\otimes}$ in Subsection \ref{sub:universal 2}. More generally, we will construct the following objects for all $\bp^1,\bp^2 \in \rr$
\begin{align}
&_{\bp^2}\CR_{\bp^1} \in \U  \ \bar{\otimes} \ \U, \qquad  \left( _{\bp^2}\CR_{\bp^1} \right) \cdot \Delta_{\bp^1}(-) = \Delta_{\bp^2}(-) \cdot \left( _{\bp^2}\CR_{\bp^1} \right) \label{eqn:intro r-matrix 2}\\
&_{\bar{\bp}^2}\CR_{\bp^1} \in \U \ \widehat{\otimes} \ \U, \qquad  \left( _{\bar{\bp}^2}\CR_{\bp^1} \right) \cdot \Delta_{\bp^1}(-) = \Delta_{\bp^2}^{\text{op}}(-) \cdot \left( _{\bar{\bp}^2}\CR_{\bp^1} \right) \label{eqn:intro r-matrix 3}
\end{align}
The existence of $_{\bar{\bp}^2}\CR_{\bp^1}$ is subject to the usual caveats involving the Cartan subalgebra, that the reader may find recalled in Subsection \ref{sub:infinite slope 2}. We may evaluate the above $R$-matrices in tensor products of modules as follows. Generalizing a phenomenon that has long been known for quantum affine algebras with the Drinfeld-Jimbo coproduct, we show in Proposition \ref{prop:regular} that for rational loop weights $\bpsi$ which are regular$^{\neq 0}$ (i.e. the rational functions $\psi_i(z)$ are regular and non-zero at $z=0$ for all $i \in I$), the action $\CA^{\geq \bp} \curvearrowright L(\bpsi)$ extends to an action
\begin{equation}
\label{eqn:extended action intro}
\U \curvearrowright L(\bpsi)
\end{equation}
which is independent of $\bp$. Thus, \eqref{eqn:extended action intro} is an example of a module
\begin{equation}
\label{eqn:integrable intro}
\U \curvearrowright V
\end{equation}
in category $\CO$, namely one whose weight spaces are finite-dimensional and non-zero only in a finite union of cones. For any two modules $\U \curvearrowright V,W$ in category $\CO$, the coproducts $\Delta_{\bp}$ and $\Delta_{\bp}^{\text{op}}$ for various $\bp \in\rr$ give rise to module structures
\begin{equation}
\label{eqn:tensor action intro}
\U \curvearrowright V \otimes_{\bp} W \qquad \text{and} \qquad \U \curvearrowright  V \otimes^{\text{op}}_{\bp} W
\end{equation}
on the tensor product $V \otimes W$. The tensors \eqref{eqn:intro r-matrix 2} and \eqref{eqn:intro r-matrix 3} produce $\U$-intertwiners
\begin{equation} 
\label{eqn:intro r-matrix in rep 1}
_{\bp^2}R_{\bp^1}(u): V^u \otimes_{\bp^1} W \rightarrow V^u \otimes_{\bp^2} W[u^{\pm 1}]
\end{equation}
\begin{equation}  
\label{eqn:intro r-matrix in rep 2}
_{\bar{\bp}^2}R_{\bp^1}(u): V^u \otimes_{\bp^1} W \rightarrow V^u \otimes^{\text{op}}_{\bp^2} W((u))
\end{equation} 
where $V^u$ refers to twisting the action of $V$ by powers of $u$ (see Subsection \ref{sub:r-matrices} for details). This opens the door to asking, in the current generality of $\U$, a host of interesting questions that have been studied for $U_q(\widehat{\fg})$: factorization of $R$-matrices, calculation of transfer matrices, relation to XXZ Hamiltonians, Baxter's equations, the Bethe ansatz, relations to $W$-algebras etc (\cite{FJMM, FJMM2, FT, FH, FR}).

\medskip

\subsection{Techniques} 
\label{sub:techniques}

Let us discuss our approach in more technical detail. We will heavily use the shuffle algebra incarnation of quantum loop algebras, namely
$$
\U \cong \CS^+ \otimes \BK[\ph^+_{i,d}, \ph^-_{i,d}]_{i \in I, d\geq 0} \otimes \CS^-
$$
where $\CS^\pm$ are defined in Subsection \ref{sub:shuffle}. We also recall (the natural generalization of) Drinfeld's new topological coproduct $\Delta$ on $\U$, and the pairing
\begin{equation}
\label{eqn:pair shuffle intro}
\CS^+ \otimes \CS^- \xrightarrow{\langle \cdot, \cdot \rangle} \BK
\end{equation}
with respect to which $\U$ is a Drinfeld double. The coproduct and pairing above are the essential inputs that we use to define the coproducts $\Delta_{\bp}$ on $\CA^{\geq \bp}$ and $\CA^{\leq \bp}$, and to show that $\U$ is the Drinfeld double of the algebras $\CA^{\geq \bp}$ and $\CA^{\leq \bp}$ for any $\bp \in \rr$. With this in mind, our approach produces explicit formulas for $\Delta_{\bp}$ only inasmuch as one has explicit answers to the following problems.

\bigskip

\noindent \textbf{Problem 1.} \emph{Find explicit descriptions of $\CS^\pm$ as sets of polynomials satisfying some collection of ``wheel conditions", e.g. \eqref{eqn:wheel} (see the general discussion of \cite{N Arbitrary}}).

\bigskip

\noindent \textbf{Problem 2.} \emph{Find explicit descriptions of the pairing \eqref{eqn:pair shuffle intro}, see Remark \ref{rem:pairing}.}

\bigskip

\noindent So far, Problem 1 has been solved in the following particular cases: quantum affine algebras (\cite{NT}), quantum affinizations of simply-laced Kac-Moody Lie algebras (\cite{N Symmetric}), $K$-theoretic Hall algebras of doubled quivers (\cite{N Wheel}) and BPS algebras associated to toric Calabi-Yau threefolds (\cite{N Reduced}). Problem 2 has also been solved in the latter three settings in the works referenced above; we will provide a solution to this problem in the case of quantum affine algebras in Lemma \ref{lem:contour}. However, a general solution to Problems 1 and 2 is open and would be extremely interesting.

\medskip 

\subsection{Acknowledgements} I would like to thank David Hernandez and Alexander Tsymbaliuk for many great conversations on quantum loop algebras and their representation theory. I gratefully acknowledge the support of the Swiss National Science Foundation grant 10005316.

\bigskip

\section{Quantum loop and shuffle algebras}
\label{sec:algebras}

\medskip

\noindent We recall the definition of quantum loop algebras in the generality of Subsection \ref{sub:algebras intro}, as well as their shuffle algebra incarnation. Special emphasis will be placed on tools and techniques which will be used in later sections, such as the bialgebra structure, Drinfeld double, and various completions of the aforementioned algebras.

\medskip

\subsection{Basic notations}
\label{sub:basic}

The set $\BN$ is henceforth thought to contain 0. We fix a finite set $I$, a field $\BK$ of characteristic 0 and a collection of rational functions $\{\zeta_{ij}(x)\}_{i,j \in I}$ as in Subsection \ref{sub:algebras intro}. The abelian group $\zz$ plays the role of the root lattice for our quantum algebras, with
$$
\bs^i = \underbrace{(0,\dots,0,1,0,\dots,0)}_{1 \text{ on }i\text{-th spot}}
$$
playing the role of simple roots. For any $\bm = (m_i)_{i \in I}, \bn = (n_i)_{i \in I} \in \rr$, we let
\begin{equation}
\label{eqn:dot product}
\bm \cdot \bn = \sum_{i \in I} m_i n_i
\end{equation}
For $\bm,\bn \in \rr$, we will write $\bm \leq \bn$ if $m_i \leq n_i$ for all $i \in I$. We will also use the notation $\b0 = (0,\dots,0)$ and $\bone = (1,\dots,1)$. For any $\bm = (m_i)_{i \in I} \in \rr$, we write
\begin{equation}
\label{eqn:coordinate sum}
|\bm| = \sum_{i \in I} m_i
\end{equation}
Because of assumption \eqref{eqn:assumption}, we can write for all $i,j \in I$
\begin{equation}
\label{eqn:magnitude}
\zeta_{ij}(x) = \frac {c_{ij} x^{\#_{ij}+\delta_{ij}} + \dots + c'_{ij} x^{-\#_{ji}}}{(x-1)^{\delta_{ij}}}
\end{equation}
for various scalars $c_{ij}, c_{ij}' \in \BK^*$ and integers $\#_{ij}$, such that $c_{ij} x^{\#_{ij}}$ is the leading order term of $\zeta_{ij}(x)$ as $x \rightarrow \infty$. The numbers $\#_{ij}$ give rise to a bilinear form
\begin{equation}
\label{eqn:euler form}
\langle \bm, \bn \rangle = \sum_{i,j \in I} m_i n_j \#_{ij}
\end{equation}
for all $\bm = (m_i)_{i \in I}, \bn = (n_i)_{i \in I}$. The following scalars will come up in Section \ref{sec:slopes}
\begin{equation}
\label{eqn:scalars}
\gamma_{\bm, \bn} = \prod_{i,j \in I} \left[ (-1)^{\delta_{ij}}\frac {c_{ij}}{c_{ji}'} \right]^{m_i n_j} \in \BK^*
\end{equation}

\medskip 

\subsection{The pre-quantum loop algebra}
\label{sub:quad}

The following notion is motivated by the setting of \eqref{eqn:quantum affine intro}, in which it $q$-deforms the Lie bracket on $\fg[t^{\pm 1}]$ for simple $\fg$.

\medskip 

\begin{definition}
\label{def:quad}
	
The (positive part of the) \emph{pre-quantum loop algebra} $\tUp$ is 
$$
\tUp = \BK \Big \langle e_{i,d} \Big \rangle _{i \in I, d \in \BZ} \Big/ \Big(\text{relations \eqref{eqn:rel quad}} \Big)
$$
where for every $i,j \in I$ we set
\begin{equation}
\label{eqn:rel quad}
e_i(z) e_j(w) \zeta_{ji} \left(\frac wz\right) = e_j(w) e_i(z) \zeta_{ij} \left( \frac zw \right)
\end{equation}
Above and henceforth, we consider the formal series
$$
e_i(z) = \sum_{d \in \BZ} \frac {e_{i,d}}{z^d}
$$
for all $i \in I$, and interpret relation \eqref{eqn:rel quad} as an infinite collection of relations obtained by equating the coefficients of all $\{z^aw^b\}_{a,b\in \BZ}$ in the left and right-hand sides (if $i = j$, one clears the denominators $z-w$ from \eqref{eqn:rel quad} before equating coefficients). 
	
\end{definition}

\medskip 

\noindent We also consider the negative part of the pre-quantum loop algebra
$$
\tUm = \tU^{+,\text{op}}
$$
with generators denoted by $f_{i,d}$ instead of $e_{i,d}$. They satisfy the relations
\begin{equation}
\label{eqn:rel quad opp}
f_i(z) f_j(w) \zeta_{ij} \left( \frac zw \right) = f_j(w) f_i(z) \zeta_{ji} \left(\frac wz\right)
\end{equation}
for all $i,j \in I$, where $f_i(z) = \sum_{d \in \BZ} \frac {f_{i,d}}{z^d}$. 

\medskip

\subsection{The shuffle algebra}
\label{sub:shuffle}

The following construction generalizes the trigonometric version (due to \cite{E}) of the Feigin-Odesskii elliptic shuffle algebras of \cite{FO}. Consider the vector space of Laurent polynomials in arbitrarily many variables
\begin{equation}
\label{eqn:big shuffle}
\CV = \bigoplus_{\bn \in \nn} \CV_{\bn}, \quad \text{where} \quad \CV_{(n_i \geq 0)_{i \in I}} = \BK[z_{i1}^{\pm 1},\dots,z_{in_i}^{\pm 1}]^{\text{sym}}_{i \in I} 
\end{equation}
Above, ``sym" refers to Laurent polynomials which are color-symmetric, i.e. symmetric in the variables $z_{i1},\dots,z_{in_i}$ for each $i \in I$ separately. The vector space $\CV$ is called the \emph{big shuffle algebra} when endowed with the following shuffle product:
\begin{equation}
	\label{eqn:mult}
	E( z_{i1}, \dots, z_{i n_i})_{i \in I} * E'(z_{i1}, \dots,z_{i n'_i})_{i \in I} = 
\end{equation}
$$
\textrm{Sym} \left[ \frac {E(z_{i1}, \dots, z_{in_i}) E'(z_{i,n_i+1}, \dots, z_{i,n_i+n'_i})}{\bn! \bn'!}
\prod_{i,j \in I} \mathop{\prod_{1 \leq a \leq n_i}}_{n_j < b \leq n_j+n_j'} \zeta_{ij} \left( \frac {z_{ia}}{z_{jb}} \right) \right]
$$
The word ``Sym" in \eqref{eqn:mult} denotes symmetrization with respect to the
\begin{equation*}
	(\bn+\bn')! := \prod_{i\in I} (n_i+n'_i)!
\end{equation*}
permutations of the variables $\{z_{i1}, \dots, z_{i,n_i+n'_i}\}$ for each $i$ independently. Let
$$
\CV^+ = \CV \qquad \text{and} \qquad \CV^- = \CV^{\text{op}}
$$
The reason for formula \eqref{eqn:mult} is to ensure that there exist algebra homomorphisms
\begin{equation}
\label{eqn:tupsilon}
\widetilde{\Upsilon}^\pm : \tUpm \rightarrow \CV^{\pm}
\end{equation}
given by sending $e_{i,d}$ and $f_{i,d}$ (respectively) to $z_{i1}^d \in \CV_{\bs^i} = \CV^{\text{op}}_{\bs^i}$, for all $i \in I$, $d \in \BZ$.

\medskip

\begin{definition}
\label{def:shuffle}

Define the positive/negative \emph{shuffle algebras} as
\begin{equation}
\label{eqn:spherical def}
\CS^{\pm} = \emph{Im }\widetilde{\Upsilon}^{\pm}
\end{equation}
and define the positive/negative parts of the \emph{quantum loop algebra} as
\begin{equation}
\label{eqn:quantum loop}
\U^{\pm} = \tU^\pm \Big/ \emph{Ker }\widetilde{\Upsilon}^\pm 
\end{equation}
Then we have induced isomorphisms 
\begin{equation}
\label{eqn:upsilon}
\Upsilon^\pm : \U^\pm \xrightarrow{\sim} \CS^\pm
\end{equation}

\end{definition}

\medskip 

\noindent Elements of $\CS^\pm$ will be called shuffle elements. By definition, any $E \in \CS^+$ can be written as a linear combination of shuffle elements of the form
\begin{equation}
\label{eqn:spherical}
E = \text{Sym} \left[ \nu(z_1,\dots,z_n) \prod_{1 \leq a < b \leq n} \zeta_{i_ai_b} \left(\frac {z_a}{z_b} \right) \right] 
\end{equation} 
as $\nu$ goes over all Laurent polynomials. In formula \eqref{eqn:spherical}, we consider any $i_1,\dots,i_n \in I$ and use the notation $z_a$ as a placeholder for the variable $z_{i_a\bullet_a}$, where $\bullet_1,\dots,\bullet_n$ denote the minimal positive integers such that $\bullet_a < \bullet_b$ if $a<b$ and $i_a = i_b$.

\medskip 

\subsection{Extended algebras}
\label{sub:extended}

To make $\tU^\pm$, $\CV^\pm$, $\U^\pm \cong \CS^\pm$ into bialgebras, we need to extend them by introducing commuting loop Cartan elements. The vector spaces
\begin{align}
	&\tUg = \tUp \otimes \BK[\ph^+_{i,d}]_{i \in I, d \geq 0} \label{eqn:tug} \\
	&\tUl = \BK[\ph^-_{i,d}]_{i \in I, d \geq 0} \otimes \tUm \label{eqn:tul}
\end{align}
can be made into algebras by imposing the commutation relations
\begin{align}
	&\ph^+_i(z) e_j(w) = e_j(w) \ph^+_i(z) \frac {\zeta_{ij} \left(\frac zw \right)}{\zeta_{ji} \left(\frac wz \right)} \label{eqn:uug} \\
	&f_j(w)\ph^-_i(z)  = \ph^-_i(z)  f_j(w) \frac {\zeta_{ij} \left(\frac zw \right)}{\zeta_{ji} \left(\frac wz \right)} \label{eqn:uul}
\end{align}
for all $i,j \in I$, where 
$$
\ph^+_i(z) = \sum_{d=0}^{\infty} \frac {\ph^+_{i,d}}{z^d} \qquad \text{and} \qquad \ph^-_i(z) = \sum_{d=0}^{\infty} \ph^-_{i,d} z^d
$$
Note that the assumption \eqref{eqn:assumption} implies that $\frac {\zeta_{ij} (x)}{\zeta_{ji} (x^{-1})}$ is regular and non-zero at both $x = 0$ and $x = \infty$, for all $i,j \in I$. With this in mind, one interprets the relations in \eqref{eqn:uug} (respectively \eqref{eqn:uul}) by expanding them as power series in negative (respectively positive) powers of $\frac zw$. Similarly, we define
\begin{align}
&\CV^{\geq} = \CV^+ \otimes \BK[\ph^+_{i,d}]_{i \in I, d \geq 0} \label{eqn:vg} \\
&\CV^{\leq} =  \BK[\ph^-_{i,d}]_{i \in I, d \geq 0} \otimes \CV^- \label{eqn:vl}
\end{align}
and make them into algebras by imposing the commutation relations
\begin{align}
&\ph^+_i(y) E(z_{j1},\dots,z_{jn_j})_{j \in I} = E(z_{j1},\dots,z_{jn_j})_{j \in I} \ph^+_i(y) \prod_{j \in I} \prod_{a=1}^{n_j}\frac {\zeta_{ij} \left(\frac y{z_{ja}} \right)}{\zeta_{ji} \left(\frac {z_{ja}}y \right)} \label{eqn:ssg} \\
&F(z_{j1},\dots,z_{jn_j})_{j \in I} \ph^-_i(y)  = \ph^-_i(y) F(z_{j1},\dots,z_{jn_j})_{j \in I} \prod_{j \in I} \prod_{a=1}^{n_j}\frac {\zeta_{ij} \left(\frac y{z_{ja}} \right)}{\zeta_{ji} \left(\frac {z_{ja}}y \right)} \label{eqn:ssl}
\end{align}
for all $E \in \CV^+$, $F \in \CV^-$ and $i \in I$. Because the homomorphisms $\widetilde{\Upsilon}^\pm$ respect all the constructions above, we obtain natural algebra structures on
\begin{align}
	&\Ug = \Up \otimes \BK[\ph^+_{i,d}]_{i \in I, d \geq 0} \label{eqn:ug} \\
	&\Ul = \BK[\ph^-_{i,d}]_{i \in I, d \geq 0} \otimes \Um \label{eqn:ul}
\end{align}
and 
\begin{align}
&\CS^{\geq} = \CS^+ \otimes \BK[\ph^+_{i,d}]_{i \in I, d \geq 0} \label{eqn:sg} \\
&\CS^{\leq} =  \BK[\ph^-_{i,d}]_{i \in I, d \geq 0} \otimes \CS^- \label{eqn:sl}
\end{align}
which are isomorphic to each other with respect to \eqref{eqn:upsilon}.

\medskip

\subsection{Gradings and completions}
\label{sub:gradings}

All algebras in this paper are graded by 
$$
\deg = (\text{hdeg}, \text{vdeg}) \in \zz \times \BZ
$$
with the $\zz$ component called \emph{horizontal} degree (denoted by hdeg) and the $\BZ$ component called \emph{vertical} degree (denoted by vdeg). Explicitly, we have
\begin{align*} 
&\deg e_{i,d} = (\bs^i, d) \\
&\deg f_{i,d} = (-\bs^i, d) \\
&\deg \ph^+_{i,d} = (0, d) \\
&\deg \ph^-_{i,d} = (0, -d) \\
&\deg E = (\bn, \text{homogeneous degree of }E) \\
&\deg F = (-\bn, \text{homogeneous degree of }F)
\end{align*}
for any $E \in \CV_{\bn}$ and $F \in \CV^{\text{op}}_{\bn}$ which are homogeneous in all their variables. Write
\begin{align}
&\CV^+ = \bigoplus_{\bn \in \nn} \CV_{\bn} = \bigoplus_{(\bn,d) \in \nn \times \BZ} \CV_{\bn,d} \label{eqn:summand plus} \\
&\CV^- = \bigoplus_{\bn \in \nn} \CV_{-\bn} = \bigoplus_{(\bn,d) \in \nn \times \BZ} \CV_{-\bn,d} \label{eqn:summand minus}
\end{align} 
for the graded summands of $\CV^\pm$ (and analogously for $\CS^\pm, \tUpm, \Upm$). For any algebra $A$ which is graded by $\zz \times \BZ$, we may define the completions
\begin{align}
&A \vo A = \bigoplus_{(\bn,d) \in \zz \times \BZ} (A \vo A)_{\bn,d} \label{eqn:vo} \\
&A \ho A = \bigoplus_{(\bn,d) \in \zz \times \BZ} (A \ho A)_{\bn,d} \label{eqn:ho}
\end{align}
where $(A \vo A)_{\bn,d}$ (respectively $(A \ho A)_{\bn,d}$) consists of infinite sums of tensors $x \otimes y$ where all but finitely many summands have the property that $\vdeg x \geq N$ and $\vdeg y \leq -N$ (respectively $|\hdeg x| \leq -N$ and $|\hdeg y| \geq N$, recall the notation \eqref{eqn:coordinate sum}) for any $N \in \BN$. It is a straightforward exercise, which we leave to the reader, to check that \eqref{eqn:vo} and \eqref{eqn:ho} are indeed algebras, i.e. the product of infinite sums of tensors is a well-defined infinite sum of tensors in the sense above. 

\medskip 

\subsection{The topological coproduct}
\label{sub:coproduct}

Our reason for defining the extended algebras in the previous Subsection is to make them into bialgebras. We start with the natural generalization of Drinfeld's new coproduct on quantum affine algebras
\begin{equation}
\label{eqn:coproduct tu}
\Delta : \tUg \rightarrow \tUg \vo \tUg \qquad \text{and} \qquad \Delta : \tUl \rightarrow \tUl \vo \tUl 
\end{equation}
given by
\begin{equation}
\label{eqn:coproduct h}
\Delta(\ph^\pm_i(z)) = \ph^\pm_i(z) \otimes \ph^\pm_i(z)  
\end{equation}
\begin{equation}
\label{eqn:coproduct e}
\Delta(e_i(z)) = \ph^+_i(z) \otimes e_i(z) + e_i(z) \otimes 1  
\end{equation}
\begin{equation}
\label{eqn:coproduct f}
\Delta(f_i(z)) = 1 \otimes f_i(z) + f_i(z) \otimes \ph^-_i(z)
\end{equation}
Similarly, the following coproducts 
\begin{equation}
\label{eqn:coproduct v}
\Delta : \CV^{\geq} \rightarrow \CV^{\geq} \vo \CV^{\geq} \qquad \text{and} \qquad \Delta : \CV^{\leq} \rightarrow \CV^{\leq} \vo \CV^{\leq}
\end{equation}
are natural generalizations of those of \cite{N Shuffle}: $\Delta(\ph^\pm_i(z)) = \ph^\pm_i(z) \otimes \ph^\pm_i(z)$ and
\begin{align}
\Delta(E) = \sum_{\b0 \leq \bm \leq \bn} \frac {\prod^{j \in I}_{m_j < b \leq n_j} \ph^+_j(z_{jb}) E(z_{i1},\dots , z_{im_i} \otimes z_{i,m_i+1}, \dots, z_{in_i})}{\prod^{i \in I}_{1\leq a \leq m_i} \prod^{j \in I}_{m_j < b \leq n_j} \zeta_{ji} \left( \frac {z_{jb}}{z_{ia}} \right)} \label{eqn:coproduct shuffle plus} \\
\Delta(F) = \sum_{\b0 \leq \bm \leq \bn} \frac {F(z_{i1},\dots , z_{im_i} \otimes z_{i,m_i+1}, \dots, z_{in_i}) \prod^{j \in I}_{1 \leq b \leq m_j} \ph^-_j(z_{jb})}{\prod^{i \in I}_{1\leq a \leq m_i} \prod^{j \in I}_{m_j < b \leq n_j} \zeta_{ij} \left( \frac {z_{ia}}{z_{jb}} \right)} \label{eqn:coproduct shuffle minus} 
\end{align}
for all $E \in \CV_{\bn}$, $F \in \CV_{-\bn}$. To make sense of the right-hand side of formulas \eqref{eqn:coproduct shuffle plus} and \eqref{eqn:coproduct shuffle minus}, we expand the denominator as a power series in the range $|z_{ia}| \ll |z_{jb}|$, and place all the powers of $z_{ia}$ to the left of the $\otimes$ sign and all the powers of $z_{jb}$ to the right of the $\otimes$ sign (for all $i,j \in I$, $1 \leq a \leq m_i$ and $m_j < b \leq n_j$). It is easy to see that the maps \eqref{eqn:tupsilon} respect the coproducts, and thus descend to 
\begin{equation}
	\label{eqn:coproduct u}
	\Delta : \Ug \rightarrow \Ug \vo \Ug \quad \text{ and } \quad \Delta : \Ul \rightarrow \Ul \vo \Ul 
\end{equation}
\begin{equation}
	\label{eqn:coproduct s}
	\Delta : \CS^{\geq} \rightarrow \CS^{\geq} \vo \CS^{\geq} \qquad \text{and} \quad \ \ \Delta : \CS^{\leq} \rightarrow \CS^{\leq} \vo \CS^{\leq} 
\end{equation}
which match each other under the isomorphisms $\Ug \cong \CS^{\geq}$ and $\Ul \cong \CS^{\leq}$.

\medskip 

\subsection{The pairing}
\label{sub:pairing}

Consider the following notation for all rational functions $G$:
\begin{equation}
	\label{eqn:contour integral}
	\int_{|z_1| \gg \dots \gg |z_n|} G(z_1,\dots,z_n)
\end{equation}
denotes the constant term in the expansion of $G$ as a power series in 
$$
\frac {z_2}{z_1}, \dots, \frac {z_n}{z_{n-1}}
$$
This notation is motivated by the fact that when $\BK = \BC$, one could compute the constant term as the contour integral of $G(z_1,\dots,z_n)\prod_{a=1}^n \frac{dz_a}{2\pi i z_a}$ over concentric circles centered at the origin. We define $\int_{|z_1| \ll \dots \ll |z_n|} G(z_1,\dots,z_n)$ analogously.

\medskip

\begin{definition}
\label{def:pair}
	
There exist bilinear pairings
\begin{align}
&\tUp \otimes \CV^- \xrightarrow{\langle \cdot, \cdot \rangle} \BK \label{eqn:pair} \\
&\CV^+ \otimes \tUm \xrightarrow{\langle \cdot, \cdot \rangle} \BK \label{eqn:pair opposite}
\end{align}
given for all $E \in \CV_{\bn}$, $F \in \CV_{-\bn}$ and all $i_1,\dots,i_n \in I$, $d_1,\dots,d_n \in \BZ$ by
\begin{align}
&\Big \langle e_{i_1,d_1} \cdots e_{i_n,d_n}, F \Big \rangle = \int_{|z_1| \gg \dots \gg |z_n|} \frac {z_1^{d_1}\dots z_n^{d_n} F(z_1,\dots,z_n)}{\prod_{1\leq a < b \leq n} \zeta_{i_bi_a} \left(\frac {z_b}{z_a} \right)} \label{eqn:pair formula} \\
&\Big \langle E, f_{i_1,d_1} \cdots  f_{i_n,d_n} \Big \rangle = \int_{|z_1| \ll \dots \ll |z_n|} \frac {z_1^{d_1}\dots z_n^{d_n} E(z_1,\dots,z_n)}{\prod_{1\leq a < b \leq n} \zeta_{i_ai_b} \left(\frac {z_a}{z_b} \right)} 
\label{eqn:pair formula opposite}
\end{align}
if $\bs^{i_1}+\dots +\bs^{i_n} = \bn$, and 0 otherwise. 
	
\end{definition}

\medskip

\noindent In the right-hand sides of \eqref{eqn:pair formula} and \eqref{eqn:pair formula opposite}, we implicitly identify
\begin{equation}
	\label{eqn:relabeling}
	z_a \quad \text{with} \quad z_{i_a\bullet_a}, \quad \forall a \in \{1,\dots, n\}
\end{equation}
where $\bullet_1,\dots,\bullet_n$ are the minimal positive integers such that $\bullet_a < \bullet_b$ if $a < b$ and $i_a = i_b$. Note that the pairings \eqref{eqn:pair} and \eqref{eqn:pair opposite}  are non-zero only on elements of opposite degree in $\zz \times \BZ$. It was shown in \cite{N Arbitrary} that there exist descended pairings
\begin{equation}
\label{eqn:pair descended}
\Up \otimes \CS^- \xrightarrow{\langle \cdot, \cdot \rangle} \BK \qquad \text{and} \qquad \CS^+ \otimes \Um \xrightarrow{\langle \cdot, \cdot \rangle} \BK
\end{equation}
which are non-degenerate and coincide under the isomorphisms \eqref{eqn:upsilon}.

\medskip 

\begin{remark}
\label{rem:pairing}

The isomorphisms \eqref{eqn:upsilon} allow us to rewrite \eqref{eqn:pair descended} as a pairing
\begin{equation}
\label{eqn:pair shuffle}
\CS^+ \otimes \CS^- \xrightarrow{\langle \cdot, \cdot \rangle} \BK
\end{equation} 
By definition, to compute $\langle E,F\rangle$ for any $E \in \CS^+$, $F \in \CS^-$, one needs to express $E$ as a linear combination of elements \eqref{eqn:spherical}, and then apply \eqref{eqn:pair formula}. It would be very interesting to obtain a formula for $\langle E, F \rangle$ that takes as input the Laurent polynomials $E$ and $F$ directly. In our experience, such a formula strongly depends on the particular choice of $(I,\BK,\zeta_{ij}(x))$, and is only known in the following cases:

\medskip

\begin{itemize}[leftmargin=*]
	
\item for quantum affinizations of Kac-Moody Lie algebras of simply-laced type in \cite{N Symmetric}

\medskip 	
	
\item for $K$-theoretic Hall algebras associated to doubled quivers in \cite{N Wheel}

\medskip

\item for BPS algebras associated to toric Calabi-Yau threefolds in \cite{N Reduced}
	
\end{itemize}

\medskip

\noindent Moreover, for quantum affine algebras associated to an arbitrary simple Lie algebra $\fg$, we will give such a formula for the pairing in Lemma \ref{lem:contour}.

\end{remark}

\medskip 

\subsection{Doubles}
\label{sub:doubles}

In what follows, we will use Sweedler's notation for coproducts
$$
\Delta(a) = a_1 \otimes a_2 \quad \text{instead of the more precise} \quad \Delta(a) = \sum_k a_{1,k} \otimes a_{2,k}
$$
Suppose we have bialgebras $A$ and $B$ over the field $\BK$. A pairing
\begin{equation}
\label{eqn:bialgebra pairing}
A \otimes B \xrightarrow{\langle \cdot, \cdot \rangle} \BK
\end{equation}
is called a bialgebra pairing if it satisfies
\begin{align}
&\Big \langle a,b'b'' \Big \rangle = \Big \langle \Delta(a), b' \otimes b'' \Big \rangle = \langle a_1,b'\rangle \langle a_2,b''\rangle\label{eqn:bialgebra 1} \\
&\Big \langle a' a'' ,b \Big \rangle = \Big \langle a' \otimes a'', \Delta^{\text{op}}(b) \Big \rangle = \langle a',b_2\rangle \langle a'',b_1\rangle\label{eqn:bialgebra 2} 
\end{align}
for all $a,a',a'' \in A$ and $b,b',b'' \in B$ ($\Delta^{\text{op}}$ is the opposite coproduct). The notion above is sometimes called a skew-bialgebra pairing, as it identifies the coproduct on $A$ with the dual of the product on $B$ and the opposite coproduct on $B$ with the dual of the product on $A$. Whenever we have bialgebras $A$ and $B$ with a bialgebra pairing \eqref{eqn:bialgebra pairing} as above, the \emph{Drinfeld double} construction makes the vector space
\begin{equation}
\label{eqn:drinfeld double}
D = A \otimes B
\end{equation}
into a bialgebra which contains $A = A \otimes 1$ and $B = 1 \otimes B$ as sub-bialgebras. Indeed, the multiplication in $D$ is governed by the relation
\begin{equation}
\label{eqn:drinfeld double relation}
a_1 b_1 \langle a_2,b_2 \rangle = \langle a_1,b_1\rangle b_2a_2, \qquad \forall a \in A = A \otimes 1, \ b \in B = 1 \otimes B
\end{equation}
Practically, \eqref{eqn:drinfeld double relation} allows one to take an arbitrary product of $a$'s and $b$'s and convert it into a linear combination of products of the form $ab$, which are elements of \eqref{eqn:drinfeld double}. 

\medskip

\begin{remark}
\label{rem:hopf}

All bialgebras considered in the present paper are actually Hopf, but we do not recall the antipode $S$ because we have no useful formula for it. Moreover, all bialgebra pairings are actually Hopf pairings, and formula \eqref{eqn:drinfeld double} is equivalent to the more conventionally written formulas for multiplication in Drinfeld doubles
\begin{align}
&ab = \langle a_1,b_1 \rangle b_2 a_2	\langle a_3, S(b_3) \rangle \label{eqn:dd1} \\
&ba = \langle a_1,S(b_1) \rangle a_2 b_2	\langle a_3, b_3 \rangle \label{eqn:dd2}
\end{align} 
$\forall a \in A$, $b \in B$. However, for all algebras considered in the present paper, relation \eqref{eqn:drinfeld double relation} will be enough to reorder arbitrary products of $a$'s and $b$'s, and we do not need \eqref{eqn:dd1} and \eqref{eqn:dd2}. This is because for all elements $x$ of the algebras considered in the present paper, we have
$$
\Delta(x) = x \otimes (\text{Cartan element}) + \dots + (\text{Cartan element}) \otimes x
$$
where the ellipsis denotes terms of horizontal degree strictly contained between $0$ and $\emph{hdeg} x$.

\end{remark} 

\medskip

\noindent If the pairing \eqref{eqn:bialgebra pairing} is non-degenerate, then it induces injective maps
\begin{equation}
\label{eqn:hooks}
A \hookrightarrow B^* \qquad \text{and} \qquad B \hookrightarrow A^*
\end{equation}
If $A$ and $B$ are finite-dimensional over $\BK$, a non-degenerate pairing is perfect, i.e. the inclusions in \eqref{eqn:hooks} are actually isomorphisms. The algebras considered in the present paper are certainly not finite-dimensional, but we will often encounter algebras with finite-dimensional graded summands. In this case, we make the convention that all our duals will always be considered in the graded sense, i.e.
$$
\text{if } A = \bigoplus_{(\bn,d) \in \zz \times \BZ} A_{\bn,d}, \quad \text{then we define } A^* = \bigoplus_{(\bn,d) \in \zz \times \BZ} A_{\bn,d}^*
$$
In this case, a non-degenerate pairing (which respects the grading, by which we mean that it only pairs non-trivially elements of opposite degrees) between graded bialgebras with finite dimensional graded summands is always perfect. 

\medskip

\subsection{Universal $R$-matrices}
\label{sub:universal}

One of the big motivations for the introduction of Drinfeld doubles is that they are endowed with a \emph{universal $R$-matrix}
$$
\CR \in D \otimes D
$$
which satisfies the properties
\begin{equation}
\label{eqn:universal 1}
\CR \cdot \Delta(-) = \Delta^{\text{op}}(-) \cdot \CR
\end{equation}
\begin{equation}
\label{eqn:universal 2}
\left( \Delta \otimes \text{Id}_D \right)(\CR) = \CR_{13} \CR_{23} 
\end{equation}
\begin{equation}
\label{eqn:universal 3}
\left( \text{Id}_D \otimes \Delta \right)(\CR) = \CR_{13} \CR_{12}
\end{equation}  
(in the right-hand side of the expressions above, we write $\CR_{12}, \CR_{13}, \CR_{23}$ for the tensors in $D \otimes D \otimes D$ which are equal to $\CR$ on the two indices in the subscript and 1 on the third index). Indeed, we have the following general result.

\medskip

\begin{lemma}
\label{lem:universal}

If the pairing \eqref{eqn:bialgebra pairing} is perfect, then its canonical tensor
\begin{equation}
\label{eqn:universal} 
\CR  = \sum_k a_k \otimes b_k \in A \otimes B \subset D \otimes D
\end{equation}
(with respect to any dual bases $\{a_k\} \subset A$ and $\{b_k\} \subset B$) is a universal $R$-matrix.

\end{lemma}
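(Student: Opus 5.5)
We verify the three properties \eqref{eqn:universal 1}--\eqref{eqn:universal 3} directly from the bialgebra pairing axioms \eqref{eqn:bialgebra 1}--\eqref{eqn:bialgebra 2} and the double relation \eqref{eqn:drinfeld double relation}. The tool throughout is the reproducing property of the canonical tensor. Since the pairing is perfect (in the graded sense of the previous subsection), for every $a \in A$ and $b \in B$ we have
$$
\sum_k \langle a, b_k \rangle a_k = a \qquad \text{and} \qquad \sum_k a_k \langle a_k, b \rangle = b .
$$
Consequently, an identity between tensors whose $B$-legs lie in $B$ (respectively $A$-legs in $A$) can be checked by pairing those legs against arbitrary elements of $A$ (respectively $B$).

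For \eqref{eqn:universal 2}, consider $(\Delta \otimes \text{Id})(\CR) = \sum_k a_{k,1} \otimes a_{k,2} \otimes b_k$. Pair its first two legs with $b' \otimes b''$ using \eqref{eqn:bialgebra 1}; this gives $\sum_k \langle a_k, b'b'' \rangle b_k = b'b''$. The right-hand side is $\CR_{13}\CR_{23} = \sum_{k,l} a_k \otimes a_l \otimes b_k b_l$, and the same pairing gives $\sum_{k,l}\langle a_k,b'\rangle\langle a_l,b''\rangle b_kb_l = b'b''$. Both sides therefore agree.

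For \eqref{eqn:universal 3}, we have $(\text{Id}\otimes\Delta)(\CR) = \sum_k a_k \otimes b_{k,1}\otimes b_{k,2}$ and $\CR_{13}\CR_{12} = \sum_{k,l} a_k a_l \otimes b_l \otimes b_k$. Pair the last two legs with $a' \otimes a''$ in the orders dictated by \eqref{eqn:bialgebra 2}. The left-hand side yields $\sum_k a_k \langle a'a'', b_k \rangle = a'a''$, with the opposite coproduct absorbing the leg swap. The right-hand side yields $\sum_{k,l}a_ka_l\langle a',b_k\rangle\langle a'',b_l\rangle = a'a''$. This step requires care with the order of the $A$-factors relative to the skew convention, but it is routine.

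The main step is \eqref{eqn:universal 1}. Since $D$ is generated by $A$ and $B$, and both sides of \eqref{eqn:universal 1} are multiplicative in the argument (because $\Delta$ and $\Delta^{\text{op}}$ are algebra maps), it suffices to check it for $x = a \in A$ and $x = b \in B$.

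For $x = b \in B$, the identity becomes
$$
\sum_k a_k b_1 \otimes b_k b_2 = \sum_k b_2 a_k \otimes b_1 b_k .
$$
We move the $b$'s past the $a$'s using \eqref{eqn:drinfeld double relation}, applied to $a_k$ together with the coproduct of $a_k$. We then re-index the sums using the reproducing property, converting $\sum_k \Delta(a_k)\otimes b_k$ into $\sum_{k,l}a_k\otimes a_l\otimes b_kb_l$ as in the proof of \eqref{eqn:universal 2}, and similarly for $\Delta(b_k)$. After these substitutions, both sides become the same expression $\sum \langle a_{k,1}, b_1\rangle \cdots$, matched through the coassociativity of $\Delta$. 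The case $x = a \in A$ is symmetric.

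I expect this step to be the main obstacle. The difficulty is the bookkeeping of Sweedler indices, together with the fact that \eqref{eqn:drinfeld double relation} is stated in the implicit form $a_1b_1\langle a_2,b_2\rangle = \langle a_1,b_1\rangle b_2 a_2$ rather than as an explicit commutation formula. To handle it, I would pair the $B$-leg of both sides against an arbitrary $a' \in A$, reducing the claim to an identity inside $D$. That identity should be literally \eqref{eqn:drinfeld double relation} applied to $a'$ and $b$.
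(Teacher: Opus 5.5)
Your proposal is correct and is essentially the paper's proof. For \eqref{eqn:universal 2}--\eqref{eqn:universal 3}, the paper pairs against $b'\otimes b''$ and $a''\otimes a'$ exactly as you do. For \eqref{eqn:universal 1}, it reduces to $x\in A$ and $x\in B$, then pairs the leg lying purely in $A$ (resp.\ $B$) with an arbitrary element, which turns the two sides into the two sides of \eqref{eqn:drinfeld double relation}. That is precisely your fallback, so the vaguer ``move the $b$'s past the $a$'s'' route can be dropped. One small typo: the second reproducing identity should read $\sum_k \langle a_k, b\rangle b_k = b$.
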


\medskip

\begin{proof} The defining property of the canonical tensor is that
\begin{equation}
\label{eqn:defining}
\langle - \otimes a, \CR \rangle = a \quad \text{and} \quad \langle \CR, b \otimes - \rangle = b
\end{equation}
for all $a \in A$ and $b \in B$. Because formula \eqref{eqn:universal 1} is multiplicative in $-$, it suffices to prove it for $a \in A$ and $b \in B$ separately, i.e.
\begin{align}
&\sum_k a_k a_1 \otimes b_k a_2 = \sum_k a_2 a_k \otimes a_1 b_k \label{eqn:oi 1} \\ 
&\sum_k a_k b_1 \otimes b_k b_2 = \sum_k b_2 a_k \otimes b_1 b_k \label{eqn:oi 2}
\end{align}
(we write $a_1,a_2,b_1,b_2$ for the tensor factors of the coproducts of $a,b$, respectively, and they are not to be confused with the tensor factors $a_k,b_k$ of $\CR$). By the non-degeneracy of the pairing, to prove the formulas above it suffices to show that the two sides of each equation have the same pairing with an element of the form $b \otimes -$ and $- \otimes a$, respectively, for arbitrary $a \in A$, $b\in B$. Thus, we have
\begin{multline*}
\langle \text{LHS of \eqref{eqn:oi 1}}, b \otimes - \rangle = \sum_k \langle a_1,b_1\rangle \langle a_k,b_2\rangle b_ka_2 = \langle a_1,b_1\rangle b_2a_2 = \\ = a_1b_1 \langle a_2,b_2\rangle = \sum_k \langle a_2, b_2\rangle \langle a_k,b_1\rangle a_1b_k = \langle \text{RHS of \eqref{eqn:oi 1}}, b \otimes - \rangle 
\end{multline*}
\begin{multline*}
\langle \text{LHS of \eqref{eqn:oi 2}}, - \otimes a \rangle = \sum_k a_k b_1 \langle a_1,b_k\rangle \langle a_2,b_2\rangle = a_1b_1 \langle a_2,b_2\rangle = \\ = \langle a_1,b_1\rangle b_2a_2 = \sum_k b_2a_k \langle a_1, b_1\rangle \langle a_2,b_k\rangle = \langle \text{RHS of \eqref{eqn:oi 2}}, - \otimes a \rangle 
\end{multline*}
with the first and last equalities in each equation due to \eqref{eqn:bialgebra 1} and \eqref{eqn:bialgebra 2} and the middle equalities due to \eqref{eqn:drinfeld double relation}. Properties \eqref{eqn:universal 2} and \eqref{eqn:universal 3} are equivalent to
\begin{align} 
&\sum_k a_{k,1} \otimes a_{k,2} \otimes b_k = \sum_{k,k'} a_k \otimes a_{k'} \otimes b_k b_{k'} \label{eqn:universal 2 equiv} \\
&\sum_k a_k \otimes b_{k,1} \otimes b_{k,2} = \sum_{k,k'} a_{k'} a_k \otimes b_{k} \otimes b_{k'}  \label{eqn:universal 3 equiv}
\end{align} 
By the non-degeneracy of the pairing, to prove the formulas above it suffices to show that the two sides of each equation have the same pairing with an arbitrary element of the form $b' \otimes b'' \otimes a$ and $b \otimes a'' \otimes a'$, respectively, where $a,a',a'' \in A$, $b,b',b'' \in B$. These pairings can then be calculated using \eqref{eqn:defining}. Thus, we have
$$
\langle \text{LHS of \eqref{eqn:universal 2 equiv}}, b' \otimes b'' \otimes a \rangle = \langle \Delta(a), b' \otimes b'' \rangle = \langle a, b'b'' \rangle = \langle \text{RHS of \eqref{eqn:universal 2 equiv}},  b' \otimes b'' \otimes a \rangle
$$
$$
\langle \text{LHS of \eqref{eqn:universal 3 equiv}}, b \otimes a'' \otimes a' \rangle = \langle a'' \otimes a', \Delta(b) \rangle = \langle a'a'', b \rangle = \langle \text{RHS of \eqref{eqn:universal 3 equiv}},  b \otimes a'' \otimes a' \rangle
$$
with the middle equalities being \eqref{eqn:bialgebra 1} and \eqref{eqn:bialgebra 2}. This establishes \eqref{eqn:universal 2} and \eqref{eqn:universal 3}. \end{proof}

\medskip 

\subsection{The quantum loop algebra}
\label{sub:quantum loop algebra}

We conclude this Section by assembling all the ingredients above into the hodge-podge that is the definition of quantum loop algebras. We work with any $(I,\BK,\zeta_{ij}(x))$, as in Subsection \ref{sub:algebras intro}. Recall the pairing
\begin{equation}
\label{eqn:quantum pairing 1}
\Up \otimes \Um \xrightarrow{\langle \cdot, \cdot \rangle} \BK
\end{equation}
induced by \eqref{eqn:pair descended} and the isomorphisms \eqref{eqn:upsilon}. It is straightforward to show that one can extend \eqref{eqn:quantum pairing 1} to a bialgebra pairing
\begin{equation}
\label{eqn:quantum pairing 2}
\Ug \otimes \Ul \xrightarrow{\langle \cdot, \cdot \rangle} \BK
\end{equation}
by requiring for all $i,j \in I$ that
\begin{equation}
\label{eqn:pairing h's}
\Big \langle \ph^+_i(z), \ph^-_j(w) \Big \rangle = \frac {\zeta_{ij}\left(\frac zw \right)}{\zeta_{ji}\left(\frac wz \right)}
\end{equation}
(and that all pairings between $\ph^\pm$ on one hand and $e,f$ on the other hand vanish).

\medskip

\begin{definition}
\label{def:quantum}

The \emph{quantum loop algebra} is the Drinfeld double
\begin{equation}
\label{eqn:quantum 1}
\U = \Ug \otimes \Ul
\end{equation}
defined with respect to the coproducts \eqref{eqn:coproduct u} and the pairing \eqref{eqn:quantum pairing 2}.

\end{definition}

\medskip

\noindent Explicitly, $\U$ has generators $\{e_{i,d}, f_{i,d}, \ph^+_{i,d'}, \ph^-_{i,d'} \}_{i \in I, d \in \BZ, d' \geq 0}$ modulo relations
\begin{equation}
\label{eqn:rel quantum 1}
e_i(z) e_j(w) \zeta_{ji} \left(\frac wz\right) = e_j(w) e_i(z) \zeta_{ij} \left( \frac zw \right)
\end{equation}
\begin{equation}
\label{eqn:rel quantum 2}
 f_i(z) f_j(w) \zeta_{ij} \left( \frac zw \right) = f_j(w) f_i(z) \zeta_{ji} \left(\frac wz\right) 
\end{equation}
\begin{equation}
\label{eqn:rel quantum 3}
\Big(\text{any element of Ker }\widetilde{\Upsilon}^\pm \Big) = 0
\end{equation} 
\begin{equation}
\label{eqn:rel quantum 4}
\left[\ph^+_{i}(z), \ph^+_{j}(w)\right] = \left[\ph^+_{i}(z), \ph^-_{j}(w)\right] = \left[\ph^-_{i}(z), \ph^-_{j}(w)\right] = 0
\end{equation} 
\begin{align}
&\ph^+_i(z) e_j(w) = e_j(w) \ph^+_i(z) \frac {\zeta_{ij} \left(\frac zw \right)}{\zeta_{ji} \left(\frac wz \right)} \label{eqn:rel quantum 5} \\
&\ph^-_i(z) e_j(w) = e_j(w) \ph^-_i(z) \frac {\zeta_{ij} \left(\frac zw \right)}{\zeta_{ji} \left(\frac wz \right)} \label{eqn:rel quantum 6} \\
&f_j(w)\ph^+_i(z)  = \ph^+_i(z)  f_j(w) \frac {\zeta_{ij} \left(\frac zw \right)}{\zeta_{ji} \left(\frac wz \right)} \label{eqn:rel quantum 7} \\
&f_j(w)\ph^-_i(z)  = \ph^-_i(z)  f_j(w) \frac {\zeta_{ij} \left(\frac zw \right)}{\zeta_{ji} \left(\frac wz \right)} \label{eqn:rel quantum 8}
\end{align}
\begin{equation}
\label{eqn:rel quantum 9}
\Big[e_{i}(z), f_{j}(w)\Big] = \delta_{ij} \delta \left(\frac zw \right) \Big( \ph^-_j(w) - \ph^+_i(z)\Big)
\end{equation} 
for all $i,j \in I$, where $\delta(x) = \sum_{d \in \BZ} x^d$ is a formal series. Similarly, we define
\begin{equation}
\label{eqn:double shuffle}
\CS = \CS^{\geq} \otimes \CS^{\leq}
\end{equation}
as a Drinfeld double with respect to the coproducts \eqref{eqn:coproduct s} and the bialgebra pairing
\begin{equation}
\label{eqn:pair shuffle extended}
\CS^{\geq} \otimes \CS^{\leq} \xrightarrow{\langle \cdot, \cdot \rangle} \BK
\end{equation} 
induced by \eqref{eqn:pair shuffle} and \eqref{eqn:pairing h's}. Clearly, we have $\CS \cong \U$ and so we henceforth identify 
\begin{equation}
\label{eqn:identify}
\CS = \U
\end{equation} 

\medskip

\begin{remark} 
\label{rem:central extension}
	
It is possible to enlarge $\U$ by a central element $c$ which governs the failure of $\ph^+_{i,d}$ and $\ph^-_{j,d'}$ to commute. To do so, one keeps the algebras $\U^{\geq}$ and $\U^{\leq}$ as we defined them, but appropriately inserts powers of $c$ in the definition of their coproduct. The resulting Drinfeld double is called centrally extended $\U$. However, since in the present paper we are interested only in modules on which the central element $c$ acts trivially, we will not bother with writing down the central extension.
	
\end{remark}

\medskip

\subsection{Cartan subalgebras}
\label{sub:cartan}

The subalgebra 
\begin{equation}
\label{eqn:loop cartan}
\BK[\ph_{i,d}^+, \ph_{i,d}^-]_{i \in I, d \geq 0} \subset \U
\end{equation}
is called the \emph{loop Cartan subalgebra}, and its subalgebras
$$
\BK[\ph_{i,d}^+]_{i \in I, d \geq 0} \qquad \text{and} \qquad \BK[\ph_{i,d}^-]_{i \in I, d \geq 0}
$$
are called the positive and negative loop Cartan, respectively. The leading terms $\ph_{i,0}^\pm$ will act by non-zero elements of $\BK$ on all (homogeneous) elements of all modules considered in the present paper, and so it makes sense to assume
\begin{equation}
\label{eqn:power series}
\ph_{i}^\pm(z) = \kappa_i^\pm \exp \left(\sum_{d=1}^{\infty} \frac {p_{i,\pm d}}{d z^{\pm d}}\right)  
\end{equation}
In other words, we simply assume that all the $\ph_{i,d}^\pm$ are multiples of $\ph_{i,0}^\pm = \kappa_i^\pm$, which could be ensured (for example) by inverting the latter elements. The subalgebra
$$
\BK[\kappa_i^{+}, \kappa_i^{-}]_{i \in I} \subset \U
$$
is called the (finite) \emph{Cartan subalgebra}. An easy consequence \footnote{Strictly speaking, \eqref{eqn:coproduct k}-\eqref{eqn:coproduct p} follow from \eqref{eqn:coproduct h} and \eqref{eqn:pairing k}-\eqref{eqn:pairing p} follow from \eqref{eqn:pairing h's} only if we assume that $\kappa_i^+$ and $\kappa_i^-$ are invertible. We will tacitly tolerate this slight abuse in the logical flow of our constructions, and henceforth assume that \eqref{eqn:coproduct k}-\eqref{eqn:pairing p} hold as stated.} of \eqref{eqn:coproduct h} is
\begin{equation}
\label{eqn:coproduct k}
\Delta(\kappa_i^\pm) = \kappa_i^\pm \otimes \kappa_i^\pm 
\end{equation}
\begin{equation}
\label{eqn:coproduct p}
\Delta(p_{i,\pm d}) = p_{i,\pm d} \otimes 1 + 1 \otimes p_{i,\pm d}
\end{equation}
for all $i \in I$ and $d \geq 1$. Moreover, \eqref{eqn:pairing h's} implies
\begin{equation}
\label{eqn:pairing k}
\langle \kappa_i^+, \kappa_j^- \rangle = \gamma_{\bs^i,\bs^j}
\end{equation}
\begin{equation}
\label{eqn:pairing p}
\langle p_{i,d}, p_{j,-d} \rangle = d \alpha_{ij}^{(d)}
\end{equation} 
for all $i,j \in I$ and $d \geq 1$, where $\alpha_{ij}^{(d)} \in \BK$ are defined by the power series expansion
\begin{equation}
\label{eqn:notation}
\frac {\zeta_{ij}(x)}{\zeta_{ji}\left(x^{-1}\right)} = \gamma_{\bs^i,\bs^j} \exp \left(\sum_{d=1}^{\infty} \frac {\alpha_{ij}^{(d)}}{dx^d} \right)
\end{equation}
(see \eqref{eqn:scalars} for the definition of $\gamma_{\bs^i,\bs^j}$). All pairings between $\kappa$'s and $p$'s other than \eqref{eqn:pairing k} and \eqref{eqn:pairing p} vanish for degree reasons. We will call the zeta functions \eqref{eqn:magnitude} \emph{fully factored} if their numerators are fully factored into linear terms, i.e.
\begin{equation}
\label{eqn:fully factored}
\zeta_{ij}(x) = \frac {c_{ij} x^{-\#_{ji}}(x-s_{ij|1})\dots (x-s_{ij|\#_{ij}+\#_{ji}+\delta_{ij}})}{(x-1)^{\delta_{ij}}}
\end{equation}
for all $i,j \in I$ and various $s_{ij|b} \in \BK^*$ (the fully factored condition is automatically satisfied if $\BK$ is algebraically closed). In this case, an easy consequence of \eqref{eqn:notation} is
\begin{equation}
\label{eqn:consequence}
\alpha_{ij}^{(d)} = \sum_{b=1}^{\#_{ij}+\#_{ji}+\delta_{ij}} (s_{ji|b}^{-d} - s_{ij|b}^d)  
\end{equation}

\bigskip

\section{Slopes and coproducts}
\label{sec:slopes}

\medskip

\noindent We begin by recalling the discussion of slopes in shuffle algebras and quantum loop algebras, following \cite{FHHSY, N Shuffle, N R-matrix}, which will allow us to construct the subalgebras
$$
\CA^{\geq \bp}, \CA^{\leq \bp} \subset \U
$$
for any $\bp \in \rr$. We then define topological coproducts $\Delta_{\bp}$ on $\CA^{\geq \bp}$ and $\CA^{\leq \bp}$, and show that $ \U$ is their Drinfeld double with respect to a pairing induced by \eqref{eqn:quantum pairing 2}. We call $\Delta_{\bp}$ thus defined ``new new" coproducts, and in the particular case of quantum affine algebras \eqref{eqn:quantum affine intro}, we show that $\Delta_{\b0}$ matches the Drinfeld-Jimbo coproduct. We also deduce various factorizations of universal $R$-matrices. 

\medskip 

\subsection{Slopes}
\label{sub:slope}

Our references for slope subalgebras are \cite{N R-matrix} and \cite{N Char}; while both these works are written for particular choices of $(I,\BK,\zeta_{ij}(x))$, all the results therein actually hold in our current level of generality. Recall the notation \eqref{eqn:dot product}-\eqref{eqn:euler form}.

\medskip

\begin{definition}
\label{def:slopes}

For any $\bp \in \rr$, we will say that

\medskip

\begin{itemize}[leftmargin=*]

\item $E \in \CS_\bn$ has slope $\geq \bp$ if the following limit is finite for all $\b0 < \bm \leq \bn$
\begin{equation}
\label{eqn:slope e geq}
\lim_{\xi \rightarrow 0} \frac {E(\xi z_{i1},\dots,\xi z_{im_i},z_{i,m_{i+1}},\dots,z_{in_i})}{\xi^{\bp \cdot \bm - \langle \bn - \bm, \bm \rangle}}
\end{equation}

\medskip
	
\item $E \in \CS_\bn$ has slope $\leq \bp$ if the following limit is finite for all $\b0 < \bm \leq \bn$
\begin{equation}
\label{eqn:slope e leq}
\lim_{\xi \rightarrow \infty} \frac {E(\xi z_{i1},\dots,\xi z_{im_i},z_{i,m_{i+1}},\dots,z_{in_i})}{\xi^{\bp \cdot \bm + \langle \bm, \bn - \bm \rangle}}
\end{equation}
	
\medskip
	
\item $F \in \CS_{-\bn}$ has slope $\leq \bp$ if the following limit is finite for all $\b0 < \bm \leq \bn$
\begin{equation}
\label{eqn:slope f leq}
\lim_{\xi \rightarrow 0} \frac {F(\xi z_{i1},\dots,\xi z_{im_i},z_{i,m_{i+1}},\dots,z_{in_i})}{\xi^{-\bp \cdot \bm - \langle \bn - \bm, \bm \rangle}}
\end{equation}
	
\medskip
	
\item $F \in \CS_{-\bn}$ has slope $\geq \bp$ if the following limit is finite for all $\b0 < \bm \leq \bn$
\begin{equation}
\label{eqn:slope f geq}
\lim_{\xi \rightarrow \infty} \frac {F(\xi z_{i1},\dots,\xi z_{im_i},z_{i,m_{i+1}},\dots,z_{in_i})}{\xi^{-\bp \cdot \bm + \langle \bm, \bn - \bm \rangle}}
\end{equation}
	
\end{itemize}

\end{definition}

\medskip

\noindent If moreover the limits in \eqref{eqn:slope e geq}-\eqref{eqn:slope f geq} are all 0, then we will say that $E$ and $F$ therein have slopes $<\bp$ or $>\bp$, respectively. We will write
\begin{equation}
\label{eqn:slopes}
\CS^\pm_{\geq \bp}, \ \CS^\pm_{\leq \bp}, \ \CS^\pm_{> \bp}, \ \CS^\pm_{< \bp}
\end{equation}
for the subsets of elements of $\CS^\pm$ of slope $\geq \bp$, $\leq \bp$, $> \bp$, $< \bp$, respectively. It is elementary to show that all the sets which appear in \eqref{eqn:slopes} are actually subalgebras of $\CS^\pm$ with respect to the shuffle product. As proved in \cite[Proposition 3.4]{N R-matrix}, the coproduct of Subsection \ref{sub:coproduct} interacts with the subalgebras defined above as follows
\begin{align}
&\Delta (\CS^+_{\geq \bp} ) \subset \CS^\geq_{\geq \bp} \vo \CS^+ \label{eqn:interact 1} \\
&\Delta (\CS^+_{\leq \bp} ) \subset \CS^\geq \vo \CS^+_{\leq \bp} \label{eqn:interact 2} \\
&\Delta (\CS^-_{\leq \bp} ) \subset \CS^-_{\leq \bp} \vo \CS^\leq \label{eqn:interact 3} \\
&\Delta (\CS^-_{\geq \bp} ) \subset \CS^- \vo \CS^\leq_{\geq \bp} \label{eqn:interact 4}
\end{align}
where $\CS^\geq_{\geq \bp}$ and $\CS^\leq_{\geq \bp}$ are obtained from $\CS^+_{\geq \bp}$ and $\CS^-_{\geq \bp}$ by adding loop Cartan elements as in \eqref{eqn:sg} and \eqref{eqn:sl}.

\medskip

\subsection{Slope subalgebras}
\label{sub:slope subalgebras}

If a shuffle element $X$ simultaneously has slope $\leq \bp$ and $\geq \bp$, then $\vdeg X = \bp \cdot \hdeg X$. The set of such elements is denoted by
\begin{equation}
	\label{eqn:slope subalgebra}
	\CB_{\bp}^\pm = \bigoplus_{\bn \in \nn \text{ s.t. } \bp \cdot \bn \in \BZ} \CB_{\bp|\pm \bn}
\end{equation}
and is called a \emph{slope subalgebra}. Recall the notation $\kappa^\pm_i = \ph_{i,0}^\pm$. The vector spaces
\begin{align}
&\CB_{\bp}^\geq = \CB_{\bp}^+ \otimes \BK[\kappa^+_i]_{i \in I} \label{eqn:enlarged slope 1} \\
&\CB_{\bp}^\leq = \BK[\kappa^-_i]_{i \in I}  \otimes \CB_{\bp}^- \label{eqn:enlarged slope 2}
\end{align}
are subalgebras of $\CS^\geq$ and $\CS^\leq$, respectively, as long as we impose the following commutation relations derived from taking the leading order terms of \eqref{eqn:ssg}-\eqref{eqn:ssl}
\begin{equation}
\label{eqn:cartan commutation plus}
\kappa^+_i X = X \kappa^+_i \gamma_{\bs^i, \hdeg X}
\end{equation}
\begin{equation}
\label{eqn:cartan commutation minus}
\kappa^-_i X = X \kappa^-_i \gamma_{\hdeg X, \bs^i}^{-1}
\end{equation}
for any $X \in \CS$ (recall that the scalars $\gamma$ are defined in \eqref{eqn:scalars}). We can further make the vector spaces \eqref{eqn:enlarged slope 1} and \eqref{eqn:enlarged slope 2} into bialgebras, using $\Delta_{\bp}(\kappa_i^\pm) = \kappa_i^\pm \otimes \kappa_i^\pm$ and 
\begin{align}
	&\Delta_\bp(E) = \sum_{\b0 \leq \bm \leq \bn} (\kappa^+_{\bn-\bm} \otimes 1) (\text{value of the limit \eqref{eqn:slope e geq}}) \label{eqn:coproduct slope plus} \\
	&\Delta_\bp(F) = \sum_{\b0 \leq \bm \leq \bn} (\text{value of the limit \eqref{eqn:slope f leq}}) (1 \otimes \kappa^-_{\bm}) \label{eqn:coproduct slope minus}
\end{align} 
for all $E \in \CB^+_\bp$ and $F \in \CB^-_\bp$, where we write $\kappa^\pm_{\bm} = \prod_{i \in I} (\kappa^\pm_i)^{m_i}$ for all $\bm \in \nn$.

\medskip

\begin{remark}
\label{rem:double}

With respect to the bialgebra structure above and the pairing 
\begin{equation}
\label{eqn:pairing restricted}
\CB^{\geq}_{\bp} \otimes \CB^{\leq}_{\bp} \xrightarrow{\langle \cdot, \cdot \rangle} \BK 
\end{equation}
obtained by restricting \eqref{eqn:pair shuffle extended} to the slope subalgebras, the Drinfeld double 
\begin{equation}
\label{eqn:double slope}
\CB_\bp = \CB_\bp^\geq \otimes \CB_\bp^\leq 
\end{equation}
is a subalgebra of $\CS$, although not a sub-bialgebra. As the author learned from Andrei Okounkov and Olivier Schiffmann, in many cases $\CB_{\bp}$ is expected to be a quantum Borcherds algebra, so not much can be said about it beside its graded dimension. 

\end{remark}

\medskip 

\subsection{Infinite slope}
\label{sub:infinite slope 1}

We will also define slope $\binfty$ versions of slope subalgebras, 
\begin{equation}
\label{eqn:slope infinity}
\CB^{\geq}_{\binfty} = \BK[\ph^+_{i,d}]_{i \in I, d\geq 0} \qquad \text{and} \qquad \CB^{\leq}_{\binfty} = \BK[\ph^-_{i,d}]_{i \in I, d\geq 0} 
\end{equation}
\begin{equation}
\label{eqn:slope infinity 2}
\CB^{+}_{\binfty} = \BK[p_{i,d}]_{i \in I, d> 0} \qquad \text{and} \qquad \CB^{-}_{\binfty} = \BK[p_{i,-d}]_{i \in I, d> 0} 
\end{equation}
with the notation as in Subsection \ref{sub:cartan}. The coproduct $\Delta_{\binfty}$ is then defined as restriction of $\Delta$ to the above commutative subalgebras. We assume that
\begin{equation}
\label{eqn:pairing slope infinity}
\CB^\geq_{\binfty} \otimes \CB^\leq_{\binfty} \xrightarrow{\langle \cdot, \cdot \rangle} \BK
\end{equation}
is non-degenerate, which boils down to the condition that the $|I| \times |I|$ matrix with entries RHS of \eqref{eqn:pairing p} is invertible for all $d$, and that the scalars \eqref{eqn:scalars} are linearly independent in both $\bm \in \nn$ and $\bn \in \nn$ separately. If the latter condition fails, all is not lost; the usual solution is to enlarge the finite Cartan subalgebra, i.e. to add new symbols $\kappa_j^\pm$ which interact with $X \in \CS$ according to \eqref{eqn:cartan commutation plus}-\eqref{eqn:cartan commutation minus}, for newly chosen scalars 
$$
\Big\{ \gamma_{\bs^j,\bn} \Big\}_{\bn \in \zz} \qquad \text{and} \quad \Big\{ \gamma_{\bn,\bs^j}^{-1} \Big\}_{\bn \in \zz}
$$
that are additive in $\bn$ and sufficiently generic.

\medskip

\subsection{Factorizations}
\label{sub:factorizations}

Slope subalgebras are important because they are the building blocks of shuffle algebras, as we will now recall. A parameterized curve 
$$
\BR \rightarrow \rr, \quad t \mapsto \bp(t) = (p_i(t))_{i \in I}
$$
will be called \emph{catty-corner} if
\begin{equation}
	\label{eqn:catty-corner 1}
	t_1 < t_2 \quad \text{implies} \quad p_i(t_1) < p_i(t_2) 
\end{equation}
and
\begin{equation}
	\label{eqn:catty-corner 2}
	\lim_{t \rightarrow \pm \infty} p_i(t) = \pm \infty
\end{equation}
for all $i \in I$. Note that condition \eqref{eqn:catty-corner 1} is stronger than $\bp(t_1) < \bp(t_2)$, because the latter just means $\bp(t_1) \leq \bp(t_2)$ and $\bp(t_1) \neq \bp(t_2)$. We may also define catty-corner curves on bounded intervals of $\BR$, in which case condition \eqref{eqn:catty-corner 2} would be dropped.

\medskip

\begin{proposition}
\label{prop:factor}
	
(\cite{N Char})	For any catty-corner curve $\bp(t)$, we have an isomorphism
\begin{equation}
\label{eqn:factorization 1}	
\bigotimes^{\rightarrow}_{t \in \BR} \CB^\pm_{\bp(t)} \xrightarrow{\sim} \CS^\pm   
\end{equation}
given by multiplication, where $\rightarrow$ means that we take the tensor product in increasing order of $t$. This isomorphism preserves the pairing \eqref{eqn:pair shuffle}, in the sense that
\begin{equation}
	\label{eqn:pair slopes basic}
	\left \langle \prod_{t \in \BR}^{\rightarrow} E_t, \prod_{t \in \BR}^{\rightarrow} F_t \right \rangle = \prod_{t \in \BR} \langle E_t, F_t \rangle 
\end{equation}
for all $\{E_t \in \CB^+_{\bp(t)}, F_t \in \CB^-_{\bp(t)}\}_{t \in \BR}$ (almost all of which are 1). 

\medskip

\end{proposition}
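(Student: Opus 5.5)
The plan is to prove the three parts of Proposition \ref{prop:factor} in order: the multiplication map is well-defined, the pairing factorizes as in \eqref{eqn:pair slopes basic}, and surjectivity holds. Injectivity will then follow from non-degeneracy of the pairing. I treat $\CS^+$; the case of $\CS^-$ is symmetric, with the limits \eqref{eqn:slope f leq}--\eqref{eqn:slope f geq} in place of \eqref{eqn:slope e geq}--\eqref{eqn:slope e leq}.

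\emph{Well-definedness.} For fixed horizontal degree $\bn$, a product $\prod^{\rightarrow}_t E_t$ has $\sum_t \hdeg E_t = \bn$ with all $\hdeg E_t \in \nn$. Hence only finitely many factors are non-trivial, and the left-hand side of \eqref{eqn:factorization 1} is an honest direct sum. Since each $\CS^+_{\geq \bp}$ and $\CS^+_{\leq \bp}$ is a subalgebra, a product in increasing order of $t$ with $t \in [t_1,t_2]$ has slope $\geq \bp(t_1)$ and $\leq \bp(t_2)$. I will use this repeatedly.

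\emph{Factorization of the pairing.} I pair $\prod^{\rightarrow} E_t$ against $\prod^{\rightarrow} F_t$ by repeatedly applying \eqref{eqn:bialgebra 1}--\eqref{eqn:bialgebra 2}, which turns the left side into pairings of coproducts with products. The key input is \eqref{eqn:interact 1}--\eqref{eqn:interact 4}, refined as follows. For $E \in \CB^+_{\bp}$, the components of $\Delta(E)$ of the form $(\text{anything}) \otimes (\text{slope} > \bp)$ reduce, by degree and slope considerations, to the leading term $\Delta_{\bp}(E)$ of \eqref{eqn:coproduct slope plus}. Every other component pairs to zero against a product of $F_{t'}$ with mismatched slopes, because a non-zero pairing forces $\vdeg = \bp(t)\cdot\hdeg$ on both sides, and the catty-corner condition \eqref{eqn:catty-corner 1} makes $\bp(t)\cdot \bm = \bp(t')\cdot\bm$ impossible for $t \neq t'$ and $\bm > \b0$. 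Inducting on the number of non-trivial factors then yields \eqref{eqn:pair slopes basic}. Since the pairing \eqref{eqn:pair shuffle} is non-degenerate, the restriction to each $\CB_{\bp}$ is non-degenerate graded piece by graded piece. Consequently \eqref{eqn:pair slopes basic} shows that the image of a non-zero tensor pairs non-trivially with some product of $F_t$'s, which gives injectivity.

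\emph{Surjectivity.} This is the main obstacle. I argue by induction on $|\bn|$, and within fixed $\bn$ by a filtration by slope. Given $E \in \CS_{\bn}$, it is a finite sum of elements \eqref{eqn:spherical}, and each of the limits \eqref{eqn:slope e geq} involves only finitely many powers of $\xi$. Hence $E \in \CS^+_{\geq \bp(t_0)}$ for $t_0 \ll 0$ by \eqref{eqn:catty-corner 2}, and $E$ fails to be of slope $\geq \bp(t)$ for $t \gg 0$ unless $E=0$. Let $t^*$ be the supremum of admissible $t$; it is attained, because the exponents in \eqref{eqn:slope e geq} are piecewise linear in $t$ with finitely many breakpoints. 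The leading coefficients of the limits \eqref{eqn:slope e geq} at $\bp(t^*)$, for the various $\bm$, assemble into a component of $\Delta(E)$ lying in $\CB^{\geq}_{\bp(t^*)} \vo \CS^+_{>\bp(t^*)}$. I then show it lies in the image of $\Delta$ applied to an element $\sum B \cdot E'$ with $B \in \CB^+_{\bp(t^*)}$ and $E' \in \CS^+_{>\bp(t^*)}$ of smaller horizontal degree. Subtracting this element increases the slope bound, or else lowers the number of leading terms. The delicate point is termination, since $\bp(t^*)$ can move continuously. I will control it by noting that only finitely many slopes $\bp$ satisfy $\bp\cdot\bm \in \BZ$ with vertical degree constrained for all $\b0<\bm\leq\bn$, within the bounded range of $t$ allowed by $E$. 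The induction hypothesis applied to $E'$ then completes the argument.

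If this peeling argument proves too unwieldy, the fallback is a dimension count. One compares the graded dimensions of both sides of \eqref{eqn:factorization 1} using injectivity, together with the inequality $\dim \CS_{\bn,d} \leq \dim(\text{pieces of the left side})$ furnished by the pairing and its duality with $\CS^-$.
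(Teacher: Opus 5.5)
Your outline has a reasonable shape, but two steps do not hold as written. The first is a pairing factorization driven by \eqref{eqn:interact 1}--\eqref{eqn:interact 4}. The second is surjectivity by peeling off extremal slopes, of which only finitely many matter in a fixed horizontal degree. Note that the paper does not reprove this proposition; it imports it from \cite{N Char}.

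\textbf{Injectivity.} You claim that non-degeneracy of \eqref{eqn:pair shuffle} implies non-degeneracy of its restriction to $\CB^+_\bp\otimes\CB^-_\bp$. This does not follow, since the restriction of a non-degenerate pairing to subspaces can be degenerate. In the paper's logic, the perfectness of \eqref{eqn:perfect pairing 1} is a \emph{consequence} of Proposition \ref{prop:factor} (see the paragraph after Lemma \ref{lem:finite-dimensional}). If $E\in\CB^+_{\bp(t_0)}$ is orthogonal to $\CB^-_{\bp(t_0)}$, then \eqref{eqn:pair slopes basic} makes $E$ orthogonal to every ordered product $\prod^{\rightarrow}_t F_t$. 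You can conclude $E=0$ only once you know these products span $\CS^-$. So the order must be: surjectivity for both $\CS^+$ and $\CS^-$, then non-degeneracy on each slope subalgebra, then injectivity.

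Your derivation of \eqref{eqn:pair slopes basic} also needs repair. Coproduct components are not in slope subalgebras, so they need not satisfy $\vdeg=\bp(t)\cdot\hdeg$; what works is one-sided bounds. Consider $\langle E_{t_1}R,F_{t_1}S\rangle$ with $R=\prod^{\rightarrow}_{t>t_1}E_t$ and $S=\prod^{\rightarrow}_{t>t_1}F_t$, and apply \eqref{eqn:bialgebra 1}. The bounds $\vdeg\geq\bp\cdot\hdeg$ from \eqref{eqn:interact 1}, together with strict monotonicity of $t\mapsto\bp(t)\cdot\bm$, force $R_1\otimes R_2=\kappa^+_{\hdeg R}\otimes R$. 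Then \eqref{eqn:bialgebra 2}, with \eqref{eqn:interact 2} and \eqref{eqn:interact 4}, forces $(E_{t_1})_1\otimes(E_{t_1})_2=E_{t_1}\otimes 1$, and you induct.

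\textbf{Surjectivity.} This step is the real content of the proposition, and it is announced rather than proved; one claim in it is also false. Set $\bp=\bp(t^*)$. The leading coefficients of the limits \eqref{eqn:slope e geq} give components of $\Delta(E)$ in $\CB^{\geq}_\bp\otimes\CS^+_{\geq\bp}$, not in $\CB^{\geq}_\bp\vo\CS^+_{>\bp}$. For instance, take $E=B*B'*X$ with $B,B'\in\CB^+_\bp$ and $X\in\CS^+_{>\bp}$: the second factors include $B'*X$, which does not have slope $>\bp$.

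You then need $\sum_k B_k*E'_k$ whose leading part matches that of $E$ for all $\bm$ simultaneously, and this requires a mechanism. Let $\Phi(E)$ denote the part of $\Delta(E)$ whose first factor lies in $\CB^{\geq}_\bp$. One can check that $\Phi(B*E)=\Delta_\bp(B)\Phi(E)$ and $(\Delta_\bp\otimes\mathrm{Id})\Phi=(\mathrm{Id}\otimes\Phi)\Phi$. Thus $\CS^+_{\geq\bp}$ is a Hopf module over $\CB^{\geq}_\bp$, with coinvariants $\CS^+_{>\bp}$ up to finite Cartan factors. From here you can conclude $\CS^+_{\geq\bp}=\CB^+_\bp\cdot\CS^+_{>\bp}$ in either of two ways:
\begin{itemize}
\item apply the fundamental theorem of Hopf modules, which would also give injectivity without using the pairing;
\item or run a decreasing induction on $|\bm|$, using coassociativity to show that the top non-vanishing components have second factors of slope $>\bp$.
\end{itemize}

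Your induction hypothesis must also be the interval statement \eqref{eqn:factorization 2} in lower horizontal degree. That is, $E'\in\CS^+_{>\bp(t^*)}$, also bounded in slope above, must be a sum of ordered products over $t>t^*$ in a bounded range. Surjectivity of \eqref{eqn:factorization 1} alone neither makes $B*E'$ an ordered product nor keeps the range of $t$ bounded, and the latter is what your termination argument needs.

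\textbf{The fallback.} A dimension count is not available. Each $\CS_{\bn,d}$ is infinite-dimensional. The pieces of $\CS^\pm_{[\bp^1,\bp^2]}$ are finite-dimensional, but they are known to pair perfectly only as a consequence of the proposition.
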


\medskip

\noindent By the same token as in Proposition \ref{prop:factor}, we have
\begin{equation}
	\label{eqn:factorization 2}
	\bigotimes^{\rightarrow }_{t \in [t_1,t_2]} \CB^\pm_{\bp(t)} \xrightarrow{\sim} \CS^\pm_{\geq \bp(t_1)} \cap  \CS^\pm_{\leq \bp(t_2)} 
\end{equation}
for any $t_1 \leq t_2$, as well as the natural analogues of \eqref{eqn:factorization 2} when some endpoints of the intervals may be open instead of closed; while the proof of these statements given in \cite[Proposition 3.14]{N R-matrix} is written in the particular case of $K$-theoretic Hall algebras of double quivers, the argument therein is completely general. Thus, we obtain a host of factorizations of the so-called wedge subalgebras \footnote{We also make the convention that
	\begin{align} 
		&\CS^+_{[\bp,\binfty]}  := \CS^+_{\geq \bp} \otimes \CB^{\geq}_{\binfty} \label{eqn:extended plus} \\ 
		&\CS^-_{[\bp,\binfty]}  := \CS^-_{\geq \bp} \otimes \CB^{\leq}_{\binfty} \label{eqn:extended minus}
	\end{align}
	i.e. we include the finite Cartan elements $\kappa_i^\pm$ in the algebras $\CS^\pm_{[\bp,\binfty]}$, alongside $\CB_{\binfty}^{\pm} = \BK[p_{i,\pm d}]$.}
$$
\CS^\pm_{[\bp^1 , \bp^2]} = \CS^\pm_{\geq \bp^1} \cap  \CS^\pm_{\leq \bp^2}
$$
for any $\bp^1 \leq \bp^2$ in $\rr$, as well as their natural analogues when some endpoints of the intervals may be open instead of closed (the word ``host" is due to the fact that there is a great degree of freedom in choosing a catty-corner curve joining $\bp^1$ and $\bp^2$). The following result is easy, so we leave it as an exercise to the reader.

\medskip

\begin{lemma}
\label{lem:finite-dimensional}

The subalgebras 
$$
\CB^\pm_\bp, \ \CS^\pm_{ \geq \bp }, \ \CS^\pm_{\leq \bp}, \ \CS^\pm_{[\bp^1 , \bp^2]}
$$
have finite-dimensional $\zz \times \BZ$ graded summands for all $\bp$ and $\bp^1 \leq \bp^2$ in $\rr$.

\end{lemma}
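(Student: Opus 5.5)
Fix a horizontal degree $\bn \in \nn$ and a vertical degree $d \in \BZ$. Every element of $\CS_{\pm\bn}$ is a color-symmetric Laurent polynomial in the $|\bn|$ variables $z_{i1},\dots,z_{in_i}$. So it suffices to show that the slope conditions confine the monomials of such an element to a finite set of exponent vectors.

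I will start with $\CS^+_{\geq \bp}$ and the case $\bm = \bs^i$ of \eqref{eqn:slope e geq}. By symmetry, specializing one variable $z_{ia}$ to $\xi z_{ia}$ and demanding finiteness as $\xi \to 0$ bounds the exponent of each variable $z_{ia}$ from below, by $p_i - \langle \bn - \bs^i, \bs^i \rangle$. The case $\bm = \bn$ of \eqref{eqn:slope e geq} then bounds the total degree from below by $\bp \cdot \bn$; this is consistent with $\vdeg = d$ but gives no upper bound. To get an upper bound I will use homogeneity. An element of degree $(\bn,d)$ has total degree exactly $d$, and each exponent is bounded below by a constant $c_i$. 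Therefore each exponent is bounded above by $d - \sum_{(j,b) \neq (i,a)} c_j$. Hence only finitely many monomials occur, and $\CS^+_{\geq \bp} \cap \CS_{\bn,d}$ sits inside a finite-dimensional space of Laurent polynomials.

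The same argument applied to \eqref{eqn:slope e leq} as $\xi \to \infty$ gives upper bounds on individual exponents, and again homogeneity supplies the matching lower bounds. So $\CS^+_{\leq \bp}$ has finite-dimensional graded pieces. For $\CS^-$ I will use \eqref{eqn:slope f leq} and \eqref{eqn:slope f geq} in the same way. The remaining algebras $\CB^\pm_\bp$ and $\CS^\pm_{[\bp^1,\bp^2]}$ are subspaces of $\CS^\pm_{\geq \bp}$ (respectively $\CS^\pm_{\geq \bp^1}$), so they inherit the property. The open-endpoint variants and the slope $<,>$ versions are likewise subspaces, so nothing more is needed for them.

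The only point to check carefully is that setting a single variable to $\xi z_{ia}$ genuinely controls the minimal exponent in that variable. Write $E = \sum_k z_{ia}^k G_k$, where the $G_k$ do not involve $z_{ia}$. The limit as $\xi \to 0$ of $\xi^{-c} E$ is finite if and only if $G_k = 0$ for $k < c$. This needs no cancellation argument, since distinct powers of $\xi$ come with linearly independent coefficients. It is also not a problem that the definition uses the first $m_i$ variables: $E$ is color-symmetric, so the bound for $z_{i1}$ transfers to every $z_{ia}$. I therefore expect no genuine obstacle; the argument is elementary, consistent with the paper leaving it as an exercise.

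Note that the loop Cartan extension in \eqref{eqn:extended plus} is not part of the statement. That is just as well, since $\BK[p_{i,d}]$ for a fixed degree is already finite-dimensional only when $d>0$ is fixed, and the statement does not need it.
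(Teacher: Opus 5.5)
Your argument is correct and is essentially the paper's own sketch: the single-variable slope conditions ($\bm = \bs^i$), together with color-symmetry, bound every exponent on one side, and fixing the total degree $d$ bounds it on the other, which leaves finitely many monomials. Your closing aside is slightly off, but it plays no role in the proof: $\BK[p_{i,d}]_{d>0}$ does have finite-dimensional graded pieces, and what would break finiteness in the $\binfty$-extended algebras is the degree-$(0,0)$ elements $\kappa_i^\pm$.
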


\medskip

\noindent In a nutshell, the condition that a shuffle element $X$ lies in either $\CS^\pm_{\geq \bp}$ or $\CS^\pm_{\leq \bp}$ imposes a bound (either lower or upper) on the degrees of all of its variables; then fixing $(\hdeg X,\vdeg X)$ leads to a finite number of monomials that may appear in $X$. As a consequence of Lemma \ref{lem:finite-dimensional}, the restrictions of the pairing \eqref{eqn:pair shuffle} to
\begin{equation}
\label{eqn:perfect pairing 1}
\CB^+_{\bp} \otimes \CB^-_{\bp} \rightarrow \BK, \qquad \CS^+_{[\bp^1 , \bp^2]} \otimes \CS^-_{[\bp^1 , \bp^2]} \rightarrow \BK 
\end{equation}
\begin{equation}
	\label{eqn:perfect pairing 2}
\CS^+_{\geq \bp} \otimes \CS^-_{\geq \bp} \rightarrow \BK, \qquad \qquad \CS^+_{\leq \bp} \otimes \CS^-_{\leq \bp} \rightarrow \BK 
\end{equation}
are perfect for all $\bp$ and $\bp^1 \leq \bp^2 \in \rr$; this is because Proposition \ref{prop:factor} implies that the pairings above inherit their non-degeneracy from that of \eqref{eqn:pair shuffle extended}, and then we invoke the last sentence of Subsection \ref{sub:doubles} to go from non-degenerate to perfect.

\medskip

\subsection{More factorizations}
\label{sub:more}
	
As a consequence of Proposition \ref{prop:factor}, for any $\bp \in \rr$ we have an isomorphism induced by multiplication 
\begin{equation}
\label{eqn:two factor}
\CS^{\pm}_{(-\binfty,\bp)} \otimes \CS^{\pm}_{[\bp,\binfty]} \xrightarrow{\sim} \CS^\geq 
\end{equation}
(recall \eqref{eqn:extended plus}-\eqref{eqn:extended minus}). Formula \eqref{eqn:two factor} allows us to uniquely write any $E \in \CS^\geq$, $F \in \CS^\leq$ as
$$
E = \sum_{k} c^+_k \cdot A^+_k * B^+_k \quad \text{and} \quad F = \sum_{k} c^-_k \cdot A^-_k * B^-_k 
$$
for any bases $\{A^\pm_k\}$ of $\CS^{\pm}_{(-\binfty,\bp)}$ and $\{B_{k}^\pm\}$ of $\CS^\pm_{[\bp,\binfty]}$, and various coefficients $c^\pm_k \in \BK$. Moreover, given elements $E$ and $F$ as above, formula \eqref{eqn:pair slopes basic} implies that
\begin{equation}
\label{eqn:pair slopes advanced}
\langle E,F \rangle = \sum_{k,\ell} c^+_k c^-_{\ell} \cdot \langle A^+_k, A^-_{\ell} \rangle  \langle B^+_k, B^-_{\ell} \rangle 
\end{equation}
In particular, if $(E, F) \in \CS^+_{[\bp,\binfty]} \times \CS^-_{(-\binfty,\bp)}$ or $(E, F) \in \CS^+_{(-\binfty,\bp)} \times \CS^-_{[\bp,\binfty]}$, then
\begin{equation}
\label{eqn:epsilon}
\langle E, F \rangle = \varepsilon(E) \varepsilon(F)
\end{equation}
with $\varepsilon$ denoting the counit. 
By \eqref{eqn:pair slopes advanced} and the perfectness of the pairings \eqref{eqn:perfect pairing 1}-\eqref{eqn:perfect pairing 2}, we may uniquely define for any $E \in \CS^\geq$ and $F \in \CS^\leq$ the elements
\begin{align} 
&[E]_{\geq \bp} \in \CS^+_{[\bp,\binfty]} \quad \ \text{ by} \quad \langle [E]_{\geq \bp}, Y \rangle = \langle E, Y \rangle, \ \forall Y \in \CS^-_{[\bp,\binfty]} \label{eqn:unique 1} \\
&[F]_{< \bp} \in \CS^-_{(-\binfty,\bp)} \quad \text{by} \quad \langle X, [F]_{< \bp}  \rangle = \langle X, F \rangle, \ \forall X \in \CS^+_{(-\binfty,\bp)} \label{eqn:unique 2} \\
&[F]_{\geq \bp} \in \CS^-_{[\bp,\binfty]} \quad \ \text{ by} \quad \langle X, [F]_{\geq \bp} \rangle = \langle X, F \rangle, \ \forall X \in \CS^+_{[\bp,\binfty]} \label{eqn:unique 3} \\
&[E]_{< \bp} \in \CS^+_{(-\binfty,\bp)} \quad \text{by} \quad \langle [E]_{<\bp}, Y \rangle = \langle E, Y \rangle, \ \forall Y \in \CS^-_{(-\binfty,\bp)} \label{eqn:unique 4} 
\end{align}
\footnote{A little care must be taken with the formulas above if $E = E'\kappa^+$ and $F \in F' \kappa^-$ where
$$
E' \in \CS^+ \otimes \CB_{\binfty}^+ \quad \text{and} \quad F' \in \CS^- \otimes \CB_{\binfty}^-
$$
and $\kappa^\pm$ denote various products of finite Cartan elements $\kappa^\pm_i$. In this case, we set
$$
[E]_{\geq \bp} = [E']_{\geq \bp} \kappa^+, \qquad [F]_{<\bp} = [F']_{<\bp}, \qquad [F]_{\geq \bp} = [F']_{\geq \bp} \kappa^-, \qquad [E]_{<\bp} = [E']_{<\bp}
$$
with $[E']_{\geq \bp}, [F']_{<\bp}, [F']_{\geq \bp}, [E']_{<\bp}$ uniquely determined by \eqref{eqn:unique 1}-\eqref{eqn:unique 4}.} Moreover, if we write the coproduct as $\Delta(E) = E_1 \otimes E_2$, $\Delta(F) = F_1 \otimes F_2$, then
\begin{align}
&E = [E_1]_{< \bp} [E_2]_{\geq \bp} \label{eqn:equation 1} \\
&F = [F_2]_{< \bp} [F_1]_{\geq \bp} \label{eqn:equation 2}
\end{align}
for all $E \in \CS^\geq$, $F \in \CS^\leq$. Formula \eqref{eqn:equation 1} is proved by pairing both sides with an arbitrary $XY$ where $X \in \CS^-_{(-\binfty,\bp)}$, $Y \in \CS^-_{[\bp,\binfty]}$ and then evaluating the left-hand side with \eqref{eqn:bialgebra 1} and the right-hand side with \eqref{eqn:pair slopes advanced}. Formula \eqref{eqn:equation 2} is proved similarly.

\medskip

\subsection{The half subalgebras}
\label{sub:slopes}

Consider any $\bp \in \rr$. The straightforward generalization of \cite[Proposition 3.18]{N Cat} yields subalgebras
\begin{align}
	&\CA^{\geq \bp} =  \CS^+_{[\bp,\binfty]} \otimes \CS^-_{(-\binfty,\bp)} \label{eqn:slope p plus} \\
	&\CA^{\leq \bp} = \CS^-_{[\bp,\binfty]}  \otimes \CS^+_{(-\binfty,\bp)} \label{eqn:slope p minus}
\end{align}
of $\CS$, which provide an isomorphism of vector spaces (cf. \cite[Proposition 3.21]{N Cat})
\begin{equation}
	\label{eqn:a triangular}
	\CA^{\geq \bp} \otimes \CA^{\leq \bp} \xrightarrow{\sim} \CS = \U 
\end{equation}
Note that the loop Cartan subalgebras $\CB_{\binfty}^\geq$ and $\CB_{\binfty}^\leq$ are contained in $\CA^{\geq \bp}$ and $\CA^{\leq \bp}$, respectively. The initial motivation for this construction is that in the case of simple Lie algebras $\fg$ (i.e. the quantum loop algebra \eqref{eqn:quantum affine intro}), the subalgebras $\CA^{\geq \b0}$ and $\CA^{\leq \b0}$ correspond to the Borel subalgebras of the quantum affine algebra $U_q(\widehat{\fg})$ under the Drinfeld-Beck isomorphism, see Subsection \ref{sub:quantum affine}. However, the importance of this construction for our purposes lies in the following strengthening of Theorem \ref{thm:main intro}.

\medskip 

\begin{theorem}
\label{thm:main}

For any $\bp \in \rr$, there exist ``new new" topological coproducts
\begin{equation}
\label{eqn:main coproduct}
\Delta_{\bp} : \CA^{\geq \bp} \rightarrow \CA^{\geq \bp} \ho \CA^{\geq \bp} \qquad \text{and} \qquad \Delta_{\bp} : \CA^{\leq \bp} \rightarrow \CA^{\leq \bp} \ho \CA^{\leq \bp} 
\end{equation}
which extend the coproducts \eqref{eqn:coproduct slope plus} and \eqref{eqn:coproduct slope minus}. With respect to $\Delta_{\bp}$, the pairing
\begin{equation}
\label{eqn:main pairing}
\CA^{\geq \bp} \otimes \CA^{\leq \bp} \xrightarrow{\langle \cdot, \cdot \rangle_{\bp}} \BK 
\end{equation}
to be introduced in \eqref{eqn:footnote} is a bialgebra pairing. The corresponding Drinfeld double 
\begin{equation}
\label{eqn:main double}
\CA^{\geq \bp} \otimes \CA^{\leq \bp}
\end{equation}
is isomorphic to $\CS = \U$ as an algebra.

\end{theorem}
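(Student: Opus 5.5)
We will realize $\Delta_{\bp}$ as a Drinfeld-twist of the new coproduct $\Delta$ by the partial universal $R$-matrix of the slopes $<\bp$. Concretely, by Lemma \ref{lem:finite-dimensional} and the perfectness of \eqref{eqn:perfect pairing 1}--\eqref{eqn:perfect pairing 2}, the canonical tensor
$$
\CR_{<\bp} = \sum_k A^+_k \otimes A^-_k \in \CS^+_{(-\binfty,\bp)} \ho \CS^-_{(-\binfty,\bp)}
$$
with respect to dual bases is well defined (each graded summand is finite-dimensional, and as $|\hdeg|$ grows the terms go off to infinity in the sense of $\ho$). We then set, on $\CS^+_{[\bp,\binfty]}$ and $\CS^-_{[\bp,\binfty]}$, the formula
$$
\Delta_{\bp}(X) = \CR_{<\bp}^{-1}\cdot \Delta(X) \cdot \CR_{<\bp}\quad\text{(possibly with the factors swapped or flipped)},
$$
and on $\CS^\mp_{(-\binfty,\bp)}$ we use the mirror formula twisted by $\CR_{<\bp}$ of the opposite side. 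Because $\CA^{\geq\bp}$ is generated by $\CS^+_{[\bp,\binfty]}$ and $\CS^-_{(-\binfty,\bp)}$, this defines an algebra map on $\CA^{\geq \bp}$; multiplicativity is automatic, since conjugation of an algebra map is an algebra map, provided the products converge in $\ho$ rather than in $\vo$. Coassociativity follows from the cocycle identities \eqref{eqn:universal 2}--\eqref{eqn:universal 3} applied to the double $\CS^{\geq}_{(-\binfty,\bp)}\otimes\CS^{\leq}_{(-\binfty,\bp)}$, which is a sub-double by \eqref{eqn:interact 1}--\eqref{eqn:interact 4}, via Lemma \ref{lem:universal}.

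The central computational step is to show that $\Delta_{\bp}$ actually lands in $\CA^{\geq\bp}\ho\CA^{\geq\bp}$ and restricts to \eqref{eqn:coproduct slope plus} on $\CB^+_{\bp}$. For this I would use \eqref{eqn:equation 1}--\eqref{eqn:equation 2} together with \eqref{eqn:interact 1}: $\Delta(E)$ for $E$ of slope $\geq\bp$ has first leg of slope $\geq\bp$, and conjugating by $\CR_{<\bp}$ kills precisely the components of the second leg of slope $<\bp$, replacing them by terms in $\CS^-_{(-\binfty,\bp)}$. Equivalently, and more cleanly, I would \emph{define} $\Delta_{\bp}$ by duality: declare the pairing $\langle\cdot,\cdot\rangle_{\bp}$ of \eqref{eqn:footnote} on $\CA^{\geq\bp}\otimes\CA^{\leq\bp}$ as the product of the pairing \eqref{eqn:pair shuffle extended} on the $[\bp,\binfty]$ factors and its inverse/transpose on the $(-\binfty,\bp)$ factors, and let $\Delta_{\bp}$ be dual to multiplication in $\CA^{\leq\bp}$. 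Graded finite-dimensionality makes this dual well defined, and the $\ho$ completion appears because the horizontal degrees of the legs are unbounded on only one side. The leading-term computation on $\CB_{\bp}$ then reduces to taking the $\xi\to0$ limit in \eqref{eqn:coproduct shuffle plus}, which reproduces \eqref{eqn:slope e geq}.

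Finally, for the double statement we must check the bialgebra pairing axioms \eqref{eqn:bialgebra 1}--\eqref{eqn:bialgebra 2} and that the cross relations \eqref{eqn:drinfeld double relation} between $\CA^{\geq\bp}$ and $\CA^{\leq\bp}$ agree with those in $\CS$. If $\Delta_{\bp}$ is defined by duality, only one axiom needs real work. The cross relations split into four types, depending on which of the two factors of \eqref{eqn:slope p plus}--\eqref{eqn:slope p minus} each element comes from. Pairs on the same side are Drinfeld double relations of $\CS$ itself. Pairs like $\CS^+_{[\bp,\binfty]}$ versus $\CS^+_{(-\binfty,\bp)}$ are reordering relations inside $\CS^+$, governed by \eqref{eqn:factorization 1} and \eqref{eqn:pair slopes advanced}. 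Together with \eqref{eqn:a triangular}, this gives the algebra isomorphism with $\CS$.

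I expect the main obstacle to be this last comparison, and specifically the bialgebra compatibility \eqref{eqn:bialgebra 2} across the two factors. It amounts to an identity between reordering in $\CS^+$ (a ``straightening'' of products of slope $<\bp$ and $\geq\bp$ elements) and the twisted coproduct. I would attack it by pairing both sides against products $XY$ and using \eqref{eqn:epsilon} and \eqref{eqn:equation 1} to reduce to the already-known double structure of $\CS$.
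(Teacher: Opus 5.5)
\textbf{Gap: the construction of $\Delta_{\bp}$ itself.} The core of the theorem is producing a map with values in $\CA^{\geq \bp} \ho \CA^{\geq \bp}$. Your proposal never does this, and both of your routes break at exactly this point.

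\emph{The twist route.} The twist formula is true a posteriori. It is Proposition \ref{prop:partial universal}, with $\CR_{\bp} = \sum_k \tilde{F}_k \otimes \tilde{E}'_k \in \CS^-_{(-\binfty,\bp)} \bar{\otimes} \CS^+_{(-\binfty,\bp)}$, which is the opposite tensor order to yours. As a definition, however, it establishes nothing:
\begin{itemize}
\item The product of $\Delta(X)$, an infinite sum in the $\vo$ sense, with $\CR_{\bp}^{\pm 1}$, an infinite sum over all degrees in $\bar{\otimes}$, is not obviously convergent.
\item Even granting convergence, the second leg a priori lies in $\CS$, not in $\CA^{\geq \bp}$. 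It involves the $\tilde{E}'_k \in \CS^+_{(-\binfty,\bp)}$, the legs $E_2$, and the $\ph^-$'s in $F_2$. Your sentence that conjugation ``kills precisely the components of slope $<\bp$'' is the statement to be proved.
\item Your coassociativity argument fails. $\CS^{\geq}_{(-\binfty,\bp)} \otimes \CS^{\leq}_{(-\binfty,\bp)}$ is not a sub-double, because \eqref{eqn:interact 2}--\eqref{eqn:interact 3} return only one leg of $\Delta(\CS^\pm_{\leq \bp})$ to slope $\leq \bp$. Expanding the factors $\zeta_{ji}(z_{jb}/z_{ia})^{-1}$ in \eqref{eqn:coproduct shuffle plus} creates first legs of unbounded vertical degree in fixed horizontal degree (cf.\ Remark \ref{rem:double}). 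So Lemma \ref{lem:universal} and \eqref{eqn:universal 2}--\eqref{eqn:universal 3} say nothing about a partial $R$-matrix.
\item Twisting the generators of $\CS^{\mp}_{(-\binfty,\bp)}$ by a different tensor forfeits the ``conjugate of an algebra map'' argument for multiplicativity.
\end{itemize}

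\emph{The duality route.} Graded finite-dimensionality is not available. $\CA^{\geq \bp}$ has infinite-dimensional graded pieces: $\BK[\kappa^+_i]$ already sits in degree $(\b0,0)$. Moreover, a fixed total degree collects $E \otimes F$ with $\hdeg E = \bn+\bm$ and $\hdeg F = -\bm$ for every $\bm$ that occurs. For instance, for quantum toroidal $\fgl_1$, $\CB^-_{-1/n}$ has elements of degree $(-n,1)$ for all $n$, and pairing these with elements of $\CB^+_{0}$ of degree $(n,0)$ gives infinitely many independent elements of degree $(0,1)$. Lemma \ref{lem:finite-dimensional} covers only the wedge subalgebras. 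So the dual of multiplication in $\CA^{\leq \bp}$ is a priori just a functional, and your sketch does not show it is represented by an element of $\CA^{\geq \bp} \ho \CA^{\geq \bp}$.

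\textbf{The missing idea.} The paper uses the ansatz \eqref{eqn:delta p plus}, $\Delta_{\bp}(EF) = E_1 Y \otimes X F_1$.
\begin{itemize}
\item The legs $E_1 \in \CS^{\geq}_{\geq \bp}$ and $F_1 \in \CS^-_{<\bp}$ of the new coproduct already lie in $\CA^{\geq \bp}$ by \eqref{eqn:interact 1} and \eqref{eqn:interact 3}.
\item Only the offending legs $E_2, F_2$ are traded for a correction tensor $Y \otimes X \in \CS^-_{(-\binfty,\bp)} \otimes \CS^+_{[\bp,\binfty]}$, defined by \eqref{eqn:declare 1 def}. This needs only the perfect pairings on the wedge subalgebras, with finite Cartan elements split off.
\item Membership in $\CA^{\geq \bp}$ is then manifest. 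The $\ho$ condition holds because $\hdeg E_1$ and $\hdeg F_1$ are bounded, while $\hdeg X$ and $\hdeg Y$ are bounded on one side.
\item The bialgebra property becomes the defining identities \eqref{eqn:declare 1}--\eqref{eqn:declare 2} once $E'\tilde{F}'$ and $F\tilde{E}$ are straightened via \eqref{eqn:simplify 1}--\eqref{eqn:simplify 2}.
\end{itemize}

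Carrying out your duality idea concretely leads exactly here. Pair $EF$ with $F'E'\tilde{F}'\tilde{E}' = \langle E'_1, \tilde{F}'_1 \rangle F'\tilde{F}'_2 E'_2 \tilde{E}'$ and apply \eqref{eqn:bialgebra 1}--\eqref{eqn:bialgebra 2}. The functional becomes $\langle E_1, F' \rangle \langle \tilde{E}', F_1 \rangle$ times the bilinear form $\langle E'_1, \tilde{F}'_1 \rangle \langle E'_2, F_2 \rangle \langle E_2, \tilde{F}'_2 \rangle$ in $(E', \tilde{F}')$. Representing that bilinear form is precisely \eqref{eqn:declare 1}. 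This representability step is what your proposal lacks. The compatibility \eqref{eqn:bialgebra 2} that you flag as the main obstacle is not the hard part: it holds by construction once $\Delta_{\bp}$ on $\CA^{\leq \bp}$ is also defined dually.

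\textbf{The cross relations.} Your plan here (pair against products and reduce with \eqref{eqn:epsilon} and \eqref{eqn:equation 1}--\eqref{eqn:equation 2}) matches the paper's four checks \eqref{eqn:check}. One correction: even the ``same-side'' pair $(E,F')$ is not literally a relation of $\CS$. You must insert $\Delta_{\bp}(E) = E_1 Y \otimes X$ and $\Delta_{\bp}(F') = Y' \otimes F'_2 X'$, then use \eqref{eqn:equation 1}--\eqref{eqn:equation 2} to reduce to $E_1 F'_1 \langle E_2, F'_2 \rangle = \langle E_1, F'_1 \rangle F'_2 E_2$.
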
 

\medskip

\noindent Thus, there are as many topological coproducts on $\U$ as there are elements $\bp \in \rr$ (beside the Drinfeld new coproduct $\Delta$, which morally corresponds to $\bp = \binfty$). In geometric situations such as critical $K$-theoretic Hall algebras of quivers, we expect our coproducts to match the coproducts defined by \cite{COZZ1, COZZ2} using the FRT formalism and stable envelopes. In the particular case of preprojective $K$-theoretic Hall algebras of quivers that was developed in \cite{OS}, the coincidence between our construction and that of \loccit follows from \cite[Sections 6.5.1, 6.5.2]{Z}. Finally, when $\U$ is defined using the zeta functions \eqref{eqn:zeta intro particular} for an affine Lie algebra $\fg$, it would be interesting to compare the coproduct \eqref{eqn:main coproduct} with the coproduct on quantum toroidal algebras defined in \cite{L} using double affine braid group actions.

\medskip

\begin{proof} \emph{of Theorem \ref{thm:main}:} By the very definition of our half subalgebras in \eqref{eqn:slope p plus} and \eqref{eqn:slope p minus}, general elements of $\CA^{\geq \bp}$ and $\CA^{\leq \bp}$ are linear combinations of
\begin{equation}
\label{eqn:as in}
\textcolor{red}{E}\textcolor{blue}{F} \quad \text{and} \quad \textcolor{blue}{F'}\textcolor{red}{E'}
\end{equation}
respectively, where we make the following color codes
\begin{align*} 
&\text{red non-primed letters:} \qquad \textcolor{red}{E}, \textcolor{red}{X} \in \CS^+_{[\bp,\binfty]} \\
&\text{blue non-primed letters:} \ \ \quad \textcolor{blue}{F}, \textcolor{blue}{Y} \in \CS^-_{(-\binfty,\bp)} \\
&\text{red primed letters:} \qquad \qquad \textcolor{red}{E'}, \textcolor{red}{X'} \in \CS^+_{(-\binfty,\bp)} \\
&\text{blue primed letters:} \ \ \quad \qquad \textcolor{blue}{F'}, \textcolor{blue}{Y'} \in \CS^-_{[\bp,\binfty]} 
\end{align*} 
If we represent horizontal and vertical degree on horizontal and vertical axes, then the elements above occupy the wedges indicated in the following picture

\begin{picture}(100,150)(-110,-20)
\label{pic:par}
	
\put(60,0){\vector(0,1){120}}
\put(0,60){\vector(1,0){120}}
\put(-40,35){\line(4,1){200}}
	
\put(15,115){$\text{vdeg} \in \BZ$}
\put(124,58){$\text{hdeg} \in \zz$}
\put(-112,32){$\bp \cdot \text{hdeg} = \text{vdeg}$}

\put(90,100){$\textcolor{red}{E}, \textcolor{red}{X}$}
\put(115,35){$\textcolor{red}{E'}, \textcolor{red}{X'}$}
\put(-25,75){$\textcolor{blue}{F}, \textcolor{blue}{Y}$}
\put(0,10){$\textcolor{blue}{F'}, \textcolor{blue}{Y'}$}
	
\end{picture}

\noindent Black letters with tildes such as $\tilde{E}, \tilde{F}, \tilde{E}', \tilde{F}'$ correspond to elements which are free to run over the same subalgebras as $\textcolor{red}{E}, \textcolor{blue}{F}, \textcolor{red}{E'}, \textcolor{blue}{F'}$, and they will be used as dummy arguments of various linear functionals. Then we declare
\begin{align} 
&\Delta_{\bp}(\textcolor{red}{E}\textcolor{blue}{F}) = \textcolor{red}{E_1} \textcolor{blue}{Y} \otimes \textcolor{red}{X} \textcolor{blue}{F_1} \label{eqn:delta p plus} \\
&\Delta_{\bp}(\textcolor{blue}{F'}\textcolor{red}{E'}) = \textcolor{blue}{Y'}\textcolor{red}{E'_2} \otimes \textcolor{blue}{F'_2} \textcolor{red}{X'} \label{eqn:delta p minus} 
\end{align} 
where the indices $1$ and $2$ refer to Sweedler notation for the coproduct $\Delta$ of Subsection \ref{sub:coproduct}, and the tensors $\textcolor{blue}{Y} \otimes \textcolor{red}{X}$ and $\textcolor{blue}{Y'} \otimes \textcolor{red}{X'}$ are defined by 
\begin{align} 
&\textcolor{red}{X} \langle \tilde{E}', \textcolor{blue}{Y} \rangle =\left[ \textcolor{red}{E_2} \tilde{E}'_1 \right]_{\geq \bp} \langle \tilde{E}'_2, \textcolor{blue}{F_2}\rangle  \label{eqn:declare 1 def} \\
&\textcolor{blue}{Y'} \langle \textcolor{red}{X'}, \tilde{F} \rangle  = \left[ \textcolor{blue}{F_1'} \tilde{F}_2 \right]_{\geq \bp}  \langle \textcolor{red}{E_1'}, \tilde{F}_1 \rangle \label{eqn:declare 2 def}
\end{align} 
The fact that the formulas above are well-defined is due to the perfectness of $\CS^+_{(-\binfty,\bp)} \otimes \CS^-_{(-\binfty,\bp)} \rightarrow \BK$. Using \eqref{eqn:unique 1}-\eqref{eqn:unique 4},  formulas \eqref{eqn:declare 1 def}-\eqref{eqn:declare 2 def} are equivalent to
\begin{align} 
&\langle \textcolor{red}{X}, \tilde{F}' \rangle \langle \tilde{E}', \textcolor{blue}{Y} \rangle = \langle \tilde{E}'_1, \tilde{F}'_1\rangle \langle \tilde{E}'_2, \textcolor{blue}{F_2}\rangle \langle \textcolor{red}{E_2}, \tilde{F}_2' \rangle \label{eqn:declare 1} \\
&\langle \textcolor{red}{X'}, \tilde{F} \rangle \langle \tilde{E}, \textcolor{blue}{Y'} \rangle = \langle \tilde{E}_2, \tilde{F}_2\rangle \langle \tilde{E}_1, \textcolor{blue}{F'_1}\rangle \langle \textcolor{red}{E_1'}, \tilde{F}_1 \rangle \label{eqn:declare 2}
\end{align} 
due to the non-degeneracy of the pairing  $\CS^+_{[\bp,\binfty]} \otimes \CS^-_{[\bp,\binfty]} \rightarrow \BK$. 

\medskip 

\noindent The fact that the terms $\textcolor{red}{E_1} \textcolor{blue}{Y},\textcolor{red}{X} \textcolor{blue}{F_1}$ in the right-hand side of \eqref{eqn:delta p plus} and $\textcolor{blue}{Y'}\textcolor{red}{E'_2}, \textcolor{blue}{F'_2} \textcolor{red}{X'}$ in the right-hand side of \eqref{eqn:delta p minus} lie in $\CA^{\geq \bp}$ and $\CA^{\leq \bp}$, respectively, follows from \eqref{eqn:interact 1}-\eqref{eqn:interact 4}. Moreover, the right-hand sides of \eqref{eqn:delta p plus}-\eqref{eqn:delta p minus} lie in the completion \eqref{eqn:ho} because the horizontal degrees of $\textcolor{red}{E_1}, \textcolor{red}{E_2'}, \textcolor{blue}{F_1}, \textcolor{blue}{F_2'}$ are bounded on both sides, while the horizontal degrees of $\textcolor{red}{X}, \textcolor{red}{X'}$ are bounded below and those of $\textcolor{blue}{Y}, \textcolor{blue}{Y'}$ are bounded above. In what follows, we spell out the following particular cases of \eqref{eqn:delta p plus}-\eqref{eqn:delta p minus}:
\begin{align} 
&\Delta_{\bp}(\textcolor{red}{E}) = \textcolor{red}{E_1} \textcolor{blue}{Y} \otimes \textcolor{red}{X}, \qquad \quad \langle \textcolor{red}{X}, \tilde{F}' \rangle \langle \tilde{E}', \textcolor{blue}{Y} \rangle  = \langle \textcolor{red}{E_2} \tilde{E}', \tilde{F}'\rangle \label{eqn:two 1} \\ 
&\Delta_{\bp}(\textcolor{blue}{F}) =  \textcolor{blue}{Y} \otimes \textcolor{red}{X} \textcolor{blue}{F_1}, \qquad \quad \langle \textcolor{red}{X}, \tilde{F}' \rangle \langle \tilde{E}', \textcolor{blue}{Y} \rangle  = \langle \tilde{E}', \tilde{F}' \textcolor{blue}{F_2}\rangle  \label{eqn:two 2} \\ 
&\Delta_{\bp}(\textcolor{red}{E'}) = \textcolor{blue}{Y'}\textcolor{red}{E'_2} \otimes \textcolor{red}{X'}, \qquad \langle \textcolor{red}{X'}, \tilde{F} \rangle \langle \tilde{E}, \textcolor{blue}{Y'} \rangle  = \langle \tilde{E}  \textcolor{red}{E_1'}, \tilde{F} \rangle\label{eqn:two 3} \\ 
&\Delta_{\bp}(\textcolor{blue}{F'}) = \textcolor{blue}{Y'} \otimes \textcolor{blue}{F'_2} \textcolor{red}{X'}, \qquad \langle \textcolor{red}{X'}, \tilde{F} \rangle \langle \tilde{E}, \textcolor{blue}{Y'} \rangle = \langle \tilde{E}, \textcolor{blue}{F'_1} \tilde{F}\rangle  \label{eqn:two 4} 
\end{align}

\medskip

\begin{claim} 
\label{claim:extend}

The coproducts \eqref{eqn:delta p plus}-\eqref{eqn:delta p minus} extend the coproducts \eqref{eqn:coproduct slope plus}-\eqref{eqn:coproduct slope minus}. 

\end{claim}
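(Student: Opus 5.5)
The plan is to treat the two cases separately and verify each directly from the special cases \eqref{eqn:two 1} and \eqref{eqn:two 4}. An element $E \in \CB^+_\bp$ has slope both $\geq \bp$ and $\leq \bp$, so $E \in \CS^+_{[\bp,\binfty]}$ is a ``red non-primed'' letter, and $\Delta_\bp(E)$ is given by \eqref{eqn:two 1}. Dually, $F \in \CB^-_\bp$ lies in $\CS^-_{[\bp,\binfty]}$, so it is a ``blue primed'' letter and $\Delta_\bp(F)$ is given by \eqref{eqn:two 4}. I will only describe the argument for $E$; the one for $F$ is obtained by swapping the roles of the two tensor factors and replacing \eqref{eqn:interact 1}--\eqref{eqn:interact 2} by \eqref{eqn:interact 3}--\eqref{eqn:interact 4}.

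\emph{Step 1.} Take $E \in \CB_{\bp|\bn}$ and write $\Delta(E) = E_1 \otimes E_2$ using \eqref{eqn:coproduct shuffle plus}. By \eqref{eqn:interact 1} and \eqref{eqn:interact 2}, every $E_1$ lies in $\CS^\geq_{\geq \bp}$ and every $E_2$ lies in $\CS^+_{\leq \bp}$. Now compute the right-hand side $\langle E_2 \tilde{E}', \tilde{F}' \rangle$ of \eqref{eqn:two 1}, where $\tilde{E}' \in \CS^+_{(-\binfty,\bp)}$ and $\tilde{F}' \in \CS^-_{[\bp,\binfty]}$.

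\emph{Step 2.} Factor $E_2 = \sum A \cdot B$ with $A \in \CS^+_{<\bp}$ and $B \in \CB^+_\bp$, using \eqref{eqn:factorization 2}. I will then move the slope $\bp$ part to the right of $\tilde{E}'$; up to reordering within slopes $\leq \bp$, which \eqref{eqn:factorization 2} permits, the product $E_2 \tilde{E}'$ has its slope $\bp$ part equal to $B$ exactly when $A\tilde{E}'$ is of slope $<\bp$. Then \eqref{eqn:pair slopes advanced} and \eqref{eqn:epsilon} show that the pairing against $\tilde{F}' \in \CS^-_{[\bp,\binfty]}$ only sees the slope-$\bp$ factor, and that the slope $<\bp$ factor contributes a counit. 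Consequently the pairing is nonzero only when $A$ and $\tilde{E}'$ are both constants, giving
\[
\langle E_2 \tilde{E}', \tilde{F}' \rangle = \varepsilon(\tilde{E}') \langle E_2^{(\bp)}, \tilde{F}' \rangle ,
\]
where $E_2^{(\bp)}$ denotes the $\CB^+_\bp$-component of $E_2$, namely the part with $\vdeg = \bp \cdot \hdeg$. Comparing with the defining relation in \eqref{eqn:two 1}, we can take $Y = 1$ and $X = E_2^{(\bp)}$, so $\Delta_\bp(E) = \sum E_1 \otimes E_2^{(\bp)}$, summed over those terms of $\Delta(E)$ whose second factor lies in $\CB^+_\bp$.

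\emph{Step 3.} It remains to identify this truncated coproduct with \eqref{eqn:coproduct slope plus}. In \eqref{eqn:coproduct shuffle plus}, the $\bm$-th summand puts the variables $z_{i1},\dots,z_{im_i}$ in the first factor and the rest, together with the Cartan currents $\ph^+_j(z_{jb})$, in the other. Keeping only the part whose $\CB_\bp$-side factor has vertical degree exactly $\bp$ times its horizontal degree amounts to three things: keeping the leading terms $\kappa^+$ of the $\ph^+$'s (their higher terms change the vertical degree in the wrong direction); keeping the leading term of the expansion of the $\zeta$'s in the denominator; and keeping the extremal homogeneous component of $E$ under the scaling of one group of variables by $\xi$. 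This is exactly the limit \eqref{eqn:slope e geq}, and the $\zeta$-denominator leading term contributes the exponent $-\langle \bn-\bm,\bm\rangle$ that appears there.

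The main obstacle is this degree bookkeeping in Step 3. One has to check that the exponent shift coming from the leading terms of the $\zeta_{ji}$ in the denominator, together with the commutation of $\kappa^+$ past the shuffle elements via \eqref{eqn:cartan commutation plus}, reproduces precisely the normalization $\xi^{\bp\cdot\bm - \langle \bn-\bm,\bm\rangle}$ and the placement $\kappa^+_{\bn-\bm}\otimes 1$ in \eqref{eqn:coproduct slope plus}, including the correct matching of which index $\bm$ labels which tensor factor. Step 2 also needs care: the reordering of $E_2\tilde{E}'$ within slopes $\leq \bp$ has to be justified (it is not a literal commutation) before the vanishing argument applies.
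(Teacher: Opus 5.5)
Your overall route is the paper's: specialize to \eqref{eqn:two 1} and \eqref{eqn:two 4}, show that the tensor $Y \otimes X$ attached to a homogeneous Sweedler term $E_1 \otimes E_2$ is $1 \otimes E_2$ when $E_2 \in \CB^+_{\bp}$ and $0$ otherwise, then read off the surviving part of $\Delta(E)$. The gap is in Step 2, which is the real content of the claim. The ``reordering within slopes $\leq \bp$'' is not available. \eqref{eqn:factorization 2} only says that multiplication $\CS^+_{<\bp} \otimes \CB^+_{\bp} \to \CS^+_{\leq \bp}$ is bijective in this fixed order, and $B\tilde{E}' \neq \tilde{E}' B$. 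Re-expanding $AB\tilde{E}' = \sum_k A_k B_k$ produces factors $B_k$ unrelated to $B$. Nothing in your argument controls which terms have $A_k \in \BK$, and that is exactly what $[E_2\tilde{E}']_{\geq \bp} = \sum_k \varepsilon(A_k) B_k$ depends on. Your criterion ``$A\tilde{E}'$ is of slope $<\bp$'' is automatic, since $\CS^+_{<\bp}$ is a subalgebra, so it cannot decide the matter. For comparison, the paper starts from the sharper decomposition \eqref{eqn:extend 1}. This kills the terms with $E_2$ of slope $<\bp$ via \eqref{eqn:epsilon}, and the terms with $E_2 \in \CB^+_{\bp}$ are handled ``by the same token''.

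The missing ingredient is a vertical degree count; with it, your weaker input \eqref{eqn:interact 1}--\eqref{eqn:interact 2} already suffices. Taking $\bm = \bn$ in Definition \ref{def:slopes} shows the following for a non-zero homogeneous $G \in \CS^+$: slope $\geq \bp$ (resp.\ $\leq \bp$, resp.\ $<\bp$ with $\hdeg G \neq \b0$) gives $\vdeg G \geq \bp \cdot \hdeg G$ (resp.\ $\leq$, resp.\ $<$). Cartan elements have hdeg $\b0$ and non-negative vdeg, so every non-zero homogeneous element of $\CS^+_{[\bp,\binfty]}$ has $\vdeg \geq \bp \cdot \hdeg$.

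Now take homogeneous $E_2 \in \CS^+_{\leq \bp}$ and $\tilde{E}' \in \CS^+_{(-\binfty,\bp)}$. Then $\vdeg(E_2\tilde{E}') \leq \bp \cdot \hdeg(E_2\tilde{E}')$, with equality only if $\tilde{E}' \in \BK$ and $\vdeg E_2 = \bp \cdot \hdeg E_2$. Since $[E_2\tilde{E}']_{\geq\bp}$ lies in $\CS^+_{[\bp,\binfty]}$ and has the same degree as $E_2\tilde{E}'$, it vanishes outside this case. In the equality case, a homogeneous expansion $E_2 = \sum_k A_k B_k$ as in \eqref{eqn:factorization 2} forces $\vdeg A_k = \bp \cdot \hdeg A_k$, hence $A_k \in \BK$. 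So $E_2 \in \CB^+_{\bp}$ and $[E_2]_{\geq \bp} = E_2$, which yields your $Y \otimes X$. Because $\vdeg E = \bp \cdot \bn$ and $E_1 \in \CS^{\geq}_{\geq \bp}$, the surviving terms are precisely those in $\CB^{\geq}_{\bp} \otimes \CB^+_{\bp}$. The case of $F'$ is identical with all inequalities reversed.

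Step 3 is then fine, and more explicit than the paper, which simply asserts the coincidence. One point of bookkeeping: as $\xi \to 0$, the leading term of $\prod \zeta_{ji}(z_{jb}/\xi z_{ia})^{-1}$ is $\xi^{\langle \bn - \bm, \bm \rangle} \prod c_{ji}^{-1} (z_{ia}/z_{jb})^{\#_{ji}}$, not merely a power of $\xi$. This factor has to be read as part of ``the value of the limit'' in \eqref{eqn:coproduct slope plus}.
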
 

\medskip

\begin{proof} Plug $\textcolor{red}{E} \in \CB_\bp^{\geq}$, $\textcolor{blue}{F}  = 1$, $\textcolor{red}{E'}  = 1$, $\textcolor{blue}{F'} \in \CB_{\bp}^{\leq}$ in formulas \eqref{eqn:delta p plus}-\eqref{eqn:delta p minus}. Then
\begin{align} 
&\textcolor{red}{X} \langle \tilde{E}', \textcolor{blue}{Y} \rangle =\left[ \textcolor{red}{E_2} \tilde{E}' \right]_{\geq \bp} \label{eqn:temp 1} \\
&\textcolor{blue}{Y'} \langle \textcolor{red}{X'}, \tilde{F} \rangle  = \left[ \textcolor{blue}{F_1'} \tilde{F} \right]_{\geq \bp}  \label{eqn:temp 2}
\end{align} 
in formulas \eqref{eqn:declare 1 def}-\eqref{eqn:declare 2 def}. As in \cite[Subsection 3.7]{N R-matrix}, we have
\begin{align}
	&\textcolor{red}{E_1} \otimes \textcolor{red}{E_2} \in \CB_{\bp}^{\geq} \otimes \CB_{\bp}^{\geq} + \CS^\geq_{>\bp} \otimes \CS^\geq_{<\bp}
	\label{eqn:extend 1} \\
	&\textcolor{blue}{F'_1} \otimes \textcolor{blue}{F'_2} \in \CB_{\bp}^{\leq} \otimes \CB_{\bp}^{\leq}  +  \CS^\leq_{<\bp} \otimes \CS^\leq_{>\bp}
	\label{eqn:extend 2} 
\end{align} 
Because $\tilde{E}'$ and $\tilde{F}$ in \eqref{eqn:temp 1} and \eqref{eqn:temp 2} have slope $<\bp$, the right-most summands in the RHS of \eqref{eqn:extend 1}-\eqref{eqn:extend 2} do not contribute anything to $\textcolor{blue}{Y} \otimes \textcolor{red}{X}$ and $\textcolor{blue}{Y'} \otimes \textcolor{red}{X'}$. By the same token, the left-most summands in the RHS of \eqref{eqn:extend 1}-\eqref{eqn:extend 2} only produce a non-trivial contribution in \eqref{eqn:temp 1}-\eqref{eqn:temp 2} if $\tilde{E}' = \tilde{F} = 1$, which implies that
\begin{align*} 
&\textcolor{blue}{Y} \otimes \textcolor{red}{X} = 1 \otimes \Big(\text{those }\textcolor{red}{E_2} \text{ in }\CB_{\bp}^{\geq} \Big) \\ &\textcolor{blue}{Y'} \otimes \textcolor{red}{X'} = \Big(\text{those }\textcolor{blue}{F_1'} \text{ in }\CB_{\bp}^{\leq} \Big) \otimes 1
\end{align*}
Plugging the formulas above in \eqref{eqn:delta p plus}-\eqref{eqn:delta p minus} yields
\begin{align*}
&\Delta_{\bp}(\textcolor{red}{E}) = \Big( \text{those } \textcolor{red}{E_1} \otimes \textcolor{red}{E_2} \text{ in } \CB_\bp^{\geq} \otimes \CB_\bp^{\geq} \Big) \\
&\Delta_{\bp}(\textcolor{blue}{F'}) = \Big( \text{those } \textcolor{blue}{F'_1} \otimes \textcolor{blue}{F'_2} \text{ in } \CB_\bp^{\leq} \otimes \CB_\bp^{\leq} \Big) 
\end{align*}
which coincides with the right-hand sides of \eqref{eqn:coproduct slope plus} and \eqref{eqn:coproduct slope minus}, respectively. \end{proof}

\medskip

\noindent Let us return to the proof of Theorem \ref{thm:main}. We define the pairing
\begin{equation}
\label{eqn:footnote}
\CA^{\geq \bp} \otimes \CA^{\leq \bp} \xrightarrow{\langle \cdot, \cdot \rangle_{\bp}} \BK, \qquad \langle \textcolor{red}{E}\textcolor{blue}{F}, \textcolor{blue}{F'}\textcolor{red}{E'} \rangle_{\bp} = \langle \textcolor{red}{E}, \textcolor{blue}{F'} \rangle \langle \textcolor{red}{E'}, \textcolor{blue}{F}\rangle
\end{equation}
for any $\textcolor{red}{E} \in \CS^+_{[\bp,\binfty]}$, $\textcolor{blue}{F} \in \CS^-_{(-\binfty,\bp)}$, $\textcolor{red}{E'} \in \CS^+_{(-\binfty, \bp)}$, $\textcolor{blue}{F'} \in \CS^-_{[\bp,\binfty]}$. 

\medskip 

\begin{claim}
\label{claim:bialgebra}

$\langle \cdot, \cdot \rangle_{\bp}$ is a bialgebra pairing with respect to $\Delta_{\bp}$, i.e.
\begin{align} 
&\langle \textcolor{red}{E}\textcolor{blue}{F}, \textcolor{blue}{F'}\textcolor{red}{E'} \textcolor{blue}{\tilde{F}'}\textcolor{red}{\tilde{E}'} \rangle_{\bp} = \langle \textcolor{red}{E_1} \textcolor{blue}{Y}, \textcolor{blue}{F'}\textcolor{red}{E'} \rangle_{\bp} \langle \textcolor{red}{X} \textcolor{blue}{F_1}, \textcolor{blue}{\tilde{F}'}\textcolor{red}{\tilde{E}'} \rangle_{\bp} \label{eqn:bialgebra 1 big} \\
&\langle \textcolor{red}{E}\textcolor{blue}{F} \textcolor{red}{\tilde{E}}\textcolor{blue}{\tilde{F}}, \textcolor{blue}{F'}\textcolor{red}{E'} \rangle_{\bp} = \langle \textcolor{red}{E}\textcolor{blue}{F},  \textcolor{blue}{F'_2} \textcolor{red}{X'} \rangle_{\bp} \langle \textcolor{red}{\tilde{E}}\textcolor{blue}{\tilde{F}}, \textcolor{blue}{Y'}\textcolor{red}{E'_2} \rangle_{\bp} \label{eqn:bialgebra 2 big}
 \end{align}
 for all $\textcolor{red}{E}, \textcolor{red}{\tilde{E}} \in \CS^+_{[\bp,\binfty]}$, $\textcolor{blue}{F}, \textcolor{blue}{\tilde{F}} \in \CS^-_{(-\binfty,\bp)}$, $\textcolor{red}{E'}, \textcolor{red}{\tilde{E}'} \in \CS^+_{(-\binfty,\bp)}$, $\textcolor{blue}{F'}, \textcolor{blue}{\tilde{F}'} \in \CS^-_{[\bp,\binfty]}$.

\end{claim}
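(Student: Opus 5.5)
We will prove \eqref{eqn:bialgebra 1 big} directly and obtain \eqref{eqn:bialgebra 2 big} by the mirror-symmetric argument, interchanging the roles of $\CS^+$ and $\CS^-$, of the Sweedler indices $1$ and $2$, and of \eqref{eqn:two 1}-\eqref{eqn:two 2} with \eqref{eqn:two 3}-\eqref{eqn:two 4}.

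The first step is to rewrite the left-hand side of \eqref{eqn:bialgebra 1 big} in the normal form $\CS^-_{[\bp,\binfty]} \otimes \CS^+_{(-\binfty,\bp)}$ of $\CA^{\leq \bp}$. The only offending product is $\textcolor{red}{E'}\textcolor{blue}{\tilde{F}'}$, with $\textcolor{red}{E'} \in \CS^+_{(-\binfty,\bp)}$ and $\textcolor{blue}{\tilde{F}'} \in \CS^-_{[\bp,\binfty]}$. We reorder it inside the Drinfeld double $\U$ using \eqref{eqn:drinfeld double relation}. Solving that relation for $ab$ in terms of $ba$ requires the antipode, so to avoid it we argue via non-degeneracy instead. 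The pairing $\langle \cdot,\cdot\rangle_\bp$ is non-degenerate by \eqref{eqn:footnote} together with the perfectness of \eqref{eqn:perfect pairing 1}-\eqref{eqn:perfect pairing 2}. Hence it suffices to check \eqref{eqn:bialgebra 1 big} for each of the four factors $\textcolor{red}{E}\textcolor{blue}{F}$ separately and then use that $\Delta_\bp$ is multiplicative.

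We do not yet know that $\Delta_\bp$ is multiplicative; establishing that is presumably the content of a later claim. The cleaner route is therefore to treat the two extreme cases $\textcolor{blue}{F}=1$ and $\textcolor{red}{E}=1$, via \eqref{eqn:two 1} and \eqref{eqn:two 2}, and then combine them. For the first extreme case, take $\textcolor{red}{E}$ alone. Expanding $\textcolor{blue}{F'}\textcolor{red}{E'}\textcolor{blue}{\tilde F'}\textcolor{red}{\tilde E'}$ into normal form and pairing with $\textcolor{red}{E}$ via \eqref{eqn:footnote}, the factor $\langle \cdot,\cdot\rangle$ against $\textcolor{blue}{F}=1$ forces the $\CS^+_{(-\binfty,\bp)}$ part to be evaluated by the counit. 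We would then:
\begin{enumerate}
\item express $\langle \textcolor{red}{E}, \cdot \rangle$ on products in $\U$ via \eqref{eqn:bialgebra 1} applied to the ordinary coproduct $\Delta(\textcolor{red}{E}) = \textcolor{red}{E_1}\otimes \textcolor{red}{E_2}$;
\item use the splitting \eqref{eqn:equation 1}-\eqref{eqn:equation 2} and the vanishing \eqref{eqn:epsilon} to project onto the correct wedges;
\item recognize the leftover expression $\langle \textcolor{red}{E_2}\tilde E',\tilde F'\rangle$ as exactly the defining identity \eqref{eqn:two 1} of $\textcolor{blue}{Y}\otimes \textcolor{red}{X}$.
\end{enumerate}
The case $\textcolor{red}{E}=1$ is analogous, using \eqref{eqn:two 2} and \eqref{eqn:bialgebra 2}. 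For the general product $\textcolor{red}{E}\textcolor{blue}{F}$, we substitute the defining identity \eqref{eqn:declare 1}. After this substitution, both sides become triple products of ordinary pairings, of the form $\langle \tilde E'_1,\tilde F'_1\rangle\langle \tilde E'_2,\textcolor{blue}{F_2}\rangle\langle \textcolor{red}{E_2},\tilde F'_2\rangle$, which are matched using coassociativity of $\Delta$ and the bialgebra axioms \eqref{eqn:bialgebra 1}-\eqref{eqn:bialgebra 2} for the original pairing \eqref{eqn:pair shuffle extended}.

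The main obstacle will be the reordering step, namely computing the normal form of $\textcolor{red}{E'}\textcolor{blue}{\tilde F'}$ and controlling which pieces survive the pairing. Here the plan is to pair both sides of \eqref{eqn:drinfeld double relation} against test elements and use \eqref{eqn:equation 1}-\eqref{eqn:equation 2}. Together with the wedge orthogonality \eqref{eqn:epsilon}, this should show that only the terms where $\textcolor{red}{E'}$ and $\textcolor{blue}{\tilde F'}$ interact through $\langle \textcolor{red}{E'_{\bullet}}, \textcolor{blue}{\tilde F'_{\bullet}}\rangle$ contribute. Those terms reproduce precisely the middle pairing $\langle \tilde E'_1,\tilde F'_1\rangle$ in \eqref{eqn:declare 1}. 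Convergence of all the infinite sums involved follows from the finite-dimensionality of the graded pieces in Lemma \ref{lem:finite-dimensional}.
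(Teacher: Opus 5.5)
Your endgame is right. Once the left-hand side of \eqref{eqn:bialgebra 1 big} is written in the normal form $\CS^-_{[\bp,\binfty]}\cdot\CS^+_{(-\binfty,\bp)}$, you expand it with \eqref{eqn:footnote} and \eqref{eqn:bialgebra 1}--\eqref{eqn:bialgebra 2} and compare with \eqref{eqn:declare 1}. This works directly for a general product $EF$, so the detour through the cases $F=1$, $E=1$ and the worry about multiplicativity of $\Delta_{\bp}$ are unnecessary.

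The gap is the step you yourself call the main obstacle: the normal form of $E'\tilde{F}'$. Your premise that extracting it from \eqref{eqn:drinfeld double relation} requires the antipode is false here. The workaround you propose is also circular. The pairing $\langle\cdot,\cdot\rangle_{\bp}$ is only defined on normally ordered elements, via \eqref{eqn:footnote}, so ``pairing against test elements'' to identify the normal form of $F'E'\tilde{F}'\tilde{E}'$ presupposes that normal form. As written, ``only the terms where $E'$ and $\tilde{F}'$ interact through $\langle E'_\bullet,\tilde{F}'_\bullet\rangle$ contribute'' is unproved, and \eqref{eqn:equation 1}--\eqref{eqn:equation 2} play no role in this claim.

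The missing observation is that one side of the Drinfeld double relation collapses. Take $a=E'$, $b=\tilde{F}'$ in \eqref{eqn:drinfeld double relation}. By \eqref{eqn:interact 2} and \eqref{eqn:interact 4}, $E'_2\in\CS^+_{(-\binfty,\bp)}$ and $\tilde{F}'_2\in\CS^-_{[\bp,\binfty]}$. So \eqref{eqn:epsilon} gives $\langle E'_2,\tilde{F}'_2\rangle=\varepsilon(E'_2)\varepsilon(\tilde{F}'_2)$, the left-hand side reduces to $E'\tilde{F}'$, and you get
\[
E'\tilde{F}' = \langle E'_1,\tilde{F}'_1\rangle\, \tilde{F}'_2E'_2 .
\]
This is already normally ordered, and it needs neither an antipode nor non-degeneracy.

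With it, the left-hand side of \eqref{eqn:bialgebra 1 big} equals $\langle E,F'\tilde{F}'_2\rangle\langle E'_2\tilde{E}',F\rangle\langle E'_1,\tilde{F}'_1\rangle$. By \eqref{eqn:bialgebra 1}--\eqref{eqn:bialgebra 2} this is $\langle E_1,F'\rangle\langle E_2,\tilde{F}'_2\rangle\langle E'_2,F_2\rangle\langle\tilde{E}',F_1\rangle\langle E'_1,\tilde{F}'_1\rangle$. The right-hand side is $\langle E_1,F'\rangle\langle E',Y\rangle\langle X,\tilde{F}'\rangle\langle \tilde{E}',F_1\rangle$, and the two agree by \eqref{eqn:declare 1}.

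For \eqref{eqn:bialgebra 2 big}, use the mirror collapse $F\tilde{E}=\tilde{E}_1F_1\langle\tilde{E}_2,F_2\rangle$, which follows from \eqref{eqn:interact 1}, \eqref{eqn:interact 3} and \eqref{eqn:epsilon}, and then compare with \eqref{eqn:declare 2}.
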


\medskip

\begin{proof} The Drinfeld double relation \eqref{eqn:drinfeld double relation} states that
$$
\textcolor{red}{E'_1} \textcolor{blue}{\tilde{F}'_1} \langle \textcolor{red}{E'_2} , \textcolor{blue}{\tilde{F}'_2} \rangle = \langle \textcolor{red}{E'_1}, \textcolor{blue}{\tilde{F}'_1} \rangle \textcolor{blue}{\tilde{F}'_2}\textcolor{red}{E'_2}
$$
$$
\langle \textcolor{red}{\tilde{E}_1}, \textcolor{blue}{F_1} \rangle \textcolor{blue}{F_2} \textcolor{red}{\tilde{E}_2} = \textcolor{red}{\tilde{E}_1} \textcolor{blue}{F_1} \langle \textcolor{red}{\tilde{E}_2} , \textcolor{blue}{F_2} \rangle
$$
Because of \eqref{eqn:interact 1}-\eqref{eqn:interact 4} and \eqref{eqn:epsilon}, the left-hand sides of the equations above simplify to
\begin{equation}
\label{eqn:simplify 1}
\textcolor{red}{E'} \textcolor{blue}{\tilde{F}'} = \langle \textcolor{red}{E'_1}, \textcolor{blue}{\tilde{F}'_1} \rangle \textcolor{blue}{\tilde{F}'_2}\textcolor{red}{E'_2}
\end{equation}
\begin{equation}
\label{eqn:simplify 2}
\textcolor{blue}{F} \textcolor{red}{\tilde{E}} = \textcolor{red}{\tilde{E}_1} \textcolor{blue}{F_1} \langle \textcolor{red}{\tilde{E}_2} , \textcolor{blue}{F_2} \rangle
\end{equation}
Plugging \eqref{eqn:simplify 1}-\eqref{eqn:simplify 2} into the left-hand sides of \eqref{eqn:bialgebra 1 big}-\eqref{eqn:bialgebra 2 big} makes the latter equivalent to
\begin{align*} 
&\langle \textcolor{red}{E}\textcolor{blue}{F}, \textcolor{blue}{F'} \textcolor{blue}{\tilde{F}_2'} \textcolor{red}{E'_2} \textcolor{red}{\tilde{E}'} \rangle_{\bp} \langle \textcolor{red}{E'_1}, \textcolor{blue}{\tilde{F}'_1} \rangle = \langle \textcolor{red}{E_1} \textcolor{blue}{Y}, \textcolor{blue}{F'}\textcolor{red}{E'} \rangle_{\bp} \langle \textcolor{red}{X} \textcolor{blue}{F_1}, \textcolor{blue}{\tilde{F}'}\textcolor{red}{\tilde{E}'} \rangle_{\bp} \\
&\langle \textcolor{red}{E} \textcolor{red}{\tilde{E}_1} \textcolor{blue}{F_1}\textcolor{blue}{\tilde{F}}, \textcolor{blue}{F'}\textcolor{red}{E'} \rangle_{\bp} \langle \textcolor{red}{\tilde{E}_2} , \textcolor{blue}{F_2} \rangle = \langle \textcolor{red}{E}\textcolor{blue}{F},  \textcolor{blue}{F'_2} \textcolor{red}{X'} \rangle_{\bp} \langle \textcolor{red}{\tilde{E}}\textcolor{blue}{\tilde{F}}, \textcolor{blue}{Y'}\textcolor{red}{E'_2} \rangle_{\bp} 
\end{align*}
By \eqref{eqn:footnote}, the equalities above are equivalent to
\begin{align*} 
&\langle \textcolor{red}{E}, \textcolor{blue}{F'} \textcolor{blue}{\tilde{F}_2'} \rangle \langle  \textcolor{red}{E'_2} \textcolor{red}{\tilde{E}'}, \textcolor{blue}{F} \rangle \langle \textcolor{red}{E'_1}, \textcolor{blue}{\tilde{F}'_1} \rangle = \langle \textcolor{red}{E_1}, \textcolor{blue}{F'} \rangle \langle \textcolor{red}{E'}, \textcolor{blue}{Y} \rangle \langle \textcolor{red}{X} , \textcolor{blue}{\tilde{F}'} \rangle \langle \textcolor{red}{\tilde{E}'}, \textcolor{blue}{F_1} \rangle \\
&\langle \textcolor{red}{E} \textcolor{red}{\tilde{E}_1}, \textcolor{blue}{F'} \rangle \langle   \textcolor{red}{E'}, \textcolor{blue}{F_1}\textcolor{blue}{\tilde{F}} \rangle \langle \textcolor{red}{\tilde{E}_2} , \textcolor{blue}{F_2} \rangle = \langle \textcolor{red}{E},  \textcolor{blue}{F'_2} \rangle \langle \textcolor{red}{X'}, \textcolor{blue}{F}\rangle \langle \textcolor{red}{\tilde{E}}, \textcolor{blue}{Y'} \rangle  \langle \textcolor{red}{E'_2}, \textcolor{blue}{\tilde{F}} \rangle
\end{align*}
After applying \eqref{eqn:bialgebra 1}-\eqref{eqn:bialgebra 2} to the left-hand sides, the expressions above match \eqref{eqn:declare 1}-\eqref{eqn:declare 2}, and are therefore proved. \end{proof} 

\medskip

\noindent To conclude the proof of Theorem \ref{thm:main}, we must show that the Drinfeld double relation \eqref{eqn:drinfeld double relation} holds in $\CS = \U$ with respect to the coproduct \eqref{eqn:delta p plus}-\eqref{eqn:delta p minus} and the pairing \eqref{eqn:footnote}, for any $\bp \in \rr$. Since the Drinfeld double relation is multiplicative in $a$ and $b$, it suffices to check \eqref{eqn:drinfeld double relation} for $(a,b)$ among
\begin{equation}
\label{eqn:check}
(\textcolor{red}{E}, \textcolor{red}{E'}), \quad (\textcolor{red}{E}, \textcolor{blue}{F'}), \quad (\textcolor{blue}{F}, \textcolor{red}{E'}), \quad (\textcolor{blue}{F}, \textcolor{blue}{F'})
\end{equation}
For the first check, we apply formulas \eqref{eqn:two 1} and \eqref{eqn:two 3} and we need to show that
\begin{equation}
\label{eqn:first check}
\textcolor{red}{E_1} \textcolor{blue}{Y}\textcolor{blue}{Y'}\textcolor{red}{E'_2} \langle \textcolor{red}{X}, \textcolor{red}{X'} \rangle_{\bp} = \langle \textcolor{red}{E_1} \textcolor{blue}{Y}, \textcolor{blue}{Y'}\textcolor{red}{E'_2} \rangle_{\bp} \textcolor{red}{X'}  \textcolor{red}{X}
\end{equation}
where 
\begin{align*} 
&\langle \textcolor{red}{X}, \tilde{F}' \rangle \langle \tilde{E}', \textcolor{blue}{Y} \rangle = \langle \textcolor{red}{E_2} \tilde{E}', \tilde{F}'\rangle \quad \Rightarrow \quad \textcolor{red}{X} \langle \tilde{E}', \textcolor{blue}{Y} \rangle = \left[\textcolor{red}{E_2}\tilde{E}' \right]_{\geq \bp} \quad \text{and} \quad \textcolor{blue}{Y} \varepsilon(\textcolor{red}{X}) = \varepsilon(\textcolor{red}{E_2}) \\
&\langle \textcolor{red}{X'}, \tilde{F} \rangle \langle \tilde{E}, \textcolor{blue}{Y'} \rangle  = \langle \tilde{E}  \textcolor{red}{E_1'}, \tilde{F} \rangle \quad \Rightarrow \quad \textcolor{red}{X'} \langle \tilde{E}, \textcolor{blue}{Y'} \rangle = \left[\tilde{E} \textcolor{red}{E_1'} \right]_{< \bp} \quad \text{and} \quad \textcolor{blue}{Y'} \varepsilon(\textcolor{red}{X'}) = \varepsilon(\textcolor{red}{E_1'})
\end{align*}
By \eqref{eqn:footnote}, we have
\begin{align*}
&\text{LHS of \eqref{eqn:first check}} = \textcolor{red}{E_1} \textcolor{blue}{Y}\textcolor{blue}{Y'}\textcolor{red}{E'_2} \varepsilon(\textcolor{red}{X}) \varepsilon(\textcolor{red}{X'}) = \textcolor{red}{E_1} \varepsilon(\textcolor{red}{E_2}) \varepsilon(\textcolor{red}{E_1'})\textcolor{red}{E'_2}  = \textcolor{red}{E}\textcolor{red}{E'} \\
&\text{RHS of \eqref{eqn:first check}} =  \langle \textcolor{red}{E_1} , \textcolor{blue}{Y'} \rangle \langle \textcolor{red}{E'_2} , \textcolor{blue}{Y} \rangle \textcolor{red}{X'}  \textcolor{red}{X} = [\textcolor{red}{E_1}\textcolor{red}{E'_1}]_{<\bp} [\textcolor{red}{E_2}\textcolor{red}{E'_2}]_{\geq\bp} 
\end{align*} 
The equality of the two expressions above is due to \eqref{eqn:equation 1}. 

\medskip

\noindent For the second check in \eqref{eqn:check}, we apply \eqref{eqn:two 1} and \eqref{eqn:two 4} and we need to show that
\begin{equation}
\label{eqn:second check}
\textcolor{red}{E_1} \textcolor{blue}{Y} \textcolor{blue}{Y'} \langle \textcolor{red}{X}, \textcolor{blue}{F'_2} \textcolor{red}{X'} \rangle_{\bp} = \langle \textcolor{red}{E_1} \textcolor{blue}{Y}, \textcolor{blue}{Y'} \rangle_{\bp} \textcolor{blue}{F'_2} \textcolor{red}{X'}\textcolor{red}{X}
\end{equation}
where
\begin{align*} 
&\langle \textcolor{red}{X}, \tilde{F}' \rangle \langle \tilde{E}', \textcolor{blue}{Y} \rangle =  \langle \textcolor{red}{E_2} \tilde{E}', \tilde{F}'\rangle \quad \Rightarrow \quad \textcolor{blue}{Y} \langle \textcolor{red}{X}, \tilde{F}' \rangle = [\tilde{F}_1']_{<\bp} \langle \textcolor{red}{E_2}, \tilde{F}_2' \rangle \ \text{ and } \ \textcolor{red}{X}\varepsilon(\textcolor{blue}{Y}) = [\textcolor{red}{E_2}]_{\geq \bp}  \\
&\langle \textcolor{red}{X'}, \tilde{F} \rangle \langle \tilde{E}, \textcolor{blue}{Y'} \rangle = \langle \tilde{E}, \textcolor{blue}{F'_1} \tilde{F}\rangle \quad \Rightarrow \quad \textcolor{red}{X'}  \langle \tilde{E}, \textcolor{blue}{Y'} \rangle = [\tilde{E}_2]_{<\bp} \langle \tilde{E}_1, \textcolor{blue}{F_1'} \rangle \ \text{ and } \ \textcolor{blue}{Y'} \varepsilon(\textcolor{red}{X'}) = [\textcolor{blue}{F_1'}]_{\geq \bp} 
\end{align*}
By \eqref{eqn:footnote}, we have 
\begin{align*} 
&\text{LHS of \eqref{eqn:second check}} = \textcolor{red}{E_1} \textcolor{blue}{Y} \textcolor{blue}{Y'} \langle \textcolor{red}{X}, \textcolor{blue}{F'_2} \rangle \varepsilon(\textcolor{red}{X'}) = \textcolor{red}{E_1} \textcolor{blue}{Y} [\textcolor{blue}{F_1'}]_{\geq \bp} \langle \textcolor{red}{X}, \textcolor{blue}{F'_2} \rangle = \textcolor{red}{E_1} [\textcolor{blue}{F_2'}]_{<\bp} [\textcolor{blue}{F_1'}]_{\geq \bp} \langle \textcolor{red}{E_2}, \textcolor{blue}{F_3'} \rangle  \\
&\text{RHS of \eqref{eqn:second check}} = \varepsilon(\textcolor{blue}{Y}) \langle \textcolor{red}{E_1}, \textcolor{blue}{Y'} \rangle \textcolor{blue}{F'_2} \textcolor{red}{X'}\textcolor{red}{X} = \langle \textcolor{red}{E_1}, \textcolor{blue}{Y'} \rangle \textcolor{blue}{F'_2} \textcolor{red}{X'} [\textcolor{red}{E_2}]_{\geq \bp}  = \textcolor{blue}{F'_2} [\textcolor{red}{E_2}]_{<\bp} [\textcolor{red}{E_3}]_{\geq \bp} \langle \textcolor{red}{E_1}, \textcolor{blue}{F_1'} \rangle
\end{align*}
By \eqref{eqn:equation 1} and \eqref{eqn:equation 2}, the right-hand sides of the expressions above are equal to $\textcolor{red}{E_1} \textcolor{blue}{F_1'} \langle \textcolor{red}{E_2}, \textcolor{blue}{F_2'} \rangle$ and $\textcolor{blue}{F'_2} \textcolor{red}{E_2} \langle \textcolor{red}{E_1}, \textcolor{blue}{F_1'} \rangle$ respectively, which are equal to each other by \eqref{eqn:drinfeld double relation}. 

\medskip

\noindent For the third check in \eqref{eqn:check}, we apply \eqref{eqn:two 2} and \eqref{eqn:two 3} and we need to show that
\begin{equation}
\label{eqn:third check}
\textcolor{blue}{Y}  \textcolor{blue}{Y'}\textcolor{red}{E'_2} \langle \textcolor{red}{X} \textcolor{blue}{F_1}, \textcolor{red}{X'}\rangle_{\bp} = \langle  \textcolor{blue}{Y} , \textcolor{blue}{Y'}\textcolor{red}{E'_2} \rangle_{\bp}\textcolor{red}{X'}\textcolor{red}{X} \textcolor{blue}{F_1}
\end{equation}
where
\begin{align*} 
&\langle \textcolor{red}{X}, \tilde{F}' \rangle \langle \tilde{E}', \textcolor{blue}{Y} \rangle = \langle \tilde{E}', \tilde{F}' \textcolor{blue}{F_2}\rangle \quad \Rightarrow \quad \textcolor{red}{X} \langle \tilde{E}', \textcolor{blue}{Y}  \rangle = [\tilde{E}'_1]_{\geq \bp} \langle \tilde{E}'_2, \textcolor{blue}{F_2} \rangle \ \text{ and } \  \textcolor{blue}{Y} \varepsilon( \textcolor{red}{X}) = [\textcolor{blue}{F_2}]_{<\bp}   \\
&\langle \textcolor{red}{X'}, \tilde{F} \rangle \langle \tilde{E}, \textcolor{blue}{Y'} \rangle = \langle \tilde{E}  \textcolor{red}{E_1'}, \tilde{F} \rangle \quad \Rightarrow \quad \textcolor{blue}{Y'} \langle \textcolor{red}{X'}, \tilde{F} \rangle = [\tilde{F}_2]_{\geq \bp} \langle \textcolor{red}{E_1'}, \tilde{F}_1 \rangle  \ \text{ and } \ \textcolor{red}{X'}\varepsilon(\textcolor{blue}{Y'}) = [\textcolor{red}{E_1'}]_{<\bp}  
\end{align*} 
By \eqref{eqn:footnote}, we have 
\begin{align*} 
&\text{LHS of \eqref{eqn:third check}} = \textcolor{blue}{Y}  \textcolor{blue}{Y'}\textcolor{red}{E'_2} \langle \textcolor{red}{X'}, \textcolor{blue}{F_1} \rangle \varepsilon(\textcolor{red}{X}) =  [\textcolor{blue}{F_2}]_{<\bp}   \textcolor{blue}{Y'}\textcolor{red}{E'_2} \langle \textcolor{red}{X'}, \textcolor{blue}{F_1} \rangle = [\textcolor{blue}{F_3}]_{<\bp}  [\textcolor{blue}{F_2}]_{\geq \bp} \textcolor{red}{E'_2}  \langle \textcolor{red}{E_1'}, \textcolor{blue}{F_1} \rangle  \\
&\text{LHS of \eqref{eqn:third check}} = \varepsilon(\textcolor{blue}{Y'}) \langle \textcolor{red}{E'_2}, \textcolor{blue}{Y}  \rangle \textcolor{red}{X'}\textcolor{red}{X} \textcolor{blue}{F_1} = \langle \textcolor{red}{E'_2}, \textcolor{blue}{Y}  \rangle [\textcolor{red}{E_1'}]_{<\bp} \textcolor{red}{X} \textcolor{blue}{F_1} =  [\textcolor{red}{E_1'}]_{<\bp} [\textcolor{red}{E'_2}]_{\geq \bp} \textcolor{blue}{F_1} \langle \textcolor{red}{E'_3}, \textcolor{blue}{F_2} \rangle 
\end{align*}
By \eqref{eqn:equation 1} and \eqref{eqn:equation 2}, the right-hand sides of the expressions above are equal to $\textcolor{blue}{F_2} \textcolor{red}{E_2'} \langle \textcolor{red}{E_1'}, \textcolor{blue}{F_1} \rangle$ and $\textcolor{red}{E_1'} \textcolor{blue}{F_1} \langle \textcolor{red}{E_2'}, \textcolor{blue}{F_2} \rangle$ respectively, which are equal to each other by \eqref{eqn:drinfeld double relation}. 

\medskip

\noindent For the fourth check in \eqref{eqn:check}, we apply \eqref{eqn:two 2} and \eqref{eqn:two 4} and we need to show that
\begin{equation}
\label{eqn:fourth check}
 \textcolor{blue}{Y}\textcolor{blue}{Y'} \langle \textcolor{red}{X} \textcolor{blue}{F_1}, \textcolor{blue}{F'_2} \textcolor{red}{X'}\rangle_{\bp} = \langle  \textcolor{blue}{Y}, \textcolor{blue}{Y'}\rangle_{\bp}\textcolor{blue}{F'_2} \textcolor{red}{X'}\textcolor{red}{X} \textcolor{blue}{F_1}
\end{equation}
where
\begin{align*} 
&\langle \textcolor{red}{X}, \tilde{F}' \rangle \langle \tilde{E}', \textcolor{blue}{Y} \rangle =  \langle \tilde{E}', \tilde{F}' \textcolor{blue}{F_2}\rangle \quad \Rightarrow \quad \textcolor{blue}{Y} \langle \textcolor{red}{X}, \tilde{F}' \rangle = \left[ \tilde{F}' \textcolor{blue}{F_2} \right]_{<\bp} \quad \text{and} \quad \textcolor{red}{X} \varepsilon(\textcolor{blue}{Y})  = \varepsilon(\textcolor{blue}{F_2}) \\
&\langle \textcolor{red}{X'}, \tilde{F} \rangle \langle \tilde{E}, \textcolor{blue}{Y'} \rangle = \langle \tilde{E}, \textcolor{blue}{F'_1} \tilde{F}\rangle \quad \Rightarrow \quad \textcolor{blue}{Y'} \langle \textcolor{red}{X'}, \tilde{F} \rangle = \left[ \textcolor{blue}{F_1'} \tilde{F} \right]_{\geq \bp} \quad \text{and} \quad  \textcolor{red}{X'}\varepsilon(\textcolor{blue}{Y'})   = \varepsilon(\textcolor{blue}{F_1'})
\end{align*}
By \eqref{eqn:footnote}, we have
\begin{align*}
&\text{LHS of \eqref{eqn:fourth check}} = \textcolor{blue}{Y}\textcolor{blue}{Y'} \langle \textcolor{red}{X}, \textcolor{blue}{F'_2} \rangle \langle \textcolor{red}{X'}, \textcolor{blue}{F_1}\rangle = [\textcolor{blue}{F_2'F_2}]_{<\bp} [\textcolor{blue}{F_1'F_1}]_{\geq \bp} \\
&\text{RHS of \eqref{eqn:fourth check}} = \textcolor{blue}{F'_2} \textcolor{red}{X'}\textcolor{red}{X} \textcolor{blue}{F_1} \varepsilon(\textcolor{blue}{Y}) \varepsilon(\textcolor{blue}{Y'}) =  \textcolor{blue}{F'_2} \varepsilon(\textcolor{blue}{F_1'}) \varepsilon(\textcolor{blue}{F_2}) \textcolor{blue}{F_1} = \textcolor{blue}{F'} \textcolor{blue}{F}
\end{align*}
The equality of the two expressions above is due to \eqref{eqn:equation 2}. \end{proof}

\medskip

\noindent We observe that the coproduct $\Delta_{\bp}$ preserves degrees. To see this, consider for instance any $\textcolor{red}{E} \textcolor{blue}{F}$ whose coproduct is $\textcolor{red}{E_1} \textcolor{blue}{Y} \otimes \textcolor{red}{X} \textcolor{blue}{F_1}$ as in \eqref{eqn:delta p plus}, and note that
$$
\text{deg}(\textcolor{red}{E} \textcolor{blue}{F}) - \text{deg}(\textcolor{red}{E_1} \textcolor{blue}{Y}) - \text{deg}(\textcolor{red}{X} \textcolor{blue}{F_1}) = \text{deg}(\textcolor{red}{E_2}) + \text{deg}(\textcolor{blue}{F_2}) - \text{deg}(\textcolor{red}{X}) - \text{deg}(\textcolor{blue}{Y}) 
$$
The quantity above is equal to 0 because of \eqref{eqn:declare 1} and the fact that the pairing $\langle \cdot, \cdot \rangle$ is non-zero only on elements of opposite degrees. The latter is also the reason why the pairing \eqref{eqn:footnote} is also non-zero only on elements of opposite degrees. 

\medskip 

\subsection{Universal $R$-matrices}
\label{sub:universal 2}

Let us now consider universal $R$-matrices as in Subsection \ref{sub:universal} for the algebra $\CS = \U$. Since this algebra has numerous coproducts and corresponding structures of a Drinfeld double, we may define partial universal $R$-matrices as follows. We will encounter the notation
\begin{align*} 
&\CS \ \widehat{\otimes} \ \CS = \prod_{(\bn,d) \in \zz \times \BZ} \CS_{\bn,d} \ \widehat{\otimes} \ \CS_{-\bn,-d} \\
&\CS \ \bar{\otimes} \ \CS = \prod_{(\bn,d) \in \zz \times \BZ} \CS_{\bn,d} \ \otimes \ \CS_{-\bn,-d}
\end{align*}
where 
\begin{equation}
\label{eqn:hat}
\CS_{\bn,d} \ \widehat{\otimes} \ \CS_{-\bn,-d} = \left \{ \sum_{k} A_k \otimes B_k \right \}
\end{equation}
in which the sum can go over infinitely many $A_k \in \CS_{\bn,d}$ and $B_k \in \CS_{-\bn,-d}$, but for any $N \in \BN$, all but finitely many $k$ have the property that $A_k = A_k'A_k''$ and $B_k = B_k' B_k''$ with $|\hdeg A_k'| \geq N, |\hdeg A_k''| \leq - N, |\hdeg B_k'| \leq - N, |\hdeg B_k''|\geq N$.

\medskip

\begin{proposition}
\label{prop:partial universal}

For any $\bp \in \rr$, the canonical tensors \footnote{See the discussion of Subsection \ref{sub:infinite slope 2} for certain technicalities necessary to make this precise.}
\begin{align}
&\CR_{\bp} \in \CA^{\geq \bp} \ \bar{\otimes} \ \CA^{\leq \bp} \subset \CS \ \bar{\otimes} \ \CS \quad \text{of the pairing} \quad \CS^-_{(-\binfty,\bp)} \otimes \CS^+_{(-\binfty,\bp)} \rightarrow \BK \label{eqn:partial 1} \\
&_{\bar{\bp}}\CR \in \CA^{\geq \bp} \ \bar{\otimes} \ \CA^{\leq \bp} \subset \CS \ \bar{\otimes} \ \CS \quad \text{of the pairing} \quad \CS^+_{[\bp,\binfty]} \otimes \CS^-_{[\bp, \binfty]} \rightarrow \BK \label{eqn:partial 2} 
\end{align} 
satisfy the properties
\begin{equation} 
\label{eqn:intertwine 1}
\CR_{\bp} \cdot \Delta_{\bp}(-) = \Delta(-) \cdot \CR_{\bp}
\end{equation} 
\begin{equation} 
\label{eqn:intertwine 2}
_{\bar{\bp}}\CR \cdot \Delta(-) = \Delta_{\bp}^{\emph{op}}(-) \cdot {_{\bar{\bp}}\CR}
\end{equation} 
(generalizing formulas of \cite{EKP} in the case of quantum affine algebras \eqref{eqn:quantum affine intro} and $\bp = \b0$).

\end{proposition}

\medskip 

\begin{proof} We will need the following technical results
\begin{equation}
\label{eqn:star 1}
[E'E]_{\geq \bp} = [E']_{\geq \bp} E
\end{equation}
\begin{equation}
\label{eqn:star 2}
[FF']_{<\bp} = F [F']_{<\bp}
\end{equation}	
for all $E \in \CS^+_{[\bp,\binfty]}$, $E' \in \CS^\geq$, $F \in \CS^-_{(-\binfty,\bp)}$, $F' \in \CS^\leq$. The formulas above follow from the non-degeneracy of the pairing and the facts that $\forall \tilde{F}' \in \CS^-_{[\bp,\binfty]}, \tilde{E}' \in \CS^+_{(-\binfty,\bp)}$
$$
\langle [E'E]_{\geq \bp}, \tilde{F}' \rangle = \langle E'E, \tilde{F}' \rangle =  \langle E, \tilde{F}'_1 \rangle  \langle E', \tilde{F}'_2 \rangle =  \langle E, \tilde{F}'_1 \rangle  \langle [E']_{\geq \bp}, \tilde{F}'_2 \rangle = \langle [E']_{\geq \bp} E, \tilde{F}' \rangle
$$
$$
\langle \tilde{E}', [FF']_{<\bp} \rangle = \langle \tilde{E}', FF' \rangle =  \langle \tilde{E}'_1, F \rangle  \langle \tilde{E}'_2, F' \rangle = \langle \tilde{E}'_1, F \rangle  \langle \tilde{E}'_2, [F']_{<\bp} \rangle = \langle \tilde{E}', F [F']_{<\bp} \rangle
$$
(the equalities above use $\tilde{E}_2' \in \CS^+_{(-\binfty,\bp)}$, $\tilde{F}'_2 \in \CS^-_{[\bp,\binfty]}$ and \eqref{eqn:unique 1}-\eqref{eqn:unique 4}). Similarly,
\begin{equation}
\label{eqn:star 3}
[E'E]_{< \bp} = E'[E]_{<\bp}
\end{equation}
\begin{equation}
\label{eqn:star 4}
[FF']_{\geq \bp} = [F]_{\geq \bp} F'
\end{equation}	
for all $E \in \CS^\geq$, $E' \in \CS^+_{(-\binfty,\bp)}$, $F \in \CS^\leq$, $F' \in \CS_{[\bp,\binfty]}^-$. The formulas above follow from the non-degeneracy of the pairing and the facts that $\forall \tilde{E} \in \CS^+_{[\bp,\binfty]}, \tilde{F} \in \CS^-_{(-\binfty,\bp)}$	
\begin{align*} 
&\langle [E'E]_{< \bp}, \tilde{F} \rangle = \langle E'E, \tilde{F} \rangle = \langle E, \tilde{F}_1 \rangle\langle E', \tilde{F}_2 \rangle = \langle [E]_{<\bp}, \tilde{F}_1 \rangle\langle E', \tilde{F}_2 \rangle = \langle E'[E]_{< \bp}, \tilde{F} \rangle \\
&\langle \tilde{E}, [FF']_{\geq \bp} \rangle = \langle \tilde{E}, FF' \rangle = \langle \tilde{E}_1, F \rangle \langle \tilde{E}_2, F' \rangle = \langle \tilde{E}_1, [F]_{\geq \bp} \rangle \langle \tilde{E}_2, F' \rangle = \langle \tilde{E}, [F]_{\geq \bp} F' \rangle
\end{align*} 
(the equalities above use $\tilde{E}_1 \in \CS^+_{[\bp,\binfty]}$, $\tilde{F}_1 \in \CS^-_{(-\binfty,\bp)}$ and \eqref{eqn:unique 1}-\eqref{eqn:unique 4}).

\medskip	
	
\noindent Let us now proceed with the proof of Proposition \ref{prop:partial universal}. Because formulas \eqref{eqn:intertwine 1} and \eqref{eqn:intertwine 2} are multiplicative in $-$, it suffices to prove them for $-$ equal to either of
\begin{equation}
\label{eqn:four}
\textcolor{red}{E} \in \CS^+_{[\bp,\binfty]}, \qquad \textcolor{blue}{F} \in \CS^-_{(-\binfty,\bp)}, \qquad \textcolor{red}{E'} \in \CS^+_{(-\binfty,\bp)}, \qquad \textcolor{blue}{F'} \in \CS^-_{[\bp,\binfty]}
\end{equation}
We will begin by proving the first two of these cases, and then turn to the remaining two on the next page. Specifically, unraveling equations \eqref{eqn:intertwine 1}-\eqref{eqn:intertwine 2} boils down to 
\begin{align} 
&\sum_k \textcolor{blue}{\tilde{F}_k} \textcolor{red}{E_1} \textcolor{blue}{Y} \otimes \textcolor{red}{\tilde{E}'_k} \textcolor{red}{X}   = \sum_k \textcolor{red}{E_1} \textcolor{blue}{\tilde{F}_k}  \otimes  \textcolor{red}{E_2} \textcolor{red}{\tilde{E}'_k} \label{eqn:tar 1} \\
&\sum_k \textcolor{blue}{\tilde{F}_k} \textcolor{blue}{\tilde{Y}}  \otimes \textcolor{red}{\tilde{E}'_k} \textcolor{red}{\tilde{X}} \textcolor{blue}{F_1}  = \sum_k \textcolor{blue}{F_1} \textcolor{blue}{\tilde{F}_k}  \otimes \textcolor{blue}{F_2} \textcolor{red}{\tilde{E}'_k}   \label{eqn:tar 2} \\
&\sum_k \textcolor{red}{\tilde{E}_k} \textcolor{red}{E_1} \otimes \textcolor{blue}{\tilde{F}'_k} \textcolor{red}{E_2} = \sum_k \textcolor{red}{X} \textcolor{red}{\tilde{E}_k} \otimes \textcolor{red}{E_1} \textcolor{blue}{Y} \textcolor{blue}{\tilde{F}'_k} \label{eqn:tar 3} \\
&\sum_k \textcolor{red}{\tilde{E}_k} \textcolor{blue}{F_1} \otimes \textcolor{blue}{\tilde{F}'_k} \textcolor{blue}{F_2} = \sum_k \textcolor{red}{\tilde{X}} \textcolor{blue}{F_1} \textcolor{red}{\tilde{E}_k} \otimes \textcolor{blue}{\tilde{Y}} \textcolor{blue}{\tilde{F}'_k} \label{eqn:tar 4}
\end{align}
where $\CR_{\bp} = \sum_k \textcolor{blue}{\tilde{F}_k} \otimes \textcolor{red}{\tilde{E}'_k}$ and $_{\bar{\bp}}\CR = \sum_k \textcolor{red}{\tilde{E}_k} \otimes \textcolor{blue}{\tilde{F}'_k}$ denote the canonical tensors of the pairings \eqref{eqn:partial 1} and \eqref{eqn:partial 2}, respectively. Above, $\textcolor{red}{X}, \textcolor{red}{\tilde{X}}, \textcolor{blue}{Y}, \textcolor{blue}{\tilde{Y}}$ are determined by
$$
\langle \textcolor{red}{X}, \tilde{F}' \rangle \langle \tilde{E}', \textcolor{blue}{Y} \rangle = \langle \textcolor{red}{E_2} \tilde{E}', \tilde{F}'\rangle \quad \Rightarrow \quad \textcolor{blue}{Y} \langle \textcolor{red}{X}, \tilde{F}' \rangle = [\tilde{F}'_1]_{<\bp} \langle \textcolor{red}{E_2} , \tilde{F}'_2\rangle
$$
$$
\langle \textcolor{red}{\tilde{X}}, \tilde{F}' \rangle \langle \tilde{E}', \textcolor{blue}{\tilde{Y}} \rangle =  \langle \tilde{E}', \tilde{F}' \textcolor{blue}{F_2}\rangle \quad \Rightarrow \quad \textcolor{red}{\tilde{X}} \langle \tilde{E}', \textcolor{blue}{\tilde{Y}} \rangle = [\tilde{E}'_1]_{\geq \bp} \langle \tilde{E}'_2, \textcolor{blue}{F_2}\rangle
$$
for all $\tilde{E}' \in \CS^+_{(-\binfty,\bp)}$ and $\tilde{F}' \in \CS^-_{[\bp,\binfty]}$. In the formulas below, we will use repeatedly the defining property \eqref{eqn:defining} of the canonical tensor of any pairing.

\medskip

\noindent To prove \eqref{eqn:tar 1}, take an arbitrary $V = FF'$ where $F\in \CS^-_{(-\binfty,\bp)}$, $F' \in \CS^-_{[\bp,\binfty]}$. Then
\begin{align*} 
&\langle \text{LHS of \eqref{eqn:tar 1}}, -  \otimes V \rangle  = F \textcolor{red}{E_1} \textcolor{blue}{Y} \langle \textcolor{red}{X}, F'\rangle =  F \textcolor{red}{E_1} [F'_1]_{<\bp} \langle \textcolor{red}{E_2}, F'_2 \rangle \stackrel{\eqref{eqn:simplify 2}}= \textcolor{red}{E_1} F_1 [F_1']_{<\bp} \langle \textcolor{red}{E_2}, F_2\rangle \langle \textcolor{red}{E_3}, F'_2\rangle \\
&\langle \text{RHS of \eqref{eqn:tar 1}}, - \otimes V \rangle = \textcolor{red}{E_1} [V_1]_{<\bp} \langle \textcolor{red}{E_2}, V_2\rangle = \textcolor{red}{E_1} [F_1F'_1]_{<\bp} \langle \textcolor{red}{E_2}, F_2 \rangle \langle \textcolor{red}{E_3}, F_2' \rangle
\end{align*}
The right-hand sides above are equal due to \eqref{eqn:star 2}, thus proving \eqref{eqn:tar 1}.

\medskip

\noindent To prove \eqref{eqn:tar 2}, pair both sides of the equation with an arbitrary $\tilde{E}' \in \CS^+_{(-\binfty,\bp)}$:
\begin{align*} 
&\langle \tilde{E}' \otimes -, \text{LHS of \eqref{eqn:tar 2}} \rangle = [\tilde{E}'_1]_{<\bp} \textcolor{red}{\tilde{X}} \textcolor{blue}{F_1} \langle \tilde{E}'_2, \textcolor{blue}{\tilde{Y}} \rangle = [\tilde{E}'_1]_{<\bp} [\tilde{E}_2']_{\geq \bp} \textcolor{blue}{F_1} \langle \tilde{E}'_3, \textcolor{blue}{F_2} \rangle \stackrel{\eqref{eqn:equation 1}}= \tilde{E}'_1 \textcolor{blue}{F_1} \langle \tilde{E}'_2, \textcolor{blue}{F_2} \rangle \\
&\langle \tilde{E}' \otimes -, \text{RHS of \eqref{eqn:tar 2}} \rangle = \langle \tilde{E}'_1, \textcolor{blue}{F_1} \rangle \textcolor{blue}{F_2} \tilde{E}'_2
\end{align*}
The right-hand sides above are equal due to \eqref{eqn:drinfeld double relation}, thus proving \eqref{eqn:tar 2}. 

\medskip

\noindent To prove \eqref{eqn:tar 3}, pair both sides of the equation with an arbitrary $\tilde{F}' \in \CS^-_{[\bp,\binfty]}$:
\begin{align*} 
&\langle \text{LHS of \eqref{eqn:tar 3}}, \tilde{F}' \otimes - \rangle = \langle \textcolor{red}{E_1}, \tilde{F}_1' \rangle \tilde{F}_2' \textcolor{red}{E_2} \\
&\langle \text{RHS of \eqref{eqn:tar 3}}, \tilde{F}' \otimes - \rangle = \textcolor{red}{E_1} \textcolor{blue}{Y} [\tilde{F}'_1]_{\geq \bp} \langle \textcolor{red}{X}, \tilde{F}'_2 \rangle = \textcolor{red}{E_1} [\tilde{F}_2']_{<\bp} [\tilde{F}'_1]_{\geq \bp} \langle \textcolor{red}{E_2}, \tilde{F}'_3 \rangle \stackrel{\eqref{eqn:equation 2}}= \textcolor{red}{E_1} \tilde{F}_1' \langle \textcolor{red}{E_2}, \tilde{F}'_2 \rangle
\end{align*}
The right-hand sides above are equal due to \eqref{eqn:drinfeld double relation}, thus proving \eqref{eqn:tar 3}. 

\medskip

\noindent To prove \eqref{eqn:tar 4}, take an arbitrary $U = E'E$ where $E'\in \CS^+_{(-\binfty,\bp)}$, $E \in \CS^+_{[\bp,\binfty]}$. Then
\begin{align*} 
&\langle - \otimes U, \text{LHS of \eqref{eqn:tar 4}} \rangle  = [U_1]_{\geq \bp}\textcolor{blue}{F_1} \langle U_2, \textcolor{blue}{F_2} \rangle  = [E_1'E_1]_{\geq \bp}\textcolor{blue}{F_1} \langle E_2, \textcolor{blue}{F_2} \rangle  \langle E'_2, \textcolor{blue}{F_3} \rangle \\
&\langle - \otimes U, \text{RHS of \eqref{eqn:tar 4}} \rangle = \textcolor{red}{\tilde{X}} \textcolor{blue}{F_1} E \langle E',\textcolor{blue}{\tilde{Y}} \rangle = [E'_1]_{\geq \bp} \textcolor{blue}{F_1} E \langle E'_2, \textcolor{blue}{F_2}\rangle \stackrel{\eqref{eqn:simplify 2}}= [E'_1]_{\geq \bp} E_1 \textcolor{blue}{F_1} \langle E_2, \textcolor{blue}{F_2}\rangle \langle E'_2, \textcolor{blue}{F_3}\rangle
\end{align*}
The right-hand sides of the expressions above are equal due to \eqref{eqn:star 1}, thus proving \eqref{eqn:tar 4}.

\medskip

\noindent Let us now deal with the last two cases in \eqref{eqn:four}, for which equations \eqref{eqn:intertwine 1}-\eqref{eqn:intertwine 2} boil down to the identities
\begin{align} 
&\sum_k \textcolor{blue}{\tilde{F}_k}\textcolor{blue}{Y'}\textcolor{red}{E'_2} \otimes \textcolor{red}{\tilde{E}'_k}\textcolor{red}{X'}  = \sum_k \textcolor{red}{E'_1} \textcolor{blue}{\tilde{F}_k}  \otimes  \textcolor{red}{E'_2} \textcolor{red}{\tilde{E}'_k} \label{eqn:tar 5} \\
&\sum_k \textcolor{blue}{\tilde{F}_k} \textcolor{blue}{\tilde{Y}'} \otimes \textcolor{red}{\tilde{E}'_k} \textcolor{blue}{F'_2} \textcolor{red}{\tilde{X}'} = \sum_k \textcolor{blue}{F'_1} \textcolor{blue}{\tilde{F}_k}  \otimes \textcolor{blue}{F'_2} \textcolor{red}{\tilde{E}'_k}   \label{eqn:tar 6} \\
&\sum_k \textcolor{red}{\tilde{E}_k} \textcolor{red}{E'_1} \otimes \textcolor{blue}{\tilde{F}'_k} \textcolor{red}{E'_2} = \sum_k \textcolor{red}{X'} \textcolor{red}{\tilde{E}_k} \otimes \textcolor{blue}{Y'}\textcolor{red}{E'_2} \textcolor{blue}{\tilde{F}'_k} \label{eqn:tar 7} \\
&\sum_k \textcolor{red}{\tilde{E}_k} \textcolor{blue}{F'_1} \otimes \textcolor{blue}{\tilde{F}'_k} \textcolor{blue}{F'_2} = \sum_k \textcolor{blue}{F'_2} \textcolor{red}{\tilde{X}'} \textcolor{red}{\tilde{E}_k} \otimes \textcolor{blue}{\tilde{Y}'}  \textcolor{blue}{\tilde{F}'_k} \label{eqn:tar 8}
\end{align}
where $\CR_{\bp} = \sum_k \textcolor{blue}{\tilde{F}_k} \otimes \textcolor{red}{\tilde{E}'_k}$ and $_{\bar{\bp}}\CR = \sum_k \textcolor{red}{\tilde{E}_k} \otimes \textcolor{blue}{\tilde{F}'_k}$ denote the canonical tensors of the pairings \eqref{eqn:partial 1} and \eqref{eqn:partial 2}, respectively. Above, $\textcolor{red}{X'}, \textcolor{red}{\tilde{X}'}, \textcolor{blue}{Y'}, \textcolor{blue}{\tilde{Y}'}$ are determined by
\begin{align*} 
&\langle \textcolor{red}{X'}, \tilde{F} \rangle \langle \tilde{E}, \textcolor{blue}{Y'} \rangle  = \langle \tilde{E}  \textcolor{red}{E_1'}, \tilde{F} \rangle \quad \Rightarrow \quad \textcolor{blue}{Y'} \langle \textcolor{red}{X'}, \tilde{F} \rangle = [ \tilde{F}_2 ]_{\geq\bp} \langle \textcolor{red}{E_1'}, \tilde{F}_1 \rangle \\ 
&\langle \textcolor{red}{\tilde{X}'}, \tilde{F} \rangle \langle \tilde{E}, \textcolor{blue}{\tilde{Y}'} \rangle = \langle \tilde{E}, \textcolor{blue}{F'_1} \tilde{F}\rangle \quad \Rightarrow \quad \textcolor{red}{\tilde{X}'}  \langle \tilde{E}, \textcolor{blue}{\tilde{Y}'} \rangle = [\tilde{E}_2]_{<\bp} \langle \tilde{E}_1, \textcolor{blue}{F_1'} \rangle
\end{align*}
for all $\tilde{E} \in \CS^+_{[\bp,\binfty]}, \tilde{F} \in \CS^-_{(-\binfty,\bp)}$.

\medskip 

\noindent To prove \eqref{eqn:tar 5}, pair both sides with an arbitrary $\tilde{F} \in \CS^-_{(-\binfty,\bp)}$:
\begin{align*} 
&\langle \text{LHS of \eqref{eqn:tar 5}}, - \otimes \tilde{F} \rangle = [\tilde{F}_2]_{<\bp} \textcolor{blue}{Y'}\textcolor{red}{E'_2} \langle \textcolor{red}{X'}, \tilde{F}_1 \rangle =  [\tilde{F}_3]_{<\bp} [\tilde{F}_2]_{\geq\bp} \textcolor{red}{E'_2} \langle \textcolor{red}{E_1'}, \tilde{F}_1 \rangle \stackrel{\eqref{eqn:equation 2}}=  \tilde{F}_2 \textcolor{red}{E'_2} \langle \textcolor{red}{E_1'}, \tilde{F}_1 \rangle \\
&\langle \text{RHS of \eqref{eqn:tar 5}}, - \otimes \tilde{F} \rangle = \textcolor{red}{E'_1} \tilde{F}_1 \langle \textcolor{red}{E'_2} , \tilde{F}_2 \rangle
\end{align*}
The right-hand sides above are equal due to \eqref{eqn:drinfeld double relation}, thus proving \eqref{eqn:tar 5}. 

\medskip

\noindent To prove \eqref{eqn:tar 6}, take an arbitrary $U = E'E$ where $E'\in \CS^+_{(-\binfty,\bp)}$, $E \in \CS^+_{[\bp,\binfty]}$. Then
\begin{align*}
&\langle U \otimes - , \text{LHS of \eqref{eqn:tar 6}}\rangle = E' \textcolor{blue}{F'_2} \textcolor{red}{\tilde{X}'} \langle E,\textcolor{blue}{\tilde{Y}'}\rangle = E' \textcolor{blue}{F'_2} [E_2]_{<\bp} \langle E_1, \textcolor{blue}{F_1'} \rangle \stackrel{\eqref{eqn:simplify 1}}= \textcolor{blue}{F_3'} E_2'[E_2]_{<\bp} \langle E'_1, \textcolor{blue}{F_2'} \rangle  \langle E_1, \textcolor{blue}{F_1'} \rangle \\
&\langle U \otimes -, \text{RHS of \eqref{eqn:tar 6}} \rangle = \textcolor{blue}{F_2'} [U_2]_{<\bp} \langle U_1, \textcolor{blue}{F_1'} \rangle =  \textcolor{blue}{F_3'} [E'_2E_2]_{<\bp} \langle E_1', \textcolor{blue}{F_2'} \rangle \langle E_1, \textcolor{blue}{F_1'} \rangle
\end{align*}
The right-hand sides above are equal due to \eqref{eqn:star 3}, thus proving \eqref{eqn:tar 6}.

\medskip

\noindent To prove \eqref{eqn:tar 7}, take an arbitrary $V = FF'$ where $F\in \CS^-_{(-\binfty,\bp)}$, $F' \in \CS^-_{[\bp,\binfty]}$. Then
\begin{align*}
&\langle \text{LHS of \eqref{eqn:tar 7}}, V \otimes - \rangle = [V_2]_{\geq \bp} \textcolor{red}{E_2'} \langle \textcolor{red}{E_1'}, V_1 \rangle = [F_2F_2']_{\geq \bp} \textcolor{red}{E_3'} \langle \textcolor{red}{E_1'}, F_1 \rangle \langle \textcolor{red}{E_2'}, F'_1 \rangle \\
&\langle \text{RHS of \eqref{eqn:tar 7}}, V \otimes - \rangle = \textcolor{blue}{Y'} \textcolor{red}{E_2'} F' \langle \textcolor{red}{X'}, F \rangle = [F_2]_{\geq \bp} \textcolor{red}{E_2'} F' \langle \textcolor{red}{E_1'}, F_1 \rangle \stackrel{\eqref{eqn:simplify 1}}=[F_2]_{\geq \bp} F'_2 \textcolor{red}{E_3'} \langle \textcolor{red}{E_1'}, F_1 \rangle \langle \textcolor{red}{E_2'}, F_1' \rangle
\end{align*}
The right-hand sides above are equal due to \eqref{eqn:star 4}, thus proving \eqref{eqn:tar 7}.

\medskip 

\noindent To prove \eqref{eqn:tar 8}, pair both sides with an arbitrary $\tilde{E} \in \CS^+_{[\bp,\binfty]}$:
\begin{align*} 
&\langle - \otimes \tilde{E}, \text{LHS of \eqref{eqn:tar 8}} \rangle = \tilde{E}_1 \textcolor{blue}{F_1'} \langle \tilde{E}_2, \textcolor{blue}{F_2'} \rangle \\
&\langle - \otimes \tilde{E}, \text{RHS of \eqref{eqn:tar 8}} \rangle = \textcolor{blue}{F_2'} \textcolor{red}{\tilde{X}'} [\tilde{E}_2]_{\geq \bp} \langle \tilde{E}_1, \textcolor{blue}{\tilde{Y}'} \rangle = \textcolor{blue}{F_2'} [\tilde{E}_2]_{<\bp} [\tilde{E}_3]_{\geq \bp} \langle \tilde{E}_1, \textcolor{blue}{F_1'} \rangle \stackrel{\eqref{eqn:equation 1}}= \textcolor{blue}{F_2'} \tilde{E}_2 \langle \tilde{E}_1, \textcolor{blue}{F_1'} \rangle
\end{align*}
The right-hand sides above are equal due to \eqref{eqn:drinfeld double relation}, thus proving \eqref{eqn:tar 8}. \end{proof}

\medskip

\noindent For any $\bp^1, \bp^2$ in $\rr$, we may consider 
\begin{align}
&_{\bp^2}\CR_{\bp^1} = {\CR_{\bp^2}^{-1}}  \cdot {\CR_{\bp^1}} \in \CS \ \bar{\otimes} \ \CS \label{eqn:consider 1} \\
&_{\bar{\bp}^2}\CR_{\bp^1} = {_{\bar{\bp}^2}\CR} \cdot {\CR_{\bp^1}} \in \CS \ \widehat{\otimes} \ \CS  \label{eqn:consider 2}
\end{align}
and so \eqref{eqn:intertwine 1}-\eqref{eqn:intertwine 2} imply
\begin{equation}
\label{eqn:intertwine 3}
({_{\bp^2}\CR_{\bp^1}}) \cdot \Delta_{\bp^1}(-) = \Delta_{\bp^2}(-) \cdot ( {_{\bp^2}\CR_{\bp^1}} )
\end{equation}
\begin{equation}
\label{eqn:intertwine 4}
( {_{\bar{\bp}^2}\CR_{\bp^1}}) \cdot \Delta_{\bp^1}(-) = \Delta^{\text{op}}_{\bp^2}(-) \cdot ( {_{\bar{\bp}^2}\CR_{\bp^1}} )
\end{equation}

\medskip

\subsection{Factorizations of $R$-matrices}
\label{sub:factorizations of r-matrices}

To further refine the constructions above, consider for any $\bp \in \rr$ the canonical tensor
\begin{equation}
\label{eqn:slope r-matrix}
\CP_{\bp} \in \CB^{+}_{\bp} \ \bar{\otimes} \ \CB^{-}_{\bp} \subset \CS \ \bar{\otimes} \ \CS
\end{equation}
of the first pairing in \eqref{eqn:perfect pairing 1}. We then use \eqref{eqn:pair slopes basic} to obtain the identities
\begin{align} 
&\CR_{\bp^1} = \prod_{t \in (-\infty,t_1)}^{\rightarrow} \CP_{\bp(t)}^{\text{op}}\label{eqn:factorization r-matrix 1} \\
&_{\bar{\bp}^2}\CR = \prod_{t \in [t_2,\infty]}^{\rightarrow} \CP_{\bp'(t)}  \label{eqn:factorization r-matrix 2} 
\end{align}
for any catty-corner curves $\bp : (-\infty,t_1) \rightarrow \rr$ and $\bp' : [t_2,\infty] \rightarrow \rr$ with $\bp(t_1) = \bp^1$ and $\bp'(t_2) = \bp_2$. In \eqref{eqn:factorization r-matrix 1}, we set $\CP^{\text{op}} = \text{swap}(\CP)$. If we put the formulas above together, we obtain the following factorization of the universal $R$-matrix \eqref{eqn:consider 2}
\begin{equation}
\label{eqn:factorization r-matrix 3}
_{\bar{\bp}^2}\CR_{\bp^1} =  \prod_{t \in [t_2,\infty]}^{\rightarrow} \CP_{\bp'(t)} \prod_{t \in (-\infty,t_1)}^{\rightarrow} \CP_{\bp(t)}^{\text{op}}
\end{equation}
When $\bp^1=\bp^2$, \eqref{eqn:factorization r-matrix 3} generalizes the well-known formulas for factorizations of $R$-matrices of \cite{KT, KR, LS, LSS, R}, see also \cite{COZZ1, COZZ2, N Stable, OS, Z} for a geometric incarnation.

\medskip 

\begin{proposition}
\label{prop:catty}

For any $\bp^1 \geq \bp^2$ in $\rr$, we have the formula
\begin{equation}
\label{eqn:factorization r-matrix 4}
{_{\bp^2}\CR_{\bp^1}} = \prod_{t \in [t_2,t_1)}^{\rightarrow} \CP^{\emph{op}}_{\bp(t)}
\end{equation}
for any catty-corner curve $\bp : [t_2,t_1] \rightarrow \rr$ with $\bp(t_1) = \bp_1$, $\bp(t_2) = \bp_2$. Thus, $_{\bp^2}\CR_{\bp^1}$ is the canonical tensor of the perfect pairing
\begin{equation}
\label{eqn:hr}
\CS^-_{[\bp^2,\bp^1)} \otimes \CS^+_{[\bp^2,\bp^1)}\rightarrow \BK
\end{equation}

\end{proposition}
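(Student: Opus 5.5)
The plan is to reduce everything to the factorization \eqref{eqn:factorization r-matrix 1} of $\CR_{\bp}$, applied to a single catty-corner curve passing through both $\bp^2$ and $\bp^1$. The first step is to extend the given curve $\bp : [t_2,t_1] \rightarrow \rr$ to a catty-corner curve $\bp : (-\infty, t_1] \rightarrow \rr$. For instance, set $\bp(t) = \bp^2 + (t-t_2)\bone$ for $t < t_2$. This is strictly increasing in every coordinate, tends to $-\binfty$, and glues continuously and monotonically at $t_2$, so conditions \eqref{eqn:catty-corner 1}--\eqref{eqn:catty-corner 2} hold on $(-\infty,t_1]$. The existence of the curve on $[t_2,t_1]$ is where the hypothesis $\bp^1 \geq \bp^2$ enters.

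Next, apply \eqref{eqn:factorization r-matrix 1} twice to this one curve, once with endpoint $t_1$ and once with endpoint $t_2$. The products are ordered in increasing $t$, so the factors with $t<t_2$ stand to the left, and we can split
$$
\CR_{\bp^1} = \prod^{\rightarrow}_{t \in (-\infty,t_2)} \CP^{\text{op}}_{\bp(t)} \cdot \prod^{\rightarrow}_{t\in[t_2,t_1)} \CP^{\text{op}}_{\bp(t)} = \CR_{\bp^2} \cdot \prod^{\rightarrow}_{t\in[t_2,t_1)} \CP^{\text{op}}_{\bp(t)}.
$$
To conclude via \eqref{eqn:consider 1}, I need $\CR_{\bp^2}$ to be invertible in $\CS \ \bar{\otimes} \ \CS$. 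Each $\CP_{\bp}$ is $1\otimes 1$ plus terms of nonzero horizontal degree, because $\CB_{\bp|\b0}=\BK$. The product therefore has leading term $1 \otimes 1$, and its inverse is given by the geometric series, which converges degree by degree thanks to Lemma \ref{lem:finite-dimensional}. Left-multiplying by $\CR_{\bp^2}^{-1}$ then yields \eqref{eqn:factorization r-matrix 4}.

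For the second assertion, I apply the half-open analogue of \eqref{eqn:factorization 2}, namely $\bigotimes^{\rightarrow}_{t\in[t_2,t_1)} \CB^\pm_{\bp(t)} \cong \CS^\pm_{[\bp^2,\bp^1)}$. By \eqref{eqn:pair slopes basic}, the pairing \eqref{eqn:hr} is the tensor product of the slope pairings $\CB^-_{\bp(t)} \otimes \CB^+_{\bp(t)}$. Consequently its canonical tensor is the ordered product of the canonical tensors of the factors, which is $\prod^{\rightarrow} \CP^{\text{op}}_{\bp(t)}$. This uses the same mechanism by which \eqref{eqn:factorization r-matrix 1} was derived from \eqref{eqn:pair slopes basic}, namely dual bases given by ordered products of dual bases. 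Perfectness follows from Lemma \ref{lem:finite-dimensional} together with non-degeneracy, as in \eqref{eqn:perfect pairing 1}.

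The main obstacle I expect is the bookkeeping needed to justify the splitting. First, \eqref{eqn:factorization r-matrix 1} must hold for every catty-corner curve ending at $\bp^1$, including the extended one constructed above. This follows from the independence of the canonical tensor $\CR_{\bp^1}$ from the chosen factorization. Second, the uncountable ordered products must be read as well-defined elements of $\bar{\otimes}$: in each fixed degree only finitely many slopes contribute, again by Lemma \ref{lem:finite-dimensional}. Once this is verified degree by degree, the associativity of splitting the product at $t_2$ is immediate.
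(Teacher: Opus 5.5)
Your proposal is correct and follows the paper's proof. You extend the curve to $-\infty$, apply \eqref{eqn:factorization r-matrix 1} at both $t_1$ and $t_2$ to split off $\CR_{\bp^2}$, and read off the canonical tensor of \eqref{eqn:hr} from \eqref{eqn:pair slopes basic}. Your explicit extension of the curve and the invertibility of $\CR_{\bp^2}$ (via its leading term $1\otimes 1$) are details the paper leaves implicit.
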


\medskip 

\begin{proof}

The first statement of the Proposition is an immediate consequence of \eqref{eqn:factorization r-matrix 1}, since any catty-corner curve can be extended to $-\infty$. The second statement of the Proposition is an immediate consequence of \eqref{eqn:pair slopes basic}. \end{proof}

\medskip 

\subsection{Infinite slope revisited}
\label{sub:infinite slope 2}

Because of the description of the universal $R$-matrix ${_{\bp^2}\CR_{\bp^1}}$ as the canonical tensor of the pairing \eqref{eqn:hr}, all of its homogeneous summands are finite sums (this statement actually holds for all $\bp^1,\bp^2 \in \rr$). This is not true for ${_{\bar{\bp}^2}\CR_{\bp^1}}$, whose homogeneous summands lie in the completion
$$
\CS \ \widehat{\otimes} \ \CS
$$
defined in \eqref{eqn:hat}. However, there is a bigger problem than the necessity of completions, which we will now address. While the slope $R$-matrices \eqref{eqn:slope r-matrix} are well-defined for all $\bp \in \rr$, when $\bp = \binfty$ we define
\begin{equation}
\label{eqn:infinite factorization}
\CP_{\binfty} = \CP'_{\binfty} \CP''_{\binfty}
\end{equation}
where $\CP'_{\binfty}$ and $\CP''_{\binfty}$ are the canonical tensors of
\begin{equation}
\label{eqn:first pairing}
\BK[\kappa^+_i]_{i \in I} \otimes \BK[\kappa^-_i]_{i \in I} \xrightarrow{\langle \cdot, \cdot \rangle} \BK
\end{equation}
and 
\begin{equation}
\label{eqn:second pairing}
\BK[p_{i,d}]_{i \in I, d\geq 1} \otimes \BK[p_{i,-d}]_{i \in I, d \geq 1} \xrightarrow{\langle \cdot, \cdot \rangle} \BK
\end{equation}
respectively. And while the canonical tensor $\CP_{\binfty}''$ of \eqref{eqn:second pairing} is well-defined (subject to the non-degeneracy assumption in Subsection \ref{sub:infinite slope 1}) and rather easy to compute, there is no reasonable canonical tensor of the pairing \eqref{eqn:first pairing}. The workaround for this issue is the usual one: replace the ground field $\BK$ with $\BK[[\hbar]]$, and replace
$$
\kappa^\pm_i \quad \text{by} \quad e^{\hbar H^\pm_i}
$$
for primitive elements $\{H^+_i, H^-_i\}_{i \in I}$. If the $|I| \times |I|$ matrix with coefficients $\langle H^+_i, H^-_j \rangle$ is invertible, then the pairing
\begin{equation}
	\label{eqn:third pairing}
	\BK[[\hbar]][H^+_i]_{i \in I} \otimes \BK[[\hbar]][H^-_i]_{i \in I} \xrightarrow{\langle \cdot, \cdot \rangle} \BK((\hbar))
\end{equation}
has a canonical tensor, which serves as a replacement for the factor $\CP'_{\binfty}$ in \eqref{eqn:infinite factorization}. We will ignore this issue in what follows, as both the problem and its workaround are well-known and similar in the case at hand to the classic cases such as $U_q(\fsl_2)$.

\medskip

\subsection{Quantum affine algebras}
\label{sub:quantum affine}

Let $\fg$ be a simple finite-dimensional Lie algebra, with a set of simple roots $\{\alpha_i\}_{i \in I}$ and associated Cartan matrix 
$$
C = \left(c_{ij} = \frac {d_{ij}}{d_{i}} \in \BZ \right)_{i,j \in I}
$$
where we abbreviate $d_{ij} = (\alpha_i, \alpha_j)$ and $d_i = \frac {(\alpha_i, \alpha_i)}2$. We work over the field $\BK = \BC$, fix $q \in \BC^*$ not a root of unity, and consider the rational functions
\begin{equation}
\label{eqn:zeta particular}
\zeta_{ij}(x) = \frac {(q^{-d_{ij}}-x)(-x)^{-\delta_{i>j}}}{(1-x)^{\delta_{ij}}}
\end{equation}
with respect to an arbirary total order $<$ on $I$. Then the corresponding quantum loop algebra is denoted by
\begin{equation}
\label{eqn:quantum loop algebra}
\UU = \Big(\U \text{ for the choice \eqref{eqn:zeta particular}} \Big) 
\end{equation}
\footnote{We tacitly modify the pairing of $\U$ by dividing the RHS of \eqref{eqn:pair formula}-\eqref{eqn:pair formula opposite} by the constant
$$
\left(q_{i_1}^{-1} - q_{i_1} \right) \dots \left( q_{i_n}^{-1} - q_{i_n}\right)
$$
where $q_i = q^{d_i}$, in order to match the existing conventions for quantum affine algebras. This has a trivial effect on our algebras, which can be countered by rescaling either the $e$ or $f$ generators.} Drinfeld constructed an isomorphism
$$
U_q(\widehat{\fg})_{c=1} =  \BC \Big \langle e_i,f_i, \kappa_i, \kappa_i^{-1}, e_0, f_0 \Big \rangle_{i \in I} \Big/ \Big(\text{relations}\Big) 
$$
\begin{equation}
\label{eqn:drinfeld iso}
\xrightarrow{\Xi} \UU \Big/ \Big(\kappa_i^+ \kappa_i^- = 1 \Big)_{i \in I}
\end{equation}
by sending for all $i \in I$
\begin{equation}
\label{eqn:send}
\kappa_i^{\pm 1} \mapsto \kappa_i^\pm, \qquad e_i \mapsto e_{i,0}, \qquad f_i \mapsto f_{i,0} 
\end{equation}
\begin{align} 
&e_{0} \mapsto c (\kappa_{\bth}^+)^{-1} \textcolor{blue}{F} \quad \ \ \text{ where } \quad \textcolor{blue}{F} = [f_{i_1,0},[f_{i_2,0},\dots,[f_{i_{k},0}, f_{j,1}]_q\dots]_q]_q \label{eqn:e bullet} \\
&f_{0} \mapsto c' (\kappa_{\bth}^-)^{-1} \textcolor{red}{E'} \quad \text{ where } \quad \textcolor{red}{E'} = [e_{i_1,0},[e_{i_2,0},\dots,[e_{i_{k},0}, e_{j,-1}]_q\dots]_q]_q \label{eqn:f bullet}
\end{align} 
where $c,c'$ are non-zero constants, we write
$$
\kappa_{\bth}^\pm = \prod_i (\kappa_i^\pm)^{\theta_i}
$$
with $\bth \in \nn$ being the longest root and  $[x,y]_q = xy - q^{(\hdeg x, \hdeg y)} yx$. In the formulas above, $i_1,\dots,i_k,j \in I$ are such that
$$
\bs^{i_1}+\dots+\bs^{i_k} + \bs^j = \bth
$$
and $\bs^{i_a}+\dots+\bs^{i_k} +\bs^j$ is a root $\forall a$. It is clear that $\deg \textcolor{blue}{F} = (-\bth,1)$ and $\deg \textcolor{red}{E'} = (\bth,-1)$. We proved in \cite[Proposition 3.23]{N Cat} that the isomorphism \eqref{eqn:drinfeld iso} sends
\begin{align} 
&\Xi \left(U_q(\widehat{\fb}^+)_{c=1} \right) = \CA^{\geq \b0} \label{eqn:borel positive} \\ 
&\Xi \left(U_q(\widehat{\fb}^-)_{c=1} \right) = \CA^{\leq \b0} \label{eqn:borel negative} 
\end{align}
where $U_q(\widehat{\fb}^+)$ and $U_q(\widehat{\fb}^-)$ denote the Borel subalgebras of $U_q(\widehat{\fg})$ generated by $\{e_i,e_0,\kappa_i\}_{i \in I}$ and $\{f_i, f_0,\kappa_i^{-1}\}_{i \in I}$, respectively.

\medskip

\begin{theorem}
\label{thm:affine}

The isomorphism $\Xi$ intertwines the Drinfeld-Jimbo coproduct on the quantum affine algebra with the coproduct $\Delta_{\b0}$ on $\UU \cong \CS$.

\end{theorem}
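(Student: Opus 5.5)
Both coproducts are algebra homomorphisms and $\Xi$ is an algebra isomorphism. It therefore suffices to check that $(\Xi\otimes\Xi)\circ\Delta_{DJ} = \Delta_{\b0}\circ\Xi$ on the generators of $U_q(\widehat{\fg})_{c=1}$. Since $\Xi$ maps the Borel subalgebras onto $\CA^{\geq \b0}$ and $\CA^{\leq \b0}$ by \eqref{eqn:borel positive}-\eqref{eqn:borel negative}, and $\Delta_{\b0}$ preserves each of these, we can treat the positive generators $\{e_i,e_0,\kappa_i\}$ and the negative generators $\{f_i,f_0,\kappa_i^{-1}\}$ separately. An alternative route is available: the Drinfeld double structure plus nondegeneracy of \eqref{eqn:footnote} would let us deduce the negative Borel case from the positive one by duality, since both $\Delta_{DJ}$ and $\Delta_{\b0}$ make the respective Borels dually paired Hopf algebras. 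We use this only as a cross-check and verify the generators directly.

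\emph{Cartan generators.} These are immediate from $\Delta_{\b0}(\kappa_i^\pm)=\kappa_i^\pm\otimes\kappa_i^\pm$.

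\emph{The generators $e_i\mapsto e_{i,0}$ and $f_i\mapsto f_{i,0}$.} The element $e_{i,0}$ has degree $(\bs^i,0)$, so it lies in the slope subalgebra $\CB^+_{\b0}$. By Claim \ref{claim:extend}, $\Delta_{\b0}$ restricts to \eqref{eqn:coproduct slope plus} there. For the single-variable shuffle element $z_{i1}^0$, the limit \eqref{eqn:slope e geq} is easy to compute: the sum over $\b0\le\bm\le\bs^i$ gives $\kappa_i^+\otimes e_{i,0}+e_{i,0}\otimes 1$, with the factor on the $\kappa$ side matching the convention for $\bm=\b0$ versus $\bm=\bs^i$. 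This is the Drinfeld-Jimbo formula, up to the standard choice of convention for which side carries $\kappa$, which \eqref{eqn:coproduct e} fixes. The case $f_{i,0}\in\CB^-_{\b0}$ is the same, using \eqref{eqn:coproduct slope minus}.

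\emph{The affine generators $e_0\mapsto c(\kappa_\bth^+)^{-1}\textcolor{blue}{F}$ and $f_0\mapsto c'(\kappa^-_\bth)^{-1}\textcolor{red}{E'}$.} This is the main step. Here $\textcolor{blue}{F}\in\CS^-_{(-\binfty,\b0)}$ has degree $(-\bth,1)$, and we use \eqref{eqn:two 2}: $\Delta_{\b0}(\textcolor{blue}{F})=\textcolor{blue}{Y}\otimes\textcolor{red}{X}\textcolor{blue}{F_1}$, with $\langle \textcolor{red}{X},\tilde F'\rangle\langle\tilde E',\textcolor{blue}{Y}\rangle=\langle\tilde E',\tilde F'\textcolor{blue}{F_2}\rangle$. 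The target is $\Delta_{DJ}(e_0)=e_0\otimes 1+k_0\otimes e_0$, i.e. up to Cartan factors we need
\[
\Delta_{\b0}(\textcolor{blue}{F}) = \textcolor{blue}{F}\otimes(\text{Cartan}) + (\text{Cartan})\otimes\textcolor{blue}{F}.
\]
The plan is a degree argument. The summands of $\Delta(\textcolor{blue}{F})$ are $\textcolor{blue}{F_1}\otimes\textcolor{blue}{F_2}$, with $\hdeg$'s summing to $-\bth$ and $\vdeg$'s summing to $1$. By \eqref{eqn:interact 3} each $\textcolor{blue}{F_1}$ has slope $<\b0$, which for nonzero $\hdeg$ means $\vdeg\ge1$. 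Next, $\textcolor{red}{X}\in\CS^+_{[\b0,\binfty]}$ and $\textcolor{blue}{Y}\in\CS^-_{(-\binfty,\b0)}$ have opposite-sign horizontal degrees. The pairing identity forces $\deg\textcolor{red}{X}+\deg\textcolor{blue}{Y}=\deg\textcolor{blue}{F_2}$, so the $\vdeg$ of $\textcolor{blue}{Y}$ is $\ge1$ unless $\textcolor{blue}{Y}$ is Cartan, while $\textcolor{red}{X}$ has $\vdeg\ge0$. The total vertical degree is $1$, so only two configurations survive. In the first, $\textcolor{blue}{F_1}$ is Cartan and $\textcolor{blue}{Y}$ has degree $(-\bth,1)$. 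In the second, $\textcolor{blue}{F_1}=\textcolor{blue}{F}$ and $\textcolor{blue}{Y},\textcolor{red}{X}$ are Cartan.

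The hard part is ruling out mixed terms. Such terms would have $\textcolor{red}{X}$ of nonzero positive $\hdeg$ and vdeg $0$, so $\textcolor{red}{X}\in\CB^+_{\b0}$, which is the finite $U_q(\fn^+)$. Paired with $\textcolor{blue}{Y}$ of degree $(-\bth+\beta,1)$, this would require $\textcolor{blue}{F_1}$ of degree $(-\beta,0)$ with $\beta>0$. That is impossible, since slope $<\b0$ forces $\vdeg\ge1$ for nonzero $\hdeg$ (a strictness claim to be checked against Definition \ref{def:slopes}). In the first surviving configuration, the defining identity with $\tilde E'$ of degree $(\bth,-1)$ gives $\textcolor{blue}{Y}=\textcolor{blue}{F}$ times the Cartan part of $\textcolor{blue}{F_2}$, namely $\ph^-$-leading terms, i.e. $\kappa^-$ factors. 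We then absorb these into the prefactor $(\kappa_\bth^+)^{-1}$ using $\kappa^+\kappa^-=1$ to match $k_0=(\kappa_\bth^+)^{-1}$. Finally, $\textcolor{red}{E'}$ is handled symmetrically via \eqref{eqn:two 3}.
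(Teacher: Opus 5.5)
Your overall plan matches the paper's. You reduce to the generators, read off $\Delta_{\b0}(e_{i,0})$ and $\Delta_{\b0}(f_{i,0})$ from Claim \ref{claim:extend}, and prove $\Delta_{\b0}(F)=F\otimes\kappa^+_{\bth}+1\otimes F$ by sorting the terms of $\Delta(F)$ inside $\Delta_{\b0}(F)=Y\otimes XF_1$. Your vertical-degree bookkeeping works for the terms of $\Delta(F)$ with $F_1\neq 1$. It kills those of intermediate horizontal degree and those whose $F_2$ is a higher mode of $\ph^-$, leaving only $F\otimes\kappa^-_{\bth}$.

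\textbf{The gap is in the term $1\otimes F$.} There $F_1=1$ and $F_2=F$. Your own constraints are $\deg X+\deg Y=(-\bth,1)$, $X$ of vertical degree $\geq 0$, and $Y$ scalar or of vertical degree $\geq 1$. These permit $X$ of degree $(\beta,0)$ together with $Y$ of degree $(-\bth-\beta,1)$ for every $\beta\in\nn$. The extra horizontal degree is absorbed by $Y$, not by $F_1$. So your claim that such terms ``would require $F_1$ of degree $(-\beta,0)$'' is a miscount, and the slope bound on $F_1$ says nothing about them.

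In fact no slope or degree argument can exclude these terms. Nothing you use distinguishes $\bth$ from another positive root $\alpha$. For an element of degree $(-\alpha,1)$ with $\alpha<\bth$, such terms genuinely occur: it is a root vector of the non-simple affine root $\delta-\alpha$, hence not skew-primitive for the Drinfeld--Jimbo coproduct.

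The missing input is that $\bth$ is the highest root. The paper uses it in this form: $\CA^{\leq \b0}\cong U_q(\widehat{\fb}^-)_{c=1}$ has no elements of vertical degree $-1$ and horizontal degree $>\bth$, since such an element involves exactly one $f_0$, of degree $(\bth,-1)$. Hence there is no $\tilde E'\in\CS^+_{(-\binfty,\b0)}$ of degree $(\bth+\beta,-1)$ with $\beta>0$, and by perfectness of the pairing no such $Y$. For $E'$ you need the mirror statement that $\CA^{\geq\b0}$ has nothing of vertical degree $1$ and horizontal degree $<-\bth$; saying ``symmetrically'' does not supply it.

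\textbf{Your Cartan bookkeeping also needs correcting.} The theorem requires an exact identity, not one ``up to Cartan factors''.

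Once $\beta=0$ is forced, take $\tilde E'$ of degree $(\bth,-1)$. The only component of $\Delta(\tilde E')$ whose second factor pairs non-trivially with $F$ is $\kappa^+_{\bth}\otimes\tilde E'$. So $X\langle\tilde E',Y\rangle=\kappa^+_{\bth}\langle \tilde E',F\rangle$, i.e. $Y\otimes X=F\otimes\kappa^+_{\bth}$. The Cartan factor sits in $X$, and it is $\kappa^+_\bth$.

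For the term $F\otimes\kappa^-_{\bth}$, the identity $X\langle\tilde E',Y\rangle=[\tilde E'_1]_{\geq\b0}\langle\tilde E'_2,\kappa^-_{\bth}\rangle$ is non-zero only for scalar $\tilde E'$. So $Y\otimes X=1\otimes 1$, and $\kappa^-_\bth$ disappears entirely, giving $1\otimes F$.

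Your suggestion that $Y$ equals $F$ times $\kappa^-$ factors, to be absorbed later via $\kappa^+\kappa^-=1$, cannot be right. First, $Y\in\CS^-_{(-\binfty,\b0)}$ contains no Cartan elements. Second, $\Delta_{\b0}$ takes values in $\CA^{\geq\b0}\ho\CA^{\geq\b0}$, where no $\kappa^-$ occurs.
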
 

\medskip 

\begin{proof} To show that the Drinfeld-Jimbo coproduct matches the coproduct $\Delta_{\b0}$, it is enough to do so on the generators $\{e_i,f_i\}_{i \in I \sqcup 0}$.  This is obvious for $i \in I$, since relations \eqref{eqn:coproduct slope plus} and \eqref{eqn:coproduct slope minus} give us
\begin{align*} 
&\Delta_{\b0}(e_{i,0}) = \kappa_i^+ \otimes e_{i,0} + e_{i,0} \otimes 1 \\
&\Delta_{\b0}(f_{i,0}) = 1 \otimes f_{i,0} + f_{i,0} \otimes \kappa_i^-
\end{align*}
and this matches the Drinfeld-Jimbo coproduct. For the generators $e_{0}$ and $f_{0}$, it suffices to show that
\begin{align}
&\Delta_{\b0}(\textcolor{blue}{F}) = \textcolor{blue}{F} \otimes \kappa_{\bth}^+ + 1 \otimes \textcolor{blue}{F} \label{eqn:dj 1} \\ 
&\Delta_{\b0}(\textcolor{red}{E'}) = \kappa_{\bth}^- \otimes \textcolor{red}{E'} + \textcolor{red}{E'} \otimes 1 \label{eqn:dj 2}
\end{align}
To prove \eqref{eqn:dj 1}, note that
\begin{equation}
\label{eqn:rob 1}
\Delta(\textcolor{blue}{F}) = \textcolor{blue}{F}_1 \otimes \textcolor{blue}{F}_2 = 1 \otimes \textcolor{blue}{F} + \textcolor{blue}{F} \otimes \kappa^-_{\bth} + \sum_k \textcolor{blue}{G_k} \otimes \textcolor{blue}{H_k}
\end{equation}
for various $\textcolor{blue}{G_k} \in \CA^{\geq \b0}$ and $\textcolor{blue}{H_k} \in \CA^{\leq \b0}$ of horizontal degree contained strictly between $\b0$ and $-\bth$ (or alternatively we may have $\hdeg \textcolor{blue}{H_k} = \b0$ and $\vdeg \textcolor{blue}{H_k}<0$). By formula \eqref{eqn:declare 1 def}, there are various $\textcolor{red}{X} \in \CS^+_{[\b0,\binfty]}$, $\textcolor{blue}{Y} \in \CS^-_{(-\binfty,\b0)}$ such that
\begin{equation}
\label{eqn:rob 2}
\Delta_{\b0}(\textcolor{blue}{F}) =  \textcolor{blue}{Y} \otimes \textcolor{red}{X} \textcolor{blue}{F_1}, \quad \text{where} \quad \textcolor{red}{X} \langle \tilde{E}', \textcolor{blue}{Y} \rangle =[ \tilde{E}'_1 ]_{\geq \b0} \langle \tilde{E}'_2, \textcolor{blue}{F_2}\rangle, \  \forall \tilde{E}' \in \CS^+_{(-\binfty,\b0)}
\end{equation}
Let us plug the three terms in the right-hand side of \eqref{eqn:rob 1} into the right-hand side of \eqref{eqn:rob 2}, and analyze their contributions to $\Delta_{\b0}(\textcolor{blue}{F})$:

\medskip

\begin{itemize}[leftmargin=*]
	
\item $1 \otimes \textcolor{blue}{F}$: for any homogeneous element $\tilde{E}' \in \CS^+_{(-\binfty,\b0)}$, the right-hand side of 
\begin{equation}
\label{eqn:jd 1}
\textcolor{red}{X} \langle \tilde{E}', \textcolor{blue}{Y} \rangle =[ \tilde{E}'_1 ]_{\geq \b0} \langle \tilde{E}'_2, \textcolor{blue}{F}\rangle
\end{equation}
can be non-zero only if $\tilde{E}'$ is proportional to $\textcolor{red}{E'}$ of  \eqref{eqn:f bullet}. This is because
$$
\CA^{\leq \b0} \cong U_q(\widehat{\fb}^-)_{c=1}
$$
has no elements $\tilde{E}'$ of vertical degree $-1$ and horizontal degree $> \bth$, and only such elements could afford an $\tilde{E}'_2$ of degree $(\bth,-1)$. Therefore, the only non-zero contribution in \eqref{eqn:jd 1} arises from
$$
\tilde{E}' \sim \textcolor{red}{E} \quad \text{and} \quad \tilde{E}'_1 \otimes \tilde{E}'_2 \sim \kappa_{\bth}^+ \otimes  \textcolor{red}{E}
$$
We conclude that $\textcolor{blue}{Y} \otimes \textcolor{red}{X} = \textcolor{blue}{F} \otimes \kappa_{\bth}^+$, which leads to the first term in \eqref{eqn:dj 1}. 

\medskip 

\item $\textcolor{blue}{F} \otimes \kappa_{\bth}^-$: in this case, the condition 
$$
\textcolor{red}{X} \langle \tilde{E}', \textcolor{blue}{Y} \rangle =[ \tilde{E}'_1 ]_{\geq \b0} \langle \tilde{E}'_2, \kappa_{\bth}^-\rangle
$$
implies $\textcolor{blue}{Y} \otimes \textcolor{red}{X} = 1 \otimes 1$, which yields the second term in the right-hand side of \eqref{eqn:dj 1}. \\

\item $\textcolor{blue}{G_k} \otimes \textcolor{blue}{H_k}$ for various $\textcolor{blue}{G_k} \in \CA^{\geq \b0}$ and $\textcolor{blue}{H_k} \in \CA^{\leq \b0}$ of horizontal degree contained strictly between $\b0$ and $-\bth$: in this case, we have for all $\tilde{E}' \in \CS^+_{(-\binfty,\b0)}$
$$
\textcolor{red}{X} \langle \tilde{E}', \textcolor{blue}{Y} \rangle =[ \tilde{E}'_1 ]_{\geq \b0} \langle \tilde{E}'_2, \textcolor{blue}{H_k}\rangle = 0
$$ 
because $\CA^{\leq \b0}$ contains no elements of vertical degree $>0$ and negative horizontal degree (which are the only ones that could pair non-trivially with $\tilde{E}_2' \in \CS^+_{(-\binfty,\b0)}$). Thus, in this case there is no contribution to the right-hand side of \eqref{eqn:dj 1}.

\end{itemize}

\medskip

\noindent Now let us prove \eqref{eqn:dj 2}. To this end, note that
\begin{equation}
\label{eqn:rob 3}
\Delta(\textcolor{red}{E'}) = \textcolor{red}{E'_1} \otimes \textcolor{red}{E'_2} = \textcolor{red}{E'} \otimes 1 + \kappa^+_{\bth} \otimes \textcolor{red}{E'} + \sum_k \textcolor{red}{G_k} \otimes \textcolor{red}{H_k}
\end{equation}
for various $\textcolor{red}{G_k} \in \CA^{\geq \b0}$ and $\textcolor{red}{H_k} \in \CA^{\leq \b0}$ of horizontal degree contained strictly between $\b0$ and $\bth$ (or alternatively we may have $\hdeg \textcolor{red}{G_k} = \b0$ and $\vdeg \textcolor{red}{G_k}>0$). By formula \eqref{eqn:declare 2 def}, there are various $\textcolor{red}{X'} \in \CS^+_{(-\binfty,\b0)}$, $\textcolor{blue}{Y'} \in \CS^-_{[\b0,\binfty]}$ such that
\begin{equation}
\label{eqn:rob 4}
\Delta_{\b0}(\textcolor{red}{E'}) =  \textcolor{blue}{Y'} \textcolor{red}{E_2'} \otimes \textcolor{red}{X'}, \quad \text{where} \quad \textcolor{blue}{Y'} \langle \textcolor{red}{X'}, \tilde{F} \rangle  = [ \tilde{F}_2 ]_{\geq \b0}  \langle \textcolor{red}{E_1'}, \tilde{F}_1 \rangle, \  \forall \tilde{F} \in \CS^-_{(-\binfty,\b0)}
\end{equation}
Let us plug the three terms in the right-hand side of \eqref{eqn:rob 3} into the right-hand side of \eqref{eqn:rob 4}, and analyze their contributions to $\Delta_{\b0}(\textcolor{red}{E'})$:

\medskip

\begin{itemize}[leftmargin=*]
	
\item $\textcolor{red}{E'} \otimes 1$: for any homogeneous element $\tilde{F} \in \CS^-_{(-\binfty,\b0)}$, the right-hand side of 
\begin{equation}
\label{eqn:jd 2}
\textcolor{blue}{Y'} \langle \textcolor{red}{X'}, \tilde{F} \rangle  = [ \tilde{F}_2 ]_{\geq \b0}  \langle \textcolor{red}{E'}, \tilde{F}_1 \rangle
\end{equation}
can be non-zero only if $\tilde{F}$ is proportional to $\textcolor{blue}{F}$ of  \eqref{eqn:e bullet}. This is because
$$
\CA^{\geq \b0} \cong U_q(\widehat{\fb}^+)_{c=1}
$$
has no elements $\tilde{F}$ of vertical degree $1$ and horizontal degree $< - \bth$, and only such elements could afford an $\tilde{F}_1$ of degree $(-\bth,1)$. Therefore, the only non-zero contribution in \eqref{eqn:jd 2} arises from
$$
\tilde{F} \sim \textcolor{blue}{F} \quad \text{and} \quad \tilde{F}_1 \otimes \tilde{F}_2 \sim \textcolor{blue}{F} \otimes \kappa_{\bth}^- 
$$
We conclude that $\textcolor{blue}{Y'} \otimes \textcolor{red}{X'} = \kappa_{\bth}^- \otimes \textcolor{red}{E'}$, which leads to the first term in \eqref{eqn:dj 2}. 

\medskip 

\item $\kappa_{\bth}^+ \otimes \textcolor{red}{E'}$: in this case, the condition 
$$
\textcolor{blue}{Y'} \langle \textcolor{red}{X'}, \tilde{F} \rangle  = [ \tilde{F}_2 ]_{\geq \b0}  \langle \kappa_{\bth}^+, \tilde{F}_1 \rangle
$$
implies $\textcolor{blue}{Y'} \otimes \textcolor{red}{X'} = 1 \otimes 1$, which yields the second term in the right-hand side of \eqref{eqn:dj 2}. \\

\item $\textcolor{red}{G_k} \otimes \textcolor{red}{H_k}$ for various $\textcolor{red}{G_k} \in \CA^{\geq \b0}$ and $\textcolor{red}{H_k} \in \CA^{\leq \b0}$ of horizontal degree contained strictly between $\b0$ and $-\bth$: in this case, we have for all $\tilde{F} \in \CS^-_{(-\binfty,\b0)}$
$$
\textcolor{blue}{Y'} \langle \textcolor{red}{X'}, \tilde{F} \rangle  = [ \tilde{F}_2 ]_{\geq \b0}  \langle \textcolor{red}{G_k}, \tilde{F}_1 \rangle = 0
$$ 
because $\CA^{\geq \b0}$ contains no elements of vertical degree $<0$ and positive horizontal degree (which are the only ones that could pair non-trivially with $\tilde{F}_1 \in \CS^-_{(-\binfty,\b0)}$). Thus, in this case there is no contribution to the right-hand side of \eqref{eqn:dj 2}.
	
\end{itemize}

\end{proof} 	

\medskip

\noindent In principle, following the proof above would lead to explicit formulas for $\Delta_{\b0}$ akin to the formulas for the Drinfeld-Jimbo coproduct obtained in \cite[Theorem 10.14]{FiT} for $\fsl_n$ (or for general $\fg$, following in the footsteps of \cite[Theorem 4]{Da}).

\medskip 

\begin{remark}
\label{rem:matrix}

We recall the case $\fg = \widehat{\fsl}_n$, when there exists a well-known two-parameter version of \eqref{eqn:quantum loop algebra} that yields quantum toroidal $\fgl_n$ (see \cite{FT}). In this case, we constructed in \cite{N Tale} a so-called double matrix shuffle algebra $\CA = \CA^\geq \otimes \CA^\leq$. We showed that $\CA$ is a Drinfeld double with respect to bialgebra structures on $\CA^{\geq}$, $\CA^{\leq}$, and a bialgebra pairing between them, that were defined in \loccitt. Using the results of \cite{N PBW}, we showed that there is an algebra isomorphism
\begin{equation}
\label{eqn:matrix iso}
\CA \cong \Big(\text{quantum toroidal }\fgl_n\Big)
\end{equation}
with respect to which the subalgebras $\CA^{\geq}, \CA^{\leq}$ map isomorphically onto the subalgebras $\CA^{\geq \b0}, \CA^{\leq \b0}$ in the present paper. The fact that these isomorphisms respect the pairings is due to the fact that all our algebras factor into slope subalgebras \eqref{eqn:double slope} that are isomorphic to the quantum affine algebra of $\fsl_{\frac nd}^{\otimes d}$ for various $d|n$ (\cite{N PBW, N Tale}), and they inherit the usual bialgebra pairing from the aforementioned quantum affine algebras. The fact that \eqref{eqn:matrix iso} intertwines the coproduct of \loccit and the coproduct $\Delta_{\b0}$ defined in the present paper is due to the fact that both coproducts are dual to compatible algebra structures under one and the same non-degenerate pairing.

\end{remark}

\medskip

\subsection{The pairing explicit}
\label{eqn:pairing}

We conclude this section with an explicit formula for the pairing \eqref{eqn:pair shuffle} in the case of the quantum affine algebra $U_q(\widehat{\fg})$ associated to a simple Lie algebra $\fg$, which will provide an answer to Problem 2 of Subsection \ref{sub:techniques} in this particular case. The contents of the present Subsection will not be used anywhere else in this paper, but we include them to fill a gap in the literature. We keep the notation as in Subsection \ref{sub:quantum affine}, and we recall from \cite[Theorem 5.19]{NT} that
\begin{equation}
\label{eqn:wheel}
\CS^\pm = \left\{E \in \CV^\pm \text{ s.t. } E\Big|_{z_{i1} = z_{j1} q^{d_{ij}}, z_{i2} = z_{j1} q^{d_{ij}+d_{ii}}, \dots,  z_{i, 1-c_{ij}} = z_{j1} q^{-d_{ij}}} = 0, \forall i \neq j\right\}
\end{equation}
(if $E$ above does not have enough variables to make the specialization, the vanishing condition is vacuous). The conditions on $E$ in \eqref{eqn:wheel} are called \emph{wheel conditions} and were introduced by \cite{E}, following the seminal work of \cite{FO}. The goal of Problem 1 in Subsection \ref{sub:techniques} is to find descriptions of $\CS^\pm$ for general $(I,\BK,\zeta_{ij}(x))$ that are similar to \eqref{eqn:wheel} above. As for Problem 2, we will provide in Lemma \ref{lem:contour} an analogue of  \cite[Proposition 2.22]{N Symmetric}, \cite[Proposition 3.3]{N Wheel}, \cite[Proposition 3.10 and formula (3.26)]{N Reduced}. 

\medskip 

\noindent In the upcoming Lemma, we will use the notation
\begin{equation}
\label{eqn:iterated residue}
\underset{(z_1,\dots,z_n) = (w q^{n-1}, \dots, w q^{1-n})}{\text{Res}} = \underset{z_1 = z_2 q^2}{\text{Res}} \  \underset{z_2 = z_3 q^2}{\text{Res}}  \dots \ \underset{z_{n-1}=z_nq^2}{\text{Res}}
\end{equation}
followed by relabeling the variable $z_n$ as $w q^{1-n}$. Moreover, 
$$
\int_{|w| = r} G(w)
$$
denotes the contour integral of $G(w)\frac {dw}{2\pi i w}$ over the circle of radius $r$ centered at the origin. In all subsequent contour integrals, we assume that $|q|>1$, although this is not important due to our result being purely algebraic. Write $q_i = q^{d_i}$ for all $i \in I$.

\medskip

\begin{lemma}
\label{lem:contour}

For any $E \in \CS_{\bn,d}$ and $F \in \CS_{-\bn,-d}$, we have for any $r \in \BR_{>0}$
\begin{multline}
\label{eqn:contour}
\Big \langle E, F \Big \rangle = \sum^{I\text{-tuples of partitions}}_{\left(n_{i1} \geq n_{i2} \geq \dots \right) \vdash n_i, \forall i \in I} \\ \int_{|w_{i1}| = |w_{i2}| = \dots = r} \emph{Res } \left[ \frac {E(z_{i1},\dots,z_{in_i})_{i \in I} F(z_{i1},\dots,z_{in_i})_{i \in I}}{\prod_{(i,a) \neq (j,b)} \zeta_{ij} \left(\frac {z_{ia}}{z_{jb}} \right)} \right]
\end{multline}
where the residue above is defined by taking
$$
\underset{\left(z_{i1},\dots,z_{in_{i1}}\right) = \left(w_{i1} q_i^{n_{i1}-1}, \dots, w_{i1} q_i^{1-n_{i1}} \right)}{\emph{Res}} \ \  \underset{\left(z_{i,n_{i1}+1},\dots,z_{i,n_{i1}+n_{i2}}\right) = \left(w_{i2} q_i^{n_{i2}-1}, \dots, w_{i2} q_i^{1-n_{i2}}\right)}{\emph{Res}} \ \dots
$$
over all $i \in I$, with the notation as in \eqref{eqn:iterated residue}. 

\end{lemma}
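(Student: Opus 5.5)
The plan is to reduce to the case where $F$ is a product of generators, by non-degeneracy and linearity, and then compute the right-hand side of \eqref{eqn:contour} by contour deformation. Since $\CS^-$ is spanned by the elements \eqref{eqn:spherical} (in the opposite version), and both sides of \eqref{eqn:contour} are linear in $F$, it suffices to take $F = f_{i_1,d_1} * \dots * f_{i_n,d_n}$, i.e.
$$
F = \text{Sym}\left[ z_1^{d_1}\cdots z_n^{d_n} \prod_{1 \leq a<b \leq n} \zeta_{i_b i_a}\left(\frac{z_b}{z_a}\right) \right].
$$
For such $F$, the left-hand side is given by \eqref{eqn:pair formula opposite} (up to the normalization constant of the footnote in Subsection \ref{sub:quantum affine}):
$$
\langle E,F\rangle = \int_{|z_1| \ll \dots \ll |z_n|} \frac{z_1^{d_1}\cdots z_n^{d_n} E(z_1,\dots,z_n)}{\prod_{a<b}\zeta_{i_ai_b}(z_a/z_b)}.
$$
Because $E$ is color-symmetric, the symmetrization in $F$ can be absorbed in the integrand of \eqref{eqn:contour}. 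Each term of $\text{Sym}$ then gives $z^{\bd}E\prod_{a<b}\zeta_{i_bi_a}(z_b/z_a)\big/\prod_{(i,a)\neq(j,b)}\zeta$, which simplifies to $z^{\bd}E/\prod_{a<b}\zeta_{i_ai_b}(z_a/z_b)$. This is exactly the integrand on the left, but now it is integrated with all variables on the same circle $|z|=r$, rather than on the nested contours.

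Next, I would move the contours from $|z_1|\ll\dots\ll|z_n|$ to all lying on $|z_a| = r$, one variable at a time, tracking residues. The poles of $1/\zeta_{i_ai_b}(z_a/z_b)$ sit at $z_a = z_b q^{-d_{ij}}$, with the factors $(1-x)$ in the numerator cancelling the diagonal. This is the standard argument of \cite[Proposition 2.22]{N Symmetric} and \cite[Proposition 3.3]{N Wheel}. The key input is the wheel conditions \eqref{eqn:wheel}: for $i\neq j$, residues at $z_{ia} = z_{jb}q^{-d_{ij}}$ either cancel in the symmetrization or vanish after further residues, by the wheel conditions on $E$ (and on $F$, which is also in $\CS^-$). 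Thus only residues along same-color strings $z_{ia} = z_{ib}q_i^{2}$ survive. Iterating these produces strings $(wq_i^{k-1},\dots,wq_i^{1-k})$.

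Grouping the surviving iterated residues by the multiset of string lengths gives the sum over $I$-tuples of partitions. The factorials coming from $\text{Sym}$ and from the orderings of strings must match the normalization $\frac{1}{\bn!}$ implicit in $\text{Sym}$ together with the residue convention \eqref{eqn:iterated residue}. The final step is checking this combinatorial coefficient, which I would do first for $\fsl_2$, where the identity is the classical $q$-Gaussian residue computation.

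The main obstacle is the second step: proving rigorously that the mixed-color poles contribute nothing, and that same-color residues only chain into $q_i^2$-strings, rather than producing more exotic configurations. Here I would first try induction on $n$. I would peel off the outermost variable $z_1$, move its contour to $|z_1|=r$ picking up residues at $z_1 = z_b q^{\pm d}$, and apply the wheel condition to the specialized $E$ to kill the non-string terms. This is the delicate point for non-simply-laced $\fg$, where $1-c_{ij}$ can be $2$ or $3$.
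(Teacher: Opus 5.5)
Your reduction is the paper's, mirrored. The paper expands $E$ into words $e_{i_1,k_1}\cdots e_{i_n,k_n}$ and uses the wheel conditions of $F$; you expand $F$ into words in the $f$'s and use those of $E$. Reversing the word turns your integrand on $|z_1|\ll\dots\ll|z_n|$ into exactly the paper's, so this choice costs nothing.

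The gap is that everything after the reduction, which is the entire content of the proof, is deferred, and the mechanism you sketch is not the one that works. The paper runs the deformation through the intermediate quantities $X_m$ of \eqref{eqn:xm}. There $z_1,\dots,z_{m-1}$ are still nested, while the remaining variables are already grouped into monochromatic $q_i^2$-strings centred at points $w_s$ with $|w_s|=r$; one proves $X_m=X_{m-1}$ by moving a single contour.

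A mixed-colour pole is not cancelled by anything intrinsic to the pair of variables, nor ``in the symmetrization'' (after your reduction no symmetrization is left). The only such poles crossed are those of \eqref{eqn:two cases}, between the moving variable and an already formed string of another colour. Each is killed by a zero of the other argument at the wheel formed by the moving variable and $1-c$ consecutive members of that string, where $c\leq 0$ is the relevant Cartan entry.

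More seriously, you never mention the step where the non-simply-laced difficulty actually lives. The same-colour pole lengthens a string on one side only, so to land in $X_{m-1}$ one must re-centre it by moving the contour of $w_s$ as in \eqref{eqn:move}. One must then prove that this crosses no poles between $w_s$ and the other $w_r$. The paper does this with three ingredients: the non-crossing diagonals picture, the divisibility property \eqref{eqn:crucial} (which reduces to the simply-laced picture on one residue class), and the triangle argument with the wheel conditions. Without this step the induction does not close, and citing the symmetric and quiver cases does not supply it.

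Your planned combinatorial check is also misjudged. There is no $1/\bn!$ hidden in $\text{Sym}$: a product of generators is the unnormalized sum over $\prod_i S_{n_i}$ in \eqref{eqn:spherical}, because the factor $1/(\bn!\bn'!)$ in \eqref{eqn:mult} exactly compensates the inner symmetrizations. When you absorb this sum into the residue, each unordered fair partition reappears once for every way of assigning its blocks to equal-size labels among $w_{i1},w_{i2},\dots$. That is $\prod_{i,k}m_{i,k}!$ times, with $m_{i,k}$ the multiplicity of $k$ in the $i$-th partition, whereas the contour deformation produces it once. So the check will show that each summand of \eqref{eqn:contour} needs an extra factor $1/\prod_{i,k}m_{i,k}!$.

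Concretely, take $\fsl_2$, $E=e_{i,0}e_{i,0}$ and $F=1\in\CS_{-2\bs^i,0}$. Then $\langle E,F\rangle=q^2$ (before the normalization in the footnote of Subsection \ref{sub:quantum affine}). The partition $(2)$ contributes $q^2-1$, and the partition $(1,1)$ contributes $2$ rather than $1$. The paper's identity \eqref{eqn:ofofof} makes the same overcount silently, so what you can actually prove is the statement with these symmetry factors inserted.
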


\medskip

\begin{proof} By the very definition of $\CS^+$ in \eqref{eqn:spherical def}, it suffices to prove \eqref{eqn:contour} for 
\begin{equation}
\label{eqn:the e}
E = e_{i_1,k_1} \dots e_{i_n,k_n}
\end{equation}
where $i_1,\dots,i_n \in I$, $k_1,\dots,k_n \in \BZ$. For any $m \in \{1,\dots,n\}$, consider the quantity
\begin{equation}
\label{eqn:xm}
X_m = \sum^{\text{fair partition}}_{\{m,\dots,n\}=A_1 \sqcup \dots \sqcup A_t} \int_{|z_1| \gg \dots \gg |z_{m-1}| \gg |w_1| = \dots = |w_t| = r} 
\end{equation}
$$
\left[ \underset{(z_{a_s^{(1)}}, \dots, z_{a_s^{(n_s)}}) = (w_s q_{\iota(A_s)}^{n_s-1}, \dots, w_sq_{\iota(A_s)}^{1-n_s})}{\text{Res}}  \frac {z_1^{k_1}\dots z_n^{k_n} F(z_1,\dots,z_n)}{\prod_{1\leq a < b \leq n} \zeta_{i_bi_a} \left(\frac {z_b}{z_a} \right)} \right]_{\forall s \in \{1,\dots,t\}}
$$
where a \emph{fair partition} (relative to the fixed $i_1,\dots,i_n \in I$ in \eqref{eqn:the e}) consists of sets
$$
A_s = \left\{a_s^{(1)} < \dots < a_s^{(n_s)} \right\} \subseteq \{m,\dots,n\}
$$
of arbitrary length $n_s$, such that 
$$
i_{a_s^{(1)}} = \dots  = i_{a_s^{(n_s)}} =: \iota(A_s)
$$
for all $s \in \{1,\dots,t\}$. 
	
\medskip 
	
\begin{claim}
\label{claim:induction}
		
We have $X_m = X_{m-1}$ for all $m \in \{2,\dots,n\}$.
		
\end{claim}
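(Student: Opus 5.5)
The plan is to obtain $X_{m-1}$ from $X_m$ by moving the contour of the single variable $z_{m-1}$ inward, from the region $|z_{m-1}| \gg |w_1| = \dots = |w_t| = r$ down to the circle $|z_{m-1}| = r$. The residues picked up along the way should be exactly the terms of $X_{m-1}$ in which the index $m-1$ is adjoined to one of the existing blocks $A_s$. The terms in which $\{m-1\}$ forms a new singleton block come from the leftover integral over $|z_{m-1}| = r$.

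\emph{Step 1: locating the poles.} Fix a fair partition $\{m,\dots,n\} = A_1 \sqcup \dots \sqcup A_t$. After taking the iterated residues in the variables $z_a$, $a \geq m$, regard the integrand of \eqref{eqn:xm} as a rational function of $z_{m-1}$, with $z_1,\dots,z_{m-2}$ still much larger. The only factors that can produce poles in $z_{m-1}$ between $\infty$ and $r$ are the factors $\zeta_{i_b i_{m-1}}(z_b/z_{m-1})^{-1}$ with $b \geq m$. Here the variables $z_b$ have been specialized to $w_s q_{\iota(A_s)}^{n_s+1-2k}$, and $z_{m-1}$ is paired only with variables to its right. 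By \eqref{eqn:zeta particular}, $\zeta_{ij}(x)^{-1}$ has a pole only at $x = q^{-d_{ij}}$, apart from the monomial factor $x^{\pm}$, which causes no pole at finite nonzero $z_{m-1}$. So the candidate poles are at $z_{m-1} = z_b q^{d_{i_b i_{m-1}}}$, together with possible poles at $z_{m-1} = 0$. The latter are excluded because we stop at $|z_{m-1}| = r$.

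\emph{Step 2: discarding spurious poles.} This is where the wheel conditions \eqref{eqn:wheel} on $F$ come in. I will show that, after the block residues are taken, the pole at $z_{m-1} = z_b q^{d_{i_b i_{m-1}}}$ is cancelled unless $i_{m-1} = \iota(A_s)$ and $z_b$ is the top element $w_s q_{\iota(A_s)}^{n_s-1}$ of its block. Two kinds of cancellation are involved.

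\begin{itemize}
\item Within a same-color block, the poles at interior positions are cancelled by the zeros of the factors $\zeta_{ii}$, that is, by the numerators $(q^{-d_{ii}} - x)$ evaluated along the $q_i^2$-string.
\item For $i_{m-1} \neq \iota(A_s)$, the pole sits on a locus where $F$ vanishes by the wheel condition. In the cases where $c_{ij}$ is such that a pole survives at some position of the string, the wheel condition of the corresponding length kills it.
\end{itemize}

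Granting this, the residue at the top of $A_s$ is, by the definition \eqref{eqn:iterated residue}, exactly the block residue for the enlarged block $A_s \cup \{m-1\}$, with $w_s$ relabeled as $w_s q_{\iota(A_s)}^{-1}$. Since the integrand in $w_s$ has no poles on the annulus $|w_s| \in [r|q_i|^{-1}, r]$ (checked in the same way), the $w_s$ contour can be moved back to radius $r$. Summing over $s$ and over fair partitions then reproduces every fair partition of $\{m-1,\dots,n\}$ exactly once, so $X_m = X_{m-1}$.

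\emph{Conclusion.} With Claim \ref{claim:induction} in hand, I finish the proof as follows.

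\begin{itemize}
\item $X_n$ is precisely \eqref{eqn:pair formula} applied to $E = e_{i_1,k_1}\cdots e_{i_n,k_n}$.
\item $X_1$ is a sum over fair partitions of $\{1,\dots,n\}$ of contour integrals at $|w_s| = r$.
\item Grouping fair partitions by the resulting $I$-tuples of block sizes, and symmetrizing in the variables, turns the integrand $\prod_a z_a^{k_a} / \prod_{a<b}\zeta_{i_bi_a}$ into $E$ divided by the full product of all $\zeta_{ij}$. This uses the formula \eqref{eqn:spherical}, with the symmetrization producing the factor $\prod_{a<b}\zeta_{i_ai_b}$ in the numerator, and it yields \eqref{eqn:contour}.
\end{itemize}

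The main obstacle is Step 2, the pole bookkeeping in the non-simply-laced cases. There one must verify, separately for $c_{ij} \in \{-1,-2,-3\}$, that every non-top residue either vanishes by the $\zeta_{ii}$ zeros or is killed by the wheel condition. I would try first to reduce this to a rank-two computation between a single block and a single variable of another color.
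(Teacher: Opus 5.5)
Your outline agrees with the paper's up to and including the residue at the top of $A_s$. You push the $z_{m-1}$ contour down to $|z_{m-1}|=r$, read off the singleton block $\{m-1\}$ from the leftover integral, and enlarge $A_s$ using the pole $z_{m-1}=w_sq_{\iota(A_s)}^{n_s+1}$. But you have put the difficulty in the wrong place. The $z_{m-1}$ bookkeeping needs no case analysis in $c_{ij}$. For $i_{m-1}\neq\iota(A_s)$, an apparent pole $z_{m-1}=w_sq_{\iota(A_s)}^{\bullet+c}$ with $|z_{m-1}|\geq r$ has $\bullet+c\geq 0$. This inequality is exactly what places the $\iota(A_s)$-coloured variables of the relevant wheel condition (exponents $\bullet+2c,\bullet+2c+2,\dots,\bullet$) inside the block's string, so one simple zero of $F$ cancels one simple pole. (Also, the interior same-colour poles are cancelled by the factors $1-x$ coming from the denominator of $\zeta_{ii}$; the numerators $q^{-d_{ii}}-x$ are what create those poles.)

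The genuine gap is your assertion that the integrand has no poles in $w_s$ on the annulus $r|q_{\iota(A_s)}|^{-1}\leq|w_s|\leq r$, ``checked in the same way''. This is the heart of the paper's proof, and it is not a one-pole-one-wheel-condition check.

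\begin{itemize}
\item These poles come from the factors $\zeta_{i_bi_a}(z_b/z_a)^{-1}$ with $z_a,z_b$ in the two different strings $A_r$ and $A_s\sqcup\{m-1\}$. After specialization each becomes $(w_s-w_rq^{k})^{-1}$ for some $k$, and many pairs $(a,b)$ can share the same $k$.
\item For instance, take $\fsl_3$ with $i_{m-1}=\iota(A_s)\neq\iota(A_r)$, $A_s=\{m,m+1\}$ and $A_r=\{m+2,m+3\}$. The pairs $(m,m+2)$ and $(m+1,m+3)$ both give a pole at $w_s=w_rq^{-1}$, i.e.\ a double pole on the final contour of \eqref{eqn:move}.
\item So you need $F$ to vanish to order at least the number of such pairs, whereas a single wheel condition gives only a simple zero.
\end{itemize}

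The paper gets this multiplicity count as follows.
\begin{itemize}
\item It draws each pole-producing pair as a diagonal between the two strings.
\item It shows that no two diagonals cross or share an endpoint. This uses that indices increase along each string and that $m-1$ is the smallest index, so $z_{m-1}$ only has diagonals going out of it.
\item It uses that $d_{ij}=0$ or $\min(d_i,d_j)\mid\max(d_i,d_j)=|d_{ij}|$ to reduce to the simply-laced picture.
\item It assigns to each diagonal its own wheel-condition triangle, with distinct triangles sharing at most one vertex, so that the zeros add up.
\end{itemize}
Without an argument of this kind, the shift of the $w_s$ contour, and hence $X_m=X_{m-1}$, is not justified.
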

	
\medskip 	
	
\noindent Let us first show how Claim \ref{claim:induction} implies \eqref{eqn:contour}. By iterating Claim \ref{claim:induction} a number of $n-1$ times, we conclude that $X_n = X_1$, or more explicitly
\begin{equation}
\label{eqn:end induction 1}
\Big \langle e_{i_1,k_1} \dots e_{i_n,k_n}, F \Big \rangle = \sum^{\text{fair partition}}_{\{1,\dots,n\}=A_1 \sqcup \dots \sqcup A_t} \int_{|w_1| = \dots = |w_t| = r} 
\end{equation}
$$
\left[ \underset{(z_{a_s^{(1)}}, \dots, z_{a_s^{(n_s)}}) = (w_s q_{\iota(A_s)}^{n_s-1}, \dots, w_sq_{\iota(A_s)}^{1-n_s})}{\text{Res}} \frac {z_1^{k_1}\dots z_n^{k_n} F(z_1,\dots,z_n)}{\prod_{1\leq a < b \leq n} \zeta_{i_bi_a} \left(\frac {z_b}{z_a} \right)} \right]_{\forall s \in \{1,\dots,t\}} 
$$
However, we claim that the residue on the second line of \eqref{eqn:contour} satisfies
\begin{equation}
\label{eqn:ofofof}
\text{Res} \left[ \text{Sym}\left( \frac {z_1^{k_1}\dots z_n^{k_n} F(z_1,\dots,z_n)}{\prod_{1\leq a < b \leq n} \zeta_{i_bi_a} \left(\frac {z_b}{z_a} \right)} \right) \right] = 
\end{equation}
$$
= \sum^{\text{fair partition}}_{\{1,\dots,n\}=A_1 \sqcup \dots \sqcup A_t} \left[ \underset{(z_{a_s^{(1)}}, \dots, z_{a_s^{(n_s)}})=(w_s q_{\iota(A_s)}^{n_s-1}, \dots, w_sq_{\iota(A_s)}^{1-n_s})}{\text{Res}} \frac {z_1^{k_1}\dots z_n^{k_n} F(z_1,\dots,z_n)}{\prod_{1\leq a < b \leq n} \zeta_{i_bi_a} \left(\frac {z_b}{z_a} \right)} \right]_{\forall s \in \{1,\dots,t\}}
$$
with the sum going over those fair partitions for which the lengths of the various $A_s$ match the parts $n_{i1},n_{i2},\dots$ in \eqref{eqn:contour}. In the left-hand side of \eqref{eqn:ofofof}, we write $\Sym$ for the symmetrization with respect to all pairs of variables $z_a$ and $z_b$ such that $i_a = i_b$. Thus, in the left-hand side of \eqref{eqn:ofofof}, we specialize specific subsets of variables of $\text{Sym}(*)$ to geometric progressions $w  q^\bullet$, where $*$ is shorthand for the rational function that appears on the second line of the equation. This is of course the same as specializing arbitrary subsets of variables of $*$ to geometric progressions $w  q^\bullet$. In the previous sentence, we can restrict attention to those subsets of variables where the indices increase (this is because the only poles of $*$ that involve variables $z_a$ and $z_b$ with $a<b$ and $i_a = i_b$ are $z_a - z_b q_{i_a}^2$) and this precisely yields the right-hand side of \eqref{eqn:ofofof}. Combining \eqref{eqn:end induction 1} and \eqref{eqn:ofofof} yields \eqref{eqn:contour} for $E$ as in \eqref{eqn:the e}.

\medskip 

\noindent It remains to prove Claim \ref{claim:induction}. To this end, consider the residue theorem
$$
\int_{|z| \gg |w|} G(z,w) = \int_{|z| = |w|} G(z,w) + \sum_{|\gamma| > 1} \int \left[ \underset{z = w\gamma}{\text{Res}} G(z,w) \right]
$$
for any rational function $G$, all of whose poles are of the form $z - w\gamma$. Consider formula \eqref{eqn:xm}, and let us zoom in on the summand corresponding to a given fair partition $\{m,\dots,n\} = A_1 \sqcup \dots \sqcup A_t$. As we move the (larger) contour of the variable $z_{m-1}$ toward the (smaller) contours of the variables $w_1,\dots,w_t$, one of two things can happen. The first thing is that the larger contour reaches the smaller ones, which leads to the fair partition
$$
\{m-1,\dots,n\} = A_1\sqcup \dots \sqcup A_t \sqcup \{m-1\}
$$
in formula \eqref{eqn:xm} for $m$ replaced by $m-1$. The second thing is that the variable $z_{m-1}$ gets ``caught" in a pole of the form $z_{m-1} = w_s \gamma$ for some $s \in \{1,\dots,t\}$ and some $|\gamma|>1$. However, the apparent poles of the rational function on the second line of \eqref{eqn:xm} that involve both $z_{m-1}$ and some $w_s$ for $s \in \{1,\dots,t\}$ are of the form
\begin{equation}
\label{eqn:two cases}
\begin{cases} \frac 1{z_{m-1}-w_s q_{\iota(A_s)}^{n_s+1}} &\text{if } i_{m-1} = \iota(A_s) \\ \prod_{\bullet \in \{n_s-1,n_s-3,\dots,3-n_s,1-n_s\}}^{\bullet+c \geq 0} \frac 1{z_{m-1} - w_s q_{\iota(A_s)}^{\bullet + c}} &\text{if }i_{m-1} \neq \iota(A_s)\end{cases}
\end{equation}
where we recall that $c := \frac {d_{i_{m-1}\iota(A_s)}}{d_{\iota(A_s)}}$ is a non-positive integer entry of the Cartan matrix. Let us start with the second option in \eqref{eqn:two cases}. The apparent simple pole at
$$
z_{m-1} = w_s q_{\iota(A_s)}^{\bullet +c}
$$
is precisely canceled by the fact that $F$ vanishes at the specialization
$$
z_{\iota(A_s)1} = w_s q_{\iota(A_s)}^{\bullet+2c}, \quad z_{\iota(A_s)2} = w_s q_{\iota(A_s)}^{\bullet+2c+2}, \quad \dots \quad , \quad z_{\iota(A_s), 1-c} = w_s q_{\iota(A_s)}^{\bullet}
$$
$$
z_{i_{m-1}1} = w_s q_{\iota(A_s)}^{\bullet + c}
$$
due to the condition \eqref{eqn:wheel} (all the powers of $q_{\iota(A_s)}$ on the first line of the equation above lie in the arithmetic progression $\{n_s-1,n_s-3\dots,3-n_s,1-n_s\}$). As for the first option in \eqref{eqn:two cases}, it leads to the fair partition:
$$
\{m-1,\dots,n\} = A_1 \sqcup \dots \sqcup A_{s-1} \sqcup \Big( A_s \sqcup \{m-1\} \Big) \sqcup A_{s+1} \sqcup \dots \sqcup A_t
$$
in formula \eqref{eqn:xm} for $m$ replaced by $m-1$. However, there is a catch: in this new fair partition, the variables that correspond to the $s$-th part are specialized to
$$
w_s q_{\iota(A_s)}^{1-n_s}, \quad \dots \quad  w_s q_{\iota(A_s)}^{n_s-1}, \quad w_s q_{\iota(A_s)}^{n_s+1}
$$
In order to match this with formula \eqref{eqn:xm} for $m$ replaced by $m-1$, we need to move the contour of the variable $w_s$
\begin{equation}
\label{eqn:move}
\text{from} \quad |w_s| = |w_r|, \ \forall r \neq s \quad \text{to} \quad |w_s | = |w_rq^{-1}_{\iota(A_s)}|, \ \forall r \neq s
\end{equation}
It remains to show that no new poles involving $w_s$ and $w_r$ (for an arbitrary $r \neq s$) are produced in the rational function
\begin{equation}
\label{eqn:restricted rational function}
\mathop{\underset{(z_{a_r^{(1)}}, \dots, z_{a_r^{(n_r)}}) = (w_r q_{\iota(A_r)}^{n_r-1}, \dots, w_r q_{\iota(A_r)}^{1-n_r})}{\text{Res}}}_{(z_{m-1}, z_{a_s^{(1)}}, \dots, z_{a_s^{(n_s)}}) = (w_s q_{\iota(A_s)}^{n_s+1}, w_s q_{\iota(A_s)}^{n_s-1}, \dots, w_s q_{\iota(A_s)}^{1-n_s})} \left[ \frac {z_1^{k_1}\dots z_n^{k_n} F(z_1,\dots,z_n)}{\prod_{1\leq a < b \leq n} \zeta_{i_bi_a} \left(\frac {z_b}{z_a} \right)} \right]
\end{equation}
as we move the contours according to \eqref{eqn:move}. At every point during the movement of the contours, let us depict the variables on two horizontal lines, where the midpoint of the top line is $|w_r|$ and any variable whose absolute value is $|w_r|q^k$ is situated $k$ horizontal units to the right of the midpoint. We draw a diagonal going $|d_{\iota(A_r)\iota(A_s)}|$ units to the right from any variable $z_a$ on one line to a variable $z_b$ on another line if $a < b$; every such diagonal is responsible for a simple pole in \eqref{eqn:restricted rational function}.

\begin{picture}(100,100)(-50,-30)
\label{pic:zig}
	
\put(-30,0){\line(1,0){300}}
\put(-40,40){\line(1,0){300}}
	
\put(-30,0){\circle*{3}}
\put(-10,0){\circle*{3}}
\put(10,0){\circle*{3}}
\put(30,0){\circle*{3}}
\put(50,0){\circle*{3}}
\put(70,0){\circle*{3}}
\put(90,0){\circle*{3}}
\put(110,0){\circle*{3}}
\put(130,0){\circle*{3}}
\put(150,0){\circle*{3}}
\put(170,0){\circle*{3}}
\put(190,0){\circle*{3}}
\put(210,0){\circle*{3}}
\put(230,0){\circle*{3}}
\put(250,0){\circle*{3}}
\put(270,0){\circle*{3}}
	
\put(-40,40){\circle*{3}}	
\put(20,40){\circle*{3}}
\put(80,40){\circle*{3}}
\put(140,40){\circle*{3}}
\put(200,40){\circle*{3}}
\put(260,40){\circle*{3}}
	
\put(250,50){$z_{a_r^{(1)}}$}
\put(190,50){$z_{a_r^{(2)}}$}
\put(-50,50){$z_{a_r^{(n_r)}}$}

\put(260,7){$z_{m-1}$}	
\put(243,-10){$z_{a_s^{(1)}}$}
\put(-40,-10){$z_{a_s^{(n_s)}}$}
	
\put(-40,40){\line(3,-4){30}}
\put(20,40){\line(3,-4){30}}
	
\put(140,40){\line(-3,-4){30}}
\put(200,40){\line(-3,-4){30}}
\put(260,40){\line(-3,-4){30}}
	
\end{picture}

\noindent However, because we have $a_r^{(n_r)} > \dots > a_r^{(1)}$ and $a_s^{(n_s)} > \dots > a_s^{(1)} > m-1$ by construction, no two diagonals can intersect (not even at the endpoints) lest they produce an impossible closed cycle where each variable has larger index than the next. A basic property of Cartan matrices is that either $d_{\iota(A_r) \iota(A_s)} = 0$ or
\begin{equation}
\label{eqn:crucial}
\text{min}(d_{\iota(A_r)}, d_{\iota(A_s)}) \quad \text{divides} \quad  \text{max}(d_{\iota(A_r)}, d_{\iota(A_s)}) =   |d_{\iota(A_r) \iota(A_s)}| 
\end{equation}
In the case when $d_{\iota(A_r) \iota(A_s)} = 0$, \eqref{eqn:wheel} implies that $F$ is divisible by linear factors corresponding to all the diagonal lines (which will be perfectly vertical) in the figure above, and so \eqref{eqn:restricted rational function} has no poles involving both $w_r$ and $w_s$. In the case of \eqref{eqn:crucial}, the variables involved with the diagonal lines in the figure above all have horizontal coordinate congruent to some fixed $k$ modulo $|d_{\iota(A_r) \iota(A_s)}|$. Thus, we might as well ignore all the variables whose horizontal coordinate is $\not \equiv k$, and then the picture above precisely matches the one in the simply-laced case
$$
d_{\iota(A_r)} =  d_{\iota(A_s)} = 1 = |d_{\iota(A_r) \iota(A_s)}|
$$
In this case, it is well-known that the wheel conditions \eqref{eqn:wheel} cause the numerator of \eqref{eqn:restricted rational function} to vanish to order at least as great as the number of diagonals in the picture above. For instance, one can show this by placing each diagonal $D$ in a triangle $T_D$, such that $T_D$ and $T_{D'}$ have either 0 or 1 vertices in common if $D \neq D'$. Then each triangle $T_D$ corresponds to a vanishing condition \eqref{eqn:wheel} which produces one zero in the numerator of \eqref{eqn:restricted rational function}, thus canceling out the pole caused by the diagonal $D$ \footnote{A crucial detail which ensures the preceding argument works is that the variable $z_{m-1}$ has the smallest index among all variables involved, and so it can only afford a diagonal going out of it to the right; this is in tune with the fact that the variable $z_{m-1}$ is causing the bottom line in the picture to be shifted between $0$ and $|d_{\iota(A_r) \iota(A_s)}|$ units to the right of the top line.}. \end{proof}

\medskip

\subsection{A generalization}
\label{sub:generalization}

The contents of the previous subsection may be generalized as follows. Following a suggestion of David Hernandez, we call a Kac-Moody Lie algebra $\fg$ \emph{strongly symmetrizable} if 
\begin{equation}
\label{eqn:strongly symmetrizable}
d_{ij} \in \{ 0, - \max(d_i,d_j) \}
\end{equation}
for all $i\neq j$. This definition includes all finite and affine type Lie algebras, except for affine $A_1$ (thus, the discussion in the present subsection does not apply to affine $A_1$, in which case one must instead follow the treatment of \cite{N Symmetric}). For a strongly symmetrizable Kac-Moody Lie algebra $\fg$, Lemma \ref{lem:contour} holds as stated, where
\begin{equation}
\label{eqn:def}
\CS^\pm := \left\{E \in \CV^\pm \text{ s.t. } E\Big|_{z_{i1} = z_{j1} q^{d_{ij}}, z_{i2} = z_{j1} q^{d_{ij}+d_{ii}}, \dots,  z_{i, 1-c_{ij}} = z_{j1} q^{-d_{ij}}} = 0, \forall i \neq j\right\}
\end{equation}
Thus, we are in the situation of \cite[Proposition 3.3]{N Wheel}: Lemma \ref{lem:contour} provides a non-degenerate pairing between the subalgebra $\CS^\pm$ of \eqref{eqn:def} and the a priori smaller subalgebra $\text{Im }\widetilde{\Upsilon}^\mp \subseteq \CV^\mp$ linearly spanned by the elements \eqref{eqn:spherical}. Then the argument of \cite[Theorem 2.13]{N Wheel} carries through and implies that these two algebras coincide
$$
\Big( \CS^\pm \text{ of \eqref{eqn:def}} \Big) = \Big(\text{Im }\widetilde{\Upsilon}^\pm \text{ of Definition \ref{def:shuffle}} \Big)
$$
This gives a complete description of the shuffle algebra associated to a strongly symmetrizable Kac-Moody Lie algebra $\fg$. As shown in \cite{NT}, the wheel conditions \eqref{eqn:def} are dual to the so-called Drinfeld-Serre relations
$$
S_{ij}^+ = \sum_{k=0}^{1-c_{ij}} (-1)^k {1-c_{ij} \choose k}_{q_i} \text{Sym}_{z_1,\dots,z_{1-c_{ij}}}  e_i(z_1) \dots e_i(z_k) e_j(w) e_i(z_{k+1}) \dots e_i(z_{1-c_{ij}}) 
$$
$$
S_{ij}^- = \sum_{k=0}^{1-c_{ij}} (-1)^k {1-c_{ij} \choose k}_{q_i} \text{Sym}_{z_1,\dots,z_{1-c_{ij}}}  f_i(z_1) \dots f_i(z_k) f_j(w) f_i(z_{k+1}) \dots f_i(z_{1-c_{ij}}) 
$$
Following the general principle laid out in \cite{N Arbitrary}, this implies that
$$
\text{Ker }\widetilde{\Upsilon}^\pm = \left( S_{ij}^\pm \right)_{i\neq j}
$$
We conclude that the full set of relations in the quantum loop algebra \eqref{eqn:quantum loop algebra} associated to any strongly symmetrizable Kac-Moody Lie algebra consists of
\begin{equation}
\label{eqn:drinfeld-serre}
S_{ij}^\pm = 0, \quad \forall i \neq j
\end{equation}
together with \eqref{eqn:rel quantum 1}, \eqref{eqn:rel quantum 2}, \eqref{eqn:rel quantum 4}, \eqref{eqn:rel quantum 5}, \eqref{eqn:rel quantum 6}, \eqref{eqn:rel quantum 7}, \eqref{eqn:rel quantum 8}, \eqref{eqn:rel quantum 9}. In the particular case of quantum toroidal algebras of type other than $A_1$, this proves that such algebras possess triangular decompositions in terms of their Drinfeld positive and negative halves, and that the usual Hopf pairing between the two halves is non-degenerate (i.e. the aforementioned set of relations is maximal so that the resulting algebra keeps its usual Hopf algebra structure and Hopf pairing); see \cite{N Arbitrary} for details.

\bigskip 

\section{Modules and tensor products}
\label{sec:modules}

\medskip

\noindent We will now introduce simple modules for the subalgebras $\CA^{\geq \bp} \subset \CS = \U$ defined in the previous Section, and we will use the coproducts $\Delta_{\bp}$ to define tensor products. Our main reference will be \cite{N Cat}; although \loccit pertains to a particular choice of $(I,\BK,\zeta_{ij}(x))$, most proofs therein are completely general, and we will provide alternative proofs for those results where the generalization is not straightforward.

\medskip

\subsection{Borel category $\CO$} 
\label{sub:category o}

The following definitions are natural generalizations of classic constructions for quantum affine algebras \eqref{eqn:quantum affine intro}. After finite-dimensional type 1 $U_q(\widehat{\fg})$-modules were classified in \cite{CP}, it was recognized in \cite{HJ} that one can obtain more by restricting to the Borel subalgebra. In light of \eqref{eqn:borel positive}, we generalize this by choosing any $\bp \in \rr$ and considering modules
\begin{equation}
\label{eqn:module}
\CA^{\geq \bp} \curvearrowright V
\end{equation}
for which the action of the Cartan subalgebra $\{\kappa^+_i = \ph^+_{i,0}\}_{i \in I}$ is diagonalizable:
\begin{equation}
\label{eqn:weight decomposition}
V = \bigoplus_{\bla = (\lambda_i)_{i \in I} \in (\BK^*)^I} V_{\bla}
\end{equation} 
where
$$
V_{\bla} = \Big\{ v \in V \Big| \kappa^+_i \cdot v = \lambda_i v, \forall i \in I \Big\}
$$
Above, $\bla$ will be referred to as \emph{weights}. Because of the commutation relation
\begin{equation}
\label{eqn:commute}
\kappa^+_i X = X \kappa^+_i \gamma_{\bs^i, \hdeg X}
\end{equation}
for all $i \in I$ and $X \in \CA^{\geq \bp}$ (see \eqref{eqn:cartan commutation plus}), it is easy to see that algebra elements interact with the weight space decomposition according to the rule
\begin{equation}
\label{eqn:r action}
X : V_{\bla} \rightarrow V_{\bla  \bga^{\hdeg X}}
\end{equation}
where $\bga^{\bn}$ denotes the weight with $i$-th component $\gamma_{\bs^i, \bn}$ for all $\bn \in \zz$. In the right-hand side of \eqref{eqn:r action} we multiply weights component-wise, so $\bla \bmu = (\lambda_i\mu_i)_{i \in I}$ for all weights $\bla = (\lambda_i)_{i \in I}$ and $\bmu = (\mu_i)_{i \in I}$.

\medskip 

\begin{definition}
\label{def:category o}

A module $\CA^{\geq \bp} \curvearrowright V$ is said to be in \emph{category }$\CO$ if it has a weight decomposition \eqref{eqn:weight decomposition} with every $V_{\bla}$ finite-dimensional and non-zero only for
$$
\bla \in \{\bla^1 \bga^{-\bn}, \dots, \bla^k \bga^{-\bn}\}_{\bn \in \nn}
$$
for finitely many weights $\bla^1,\dots,\bla^k$.

\end{definition}

\medskip

\noindent In order for the above definition to be meaningful, we assume throughout that
\begin{equation}
\label{eqn:generic}
\bga^{\bn} \neq \bone
\end{equation}
for all $\bn \in \zz \backslash \b0$, which allows us to define the partial order
\begin{equation}
\label{eqn:order}
\bla \bga^{\bn} > \bla, \quad \forall \text{ weight }\bla, \ \forall \ \bn \in \nn \backslash \b0
\end{equation}
If condition \eqref{eqn:generic} is not satisfied (e.g. in the important case of quantum toroidal $\fgl_1$ studied in \cite{FJMM}) then there is a fix following \cite{H2}: one adds additional finite Cartan elements $\kappa_j^+$ and imposes relations \eqref{eqn:commute} for various constants $\gamma_{\bs^j, \bn}$ that are multiplicative in $\bn$. If the aforementioned constants are chosen generic enough, then we can ensure \eqref{eqn:generic} for all $\bn \in \zz \backslash \b0$.

\medskip 

\begin{proposition}
	\label{prop:tensor product}

For any modules $\CA^{\geq \bp} \curvearrowright V,W$ in category $\CO$, the tensor product
$$
\CA^{\geq \bp} \curvearrowright V \otimes W
$$
defined with respect to the coproduct $\Delta_{\bp}$, is a module in category $\CO$.

\end{proposition}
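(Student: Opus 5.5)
Three things need to be checked. First, the action $\Delta_{\bp}(a)\cdot(v\otimes w)$ must be a finite sum, so that it actually defines an action on the uncompleted tensor product. Second, the resulting operators must satisfy the relations of $\CA^{\geq \bp}$. Third, $V\otimes W$ must satisfy the weight conditions of Definition \ref{def:category o}.

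The third point is the easiest, so I would handle it first. By \eqref{eqn:coproduct k}, $\kappa_i^+$ acts on $V\otimes W$ by $\kappa_i^+\otimes\kappa_i^+$. Hence $V_{\bla}\otimes W_{\bmu}\subset (V\otimes W)_{\bla\bmu}$, and the action is diagonalizable. If the weights of $V$ lie in $\{\bla^a\bga^{-\bn}\}$ and those of $W$ lie in $\{\bmu^b\bga^{-\bn}\}$, then the weights of $V\otimes W$ lie in $\{\bla^a\bmu^b\bga^{-\bn}\}_{\bn\in\nn}$, using that $\bga^{\bn}$ is multiplicative in $\bn$. For finite-dimensionality, fix a weight $\bnu$. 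By the genericity assumption \eqref{eqn:generic}, $\bnu$ can be written as $\bla^a\bga^{-\bn}\cdot\bmu^b\bga^{-\bn'}$ in only finitely many ways: for each pair $(a,b)$, the sum $\bn+\bn'$ is determined, and it has finitely many splittings in $\nn\times\nn$. Therefore $(V\otimes W)_{\bnu}$ is a finite direct sum of tensor products of finite-dimensional spaces.

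For the finiteness of the action, recall from the discussion after \eqref{eqn:delta p plus}--\eqref{eqn:delta p minus} that $\Delta_{\bp}(a)\in\CA^{\geq\bp}\ho\CA^{\geq\bp}$. Concretely, $\Delta_{\bp}(a)$ is a sum of terms $x_k\otimes y_k$ with $\deg x_k+\deg y_k=\deg a$ (degrees are preserved, as observed after the proof of Theorem \ref{thm:main}), where $|\hdeg x_k|\to-\infty$ and $|\hdeg y_k|\to+\infty$. By \eqref{eqn:r action}, $y_k$ sends $W_{\bmu}$ to $W_{\bmu\bga^{\hdeg y_k}}$. Since the weights of $W$ lie in finitely many cones $\bmu^b\bga^{-\nn}$, and \eqref{eqn:generic} makes the order \eqref{eqn:order} well behaved, there are only finitely many $\bn=\hdeg y_k$ with $|\bn|$ large for which $W_{\bmu\bga^{\bn}}\neq0$. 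I would make this precise by noting that $\bmu\bga^{\bn}=\bmu^b\bga^{-\bm}$ with $\bmu=\bmu^{b'}\bga^{-\bm'}$ forces $\bn=\bm'-\bm$ (again by genericity), which gives $|\bn|\le|\bm'|$. Hence all but finitely many summands act by zero on any fixed $v\otimes w$. The same argument applied to $V$ takes care of the lower bound, although it is not even needed here.

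For the module axioms, I would use that $\Delta_{\bp}$ is an algebra homomorphism into the completion $\CA^{\geq\bp}\ho\CA^{\geq\bp}$. This is implicit in Theorem \ref{thm:main}, since $\CA^{\geq\bp}$ is one half of a Drinfeld double bialgebra structure. Products of infinite sums in $\ho$ are well defined, and by the finiteness just established they act on $V\otimes W$ compatibly with composition, so $\Delta_{\bp}(ab)=\Delta_{\bp}(a)\Delta_{\bp}(b)$ gives the module structure. The main obstacle is this step. One has to justify that the homomorphism property in the completion really translates into an identity of operators, meaning that the reorganization of the double infinite sum is legitimate. I would handle it by truncating: on a given weight vector, only finitely many terms of each factor contribute, by the horizontal-degree bound above, so both sides reduce to the same finite sums.
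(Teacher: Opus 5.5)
Your proposal is correct and follows the paper's own argument: group-likeness of $\kappa_i^+$ under $\Delta_{\bp}$ gives the weight axioms (this comes from Claim \ref{claim:extend}, not from \eqref{eqn:coproduct k}, which concerns $\Delta$), and the definition of $\ho$ together with \eqref{eqn:r action} and the cone condition forces all but finitely many second tensor factors to kill any given $w$; your treatment of finite-dimensionality and of the module axioms only makes explicit what the paper leaves implicit. The one slip is that for $b\neq b'$ the equation $\bmu^{b'}\bga^{\bn-\bm'}=\bmu^{b}\bga^{-\bm}$ gives $\bn=\bm'-\bm$ only up to a fixed shift depending on $(b,b')$, but this still bounds $|\bn|$ from above, so the conclusion is unaffected.
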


\medskip 

\begin{proof} The fact that $\kappa^+_i$ are group-like for the coproduct $\Delta_{\bp}$ ensures that $V \otimes W$ satisfies all the axioms of category $\CO$. However, we need to check that infinite sums
$$
\sum_k A_k \otimes B_k \in (\CA^{\geq \bp} \ho \CA^{\geq \bp})_{\bn,d}
$$
act correctly on $V \otimes W$. By the very definition of $\ho$, for any $N \in \BN$ we have $|\hdeg A_k| \leq -N$ and $|\hdeg B_k| \geq N$ for all but finitely many $k$. This means that on any given vector $v \otimes w \in V \otimes W$, all but finitely many $B_k$'s will have the property that $B_k(w) = 0$ due to \eqref{eqn:r action} and the very definition of category $\CO$. \end{proof}

\medskip

\subsection{Loop weights}
\label{sub:loop weights}

Beside the finite Cartan subalgebra generated by $\kappa^+_i = \ph^+_{i,0}$, we have the (positive) loop Cartan subalgebra 
$$
\CB^{\geq}_{\binfty} = \BK[\ph^+_{i,d}]_{i \in I, d \geq 0}
$$
in $\CA^{\geq \bp}$ for any $\bp \in \rr$. The $\ph^+_{i,d}$'s give an infinite family of commuting operators in any module $V$ in category $\CO$, so we may consider their joint generalized eigenspaces 
\begin{equation}
\label{eqn:loop weight decomposition}
V = \bigoplus_{\bpsi = (\psi_i(z))_{i \in I} \in (\BK[[z^{-1}]]^\times)^I} V_{\bpsi}
\end{equation} 
\footnote{The decomposition above exists on general grounds if $\BK$ is algebraically closed, but otherwise we restrict attention only to those $V$'s for which such a decomposition exists.} where
$$
V_{\bpsi} = \Big\{ v \in V \Big| \left( \ph^+_{i,d} - \psi_{i,d}\text{Id}_V \right)^N v = 0, \forall i \in I, d \geq 0 \text{ and }N \text{ large enough} \Big\}
$$
Above, 
\begin{equation}
\label{eqn:loop weight}
\bpsi = \left(\psi_i(z) = \sum_{d=0}^{\infty} \frac {\psi_{i,d}}{z^d} \right)_{i \in I}
\end{equation}
will be referred to as \emph{loop weights}, by analogy with the classic situation of quantum affine algebras. The vector spaces $V_\bpsi$ are finite-dimensional for any $V$ in category $\CO$, so we may define the \emph{$q$-character} following \cite{FR}
\begin{equation}
\label{eqn:q-character}
\chi_q(V) = \sum_{\text{loop weights }\bpsi} \dim_{\BK}(V_{\bpsi}) [\bpsi]
\end{equation}
where $[\bpsi]$ are formal symbols. Recall that $[\bpsi][\bpsi'] = [\bpsi\bpsi']$ with respect to the component-wise multiplication of $I$-tuples of power series.

\medskip

\begin{proposition}
\label{prop:multiplicative}

For any modules $\CA^{\geq \bp} \curvearrowright V,W$ in category $\CO$, we have
\begin{equation}
\label{eqn:multiplicative}
\chi_q(V \otimes W) = \chi_q(V) \chi_q(W)
\end{equation}
\end{proposition}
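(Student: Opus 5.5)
The strategy is to compute $\Delta_{\bp}(\ph^+_i(z))$ and show it is "triangular": its leading term is $\ph^+_i(z)\otimes\ph^+_i(z)$, and the correction terms strictly move the weight in a fixed direction.

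Since $\ph^+_{i,d}$ lie in $\CB^\geq_{\binfty} \subset \CS^+_{[\bp,\binfty]}$, we are in the case $\textcolor{red}{E} = \ph^+_i(z)$, $\textcolor{blue}{F}=1$ of \eqref{eqn:two 1}. As $\Delta(\ph^+_i(z)) = \ph^+_i(z)\otimes\ph^+_i(z)$, we get
$$
\Delta_{\bp}(\ph^+_i(z)) = \ph^+_i(z)\textcolor{blue}{Y}\otimes\textcolor{red}{X},
\qquad
\langle \textcolor{red}{X}, \tilde{F}'\rangle\langle \tilde{E}', \textcolor{blue}{Y}\rangle = \langle \ph^+_i(z)\tilde{E}', \tilde{F}'\rangle .
$$
Taking $\tilde{E}'=1$ forces $\textcolor{blue}{Y}=1$ and $\textcolor{red}{X}=[\ph^+_i(z)]_{\geq\bp}=\ph^+_i(z)$, which gives the term $\ph^+_i(z)\otimes\ph^+_i(z)$. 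Every other summand has $\textcolor{blue}{Y}\in\CS^-_{(-\binfty,\bp)}$ of horizontal degree $-\bn$ with $\bn\in\nn\backslash\b0$, and correspondingly $\textcolor{red}{X}$ of horizontal degree $+\bn$; this follows from degree preservation of $\Delta_{\bp}$ and the fact that $\langle\tilde E',\textcolor{blue}{Y}\rangle\neq 0$ requires opposite degrees. Hence
$$
\Delta_{\bp}(\ph^+_{i,d}) = \sum_{a+b=d}\ph^+_{i,a}\otimes\ph^+_{i,b} + \sum_k A_k\otimes B_k,
\qquad \hdeg B_k \in \nn\backslash\b0 ,
$$
where the infinite sum acts on $V \otimes W$ via Proposition \ref{prop:tensor product}.

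Next, put a filtration on $V\otimes W$. By \eqref{eqn:r action}, $B_k$ maps $W_{\bmu}$ to $W_{\bmu\bga^{\bn}}$ with $\bn>\b0$, which is strictly higher in the order \eqref{eqn:order}. Choose a total order refining \eqref{eqn:order} on the finitely many cosets of weights of $W$; this is possible since by Definition \ref{def:category o} and \eqref{eqn:generic} each weight space is reached from finitely many top weights. For each weight $\bmu$ set $\mathcal{F}_{\geq\bmu} = V\otimes\bigoplus_{\bmu'\geq\bmu}W_{\bmu'}$. Each $\Delta_{\bp}(\ph^+_{i,d})$ preserves this filtration, and on the associated graded $V\otimes W_{\bmu}$ it acts by $\sum\ph^+_{i,a}\otimes\ph^+_{i,b}$.

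Each weight space $(V\otimes W)_{\bla}$ is finite-dimensional and meets finitely many pieces of the filtration, so generalized eigenspaces can be computed on the associated graded. On $V_{\bpsi}\otimes W_{\bpsi'}$ the operators $\ph^+_i(z)\otimes\ph^+_i(z)$ have the single generalized eigenvalue $\psi_i(z)\psi'_i(z)$, as a product of commuting operators, each with a single generalized eigenvalue. Summing dimensions over $(\bpsi,\bpsi')$ then gives \eqref{eqn:multiplicative}.

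The main obstacle is the first step: confirming that the tensor $\textcolor{blue}{Y}\otimes\textcolor{red}{X}$ attached to $\ph^+_i(z)$ has the claimed shape. In particular one must check that $\textcolor{red}{X}$ with $\hdeg = \b0$ occurs only when $\tilde E'=1$. This holds because $\CS^+_{(-\binfty,\bp)}$ contains no nonscalar elements of horizontal degree $\b0$.
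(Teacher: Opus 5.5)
Your proposal is correct and follows essentially the same route as the paper's proof. Both use \eqref{eqn:two 1} to write $\Delta_{\bp}(\ph^+_i(z)) = \ph^+_i(z)Y \otimes X$ and read off $\ph^+_i(z)\otimes\ph^+_i(z)$ from the horizontal degree $\b0$ part (where $\tilde{E}'$ is a scalar) plus terms of the form $(\text{hdeg}<\b0)\otimes(\text{hdeg}>\b0)$, and then conclude by block triangularity with respect to the order \eqref{eqn:order}; your filtration by $W$-weights and the generalized-eigenvalue computation on the graded pieces spell out in more detail what the paper states briefly.
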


\medskip

\begin{proof} The Proposition would be obvious if the Cartan series $\ph^+_i(z)$ were group-like with respect to the coproduct $\Delta_{\bp}$, but this is not the case. Instead, \eqref{eqn:two 1} yields
$$
\Delta_{\bp}(\ph^+_i(z)) = \ph^+_i(z) Y \otimes X
$$
where $Y \otimes X \in \CS_{(-\binfty,\bp)}^- \otimes \CS_{[\bp,\binfty]}^+$ is determined by
$$
X \langle E', Y \rangle = \Big[\ph^+_i(z) E'\Big]_{\geq \bp}, \quad \forall E' \in \CS_{(-\binfty,\bp)}^+
$$
When $\hdeg E' > \b0$, the equation above forces $\hdeg X > \b0$ and $\hdeg Y < \b0$, and when $\hdeg E' = \b0$, the equation above forces $Y \otimes X = 1 \otimes \ph_i^+(z)$. We thus have
\begin{equation}
\label{eqn:id}
\Delta_{\bp}(\ph^+_i(z)) \in \ph^+_i(z) \otimes \ph^+_i(z) + \Big(\text{hdeg} < \b0\Big) \otimes \Big(\text{hdeg} > \b0\Big) 
\end{equation}
(the formula above generalizes a well-known formula of \cite{Da} for the Drinfeld-Jimbo coproduct of quantum affine algebras). Therefore, the action of $\Delta_{\bp}(\ph^+_i(z))$ on $V \otimes W$ is block upper triangular with respect to the subspaces $V_{\bpsi} \otimes W_{\bpsi'}$ of $V \otimes W$: the order of the blocks is determined by \eqref{eqn:order}, and the action on the diagonal blocks is given by $\ph^+_i(z) \otimes \ph^+_i(z)$. Since $\chi_q(V \otimes W)$ encodes the generalized eigenspaces of $\Delta_{\bp}(\ph^+_i(z))$ while $\chi_q(V) \chi_q(W)$ encodes the generalized eigenspaces of $\ph^+_i(z) \otimes \ph^+_i(z)$, the aforementioned triangularity implies \eqref{eqn:multiplicative}. \end{proof}

\medskip 

\subsection{Polynomiality of the theta series}
\label{sub:theta}

Inspired by the work of Huafeng Zhang (\cite[Theorem 9.5]{Zhang}, which originally established Theorem \ref{thm:t} below for quantum affine algebras), we expand on the proof of Proposition \ref{prop:multiplicative} in the following sense. Fix $i \in I$ and assume that there exists a Cartan series 
\begin{equation}
\label{eqn:def t}
T_i(x) \in \U[[x^{-1}]] 
\end{equation}
(initially defined for quantum affine algebras in \cite{FH}) which is group-like for \eqref{eqn:coproduct u}
\begin{equation}
\label{eqn:coproduct t}
\Delta(T_i(x)) = T_i(x) \otimes T_i(x)
\end{equation}
and commutes with the positive and negative halves of the quantum loop algebra according to the following analogues of \eqref{eqn:ssg} and \eqref{eqn:ssl}
\begin{align}
&T_i(x) E(z_{j1},\dots,z_{jn_j})_{j \in I} = E(z_{j1},\dots,z_{jn_j})_{j \in I} T_i(x)  \prod_{a=1}^{n_i} \left(1- \frac {z_{ia}}x \right) \label{eqn:ssg t} \\
&F(z_{j1},\dots,z_{jn_j})_{j \in I} T_i(x)  = T_i(x) F(z_{j1},\dots,z_{jn_j})_{j \in I}  \prod_{a=1}^{n_i} \left(1- \frac {z_{ia}}x \right) \label{eqn:ssl t}
\end{align}
When the zeta functions \eqref{eqn:zeta intro} have sufficiently generic coefficients (which happens for instance in the case of quantum affine algebras), it is well-known that $T_i(x)$ can be written as an appropriate product of the power series $\{\ph^+_j(xa)\}_{j \in I, a \in \BK^*}$. Even if the aforementioned genericity fails (which happens for instance in the case of quantum toroidal algebras), one can still formally add the series $T_i(x)$ to the algebra $\U$, all the while imposing \eqref{eqn:coproduct t}, \eqref{eqn:ssg t}, \eqref{eqn:ssl t}.

\medskip 

\begin{theorem}
\label{thm:t}

For any $\bp \in \rr$, we have
\begin{equation}
\label{eqn:t}
\Delta_{\bp}(T_i(x)) = (T_i(x) \otimes 1) \Theta_{i,\bp}(x) (1 \otimes T_i(x))
\end{equation}
for some
\begin{equation}
\label{eqn:tt}
\Theta_{i,\bp}(x) \in 1 + \sum_{\bn \in \nn\backslash \b0} \sum_{d=1}^{n_i} \frac {\U_{-\bn} \otimes \U_{\bn}}{x^d}
\end{equation}
called a \emph{theta series}. Thus, every hdeg graded piece of $\Theta_{i,\bp}$ is polynomial in $x^{-1}$.

\end{theorem}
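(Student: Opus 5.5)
Since $T_i(x)$ is a Cartan series lying in $\CB^{\geq}_{\binfty}[[x^{-1}]] \subset \CA^{\geq \bp}$ (or is formally adjoined to it), formula \eqref{eqn:two 1} applies with $\textcolor{red}{E} = T_i(x)$. Because $T_i(x)$ is group-like for $\Delta$ by \eqref{eqn:coproduct t}, the Sweedler factors are simply $\textcolor{red}{E_1} = \textcolor{red}{E_2} = T_i(x)$. We therefore obtain $\Delta_{\bp}(T_i(x)) = T_i(x) Y \otimes X$, where the tensor $Y \otimes X \in \CS^-_{(-\binfty,\bp)} \otimes \CS^+_{[\bp,\binfty]}$ is characterized by
$$
\langle X, \tilde{F}' \rangle \langle \tilde{E}', Y \rangle = \langle T_i(x) \tilde{E}', \tilde{F}' \rangle, \qquad \text{i.e.} \qquad X \langle \tilde{E}', Y \rangle = \left[T_i(x) \tilde{E}'\right]_{\geq \bp}
$$
for all $\tilde{E}' \in \CS^+_{(-\binfty,\bp)}$. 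The plan is to commute $T_i(x)$ to the right inside the bracket using \eqref{eqn:ssg t}, so as to peel off the factor $1 \otimes T_i(x)$.

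\textbf{Step 1: move $T_i(x)$ through $\tilde{E}'$.} For homogeneous $\tilde{E}' \in \CS^+_{(-\binfty,\bp)}$ of horizontal degree $\bn$, relation \eqref{eqn:ssg t} gives
$$
T_i(x)\tilde{E}' = \tilde{E}'_x \, T_i(x), \qquad \tilde{E}'_x := \tilde{E}' \prod_{a=1}^{n_i}\left(1-\frac{z_{ia}}{x}\right).
$$
Expanding the product, $\tilde{E}'_x = \sum_{d=0}^{n_i} x^{-d} \tilde{E}'^{(d)}$, where each $\tilde{E}'^{(d)} = (-1)^d \tilde{E}' e_d(z_{i1},\dots,z_{in_i})$ is a shuffle element. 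These lie in $\CS^+$: multiplication by a symmetric Laurent polynomial preserves $\CS^+$, as follows from \eqref{eqn:ssg t} itself, since $T_i(x)$ conjugates $\CS^+$ into itself. They are, however, no longer necessarily of slope $<\bp$.

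Next apply \eqref{eqn:star 1} with $E = T_i(x)$, which is legitimate because $T_i(x)$ lies in $\CS^+_{[\bp,\binfty]}$ via the Cartan part \eqref{eqn:extended plus}. This gives
$$
\left[T_i(x)\tilde{E}'\right]_{\geq \bp} = \left[\tilde{E}'_x\right]_{\geq \bp} T_i(x).
$$
Hence $X = X_0 \, T_i(x)$, where $X_0 \langle \tilde{E}', Y \rangle = [\tilde{E}'_x]_{\geq \bp}$. Comparing the $\hdeg = \b0$ pieces, using that $[\tilde{E}']_{\geq\bp} = \varepsilon(\tilde{E}')$ for $\tilde{E}'$ of slope $<\bp$, one also finds that the degree-$\b0$ part of $Y \otimes X_0$ is $1 \otimes 1$.

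\textbf{Step 2: assemble $\Theta$.} Define $\Theta_{i,\bp}(x) := Y \otimes X_0$, which is the canonical-tensor-type element determined by
$$
\sum \langle \tilde{E}', Y \rangle X_0 = \sum_{d=0}^{n_i} x^{-d} \left[\tilde{E}'^{(d)}\right]_{\geq \bp}.
$$
Then $\Delta_{\bp}(T_i(x)) = (T_i(x) \otimes 1)\,\Theta_{i,\bp}(x)\,(1 \otimes T_i(x))$, which is \eqref{eqn:t}. Because the pairing is nondegenerate on $\CS^+_{(-\binfty,\bp)} \otimes \CS^-_{(-\binfty,\bp)}$, the component of $\Theta$ in $\U_{-\bn} \otimes \U_{\bn}$ is obtained by taking dual bases of the finite-dimensional pieces $\CS^\pm_{(-\binfty,\bp)}$ of horizontal degree $\pm\bn$ and vertical degree $v$. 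Up to the Cartan factors, which are absorbed into the graded pieces $\U_{\pm\bn}$, this gives
$$
\Theta_{i,\bp}(x)\big|_{\U_{-\bn}\otimes\U_{\bn}} = \sum_{k} Y_k \otimes \sum_{d=0}^{n_i} x^{-d} \left[\tilde{E}'^{(d)}_k\right]_{\geq \bp},
$$
a polynomial in $x^{-1}$ of degree at most $n_i$.

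\textbf{Step 3: vanishing of the constant term.} For $\bn \neq \b0$ and $d = 0$ we have $[\tilde{E}'_k]_{\geq\bp} = \varepsilon(\tilde{E}'_k) = 0$ by \eqref{eqn:epsilon}, so the $x^0$ coefficient vanishes. This yields \eqref{eqn:tt}, with $d$ running from $1$ to $n_i$.

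\textbf{Main obstacle.} The delicate point is the infinite sum over vertical degrees $v$ for fixed $\bn$. One must check that $\Theta$ is a legitimate element of the completed tensor product, so that each graded piece is a genuine polynomial in $x^{-1}$ with coefficients in the completion, rather than something requiring the exchange of an infinite sum with the expansion in $x^{-1}$. The other subtle point is justifying that $T_i(x)$, when formally adjoined, may be treated as an element of $\CS^+_{[\bp,\binfty]}$ so that \eqref{eqn:star 1} and \eqref{eqn:two 1} apply. I would handle the latter by extending the pairing to $T_i(x)$ via its group-like property together with \eqref{eqn:ssg t}--\eqref{eqn:ssl t}, and then rechecking \eqref{eqn:star 1} directly by the same pairing computation used in its original proof.
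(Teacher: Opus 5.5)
Your argument is correct and is essentially the paper's proof: apply \eqref{eqn:two 1} with $\Delta(T_i(x)) = T_i(x)\otimes T_i(x)$, move $T_i(x)$ to the right inside $[\,\cdot\,]_{\geq \bp}$ via \eqref{eqn:ssg t} and \eqref{eqn:star 1}, read off $d \leq n_i$ from $\prod_a (1 - z_{ia}/x)$, and kill the $x^0$ term because $[\tilde{E}']_{\geq \bp} = \varepsilon(\tilde{E}')$ for slope $<\bp$. The ``main obstacle'' you flag is not one, by a degree count. Each $\tilde{E}'_k$ of horizontal degree $\bn$ has vertical degree $< \bp\cdot\bn$, elements of $\CS^+_{[\bp,\binfty]}$ of horizontal degree $\bn$ have vertical degree $\geq \bp\cdot\bn$, and multiplying by $e_d(z_{i1},\dots,z_{in_i})$ raises vertical degree by exactly $d \leq n_i$; hence only the finitely many vertical degrees in $[\bp\cdot\bn - n_i, \bp\cdot\bn)$ contribute, and each horizontal-degree piece of $\Theta_{i,\bp}(x)$ is a finite sum in $\U_{-\bn}\otimes\U_{\bn}$.
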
 

\medskip

\begin{proof} As in the proof of Proposition \ref{prop:multiplicative}, we have 
\begin{equation}
\label{eqn:zero}
\Delta_{\bp}(T_i(x)) = T_i(x) Y \otimes X
\end{equation}
where $Y \otimes X \in \CS_{(-\binfty,\bp)}^- \otimes \CS_{[\bp,\binfty]}^+$ satisfies for any $E \in \CS_{(-\binfty,\bp)}^+ \cap \CS_{\bn}$ the equation
\begin{equation}
\label{eqn:eqn}
X \langle E', Y \rangle = \Big[T_i(x) E'\Big]_{\geq \bp} = \left[E'\prod_{a=1}^{n_i} \left(1- \frac {z_{ia}}x \right) \right]_{\geq \bp} T_i(x)
\end{equation}
The formula above immediately implies \eqref{eqn:tt}, once one observes that no $x^{-d}$ with $d > n_i$ can contribute to the bracket in the right-hand side. Moreover, if $\bn \neq \b0$ then $x^0$ can also not appear, because $[E']_{\geq \bp} = 0$ for any element $E'$ of slope $<\bp$. \end{proof}

\medskip 

\subsection{Simple modules}
\label{sub:simple modules}

For any loop weight $\bpsi$, we will define a horizontally graded simple module 
\begin{equation}
\label{eqn:simple}
\CA^{\geq \bp} \curvearrowright L^{\bp}(\bpsi)
\end{equation}
generated by a single vector $\vac$ modulo the relations
\begin{align}
&\ph^+_i(z) \cdot \vac = \psi_i(z) \vac, \quad \forall i \in I \label{eqn:simple relation 1} \\
&E \cdot \vac = 0, \ \ \quad \qquad \qquad \forall E \in \CS^+_{\geq \bp} \quad \text{of hdeg}>\b0 \label{eqn:simple relation 2}
\end{align}
The construction of this simple module is quite standard: define the Verma module
\begin{equation}
\label{eqn:verma action}
W^{\bp}(\bpsi) = \CS^-_{< \bp} \vac 
\end{equation}
with $F \in \CS^-_{<\bp}$ acting by left multiplication and $E \in \CS^+_{\geq \bp}$ acting by
\begin{multline} 
E \cdot F \vac \stackrel{\eqref{eqn:dd1}}= \langle E_1, F_1 \rangle  \langle E_3, S(F_3) \rangle F_2 E_2 \cdot \vac \\ \stackrel{\eqref{eqn:epsilon}}= \langle E_2, S(F_2) \rangle F_1 E_1 \cdot \vac \stackrel{\text{\eqref{eqn:simple relation 1}-\eqref{eqn:simple relation 2}}}=  \langle E\psi, S(F_2) \rangle F_1\vac \label{eqn:action verma}
\end{multline}
where for $E = E(z_{i1},\dots,z_{in_i})_{i \in I}$, we write 
$$
E \psi = E(z_{i1},\dots,z_{in_i})_{i \in I}\prod_{i \in I} \prod_{a=1}^{n_i} \psi_i(z_{ia})
$$
While the expression $E\psi$ is a power series in the variables $z_{ia}^{-1}$, only finitely many terms in the expansion pair non-trivially with any given $S(F_2)$, since the vertical degree of the latter is bounded above for any $F$. The following result is quite standard, and it can be construed as the definition of  $L^{\bp}(\bpsi)$.

\medskip 

\begin{proposition}
\label{prop:unique}
	
There is a surjective $\CA^{\geq \bp}$-module homomorphism 
\begin{equation}
\label{eqn:unique}
W^{\bp}(\bpsi) \stackrel{\pi}\twoheadrightarrow L^{\bp}(\bpsi)
\end{equation}
that sends $\vac$ to $\vac$, whose kernel is the maximal $-\nn \backslash \b0$ horizontally graded $\CA^{\geq \bp}$-submodule of $W^{\bp}(\bpsi)$. Thus, the simple module $L^{\bp}(\bpsi)$ is unique up to isomorphism. 

\end{proposition}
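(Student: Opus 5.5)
The plan is the standard highest-weight argument, adapted to the horizontal grading. First I would check that the formula \eqref{eqn:action verma}, together with left multiplication by $\CS^-_{<\bp}$, really defines an $\CA^{\geq \bp}$-module structure on $W^{\bp}(\bpsi)$. The cleanest route is to realize $W^{\bp}(\bpsi)$ as an induced module
$$
W^{\bp}(\bpsi) = \CA^{\geq \bp} \otimes_{\CS^+_{[\bp,\binfty]}} \BK_{\bpsi},
$$
where $\BK_{\bpsi}$ is the one-dimensional module of $\CS^+_{[\bp,\binfty]} = \CS^+_{\geq \bp} \otimes \CB^{\geq}_{\binfty}$ on which the $\ph^+_i(z)$ act by $\psi_i(z)$ and elements of positive horizontal degree act by $0$. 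For this to make sense, I need the quotient map $\CS^+_{\geq \bp} \otimes \CB^\geq_{\binfty} \to \CB^\geq_{\binfty}$ (killing hdeg $>\b0$) to be an algebra map. This holds because $\CS^+_{\geq\bp}$ is $\nn$-graded with degree-zero part $\BK$, and \eqref{eqn:ssg} only rescales horizontal-degree-$>\b0$ elements. The decomposition \eqref{eqn:slope p plus} then identifies the induced module with $\CS^-_{<\bp}\vac$ as a vector space, and \eqref{eqn:action verma} is simply the resulting action written out using \eqref{eqn:dd1} and \eqref{eqn:epsilon}.

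Next I grade $W^{\bp}(\bpsi)$ by horizontal degree. Since $\CS^-_{<\bp}$ lives in degrees $-\nn$, we have $W^{\bp}(\bpsi) = \bigoplus_{\bn \in \nn} W_{-\bn}$, and by construction $W_{\b0} = \BK\vac$. Every element of $\CA^{\geq\bp}$ acts homogeneously with respect to this grading. Hence the sum of all graded submodules contained in $\bigoplus_{\bn \neq \b0} W_{-\bn}$ is again such a submodule, because a sum of graded submodules avoiding degree $\b0$ still avoids degree $\b0$. Call this maximal one $M$.

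I then set $L^{\bp}(\bpsi) = W^{\bp}(\bpsi)/M$ and take $\pi$ to be the projection. To see simplicity, let $v \neq 0$ be a homogeneous vector of $L$ of degree $-\bn$. I claim $\CA^{\geq\bp} v \ni \vac$. If not, the preimage of $\CA^{\geq\bp}v$ in $W$ would be a graded submodule avoiding degree $\b0$ and strictly containing $M$, contradicting maximality. Since $\vac$ generates, the module is simple; this holds at least among graded submodules, and the statement is phrased in the graded setting. Uniqueness follows by the usual argument. Any graded module $L'$ generated by a vector satisfying \eqref{eqn:simple relation 1}--\eqref{eqn:simple relation 2}, and simple in the graded sense, receives a surjection from $W^{\bp}(\bpsi)$ by the universal property of induction. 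The kernel of this surjection avoids degree $\b0$, since it does not contain $\vac$ and $W_{\b0}$ is one-dimensional, so it is contained in $M$. Graded simplicity of $L'$ then forces the kernel to equal $M$.

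The main obstacle is the first step: confirming that the quotient map $\CS^+_{[\bp,\binfty]} \to \BK_\bpsi$ is an algebra homomorphism, and that the induction really produces $\CS^-_{<\bp}\otimes \BK_\bpsi$ as a free module. The second point rests on the vector space factorization $\CA^{\geq\bp}\cong \CS^+_{[\bp,\binfty]}\otimes\CS^-_{(-\binfty,\bp)}$ from \eqref{eqn:slope p plus}, read in the opposite order. I would get that reversed factorization from the Drinfeld double relation \eqref{eqn:drinfeld double relation} together with \eqref{eqn:simplify 2}, which lets one move $F$'s past $E$'s while staying inside the two factors.
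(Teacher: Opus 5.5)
Your proposal is correct and is essentially the paper's argument. The paper maps $W^{\bp}(\bpsi)$ onto $L^{\bp}(\bpsi)$ by $\vac \mapsto \vac$, gets surjectivity from simplicity, and notes that the graded kernel is maximal exactly because the quotient is simple; this is your $M$-construction read in the opposite direction. The extra material you supply is not a different route but something the paper takes for granted: realizing $W^{\bp}(\bpsi)$ as an induced module to justify \eqref{eqn:action verma}, and the reversed factorization of $\CA^{\geq \bp}$ (where \eqref{eqn:simplify 2} gives one containment and the unitriangularity of the Drinfeld double relation supplies the rest).
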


\medskip 

\begin{proof} Sending $\vac \mapsto \vac$ yields an $\CA^{\geq \bp}$-module homomorphism 
$$
\pi : W^{\bp}(\bpsi) \rightarrow L^{\bp}(\bpsi)
$$
which is surjective due to the simplicity of $L^{\bp}(\bpsi)$. Moreover, $\text{Ker }\pi$ is a horizontally graded $\CA^{\geq \bp}$-submodule of $W^{\bp}(\bpsi)$, which is maximal if and only if $L^{\bp}(\bpsi)$ is simple. \end{proof}

\medskip 

\subsection{Rational loop weights}
\label{sub:rational}
 
The following description of the maximal  $-\nn \backslash \b0$ horizontally graded $\CA^{\geq \bp}$-submodule of $W^{\bp}(\bpsi)$ is proved as in \cite[Proposition 4.5]{N Cat}:
$$
\text{Ker }\pi = J^{\bp}(\bpsi) \vac 
$$
where 
\begin{equation}
\label{eqn:j psi}
J^{\bp}(\bpsi) = \left\{F \in  \CS^-_{(-\binfty,\bp)} \Big| \ \langle E \psi, S(F) \rangle = 0 , \ \forall E \in \CS^+_{\geq \bp} \right \}
\end{equation}
Thus, we conclude the following description of the underlying vector space of simple modules
\begin{equation}
\label{eqn:simple modules}
L^{\bp}(\bpsi) = \CS^-_{<\bp} \Big/ J^{\bp}(\bpsi)
\end{equation}
This will allow us to determine which simple modules $L^{\bp}(\bpsi)$ lie in category $\CO$, by analogy with \cite[Theorem 3.11]{HJ}.

\medskip

\begin{proposition}
\label{prop:rational}
	
$L^{\bp}(\bpsi)$ lies in category $\CO$ if and only if $\bpsi$ is \emph{rational}, by which we mean that the constituent power series $\psi_i(z)$ are expansions of rational functions.

\end{proposition}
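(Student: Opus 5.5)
Via \eqref{eqn:simple modules}, $L^{\bp}(\bpsi)=\CS^-_{<\bp}/J^{\bp}(\bpsi)$. By \eqref{eqn:r action} its weights automatically lie in the single cone $\bla\bga^{-\bn}$, $\bn\in\nn$, where $\bla$ is the weight of $\vac$. So the only issue is whether each horizontal graded piece $\CS^-_{<\bp}\cap\CS_{-\bn}$ modulo $J^{\bp}(\bpsi)$ is finite-dimensional. Everything therefore reduces to describing, for fixed $\bn$, the rank of the bilinear form
$$
\CS^+_{\geq\bp}\cap\CS_{\bn}\ \times\ \CS^-_{<\bp}\cap\CS_{-\bn}\to\BK,\qquad (E,F)\mapsto\langle E\psi,S(F)\rangle .
$$
The quotient by $J^{\bp}(\bpsi)$ is exactly the image of $F\mapsto\langle\,\cdot\,\psi,S(F)\rangle$, so I want to show this image is finite-dimensional iff $\bpsi$ is rational. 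This follows the template of \cite[Theorem 3.11]{HJ} and \cite[Proposition 4.5]{N Cat}.

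\emph{Rational implies category $\CO$.} Write $\psi_i(z)=P_i(z)/Q_i(z)$. Multiplication by $\prod_{i,a}Q_i(z_{ia})$ maps $\CS^+_{\geq\bp}$ into itself, since it raises degrees and so preserves slope $\geq\bp$ conditions; this is the multiplication analogous to \eqref{eqn:ssg t}, and it preserves $\CS^+$ because multiplying by symmetric Laurent polynomials preserves the wheel-type description, just as $T_i$ does. Elements of the form $E\prod Q_i(z_{ia})$ then pair with $S(F)$ as $E\prod P_i(z_{ia})$, which is a Laurent polynomial. The key point is that the functional $F\mapsto\langle E\psi,S(F)\rangle$ depends on $E$ only through a bounded window of vertical degrees. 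The slope condition on $F$ bounds the vertical degree of $F$ above, and hence of $S(F)$ via the degree bookkeeping of Lemma \ref{lem:finite-dimensional}. The denominators $Q$ give a linear recursion among the functionals for $E$ shifted by $z_{ia}$-multiplications: shifting $E\mapsto Ez_{ia}^{k}$ for $k$ large is expressed through lower shifts. Consequently the span of these functionals in degree $-\bn$ is spanned by the ones coming from $E$ in a finite-dimensional graded range, which Lemma \ref{lem:finite-dimensional} guarantees is finite-dimensional. Hence $\dim L^{\bp}(\bpsi)_{-\bn}<\infty$.

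\emph{Category $\CO$ implies rational.} I use $\bn=\bs^i$. Then $\CS^-_{<\bp}\cap\CS_{-\bs^i}$ is spanned by $f_{i,d}$ with $d<p_i$, and $\CS^+_{\geq\bp}\cap\CS_{\bs^i}$ by $e_{i,k}$ with $k\geq p_i$. The pairing $\langle e_{i,k}\psi,S(f_{i,d})\rangle$ is, up to Cartan factors, $\psi_{i,k+d}$ (antipode: $S(f_{i,d})=-f_{i}\ph^{-}$ terms, which only contribute leading constants $\kappa$). So the form is the Hankel matrix $(\psi_{i,k+d})_{k\geq p_i,\,d<p_i}$. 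Finite rank of this infinite Hankel matrix is Kronecker's criterion for rationality of $\sum\psi_{i,m}z^{-m}$.

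The main obstacle is the first direction: making precise that the $Q$-multiplication lands in $\CS^+_{\geq\bp}$ and organizing the recursion so that finitely many $E$'s suffice in each degree $\bn$ with several variables. My first attempt is induction on $|\bn|$ using the coproduct $\Delta_{\bp}$ and the fact that $\CS^-_{<\bp}$ is generated by slope pieces via Proposition \ref{prop:factor}. This would let me reduce to one-variable Hankel finiteness in each color, at the cost of tracking the block triangularity from \eqref{eqn:id}.
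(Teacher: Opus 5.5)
Your reduction to finite-dimensionality of each horizontal piece is correct. Your ``only if'' argument is also fine: for $\bn=\bs^i$ the form is a Hankel matrix in the $\psi_{i,m}$, and Kronecker's criterion gives rationality. The paper leaves this direction as an exercise, so your argument fills it in. There is one sign slip: $f_{i,d}$ has slope $<\bp$ iff $d>-p_i$, not $d<p_i$. So the matrix is $(\psi_{i,k+d})_{k\geq p_i,\,d>-p_i}$, which only involves $\psi_{i,m}$ with $m\geq 1$.

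The ``if'' direction, however, has a genuine gap.

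First, the slope condition does not bound $\vdeg F$ from above. Every $F\in\CS^-_{<\bp}\cap\CS_{-\bn}$ has $\vdeg F>-\bp\cdot\bn$, and arbitrarily large vertical degrees occur. Likewise, for fixed $F$ the value $\langle E\psi,S(F)\rangle$ involves $\psi$-coefficients of index $\vdeg E+\vdeg F$, so there is no bounded window in $E$. If your bound held, Lemma \ref{lem:finite-dimensional} would put $L^{\bp}(\bpsi)$ in category $\CO$ for every $\bpsi$, contradicting your own Hankel argument.

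Second, the $Q$-recursion is a one-variable mechanism. What is true is that $E\prod_{i,a}Q_i(z_{ia})$ pairs to zero against all $S(F)$ with $F\in\CS^-_{<\bp}$ of nonzero horizontal degree. Indeed, it pairs as $E\prod_{i,a}P_i(z_{ia})\in\CS^+_{\geq\bp}$, and $J^{\bp}(z^{\b0})$ is the kernel of the counit. But this is a relation for multiplication by the full color-symmetric product only. The shift $E\mapsto Ez_{ia}^k$ is not an operation on $\CS^+$. Moreover, the quotient of $\CS^+_{\geq\bp}\cap\CS_{\bn}$ by $\prod_{i,a}Q_i(z_{ia})\cdot(\CS^+_{\geq\bp}\cap\CS_{\bn})$ is infinite-dimensional once $|\bn|\geq 2$. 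For example, for $\fsl_2$, $\bp=0$, $\bn=2$ and nonconstant $Q$, it is $\BK[z_1,z_2]^{S_2}$ modulo multiples of $Q(z_1)Q(z_2)$.

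Third, your fallback of reducing by induction to one-variable Hankel finiteness fails too. The algebra $\CS^-_{<\bp}$ is not generated by its pieces of horizontal degree $-\bs^i$. For $U_q(L\fg)$ of rank $\geq 2$ and $\bp=\b0$, the element of degree $(-\bth,1)$ in \eqref{eqn:e bullet} lies in $\CS^-_{<\b0}$. Yet any product of $f_{i,d}$ with $d\geq 1$ has vertical degree $\geq|\bth|\geq 2$.

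What is missing is a multi-variable mechanism in which the poles of $\psi$ interact with the poles of the $\zeta$-factors. The paper proceeds as follows.
\begin{itemize}
\item It uses \cite[Proposition 3.26]{N Cat} to reduce to $E=e_{i_1,d_1}\cdots e_{i_n,d_n}$ with all $d_a\geq -M$.
\item It rewrites $\langle E\psi,S(F)\rangle$ as the constant term \eqref{eqn:r3} in the region $1\ll|z_1|\ll\dots\ll|z_n|$.
\item Moving the contour of $z_a$ toward $0$ crosses only three kinds of poles: the roots of the denominator $Q_a$ of $\psi_{i_a}$, the poles of $\zeta_{i_ai_c}(z_a/z_c)$ for $c<a$, and $0$. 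The pole order at $0$ is uniformly bounded, because $d_a\geq -M$ and the degree of $F$ in each variable is bounded below.
\item Hence \eqref{eqn:r3} vanishes for all admissible $d$'s under divisibility conditions on $F$ by $z_a^M$, by the relevant $\zeta$-factors, and by $Q_a(z_a)$.
\item These conditions amount to finitely many linear conditions on $F$, independent of $\vdeg F$, which is exactly what finite-dimensionality of each horizontal piece requires.
\end{itemize}
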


\medskip

\begin{proof} The ``only if" statement is an easy exercise (see \cite[Theorem 4.11]{N Cat} for what is essentially the general proof in a particular setup), and so we skip it. Meanwhile, the ``if" statement is equivalent to the following fact: finitely many linear conditions on the coefficients of $F \in \CS_{<\bp}^-$ of any fixed horizontal degree $-\bn$ would imply
\begin{equation}
\label{eqn:r1}
\langle E \psi, S(F) \rangle = 0 , \quad \forall E \in \CS^+_{\geq \bp}
\end{equation}
By \cite[Proposition 3.26]{N Cat}, equation \eqref{eqn:r1} follows from
\begin{equation}
\label{eqn:r2}
\langle e_{i_1,d_1} \dots e_{i_n,d_n} \psi, S(F) \rangle = 0
\end{equation}	
for all $i_1,\dots,i_n \in I$ such that $\bs^{i_1} + \dots + \bs^{i_n} = \bn$ and all $d_1,\dots,d_n \geq -M$ for large enough $M$ (depending on $\bn$). By \cite[Lemma 3.10]{N Cat}, condition \eqref{eqn:r2} is equivalent to
\begin{equation}
\label{eqn:r3}
\int_{1 \ll |z_1| \ll \dots \ll |z_n|} \frac {z_1^{d_1}\dots z_n^{d_n} F(z_1,\dots,z_n)}{\prod_{1\leq a < b \leq n} \zeta_{i_bi_a} \left(\frac {z_b}{z_a} \right)} \prod_{a=1}^n \psi_{i_a}(z_a) = 0
\end{equation}	
for all $d_1,\dots,d_n \geq -M$ (the meaning of $1\ll$ is that we expand the rational functions above near $\infty$ and not near 0; the intuition is that $\psi_i(z)$ might produce poles other than $0$ and $\infty$, which we want to avoid). In turn, \eqref{eqn:r3} is implied by
\begin{align*}
&F(z_1,\dots,z_n) \text{ is divisible by } z_1^M Q_1(z_1), \\
\text{or }&F(z_1,\dots,z_n) \text{ is divisible by } z_{2}^M\zeta_{i_2i_1} \left(\frac {z_2}{z_1} \right) Q_2(z_{2}),  \\
& \dots \\
\text{or }&F(z_1,\dots,z_n) \text{ is divisible by } z_n^M\zeta_{i_ni_1} \left(\frac {z_n}{z_1} \right)\dots \zeta_{i_ni_{n-1}} \left(\frac {z_n}{z_{n-1}} \right) Q_n(z_n)
\end{align*}
where $Q_a(z_a)$ denotes the denominator of $\psi_{i_a}(z_a)$, and divisibility is defined in the ring of polynomials, not Laurent polynomials. Indeed, if the $a$-th condition above holds, then we can expand \eqref{eqn:r3} near $z_a \sim 0$ and obtain an answer of $0$ due to the lack of poles at $z_a = 0$. However, it is clear that the above divisibility conditions impose finitely many linear conditions on the coefficients of $F$, since the definition of $\CS_{<\bp}^-$ means that the degree of $F$ in any variable is bounded below. \end{proof}

\medskip 

\subsection{Decompositions}
\label{sub:decompositions}

A particular case of \eqref{eqn:j psi} occurs for the loop weight
$$
z^{-\br} = (z^{-r_i})_{i \in I}
$$
for any $\br = (r_i)_{i \in I} \in \zz$, namely
\begin{equation}
\label{eqn:j r}
J^{\bp}(z^{-\br}) =  \left\{F \in  \CS^-_{<\bp} \ \Big| \ \langle Ez^{-\br}, S(F) \rangle = 0 , \ \forall E \in \CS^+_{\geq \bp} \right \}
\end{equation}
The corresponding simple module \eqref{eqn:simple modules} has underlying vector space
\begin{equation}
\label{eqn:graded modules}
L^{\bp}(z^{-\br}) = \CS^-_{<\bp} \Big/ J^{\bp}(z^{-\br})
\end{equation}
which inherits a grading from the vertical degree of $\CS^-$ (generalizing the grading constructed in \cite{FH}). Moreover, it is clear that shuffle elements $F$ of large enough vertical degree vanish in $L^{\bp}(z^{-\br})$. We will also encounter the following vector space
\begin{equation}
\label{eqn:j neq}
J^{\neq 0}(\bpsi) =  \left\{F \in \CS^- \ \Big| \ \langle E\psi, S(F) \rangle = 0, \forall E \in \CS_{\geq N\bone} \text{ for }N\text{ large enough} \right\}
\end{equation} 
where in the right-hand side, $N$ is allowed to be large enough in comparison to the horizontal degree of $F$. As in formula \eqref{eqn:r3}, we have $F \in J^{\neq 0}(\bpsi)$ if and only if
\begin{equation}
\label{eqn:r4}
\int_{1 \ll |z_1| \ll \dots \ll |z_n|} \frac {z_1^{d_1}\dots z_n^{d_n} F(z_1,\dots,z_n)}{\prod_{1\leq a < b \leq n} \zeta_{i_bi_a} \left(\frac {z_b}{z_a} \right)} \prod_{a=1}^n \psi_{i_a}(z_a)  = 0
\end{equation}
for all $i_1,\dots,i_n \in I$ and all $d_1,\dots,d_n$ large enough. 

\medskip 

\begin{proposition}
\label{prop:two contours}

We have $F \in J^{\neq 0}(\bpsi)$ if and only if
\begin{equation}
\label{eqn:hart}
\int_{z_n} \dots \int_{z_1}  \frac {z_1^{d_1}\dots z_n^{d_n} F(z_1,\dots,z_n)}{\prod_{1\leq a < b \leq n} \zeta_{i_bi_a} \left(\frac {z_b}{z_a} \right)} \prod_{a=1}^n \psi_{i_a}(z_a) = 0
\end{equation}
for all $i_1,\dots,i_n \in I$ and all $d_1,\dots,d_n \in \BZ$. Above, we use the notation
\begin{equation}
	\label{eqn:two contours}
	\int_w G(w) = \int_{1\ll |w|} G(w) - \int_{1 \gg |w|}  G(w)
\end{equation}
to refer to the difference between the constant terms of $G(w)$ calculated near $\infty$ and near $0$ (the notation is inspired by the case of $\BK = \BC$, in which $\int_w$ can be calculated as the difference of contour integrals over two circles around $\infty$ and around $0$).

\end{proposition}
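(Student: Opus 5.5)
Starting from \eqref{eqn:r4}, which characterizes $J^{\neq 0}(\bpsi)$ as those $F$ for which the iterated constant term $\int_{1\ll|z_1|\ll\dots\ll|z_n|}$ of the integrand
$$
G_{\mathbf d}(z_1,\dots,z_n) = \frac {z_1^{d_1}\dots z_n^{d_n} F(z_1,\dots,z_n)}{\prod_{1\leq a < b \leq n} \zeta_{i_bi_a} \left(\frac {z_b}{z_a} \right)} \prod_{a=1}^n \psi_{i_a}(z_a)
$$
vanishes for all $d_1,\dots,d_n$ large enough, I would compare this with the two-contour integral in \eqref{eqn:hart} by expanding the latter as an alternating sum over the $2^n$ choices of contour (near $\infty$ or near $0$) for each variable, taken in the order $z_1$ innermost. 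The key observation is that the contour near $0$ for a variable $z_a$ contributes nothing once $d_a$ is large. Indeed, if $\bpsi$ is rational then $\psi_{i_a}(z_a)$ is regular near $0$ away from possibly finitely many poles at $z_a=0$. Moreover $F$ is a Laurent polynomial, and the $\zeta$ factors, being expanded in the appropriate regime, contribute bounded negative powers of $z_a$ at fixed total degree. Hence a large power $z_a^{d_a}$ kills the constant term at $0$. So for $d_a\gg0$ the two-contour expression \eqref{eqn:hart} reduces to the single term in which every variable sits near $\infty$, namely \eqref{eqn:r4}. This gives the direction ``\eqref{eqn:hart} for all $\mathbf d$ $\Rightarrow$ $F\in J^{\neq0}(\bpsi)$''.

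For the converse I would argue that the left-hand side of \eqref{eqn:hart}, as a function of $\mathbf d$, is a finite sum of iterated residues at the finite poles of $G_{\mathbf d}$ (those of $\psi$ and of the $\zeta$'s), by the residue theorem applied one variable at a time. Here one should first check that after integrating $z_1,\dots,z_{a-1}$ the result is again rational in $z_a$, so that the difference of constant terms at $\infty$ and $0$ equals minus the sum of residues at nonzero finite points. Each such residue has the form $\sum_k c_k(\mathbf d)\prod_a \lambda_{a,k}^{d_a}$, an exponential-polynomial in $\mathbf d$ (quasi-polynomial coefficients arise when poles are multiple). An exponential-polynomial in $\mathbf d$ that vanishes for all $\mathbf d$ in a translated orthant vanishes identically, by linear independence of the characters $\mathbf d\mapsto \prod \lambda^{d_a} d_a^{m}$ on such orthants. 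Since for large $\mathbf d$ it equals \eqref{eqn:r4}, which vanishes, it vanishes for all $\mathbf d\in\BZ^n$.

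The main obstacle is the bookkeeping of the nested contours. When $z_1$ is integrated near $0$ versus near $\infty$, the expansion regime of $\zeta_{i_bi_a}(z_b/z_a)$ relative to the other variables changes. One must verify that $\int_{z_n}\dots\int_{z_1}$ is an iterated residue at genuinely finite nonzero poles, including the diagonal poles $z_a=z_b s$ coming from the $\zeta$'s, and that the ``near $0$'' pieces truly vanish for large $d_a$ uniformly in the other variables. I would first treat $n=1$, where everything is transparent, and then induct on $n$, integrating out $z_1$. This produces a rational function in $z_2,\dots,z_n$ of the same shape, with the shifted poles $z_1=z_b s$ absorbed as residues.
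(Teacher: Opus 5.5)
Your proposal is correct, and its final step is exactly the paper's: for all $d_a$ large, every piece of \eqref{eqn:hart} in which some variable sits near $0$ vanishes, so \eqref{eqn:hart} collapses to \eqref{eqn:r4}.

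You genuinely differ from the paper on the step ``vanishing for $\mathbf d$ large implies vanishing for all $\mathbf d\in\BZ^n$''. The paper argues softly. $\int_w$ annihilates any integrand with no poles strictly between $0$ and $\infty$. Hence the left-hand side of \eqref{eqn:hart} vanishes when $z^{\mathbf d}$ is multiplied by any of the polynomials $P_a$ from the divisibility conditions in the proof of Proposition \ref{prop:rational}, which clear the poles in $z_a$ coming from $\psi_{i_a}$ and from $\zeta_{i_ai_b}(z_a/z_b)$, $b<a$. These relations let one raise the exponents at will. Concretely, $P_a$ involves only $z_1,\dots,z_a$ and has unit constant coefficient in $z_a$, so one raises the exponent of $z_n$, then of $z_{n-1}$, and so on.

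You instead evaluate the left-hand side as a finite sum of iterated residues at points of $(\bar{\BK}^*)^n$. This makes it an exponential polynomial in $\mathbf d$, and you conclude by rigidity of exponential polynomials on a translated orthant, which is valid since $\mathrm{char}\,\BK=0$ and the bases are nonzero. The paper's route needs no residue calculus and no independence lemma. Yours costs the contour bookkeeping, but identifies the functional explicitly as finitely many derivatives evaluated at points, essentially the iterated residues that reappear in \eqref{eqn:j x}.

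There are two points to tighten.

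First, integrate in the order $z_1,z_2,\dots$. Once $z_1,\dots,z_{a-1}$ have been specialized to constants, the poles $z_a=z_b/s$ with $b>a$ do not lie between the two expansions of $z_a$. Indeed, for $z_b$ near $\infty$ one has $|z_a|\ll|z_b|$ in both expansions, and for $z_b$ near $0$ one has $|z_b|\ll|z_a|$ in both. So only constant nonzero poles are picked up, and with a plus sign, since $\int_{1\ll|w|}-\int_{1\gg|w|}$ is the sum of the residues in between.

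Second, the near-$0$ vanishing does not hold variable by variable. If $z_c$ with $c>a$ is also near $0$, expanding $\zeta_{i_ci_a}(z_c/z_a)^{-1}$ in $z_c/z_a$ makes the powers of $z_a$ unbounded below. What is true is the following. In a term whose set $A$ of near-$0$ variables is nonempty, $z_{\max A}$ is the smallest variable of the regime. All factors other than $z_{\max A}^{d_{\max A}}$ contribute powers of $z_{\max A}$ bounded below by a constant independent of $\mathbf d$. Hence $d_{\max A}\gg 0$ kills the term.
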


\medskip 

\begin{proof} It is clear that $\int_w G(w) = 0$ for a Laurent polynomial $G$. Thus, the left-hand side of \eqref{eqn:hart} is automatically 0 if $F$ satisfies any one of the divisibility properties in the proof of Proposition \ref{prop:rational}. This allows us to arbitrarily increase the exponents $d_1,\dots,d_n$ in formula \eqref{eqn:hart} without changing the zero-ness of the left-hand side. We conclude that \eqref{eqn:hart} holds for all $d_1,\dots,d_n \in \BZ$ if and only if it holds for all $d_1,\dots,d_n$ large enough. But if $d_1,\dots,d_n$ are large enough, the integrals in \eqref{eqn:hart} have zero contribution from $|z_1|,\dots,|z_n| \ll 1$, so  \eqref{eqn:hart} is equivalent to \eqref{eqn:r4}. \end{proof}

\medskip 

\noindent While the quotient
\begin{equation}
\label{eqn:neq modules}
L^{\neq 0}(\bpsi) = \CS^- \Big/ J^{\neq 0}(\bpsi) 
\end{equation}
is not a $\CA^{\geq \bp}$-module, we will see in the next Subsection that it is a module for a shifted version of the quantum loop algebra $\U$. Recall the coproduct \eqref{eqn:coproduct shuffle minus}, and let us write it as follows
\begin{equation}
\label{eqn:delta prime}
\Delta(F) = \sum_{\b0 \leq \bm \leq \bn} F'(z_{i1},\dots , z_{im_i}) \otimes  F''(z_{i,m_i+1}, \dots, z_{in_i}) \prod_{i \in I} \prod_{a=1}^{m_i} \ph^-_i(z_{ia})
\end{equation}
(in other words, we absorb the denominator of \eqref{eqn:coproduct shuffle minus} in the sum of tensors $F' \otimes F''$, at the cost of allowing infinite sums). For any loop weight $\bpsi = (\psi_i(z))_{i \in I}$, let  $\br = \textbf{ord }\bpsi$ denote the $I$-tuple of orders of the poles of the functions $\psi_i(z)$ at $z=0$. 

\medskip 

\begin{proposition}
\label{prop:decomposition}

For any loop weight $\bpsi$, let $\br = \emph{\textbf{ord }}\bpsi$. The assignment \footnote{While the sum $F' \otimes F''$ is infinite for any given $F$, we note that the map \eqref{eqn:decomposition} is well-defined because all but finitely many of the $F'$ that appear have vertical degree bounded below by any arbitrarily large number, and thus vanish in $L^{\bp}(z^{-\br})$.}
$$
\CS^-_{<\bp} \rightarrow \CS^-_{<\bp}  \otimes \CS^-, \qquad F \mapsto F' \otimes F''
$$
(with $F',F''$ as in \eqref{eqn:delta prime}) induces an isomorphism of vector spaces
\begin{equation}
\label{eqn:decomposition}
L^\bp(\bpsi) \xrightarrow{\sim} L^\bp(z^{-\br}) \otimes L^{\neq 0}(\bpsi)
\end{equation}
which intertwines the action of $\ph^+_i(z)$ on the LHS with $\ph^+_i(z) \otimes \ph^+_i(z)$ on the RHS.

\end{proposition}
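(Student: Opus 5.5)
The plan is to factor $\bpsi = z^{-\br}\cdot\bpsi^{\neq 0}$, where $\bpsi^{\neq 0}$ is regular and non-zero at $z=0$, and to compare the three kernels $J^{\bp}(\bpsi)$, $J^{\bp}(z^{-\br})$ and $J^{\neq 0}(\bpsi)$. These kernels are described by the contour-integral conditions \eqref{eqn:r3}, \eqref{eqn:r4} and Proposition \ref{prop:two contours}. The map $F \mapsto F'\otimes F''$ comes from the coproduct \eqref{eqn:delta prime}, so the first step is to make precise why it is compatible with these kernels.

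The tool is the splitting of the constant-term integral in \eqref{eqn:r3}. Each variable is integrated near $\infty$, and we write $\int_{1\ll|z|} = \int_{z} + \int_{1\gg|z|}$ using the notation \eqref{eqn:two contours}. Expanding this over all variables gives a sum over subsets of variables integrated near $0$ and the complementary subsets integrated by $\int_z$. Because of the ordering $|z_1|\ll\dots\ll|z_n|$ and the expansion of the $\zeta$-denominators, the variables placed near $0$ separate from the others exactly as in the coproduct formula \eqref{eqn:coproduct shuffle minus}. The factors $\prod \ph^-_i(z_{ia})$ evaluated on the small variables together with $\psi$ reproduce the contribution of $z^{-\br}$ near $0$, namely the leading order of $\psi$ at $0$ times something regular. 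Consequently the pairing $\langle E\psi, S(F)\rangle$ should factor as $\sum \langle E^{(1)} z^{-\br}, S(F')\rangle\,\cdot\,(\text{two-contour pairing of } E^{(2)}\psi \text{ with } F'')$. I would derive this identity by combining the bialgebra property \eqref{eqn:bialgebra 1} of the pairing with \eqref{eqn:interact 3}, the latter guaranteeing that $F'\in\CS^-_{<\bp}$.

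With this factorization, well-definedness follows directly: if $F'\in J^{\bp}(z^{-\br})$ or $F''\in J^{\neq0}(\bpsi)$, the corresponding factor vanishes, so the image of $J^{\bp}(\bpsi)$ lands in the kernel of the projection. For injectivity and surjectivity I would not build an inverse. Instead, I would use the non-degeneracy description: $F$ vanishes in $L^{\bp}(\bpsi)$ iff all the pairings $\langle E\psi,S(F)\rangle$ vanish. The factorization says these pairings are recovered from the pairings of $F'\otimes F''$ against the two types of test functionals, which gives injectivity. For surjectivity, I would run a triangularity argument in horizontal degree, with the leading term of $\Delta(F)$ given by $F\otimes\ph^-$ and $1\otimes F$. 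This triangularity, together with an induction on $\bn$, should show that products $F_1 F_2$ with $F_1\in\CS^-_{<\bp}$ and $F_2\in\CS^-$ hit all of $L^{\bp}(z^{-\br})\otimes L^{\neq0}(\bpsi)$. A comparison of graded dimensions of the finite-dimensional horizontal pieces (Lemma \ref{lem:finite-dimensional}) would then close the argument.

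Intertwining $\ph^+_i(z)$ follows from \eqref{eqn:ssl}: commuting $\ph^+_i(z)$ past $F$ produces the factor $\prod\zeta/\zeta$, which splits multiplicatively over the variables of $F'$ and $F''$. Together with $\bpsi = z^{-\br}\bpsi^{\neq0}$, this gives $\ph^+_i(z)\otimes\ph^+_i(z)$.

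The main obstacle is the first step: making the contour-splitting rigorous and matching it exactly with \eqref{eqn:delta prime}. This includes the orientation and sign conventions for the antipode $S$, and checking that the infinite sums converge in the relevant quotients, using the footnote's vertical-degree argument.
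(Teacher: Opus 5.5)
Your contour-splitting factorization is the same device the paper uses in \eqref{eqn:integral 2}--\eqref{eqn:integral 4}. It yields one half of
\[
F \in J^{\bp}(\bpsi) \iff F'\otimes F'' \in J^{\bp}(z^{-\br})\otimes \CS^- + \CS^-_{<\bp}\otimes J^{\neq 0}(\bpsi),
\]
namely $\Leftarrow$, which is injectivity. For this you also need the first factors of $\Delta(E)$ to lie in $\CS^+_{\geq \bp}$, i.e.\ \eqref{eqn:interact 1} and not only \eqref{eqn:interact 3}, so that the first pairing is one of the functionals defining $J^{\bp}(z^{-\br})$.

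The gap is that you invoke this same implication a second time and call it well-definedness. The sentence ``if $F'\in J^{\bp}(z^{-\br})$ or $F''\in J^{\neq 0}(\bpsi)$, the corresponding factor vanishes'' shows that anything mapping to $0$ lies in $J^{\bp}(\bpsi)$. It says nothing about whether $J^{\bp}(\bpsi)$ maps to $0$. That is the implication $\Rightarrow$, and it needs a separate idea.

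Concretely, you must show that each product functional
\[
F'\otimes F''\mapsto \langle E'z^{-\br},S(F')\rangle\langle E''\psi,S(F'')\rangle, \qquad E'\in\CS^+_{\geq\bp}, \ E''\in\CS^+_{\geq N\bone}, \ N\gg 0,
\]
kills $F'\otimes F''$ whenever $F\in J^{\bp}(\bpsi)$. In each graded piece these functionals cut out exactly the subspace above. The paper first uses \eqref{eqn:bialgebra 2} and the anti-homomorphism property of $S$ to collapse the sum into the single pairing of $(E'z^{-\br})*(E''\psi)$ with $S(F)$. It then trades $z^{-\br}$ back for $\psi$: choose polynomials $Q_i$ with $Q_i(w)\psi_i(w)\in w^{-r_i}+w^N\BK[w]$. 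For $N$ large, the vanishing then follows from
\[
\Big\langle \big((E'\textstyle\prod_{i,a}Q_i(z_{ia}))*E''\big)\psi, S(F)\Big\rangle=0,
\]
which holds because $(E'\prod Q_i)*E''\in\CS^+_{\geq\bp}$. Without such a step, the map is not shown to descend to $L^{\bp}(\bpsi)$ at all.

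Your surjectivity sketch also fails as stated, for two reasons.
\begin{itemize}
\item The map is only defined on $\CS^-_{<\bp}$. A product $F_1F_2$ with $F_1\in\CS^-_{<\bp}$ and $F_2\in\CS^-$ arbitrary generally does not have slope $<\bp$.
\item $\Delta(F_1F_2)$ contains cross terms in which variables of $F_2$ enter the first tensor factor while variables of $F_1$ enter the second. These can have the same horizontal degree as $F_1$, so induction on $\bn$ does not control them.
\end{itemize}
The paper handles both issues with one observation. By Proposition \ref{prop:two contours}, multiplication by $\prod_{i,a}z_{ia}$ preserves $J^{\neq 0}(\bpsi)$ and is injective on the finite-dimensional quotient $\CS_{-\bn}/(J^{\neq 0}(\bpsi)\cap \CS_{-\bn})$, hence bijective. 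So the representative $\tilde F''$ can be taken in $\CS^-_{<-N\bone}$ with $N$ huge. Then set $F=\tilde F'*[\tilde F''\cdot M]$, where $M$ is a monomial compensating the leading terms of the $\zeta$-denominators in \eqref{eqn:delta prime}. This $F$ lies in $\CS^-_{<\bp}$. Every cross term has first factor of huge vertical degree, so it lies in $J^{\bp}(z^{-\br})$. The remaining terms are absorbed by induction on $|\text{hdeg}\,\tilde F'|$, with decreasing induction on $\text{vdeg}\,\tilde F'$ to break ties.

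Finally, the closing dimension count is not available. Lemma \ref{lem:finite-dimensional} concerns the subalgebras $\CS^\pm_{\geq\bp}$ and the like, and nothing gives the graded dimensions of $L^{\bp}(\bpsi)$ independently. Surjectivity therefore has to be proved directly, as above.
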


\medskip

\begin{proof} We will adapt the proof of \cite[Proposition 4.8]{N Char}. In order to show that the map \eqref{eqn:decomposition} is well-defined and injective, we must prove that 	
\begin{equation}
\label{eqn:that}
F \in J^{\bp}(\bpsi) \quad \Leftrightarrow \quad F' \otimes F'' \in J^{\bp}(z^{-\br}) \otimes \CS^- + \CS^-_{<\bp} \otimes J^{\neq 0}(\bpsi)
\end{equation}
For the implication $\Rightarrow$, we must show that
\begin{multline}
\label{eqn:this}
\left \langle E'(z_{i1},\dots,z_{im_i})_{i \in I} \prod_{i \in I} \prod_{a=1}^{m_i} z_{ia}^{-r_i}, S(F') \right \rangle \\ \left \langle E''(z_{i,m_i+1},\dots,z_{in_i})_{i \in I} \prod_{i \in I} \prod_{a=m_i+1}^{n_i} \psi_i(z_{ia}), S(F'') \right \rangle = 0
\end{multline}
for all $E' \in \CS_{\geq \bp|\bm}$ and $E'' \in \CS_{\geq N \bone|\bn-\bm}$ with $N$ large enough. By \eqref{eqn:bialgebra 2} and the anti-automorphism property of the antipode, the condition above is equivalent to \footnote{Note that we must use the following straightforward consequence of \eqref{eqn:coproduct shuffle plus} and \eqref{eqn:bialgebra 1}:
\begin{equation}
\label{eqn:consequence 1}
\langle E'', S(\ph F'') \rangle = \varepsilon(\ph) \langle E'',S(F'')\rangle
\end{equation}
for any $E'' \in \CS^+$, $F'' \in \CS^-$ and $\ph = \ph^-_{i_1,d_1}\ph^-_{i_2,d_2}\dots$. We also note that $\ph$ is on opposite sides of $F''$ in the RHS of \eqref{eqn:delta prime} and in the LHS of \eqref{eqn:consequence 1}. This is not an issue, since commuting $\ph$ past $F''$ happens at the cost of multiplying $F' \otimes F''$ by a power series in $\{z_{ia}/z_{jb}\}_{i,j \in I, a \leq m_i, b > m_j}$ with non-zero constant term, which does not change whether $F' \otimes F'' \in J^{\bp}(z^{-\br}) \otimes \CS^- + \CS^-_{<\bp} \otimes J^{\neq 0}(\bpsi)$.}
$$
\left \langle E'(z_{i1},\dots,z_{im_i}) * E''(z_{i,m_i+1},\dots,z_{in_i}) \prod_{i \in I} \left( \prod_{a=1}^{m_i} z_{ia}^{-r_i} \prod_{a=m_i+1}^{n_i} \psi_i(z_{ia}) \right), S(F)\right \rangle = 0
$$
However, consider the following fact: since $\psi_i(w) = O(w^{-r_i})$ near 0, then we can find a polynomial $Q_i(w)$ such that $Q_i(w)\psi_i(w) \in w^{-r_i} + w^N\BC[w]$ for arbitrarily large $N$ (which we choose large enough so that any shuffle element of vertical degree $\geq N$ is automatically in $J^\bp(z^{-\br})$). Thus, the equation above is implied by
$$
\left \langle E'(z_{i1},\dots,z_{im_i})\prod_{i \in I} \prod_{a=1}^{m_i} Q_i(z_{ia}) * E''(z_{i,m_i+1},\dots,z_{in_i}) \prod_{i \in I} \prod_{a=1}^{n_i} \psi_i(z_{ia}), S(F)\right \rangle = 0
$$
which in turn holds because $F \in J^{\bp}(\bpsi)$ and $E' \prod_{i,a} Q_i(z_{ia}) * E'' \in \CS^+_{\geq \bp}$.
	
\medskip
	
\noindent For the implication $\Leftarrow$ of \eqref{eqn:that}, recall that any $E \in \CS_{\geq \bp}^+$ can be written as
\begin{equation}
	\label{eqn:e spherical}
	E \stackrel{\eqref{eqn:spherical}}= \text{Sym} \left[ \nu(z_1,\dots,z_n) \prod_{1 \leq a < b \leq n} \zeta_{i_ai_b} \left(\frac {z_a}{z_b} \right) \right] \in \CS_{\geq \bp}^+
\end{equation}
for some Laurent polynomial $\nu$ and some $i_1,\dots,i_n \in I$. In the formula above, we recall that $z_a$ is a placeholder for the variable $z_{i_a\bullet_a}$, for minimal positive integers $\bullet_1,\dots,\bullet_n$ such that $\bullet_a < \bullet_b$ if $a<b$ and $i_a=i_b$. By \cite[Lemma 3.10]{N Cat}, we have
\begin{equation}
	\label{eqn:integral 1}
	\langle E\psi, S(F) \rangle =	\int_{1 \ll |z_1| \ll \dots \ll |z_n|} \frac {\nu(z_1,\dots,z_n)F(z_1,\dots,z_n)}{ \prod_{1 \leq a < b \leq n} \zeta_{i_bi_a} \left(\frac {z_b}{z_a} \right)} \prod_{a=1}^n \psi_{i_a}(z_a)
\end{equation}
Using \eqref{eqn:two contours}, we rewrite \eqref{eqn:integral 1} as follows
\begin{multline}
	\langle E\psi, S(F) \rangle =	\label{eqn:integral 2}
	\sum_{\{1,\dots,n\} = \{a_1<\dots <a_k\} \sqcup \{b_1 < \dots < b_{\ell}\}} \\ \int_{z_{b_\ell}} \dots \int_{z_{b_1}} \int_{1 \gg |z_{a_k}| \gg \dots \gg |z_{a_1}|}  \frac {\nu(z_1,\dots,z_n)F(z_1,\dots,z_n)}{ \prod_{1 \leq a < b \leq n} \zeta_{i_bi_a} \left(\frac {z_b}{z_a} \right)} \prod_{a=1}^n \psi_{i_a}(z_a)
\end{multline}
Let us rewrite the expression above in a way that makes it clear that the integrand is expanded as $|z_{a_1}|,\dots,|z_{a_k}|$ $\ll$ $|z_{b_1}|,\dots,|z_{b_{\ell}}|$:
\begin{equation}
	\label{eqn:integral 3}
	\langle E\psi, S(F) \rangle =	\sum_{\{1,\dots,n\} = \{a_1<\dots <a_k\} \sqcup \{b_1 < \dots < b_{\ell}\}} \int_{z_{b_\ell}} \dots \int_{z_{b_1}} \int_{1 \gg |z_{a_k}| \gg \dots \gg |z_{a_1}|} 
\end{equation}
$$ 
\frac {\nu(z_{a_1},\dots,z_{a_k} \otimes z_{b_1},\dots,z_{b_{\ell}})F(z_{a_1},\dots,z_{a_k} \otimes z_{b_1},\dots,z_{b_{\ell}})}{ \prod_{1 \leq a < b \leq n} \zeta_{i_bi_a} \left(\frac {z_b}{z_a} \right)} \prod_{s=1}^{k} \psi_{i_{a_s}}(z_{a_s}) \prod_{t=1}^{\ell} \psi_{i_{b_t}}(z_{b_t})
$$
Since $\CS^+_{\geq \bp}$ is closed under multiplication by color-symmetric polynomials, the right-hand side of \eqref{eqn:integral 3} vanishes for all $\nu$ if and only if it vanishes for all $\nu$ multiplied by arbitrary color-symmetric polynomials in $z_{a_1},\dots,z_{a_k}$. Thus, we may replace 
$$
\psi_{i_{a_s}}(z_{a_s}) = z_{a_s}^{-r_{i_{a_s}}} \Big (\text{non-zero constant} + O(z_{a_s}) \Big) \quad \text{by} \quad z_{a_s}^{-r_{i_{a_s}}} 
$$
in \eqref{eqn:integral 3}, since the constant terms in the variables $z_{a_1},\dots,z_{a_k}$ are calculated only near 0. Thus, we conclude that $F \in J^{\bp}(\bpsi)$ if and only if
\begin{equation}
	\label{eqn:integral 3 bis}
	\sum_{\{1,\dots,n\} = \{a_1<\dots <a_k\} \sqcup \{b_1 < \dots < b_{\ell}\}} \int_{z_{b_\ell}} \dots \int_{z_{b_1}} \int_{1 \gg |z_{a_k}| \gg \dots \gg |z_{a_1}|} 
\end{equation}
$$ 
\frac {\nu(z_{a_1},\dots,z_{a_k} \otimes z_{b_1},\dots,z_{b_{\ell}})F(z_{a_1},\dots,z_{a_k} \otimes z_{b_1},\dots,z_{b_{\ell}})}{ \prod_{1 \leq a < b \leq n} \zeta_{i_bi_a} \left(\frac {z_b}{z_a} \right)} \prod_{s=1}^{k} z_{a_s}^{-r_{i_{a_s}}} \prod_{t=1}^{\ell} \psi_{i_{b_t}}(z_{b_t}) = 0
$$
for all $\nu$ as in \eqref{eqn:e spherical}. For the remainder of this proof, the symbol $\sum$ will stand for summing over all partitions $\{1,\dots,n\} = \{a_1<\dots<a_k\} \sqcup \{b_1 < \dots < b_\ell\}$, for various $k$ and $\ell$. Using relations \eqref{eqn:uug} and \eqref{eqn:coproduct shuffle plus}, it is straightforward to obtain
	$$
	\Delta(E) = \sum \ph E' \otimes E''
	$$
	where (above and henceforth) the symbol $\sum$ stands for summing over partitions $\{1,\dots,n\} = \{a_1<\dots<a_k\} \sqcup \{b_1 < \dots < b_\ell\}$ for various $k$, $\ell$, and we write
	\begin{align*}
		&\ph = \ph_{i_{b_1}}^+(z_{b_1}) \dots \ph_{i_{b_\ell}}^+(z_{b_\ell}) \\
		&E' = \text{Sym} \left[ \nu'(z_{a_1},\dots,z_{a_k}) \prod_{1 \leq s < t \leq k} \zeta_{i_{a_s} i_{a_t}} \left(\frac {z_{a_s}}{z_{a_t}} \right) \right] \prod_{1 \leq s \leq k, 1 \leq t \leq \ell}^{a_s < b_t} \frac {\zeta_{i_{a_s}i_{b_t}} \left(\frac {z_{a_s}}{z_{b_t}} \right)}{\zeta_{i_{b_t}i_{a_s}} \left(\frac {z_{b_t}}{z_{a_s}} \right)} \\ 
		&E'' = \text{Sym} \left[ \nu''(z_{b_1},\dots,z_{b_\ell}) \prod_{1 \leq s < t \leq \ell} \zeta_{i_{b_s} i_{b_t}} \left(\frac {z_{b_s}}{z_{b_t}} \right) \right] 
\end{align*} 
where $\nu(z_1,\dots,z_n) = \nu'(z_{a_1},\dots,z_{a_k}) \otimes \nu''(z_{b_1},\dots,z_{b_{\ell}})$.  Therefore, condition \eqref{eqn:integral 3} can be translated into the fact that $F \in J^{\bp}(\bpsi)$ if and only if \footnote{Note that we must use the following straightforward consequence of \eqref{eqn:coproduct shuffle minus} and \eqref{eqn:bialgebra 2}:
\begin{equation}
\label{eqn:consequence 2}
\langle \ph E', S(F') \rangle = \varepsilon(\ph) \langle E',S(F')\rangle
\end{equation}
for any $E' \in \CS^+$, $F' \in \CS^-$ and $\ph = \ph^+_{i_1,d_1}\ph^+_{i_2,d_2}\dots$.}
\begin{multline}
\label{eqn:integral 4}
\sum \left \langle E' \prod_{s=1}^{k} z_{a_s}^{-r_{i_{a_s}}} ,  S(F')  \right \rangle \\
\int_{z_{b_\ell}} \dots \int_{z_{b_1}} \frac {\nu''(z_{b_1},\dots,z_{b_{\ell}})F''(z_{b_1},\dots,z_{b_{\ell}})}{ \prod_{1 \leq s<t \leq \ell} \zeta_{i_{b_t}i_{b_s}} \left(\frac {z_{b_t}}{z_{b_s}} \right)}  \prod_{t=1}^{\ell} \psi_{i_{b_t}}(z_{b_t}) = 0
\end{multline}
By \eqref{eqn:interact 1}, we have $E \in \CS_{\geq \bp}^+ \Rightarrow E' \in \CS_{\geq \bp}^+$. Therefore, the display above allows us to immediately conclude the $\Leftarrow$ implication of \eqref{eqn:that}: if $F' \in J^{\bp}(z^{-\br})$ then the first line of \eqref{eqn:integral 4} vanishes, while if $F'' \in J^{\neq 0}(\bpsi)$ then the second line of \eqref{eqn:integral 4} vanishes due to Proposition \ref{prop:two contours}. We have thus shown that the map \eqref{eqn:decomposition} is well-defined and injective. To prove that it is surjective, we adapt the final paragraph in the proof of \cite[Theorem 1.3]{N Cat}, as follows. Consider any 
\begin{equation}
\label{eqn:consider any}
\tilde{F}' \otimes \tilde{F}'' \in \CS^-_{<\bp} \otimes \CS^-
\end{equation}
and we seek to construct $F \in \CS^-_{<\bp}$ such that
\begin{equation}
\label{eqn:jt}
F' \otimes F'' \equiv \tilde{F}' \otimes \tilde{F}'' \quad \text{mod} \quad J^{\bp}(z^{-\br}) \otimes \CS^- + \CS^-_{<\bp} \otimes J^{\neq 0}(\bpsi)
\end{equation}
with $F' \otimes F''$ as in \eqref{eqn:delta prime}. We do so by increasing induction on $|\hdeg \tilde{F}'|$ and by decreasing induction on $\vdeg \tilde{F}'$ to break ties (the base case of the latter induction is trivial because all shuffle elements of high enough vdeg lie in $J^{\bp}(z^{-\br})$). Thus, we assume that one can always pick $F$ such that \eqref{eqn:jt} holds whenever $\deg \tilde{F}'$ is smaller than a fixed amount, and we will construct $F$ when $\deg \tilde{F}'$ is equal to said amount. First of all, Proposition \ref{prop:two contours} shows that
$$
\tilde{F}''(z_{i1},\dots,z_{in_i}) \in J^{\neq 0}(\bpsi) \quad \Leftrightarrow \quad \tilde{F}''(z_{i1},\dots,z_{in_i}) \prod_{i \in I} \prod_{a=1}^{n_i} z_{ia} \in J^{\neq 0}(\bpsi)
$$
which implies that multiplication by $\prod_{i \in I} \prod_{a=1}^{n_i} z_{ia}$ is an automorphism of the finite-dimensional vector space $\CS_{-\bn}/(J^{\neq 0}(\bpsi) \cap \CS_{-\bn})$ for all $\bn$. Therefore, we can assume that $\tilde{F}''$ in \eqref{eqn:consider any} lies in $\CS^-_{<-N\bone}$ for a henceforth fixed $N \gg \vdeg \tilde{F}'$. Let
$$
F =  \tilde{F}'(z_{i1},\dots,z_{im_i})  * \left[ \tilde{F}''(z_{i,m_i+1},\dots,z_{in_i}) \prod_{i,j \in I} \prod_{a \leq m_i, b > m_j} \left(\frac {(-1)^{\delta_{ij}} z_{ia}^{\#_{ji}}}{c_{ji} z_{jb}^{\#_{ji}}} \right) \right]
$$
If $N$ is large enough, then the shuffle element in square brackets lies in $\CS^-_{<\bp}$, and therefore so does $F$. When we compute the coproduct of $F$ as in \eqref{eqn:delta prime}, either

\medskip 

\begin{itemize}[leftmargin=*]
	
\item some of the variables of $F'$ come from the shuffle element $\tilde{F}''$. However, since the latter shuffle element has degree in every variable at least $N$ ($\pm$ a constant), this would force $\vdeg F'$ to also be large enough and thus $F' \in J^{\bp}(z^{-\br})$;

\medskip 

\item  all the variables of $F'$ are among the variables of $\tilde{F}'$, but $\hdeg F' < \hdeg \tilde{F}'$. Then we invoke the induction hypothesis to achieve \eqref{eqn:jt};

\medskip 

\item the variables of $F'$ are precisely the ones of $\tilde{F'}$, i.e. $\hdeg F' = \hdeg \tilde{F}'$. Then 
$$
F' \otimes F'' = \tilde{F}' \otimes \tilde{F}''
$$
plus terms whose first tensor factor has vdeg greater than that of $\tilde{F}'$ (and thus can be accounted for by the induction hypothesis).  

\end{itemize} 

\medskip 

\end{proof}

\subsection{Factorization of $q$-characters} 
\label{sub:q-char}

Since the decomposition \eqref{eqn:decomposition} preserves the action of the positive loop Cartan subalgebra, it implies an equality of $q$-characters
\begin{equation}
\label{eqn:decomposition q-char}
\chi_q(L^\bp(\bpsi)) = \chi_q(L^\bp(z^{-\textbf{ord }\bpsi})) \cdot \chi_q(L^{\neq 0}(\bpsi))
\end{equation}
As shown in \cite{N Char}, the first term above can be completely calculated in terms of the graded dimensions of slope subalgebras, as follows. We will not reprove the result below, as it follows the proof of Theorem 1.3 of \loccit almost verbatim.

\medskip 

\begin{theorem}
\label{thm:factor}

For any $\bp \in \rr$ and $\br \in \zz$, we have
\begin{align*} 
&L^{\bp}(z^{-\br}) \qquad = \CS^-_{(-\binfty,\bp)} \Big/ J^{\bp}(z^{-\br})  \\ =& \bigoplus_{d=0}^{\infty} L^{\bp}(z^{-\br})_{d} = \bigoplus_{d=0}^{\infty} \CS^-_{(-\binfty,\bp)|d} \Big/ J^{\bp}(z^{-\br})_{d} 
\end{align*} 
The refined $q$-character 
\begin{equation}
\label{eqn:refined q-char}
\chi^{\br}_{\emph{ref}} =  \sum_{\bpsi \text{ loop weight}, \ d\in \BZ} \dim_{\BK}\left(\bpsi\text{-eigenspace in }L^{\bp}(z^{-\br})_{d} \right)  [\bpsi] v^d
\end{equation}
is given by the formula
\begin{equation}
\label{eqn:factor}
\chi^{\br}_{\emph{ref}} = \mathop{\prod_{t < 0 \text{ s.t. }d := - \bar{\bp}(t)\cdot \bm(t) \in \BZ}}_{\text{and } - \bp \cdot \bm(t) < d \leq (-\bp+\br)\cdot \bm(t)} \sum_{k = 0}^{\infty} \dim_{\BK}(\CB^-_{\bar{\bp}(t)|-k\bm(t)}) \left[ \bga^{-k\bm(t)} \right] v^{kd}
\end{equation}
where the product is determined by any catty-corner curve $\bar{\bp} : (-\infty,0) \rightarrow \rr$ such that $\bar{\bp}(0) = \bp$; this curve is assumed to be generic in the sense that for all $t<0$,
$$
\Big\{ \bn \in \nn | \bar{\bp}(t) \cdot \bn \in \BZ \Big\} = \begin{cases} 0 &\text{or} \\ \BN  \bm(t) &\text{for some }\bm(t) \in \nn \backslash \b0 \end{cases} 
$$

\end{theorem}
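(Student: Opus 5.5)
We follow the strategy of \cite[Theorem 1.3]{N Char}. The first displayed equality is the definition \eqref{eqn:graded modules}. The grading claim holds because $J^{\bp}(z^{-\br})$ is a graded subspace of $\CS^-_{(-\binfty,\bp)}$, since the pairing only pairs elements of opposite degrees. Two further points follow from degree bounds. Elements of vertical degree $<0$ vanish in the quotient, because an element of slope $<\bp$ paired against $Ez^{-\br}$ with $E\in\CS^+_{\geq\bp}$ forces vertical degree into a window. For the same reason, large vertical degrees vanish, as already noted after \eqref{eqn:graded modules}. So the grading runs over $d\geq 0$ with finitely many nonzero pieces in each horizontal degree.

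For the character formula, we factor $\CS^-_{(-\binfty,\bp)}$ via Proposition \ref{prop:factor} along the generic catty-corner curve $\bar{\bp}$: $\CS^-_{<\bp}\cong\bigotimes^{\rightarrow}_{t<0}\CB^-_{\bar\bp(t)}$. Genericity means each factor is supported on horizontal degrees $-k\bm(t)$, with vertical degree $-k\bar\bp(t)\cdot\bm(t)$. We then show that $J^{\bp}(z^{-\br})$ is compatible with this factorization. A product $\prod_t F_t$ lies in $J$ modulo lower terms iff some factor $F_t$ has $\bar\bp(t)$ outside the window $-\bp\cdot\bm(t)<d\leq(-\bp+\br)\cdot\bm(t)$. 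If the window condition holds for all factors, the product survives, and the pairing restricted to the surviving factors is perfect. To prove this, we pair with $E\,z^{-\br}$, $E\in\CS^+_{\geq\bp}$, and use the factorization of $\CS^+$ together with \eqref{eqn:pair slopes basic}. Multiplication by $z^{-\br}$ shifts slope by $\br$ in each variable. Hence $E z^{-\br}$ ranges over slopes in $[\bp-\br,\binfty)$ twisted appropriately. Slope subalgebras $\CB_{\bar\bp(t)}$ whose slope lies in the window $[\bp-\br,\bp)$, read through the vertical degree $d$ normalization, pair perfectly; those outside pair to zero by \eqref{eqn:epsilon}. Consequently, $L^{\bp}(z^{-\br})$ has a basis given by ordered products of dual-basis elements of $\CB^-_{\bar\bp(t)|-k\bm(t)}$ over the admissible $t$. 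This gives the graded dimension $\prod_t\sum_k\dim\CB^-_{\bar\bp(t)|-k\bm(t)}v^{kd}$.

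Finally, we pass from dimension to refined $q$-character. We must show that $\ph^+_i(z)$ acts on the surviving basis triangularly, with diagonal eigenvalue $z^{-r_i}\prod\bga^{-k\bm(t)}$. Only the constant parts $\kappa$ contribute, because $\psi=z^{-\br}$ is a monomial and the commutation \eqref{eqn:ssg} produces leading terms $\gamma$ at $z\to\infty$. Since the loop weights $[\bpsi]$ in $L^\bp(z^{-\br})$ then only record $[\bga^{-\hdeg}]$ times the highest weight, and the highest-weight prefactor is normalized away, the formula follows.

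The main obstacle is the compatibility step: proving that the pairing against $E\,z^{-\br}$ respects the slope factorization, with exactly the stated strict/nonstrict window inequalities. We would handle it by induction along the curve, peeling off the largest slope factor using \eqref{eqn:equation 2} and \eqref{eqn:star 2}, exactly as in \loccit.
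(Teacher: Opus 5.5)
The paper gives no argument of its own here; it defers to \cite[Theorem 1.3]{N Char}, and so do you in the end. The trouble is that the mechanism you describe for the one substantive step does not work. The space $J^{\bp}(z^{-\br})$ is the radical of the form $(E,F)\mapsto\langle Ez^{-\br},S(F)\rangle$, which involves the antipode. By contrast, \eqref{eqn:pair slopes basic}, \eqref{eqn:epsilon}, \eqref{eqn:equation 2} and \eqref{eqn:star 2} are all statements about the untwisted pairing. The antipode reverses the order of the slope factorization and mixes slopes: already $S(f_i(z))=-f_i(z)\ph^-_i(z)^{-1}$ is an infinite sum over lower slopes. So the twisted form is not multiplicative along $\bar{\bp}$, and ``factors outside the window pair to zero'' fails for products.

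Concretely, take $U_q(L\fsl_2)$ with $I=\{i\}$, $\bp=0$, $\br=2$, so the admissible slopes are $-2$ and $-1$. In degree $(-2,4)$, $\CS^-_{<0}$ is spanned by $f_{i,2}f_{i,2}$ and $f_{i,3}f_{i,1}$. The only relevant $E\in\CS^+_{\geq 0}$ is $e_{i,0}e_{i,0}$, and $Ez^{-2}=e_{i,-2}e_{i,-2}$.
\begin{itemize}
\item The untwisted value $\langle e_{i,-2}e_{i,-2},f_{i,3}f_{i,1}\rangle$ is the constant term of $z_1^{-2}z_2^{-2}F(z_1,z_2)/\zeta(z_2/z_1)$ over $|z_1|\gg|z_2|$, with $F$ the shuffle element of $f_{i,3}f_{i,1}$. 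It equals $0$, as \eqref{eqn:pair slopes basic} predicts.
\item By \cite[Lemma 3.10]{N Cat}, $\langle e_{i,-2}e_{i,-2},S(f_{i,3}f_{i,1})\rangle$ is the constant term of the same function over $|z_1|\ll|z_2|$. This equals $q^{-4}-1\neq 0$, up to the normalizing constant.
\end{itemize}
Hence $f_{i,3}f_{i,1}\notin J^{0}(z^{-2})$. Only a combination $f_{i,3}f_{i,1}+c\,f_{i,2}f_{i,2}$ with $c\neq 0$ lies in it, so the twisted and untwisted radicals genuinely differ. What has to be proved is a triangularity of the twisted form with respect to an explicitly ordered slope PBW basis, together with nondegeneracy on the admissible products. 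Your sketch fixes no order and argues neither, beyond the pointer to \loccitt

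Two secondary points.

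First, your derivation of $\vdeg\geq 0$ is wrong. On horizontal degree $-\bn$, the surviving vertical degrees lie in $(-\bp\cdot\bn,(\br-\bp)\cdot\bn]$. The lower bound comes from the slope condition on $F$ (take $\bm=\bn$ in \eqref{eqn:slope f leq}), not from the pairing, and it gives nonnegativity only when $\bp\leq\b0$. For instance, for $U_q(L\fsl_2)$ with $\bp=5.5$ and $\br=1$, the vector $f_{i,-5}\vac$ survives in vertical degree $-5$. The product formula itself then contains $v^{-5}$.

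Second, the theorem is stated for arbitrary $\br\in\zz$ and any generic curve ending at $\bp$. The admissibility condition $\bar{\bp}(t)\cdot\bm(t)\geq(\bp-\br)\cdot\bm(t)$ is a condition on the degree $\bm(t)$, not membership of $\bar{\bp}(t)$ in a slope interval $[\bp-\br,\bp)$. Such an interval only makes sense when all $r_i>0$ and $\bar{\bp}$ passes through $\bp-\br$. In general the admissible $t$ need not form an interval. Then you cannot factor $\CS^+_{\geq\bp-\br}=\sigma_{-\br}(\CS^+_{\geq\bp})$ along $\bar{\bp}$ and pair factor by factor. You would need either an argument organized around the degree condition, or a separate proof that the product does not depend on the curve.
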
 

\medskip 

\noindent In \eqref{eqn:factor}, we identify the weight $[\bga^{\bn}]$ with a constant loop weight, i.e. an $I$-tuple of constant power series whose constant term is given by the scalars $\gamma_{\bs^i,\bn} \in \BK^*$. Because of this, we note that \eqref{eqn:refined q-char} is simply a character and not a $q$-character: it does not actually depend on loop weights, only on the underlying weights.

\medskip 

\subsection{Eigenspaces}
\label{sub:eigenspaces}

As for the second term in \eqref{eqn:decomposition q-char}, it was shown in \cite{N Cat} that
\begin{equation}
\label{eqn:q-character non-zero}
\chi_q(L^{\neq 0}(\bpsi)) = [\bpsi] \sum_{\bn = (n_i)_{i \in I} \in \nn} \sum_{\bx \in (\BK^*)^{(\bn)}} \dim_{\BK}(L^{\neq 0}(\bpsi)_{\bx}) \prod_{i \in I} \prod_{a=1}^{n_i} A_{i,x_{ia}}^{-1}
\end{equation}
where the sum runs over $\bx = (x_{i1},\dots,x_{in_i})_{i \in I} \in (\BK^*)^{(\bn)} = \prod_{i \in I} (\BK^*)^{n_i}/S_{n_i}$, and 
\begin{equation}
\label{eqn:fm}
A_{i,x}^{-1} = \left[ \frac {\zeta_{ij} \left(\frac xz \right)}{\zeta_{ji} \left(\frac zx \right)} \right]_{j \in I}
\end{equation}
The right-hand side of \eqref{eqn:q-character non-zero} features the finite-dimensional vector spaces
\begin{equation}
\label{eqn:l x}
L^{\neq 0}(\bpsi)_{\bx} = \CS_{-\bn} \Big / J^{\neq 0}(\bpsi)_{\bx}
\end{equation}
where $J^{\neq 0}(\bpsi)_{\bx}$ denotes the set of $F(z_{i1},\dots,z_{in_i})_{i \in I} \in \CS_{-\bn}$ such that
\begin{equation}
\label{eqn:j x} 
\underset{z_n = x_n}{\text{Res}} \dots \underset{z_1 = x_1}{\text{Res}} \frac {F(z_1,\dots,z_n)(\text{any monomial in the }z_a)}{\prod_{1\leq a < b \leq n} \zeta_{i_ai_b} \left(\frac {z_a}{z_b} \right)} \prod_{a=1}^n \psi_{i_a}(z_a) = 0
\end{equation}
for all $i_1,\dots,i_n \in I$ such that $\bs^{i_1}+\dots+\bs^{i_n} = \bn$ and all orderings $x_1,\dots,x_n$ of the coordinates of $\bx = (x_{i1},\dots,x_{in_i})_{i \in I}$ such that $x_a = x_{i_a\bullet_a}$ for various $\bullet_a \geq 1$ (see \eqref{eqn:relabeling} for the meaning of $F(z_1,\dots,z_n)$ in formula \eqref{eqn:j x}). Thus, we realize the $q$-character \eqref{eqn:q-character non-zero} as the dimension of certain explicit vector spaces. More generally,
\begin{equation}
\label{eqn:eigenspace decomposition}
L^{\neq 0}(\bpsi) = \bigoplus_{\bn \in \nn} L^{\neq 0}(\bpsi)_{\bn}, \qquad L^{\neq 0}(\bpsi)_{\bn} = \bigoplus_{\bx \in (\BK^*)^{(\bn)}} L^{\neq 0}(\bpsi)_{\bx}
\end{equation} 
where each summand in the RHS is the generalized eigenspace of $L^{\neq 0}(\bpsi)$ on which 
\begin{align}
&\kappa_i^+ \text{ acts by } \ell_i \gamma_{\bs^i,-\bn} \label{eqn:kappa acts} \\
&p_{i,d} \text{ acts by } y_{i,d} - \sum_{j \in I} \alpha_{ij}^{(d)}(x_{j1}^d+\dots+x_{jn_j}^d) \label{eqn:p acts}
\end{align}
where we set $\psi_i(z) = \ell_i \exp \left(\sum_{d=1}^{\infty} \frac {y_{i,d}}{z^d} \right)$ and recall the notation in Subsection \ref{sub:cartan}. In particular, formula \eqref{eqn:p acts} follows from the fact that the loop Cartan elements $p_{i,d}$ act on the shuffle algebra $\CS^-$ by multiplication with appropriate linear combinations of color-symmetric Laurent polynomials, which descend to $L^{\neq 0}(\bpsi)_{\bn}$ as the formulas
\begin{equation}
\label{eqn:p acts again} 
p_{i,d} \left(F \text{ mod }J^{\neq 0}(\bpsi)_{\bn} \right) =  F \cdot \left(y_{i,d} - \sum_{j \in I} \alpha_{ij}^{(d)}(z_{j1}^d+\dots+z_{jn_j}^d) \right)\text{ mod }J^{\neq 0}(\bpsi)_{\bn}
\end{equation}
for any $F = F(z_{i1},\dots,z_{in_i})_{i \in I} \in \CS_{-\bn}$. 

\medskip 

\subsection{Shifted quantum loop algebras}
\label{sub:shifted}

In the present Subsection, we adapt the material of \cite{HN}, which in turn generalizes the representation theory of shifted quantum loop algebras from \cite{H3}. For any $\br \in \zz$, define a shifted quantum loop algebra
\begin{equation}
\label{eqn:shifted algebra}
\U^\br = \BK \Big \langle e_{i,d}, f_{i,d}, \ph^+_{i,d'}, \ph^-_{i,d'}\Big \rangle_{i \in I, d \in \BZ, d' \geq 0} \Big/ \Big(\text{\eqref{eqn:rel quantum 1}-\eqref{eqn:rel quantum 8} and modified \eqref{eqn:rel quantum 9}} \Big)
\end{equation}
where the appropriate modification of \eqref{eqn:rel quantum 9} is to replace $\ph^-_j(w)$ by $w^{-r_j} \ph^-_j(w)$ in the right-hand side. Clearly, we have $\U^\b0 = \U$. It was shown in \cite{HN} that for any loop weight $\bpsi$ with $\br = \textbf{ord }\bpsi$, one can construct a simple module
\begin{equation}
\label{eqn:shifted module}
\U^\br \curvearrowright L^{\text{sh}}(\bpsi) = \CS^- \Big/ J^{\text{sh}}(\bpsi)
\end{equation}
where
\begin{equation}
\label{eqn:j shifted}
J^{\text{sh}}(\bpsi)= \left\{F \in \CS^- \ \Big| \ \langle E \psi, F_1 * S(F_2) \rangle = 0 , \ \forall E \in \CS^+ \right \}
\end{equation}
Explicitly, the vanishing condition in the formula above reads
\begin{multline}
	\label{eqn:psi pairing shifted}
	\left\langle E(z_{i1},\dots,z_{in_i})_{i \in I} \prod_{i \in I} \prod_{a=1}^{n_i} \psi_{i}(z_{ia}), \right. \\ \left. \sum_{\b0 \leq \bm \leq \bn} F_1(z_{i1},\dots,z_{im_i})_{i \in I} * S(F_2(z_{i,m_i+1},\dots,z_{in_i})_{i \in I}) \right\rangle = 0
\end{multline}
and the pairing is calculated by expanding in the range
\begin{equation}
	\label{eqn:expansion}
	z_{i1},\dots,z_{i m_i} \sim 0 \qquad \text{and} \qquad z_{i,m_i+1},\dots,z_{in_i} \sim \infty
\end{equation}
This is the reason why \eqref{eqn:psi pairing shifted} does not vanish identically, despite the fact that $F_1 * S(F_2) = 0$ in any topological Hopf algebra due to the properties of the antipode.

\medskip

\begin{lemma} 
	\label{lem:iso}
	
	For any loop weight $\bpsi$, we have $L^{\neq 0}(\bpsi) = L^{\emph{sh}}(\bpsi)$ as vector spaces.
	
\end{lemma}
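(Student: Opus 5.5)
Since both sides are quotients of $\CS^-$, the plan is to show $J^{\neq 0}(\bpsi) = J^{\text{sh}}(\bpsi)$ as subspaces of $\CS^-$. We will put each membership condition into the same contour-integral form and compare them.

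First, by \eqref{eqn:r4} and Proposition \ref{prop:two contours}, $F \in J^{\neq 0}(\bpsi)$ if and only if the iterated difference-of-constant-terms integral \eqref{eqn:hart} vanishes for all $i_1,\dots,i_n$ and all $d_1,\dots,d_n \in \BZ$. Each $\int_{z_a}$ in \eqref{eqn:hart} is $\int_{1 \ll |z_a|}$ minus $\int_{1\gg|z_a|}$. Expanding the product of these differences gives a sum over splittings $\{1,\dots,n\} = \{a_1<\dots<a_k\}\sqcup\{b_1<\dots<b_\ell\}$, with sign $(-1)^k$: the $a$-variables are expanded near $0$ and the $b$-variables near $\infty$.

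Next, unpack \eqref{eqn:psi pairing shifted} for $E = e_{i_1,d_1}\cdots e_{i_n,d_n}$; this suffices because such products span $\CS^+$ by \eqref{eqn:spherical def}.
\begin{itemize}
\item Write $\Delta(F)$ via \eqref{eqn:coproduct shuffle minus}. The factors $\ph^-$ in $\Delta(F)$ pair trivially after applying $\varepsilon$, as in \eqref{eqn:consequence 1}.
\item Use the bialgebra property \eqref{eqn:bialgebra 2} to write the pairing with the product $F_1 * S(F_2)$ as a pairing of $\Delta(E\psi)$ with $F_1 \otimes S(F_2)$, up to ordering conventions.
\item Compute the pairing of the $F_1$ part by \eqref{eqn:pair formula}, in the range \eqref{eqn:expansion} where $z_{i1},\dots,z_{im_i}\sim 0$. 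Compute the pairing against $S(F_2)$ by \cite[Lemma 3.10]{N Cat}, as in \eqref{eqn:integral 1}, which expands those variables near $\infty$ (the ``$1\ll$'' region).
\end{itemize}
The aim is to show that the sum over $\b0\le\bm\le\bn$ in \eqref{eqn:psi pairing shifted} reassembles into exactly the sum over splittings obtained above. The $\bm$ variables go to the $a$-set, the rest go to the $b$-set, and the sign $(-1)^k$ comes from the antipode. The pairing of $S(F_2)$ with the corresponding part of $E\psi$ contributes a factor $(-1)^{|\bn-\bm|}$, up to harmless $\ph$-twists, which can be read off from $S(e_i(z)) = -\ph^+_i(z)^{-1}e_i(z)$. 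Once this holds, \eqref{eqn:psi pairing shifted} for all $E$ is equivalent to \eqref{eqn:hart} for all $d_a$, which proves $J^{\text{sh}}(\bpsi) = J^{\neq 0}(\bpsi)$.

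The main obstacle is the bookkeeping in the matching. The cross factors between the $a$- and $b$-variables have to work out exactly: the denominator of \eqref{eqn:coproduct shuffle minus}, expanded in $|z_{a}|\ll|z_b|$, must combine with the $\zeta$-denominators of the two separate pairings to give the single factor $\prod_{a<b}\zeta_{i_bi_a}(z_b/z_a)^{-1}$ of \eqref{eqn:hart}, expanded consistently. For this I would follow the same manipulation as in the proof of Proposition \ref{prop:decomposition} between \eqref{eqn:integral 2} and \eqref{eqn:integral 4}, where precisely this rewriting of an integral over a splitting as a coproduct-weighted product of pairings was carried out. If that goes through, I expect the signs from the antipode and the orientation of \eqref{eqn:two contours} to agree once they are checked on $n=1$, where both conditions say $\int_z z^d F(z)\psi_i(z)=0$ for all $d$.
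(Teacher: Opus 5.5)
Your proposal follows the paper's proof: it reduces to $J^{\neq 0}(\bpsi)=J^{\text{sh}}(\bpsi)$, plugs $E=e_{i_1,d_1}\cdots e_{i_n,d_n}$ into \eqref{eqn:psi pairing shifted}, obtains a signed sum over splittings $\{1,\dots,n\}=\{a_1<\dots<a_m\}\sqcup\{b_1<\dots<b_{n-m}\}$ integrated over $|z_{a_m}|\ll\dots\ll|z_{a_1}|\ll 1\ll|z_{b_1}|\ll\dots\ll|z_{b_{n-m}}|$, and recognizes this as the expansion of \eqref{eqn:hart}. The paper states that unpacking in one line, so your bookkeeping goes beyond it; two small corrections: splitting the pairing against the product $F_1 * S(F_2)$ uses \eqref{eqn:bialgebra 1}, not \eqref{eqn:bialgebra 2}, and the missing cross factors (for a near-$0$ index $a$ smaller than a near-$\infty$ index $b$) come from commuting the $\ph^+$'s in $\Delta(E)$ past the $e$'s, after which the two $\ph^+$--$\ph^-$ pairings cancel, as in the proof of Proposition \ref{prop:decomposition}.
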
 

\medskip 

\begin{proof} We will show that $J^{\neq 0}(\bpsi) = J^{\text{sh}}(\bpsi)$.	By plugging $E = e_{i_1,d_1} \dots e_{i_n,d_n}$ (such elements span $\CS^-$) in \eqref{eqn:psi pairing shifted}, we see that an element $F \in \CS^-$ lies in $J^{\text{sh}}(\bpsi)$ iff
$$
\sum_{\{1,\dots,n\} = \{a_1 < \dots < a_m\} \sqcup \{b_1 < \dots < b_{n-m}\}}  (-1)^{m} \int_{|z_{a_m}| \ll \dots \ll |z_{a_1}| \ll 1 \ll |z_{b_1}| \ll \dots \ll |z_{b_{n-m}}|}  
$$
$$
\frac {z_1^{d_1} \dots z_n^{d_n} F(z_1,\dots,z_n)}{\prod_{1\leq a < b \leq n} \zeta_{i_bi_a} \left(\frac {z_b}{z_a} \right)} \prod_{a=1}^n \psi_{i_a}(z_a) = 0
$$
for all $i_1,\dots,i_n \in I$ and $d_1,\dots,d_n \in \BZ$. This is none other than condition \eqref{eqn:hart}. \end{proof} 
	
\medskip 

\noindent In \eqref{eqn:shifted module}, elements of $L^{\text{sh}}(\bpsi)$ are given by $F \vac$ for various $F \in \CS^-$. The action is given as follows: $\CS^-$ acts by left multiplication, while $\CS^+$ acts by the formula
\begin{equation}
\label{eqn:action verma 2}
E \cdot F \vac =  F_2\vac \cdot \langle E_1,F_1\rangle \langle E_2 \psi, S(F_3) \rangle
\end{equation}
for all $E \in \CS^+$ and $F \in \CS^-$ (viewed as halves of the shifted algebra $\U^\br$), see \cite{HN}.

\medskip

\subsection{Regular$^{\neq 0}$ loop weights}
\label{sub:regular}

A rational loop weight $\bpsi$ is called \emph{regular$^{\neq 0}$} if 
$$
\textbf{ord }\bpsi = \b0
$$
i.e. each constituent rational function $\psi_i(z)$ is regular and non-zero at $z = 0$ (beside being regular and non-zero at $z = \infty$, as all loop weights are). The following result generalizes a well-known feature of simple modules of quantum affine algebras.

\medskip 

\begin{proposition}
\label{prop:regular}
	
If $\bpsi$ is regular$^{\neq 0}$, then the action $\CA^{\geq \bp}\curvearrowright L^{\bp}(\bpsi)$ extends to
\begin{equation}
\label{eqn:regular}
\U = \CA^{\geq \bp} \otimes \CA^{\leq \bp} \curvearrowright L^{\bp}(\bpsi)
\end{equation}
As we will show in the proof below, the module $L^{\bp}(\bpsi)$ does not depend on $\bp$.
	
\end{proposition}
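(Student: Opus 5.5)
The plan is to route everything through the decomposition of Proposition \ref{prop:decomposition} and the shifted modules of Subsection \ref{sub:shifted}. If $\bpsi$ is regular$^{\neq 0}$, then $\br = \textbf{ord }\bpsi = \b0$, so the first tensor factor in \eqref{eqn:decomposition} is $L^{\bp}(z^{-\b0}) = L^{\bp}(\bone)$. First I will show that this factor is one-dimensional, spanned by $\vac$. By Theorem \ref{thm:factor} with $\br = \b0$, the product in \eqref{eqn:factor} runs over $d$ with $-\bp\cdot\bm(t) < d \leq -\bp\cdot \bm(t)$, which is empty. Hence $\chi^{\b0}_{\text{ref}} = 1$. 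A more direct argument: any $F \in \CS^-_{<\bp}$ of nonzero horizontal degree pairs trivially with $E \cdot 1$ for all $E \in \CS^+_{\geq \bp}$, because of the slope and degree constraints together with \eqref{eqn:epsilon}. So $J^{\bp}(\bone)$ is everything of nonzero hdeg. Proposition \ref{prop:decomposition} then gives a vector space isomorphism $L^{\bp}(\bpsi) \cong L^{\neq 0}(\bpsi)$, induced by $F \mapsto F$ modulo $J^{\neq 0}(\bpsi)$; the terms of \eqref{eqn:delta prime} with $\bm \neq \b0$ die in the first factor. This isomorphism is visibly independent of $\bp$.

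Next, Lemma \ref{lem:iso} identifies $L^{\neq 0}(\bpsi) = L^{\text{sh}}(\bpsi)$, and since $\br = \b0$ we have $\U^{\b0} = \U$. So $L^{\text{sh}}(\bpsi)$ carries a $\U$-action by \eqref{eqn:action verma 2}, with $\CS^-$ acting by left multiplication. Transporting this action gives $\U \curvearrowright L^{\bp}(\bpsi)$, and it is $\bp$-independent by construction. The remaining task is to check that the restriction of this $\U$-action to $\CA^{\geq \bp} = \CS^+_{[\bp,\binfty]} \otimes \CS^-_{(-\binfty,\bp)}$ is the original action. For $\CS^-_{<\bp}$ both actions are left multiplication, and the isomorphism is induced by the identity on representatives, so they agree. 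For the loop Cartan elements $\ph^+_i(z)$, the intertwining property in Proposition \ref{prop:decomposition} (with the trivial factor acting by $\bone$) gives agreement. The main point is the elements $E \in \CS^+_{\geq \bp}$: I must compare \eqref{eqn:action verma}, i.e. $\langle E\psi, S(F_2)\rangle F_1\vac$, with \eqref{eqn:action verma 2}, i.e. $\langle E_1,F_1\rangle\langle E_2\psi,S(F_3)\rangle F_2\vac$.

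For that comparison I plan the following route. Since $E \in \CS^+_{\geq\bp}$, \eqref{eqn:interact 1} places $E_1$ in $\CS^\geq_{\geq\bp}$. For $F \in \CS^-_{<\bp}$, \eqref{eqn:interact 3} keeps the first factors of slope $<\bp$. Then \eqref{eqn:epsilon} forces $\langle E_1, F_1\rangle = \varepsilon(E_1)\varepsilon(F_1)$, up to Cartan factors that act on $\vac$ through $\psi$. This collapses \eqref{eqn:action verma 2} to \eqref{eqn:action verma}, modulo the difference in expansion regions: \eqref{eqn:expansion} uses $\sim 0$ and $\sim\infty$, whereas \eqref{eqn:action verma} uses $1\ll$. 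Regularity of $\psi$ at $0$ should make the terms expanded near $0$ harmless. This is exactly the mechanism behind Proposition \ref{prop:two contours}, where the regular$^{\neq0}$ hypothesis kills the extra poles, and I will rerun that contour argument here. I expect this bookkeeping — the Cartan factors $\ph^-$ hidden in $F_1$ and the two expansion regions — to be the main obstacle.

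Once this agreement is established, $L^{\bp}(\bpsi)$ is a $\U$-module extending the $\CA^{\geq\bp}$-action. It is the same module (namely $L^{\text{sh}}(\bpsi)$) for every $\bp$, which proves both assertions of the proposition.
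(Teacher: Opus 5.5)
Your proposal is correct and follows essentially the paper's own route. You show $L^{\bp}(z^{\b0}) = \BK$, and then Proposition \ref{prop:decomposition} and Lemma \ref{lem:iso} identify $L^{\bp}(\bpsi)$ with the $\U$-module $L^{\text{sh}}(\bpsi)$ (since $\U^{\b0} = \U$). Elements of $\CS^-_{<\bp}$ act by left multiplication on both sides, and for $E \in \CS^+_{\geq \bp}$ the comparison of \eqref{eqn:action verma} with \eqref{eqn:action verma 2} follows from \eqref{eqn:interact 1}, \eqref{eqn:interact 3} and \eqref{eqn:epsilon}.

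The extra bookkeeping you anticipate (the $\ph^-$ factors and the two expansion regions) is not needed. Once \eqref{eqn:epsilon} replaces $\langle E_1, F_1 \rangle$ by $\varepsilon(E_1)\varepsilon(F_1)$, formula \eqref{eqn:action verma 2} becomes \eqref{eqn:action verma} verbatim after relabeling Sweedler indices: no $\ph^-$ factors end up acting on $\vac$, and $\psi$ is expanded near $\infty$ in both formulas. The $\sim 0$ versus $\sim \infty$ split of \eqref{eqn:expansion} only enters the definition of $J^{\text{sh}}(\bpsi)$, which Lemma \ref{lem:iso} already handles. Also, regularity is used only to get $\textbf{ord }\bpsi = \b0$; contrary to what you suggest, Proposition \ref{prop:two contours} does not use it and holds for every loop weight.
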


\medskip 

\begin{proof} Formula \eqref{eqn:epsilon} shows that $J^{\bp}(z^{\b0})$ is the kernel of the counit $\varepsilon$, and so
$$
L^{\bp}(z^{\b0}) = \BK
$$
concentrated in horizontal degree $\b0$. Then we have that
\begin{equation}
\label{eqn:iso 1}
L^{\bp}(\bpsi) \stackrel{\text{Proposition \ref{prop:decomposition}}}\cong L^{\neq 0}(\bpsi) \stackrel{\text{Lemma \ref{lem:iso}}}\cong L^{\text{sh}}(\bpsi)
\end{equation}
The shifted quantum loop algebra for $\br = \b0$ is none other than $\U$, so the isomorphism \eqref{eqn:iso 1} will establish \eqref{eqn:regular} as soon as we show that the actions of $\CA^{\geq \bp}$ on the two sides are compatible. This is clear for the action of $F \in \CS^-_{<\bp}$, as such elements act on both sides of the equation by left multiplication. To show that the action of $E \in \CS^+_{\geq \bp}$ on the two sides of \eqref{eqn:iso 1} matches,  compare \eqref{eqn:action verma} with \eqref{eqn:action verma 2}
$$
F_1 \vac \cdot \langle E \psi , S(F_2) \rangle =  F_2\vac \cdot \langle E_1,F_1\rangle \langle E_2 \psi, S(F_3) \rangle
$$
The equation above holds because of \eqref{eqn:epsilon} and
$$
E \in \CS^+_{\geq \bp}, \ F \in \CS^-_{<\bp} \quad \stackrel{\eqref{eqn:interact 1}-\eqref{eqn:interact 3}}\Longrightarrow \quad E_1\in \CS^+_{\geq \bp}, \ F_1 \in \CS^-_{<\bp}
$$
\end{proof}

\medskip

\noindent As a consequence of Proposition \ref{prop:regular}, we will denote the simple modules \eqref{eqn:iso 1} as
\begin{equation}
\label{eqn:simple module without slope}
\U \curvearrowright L(\bpsi)
\end{equation}
for any regular$^{\neq 0}$ loop weight $\bpsi$, without any reference to the defining slope $\bp \in \rr$.

\medskip

\subsection{$R$-matrices}
\label{sub:r-matrices}

Consider now two modules $\U \curvearrowright V,W$ in category $\CO$, for example simple modules associated to regular$^{\neq 0}$ loop weights, as per Proposition \ref{prop:regular}. The various coproducts $\Delta_{\bp}$ give rise to a host of module structures
\begin{equation}
\label{eqn:structures}
\U \curvearrowright V \otimes_{\bp} W
\end{equation}
on the tensor product $V \otimes W$; the fact that the completion $\ho$ in which $\Delta_{\bp}$ takes values acts by finite sums on any element of $V \otimes W$ is an immediate consequence of the fact that $V,W$ are in category $\CO$, as we saw in Proposition \ref{prop:tensor product}. As $\bp$ varies, the module structures \eqref{eqn:structures} are all related by the evaluation of the universal $R$-matrices of Subsection \ref{sub:universal 2} in $V\otimes W$. For instance, we may consider \eqref{eqn:consider 1}
$$
_{\bp^2}\CR_{\bp^1} \in \U \ \bar{\otimes} \ \U \quad \leadsto \quad _{\bp^2}R_{\bp^1} \in \text{End}(V) \otimes \text{End}(W)
$$
for any $\bp^1, \bp^2$ in $\rr$, and then \eqref{eqn:intertwine 3} implies that we have a $\U$-intertwiner
\begin{equation}
\label{eqn:isomorphism 1}
_{\bp^2}R_{\bp^1} : V \otimes_{\bp^1} W \rightarrow  V \otimes_{\bp^2} W
\end{equation}
To do the same for the universal $R$-matrix of \eqref{eqn:consider 2}, we need to modify the construction as is usually done in the theory of integrable systems for quantum affine algebras. The reason is that while elements of the completion \eqref{eqn:hat} act by finite sums on $V \otimes W$ for any $V,W$ in category $\CO$, the universal $R$-matrix \eqref{eqn:consider 2} actually has infinitely many terms in any horizontal degree, corresponding to arbitrarily high vertical degrees in the first tensor factor. Therefore, we consider
$$
\U \curvearrowright V^u = V \quad \text{with the action of any }X \in \U \text{ renormalized by }u^{\vdeg X}
$$
Therefore
\begin{multline*} 
_{\bar{\bp}^2}\CR_{\bp^1} = \sum_{k} A_k \otimes B_k \in \U \ \widehat{\otimes} \ \U \quad \leadsto \\ \leadsto \quad _{\bar{\bp}^2}R_{\bp^1}(u) = \sum_k A_k u^{\vdeg A_k} \otimes B_k \in \text{End}(V) \otimes \text{End}(W)((u))
\end{multline*}
With this modification, formula \eqref{eqn:intertwine 4} implies that we have a $\U$-intertwiner
\begin{equation}
\label{eqn:isomorphism 2}
_{\bar{\bp}^2}R_{\bp^1}(u) : V^u \otimes_{\bp^1} W \rightarrow V^u \otimes^{\text{op}}_{\bp^2} W((u))
\end{equation}
When $V = L(\bpsi)$ and $W = L(\bpsi')$ are the simple modules \eqref{eqn:iso 1} associated to arbitrary regular$^{\neq 0}$ loop weights $\bpsi$ and $\bpsi'$, it is customary to renormalize the above intertwiner so that the matrix coefficient of the highest weight vector is 1:
\begin{equation}
\label{eqn:reduced r-matrix}
_{\bar{\bp}^2}R'_{\bp^1}(u) = \frac {_{\bar{\bp}^2}R_{\bp^1}(u)}{\langle \varnothing \otimes \varnothing | _{\bar{\bp}^2}R_{\bp^1}(u) | \varnothing \otimes \varnothing \rangle}
\end{equation}
The following generalizes a classic result of Drinfeld (\cite{Dr 0}) for finite type $\fg$.

\medskip 

\begin{theorem}
\label{thm:rational}

For any simple modules $V = L(\bpsi)$ and $W = L(\bpsi')$ associated to regular$^{\neq 0}$ loop weights $\bpsi$ and $\bpsi'$, the coefficients of the renormalized $\U$-intertwiner $_{\bar{\bp}^2}R'_{\bp^1}(u)$ of \eqref{eqn:reduced r-matrix} are (Laurent series expansions of) rational functions in $u$.

\end{theorem}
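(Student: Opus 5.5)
The plan is to follow Drinfeld's classical argument: we show that the intertwiner space is one-dimensional over $\BK(u)$, and then that the renormalized $R$-matrix is a fixed rational solution expanded as a Laurent series.

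The first step is to reduce to finite-dimensional linear algebra. By \eqref{eqn:isomorphism 2}, $_{\bar{\bp}^2}R_{\bp^1}(u)$ intertwines $V^u \otimes_{\bp^1} W$ with $V^u \otimes^{\text{op}}_{\bp^2} W$. We rewrite this intertwining condition against the generators. The action of $e_{i,d}, f_{i,d}, \ph^\pm_{i,d}$ on $V^u$ is the action on $V$ times $u^{d}$ (or $u^{\mp d}$). The coproducts $\Delta_{\bp}$ are given by sums in which, on any fixed vector of $V\otimes W$, only finitely many terms act nontrivially (Proposition \ref{prop:tensor product}). Hence the matrix coefficients of $\Delta_{\bp^1}(X)$ and $\Delta^{\text{op}}_{\bp^2}(X)$ on $V^u \otimes W$ are Laurent \emph{polynomials} in $u$. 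We also use the commuting loop Cartan elements and the weight grading: $R$ preserves weights, because it has degree $0$ in horizontal degree. The intertwining equation therefore restricts to each finite-dimensional weight space $(V\otimes W)_{\bla}$, giving a linear system whose coefficients lie in $\BK[u^{\pm1}]$. The only caveat is that the system on one weight space couples to neighbouring weight spaces via $e,f$; we organize it by induction along the order \eqref{eqn:order}, starting from the top weight $\bla_\psi\bla_{\psi'}$, whose space is spanned by $\vac\otimes\vac$.

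The second and key step is uniqueness: for generic $u$, the module $V^u\otimes_{\bp^1}W$ should be generated by $\vac\otimes\vac$ (cyclicity), and $V^u\otimes^{\text{op}}_{\bp^2}W$ should be cocyclic. Granting this, any intertwiner is determined by its value on $\vac\otimes\vac$, which is a multiple of $\vac\otimes\vac$ by weight reasons. The space of solutions of the above linear system over $\BK(u)$ is then at most one-dimensional, and it is nonzero because $R(u)$ itself provides a solution over $\BK((u))$. A one-dimensional solution space of a linear system with coefficients in $\BK[u^{\pm1}]$ is spanned over $\BK((u))$ by a vector with entries in $\BK(u)$. Normalizing its $\vac\otimes\vac$ coefficient to $1$ yields a rational vector, and \eqref{eqn:reduced r-matrix} must be its Laurent expansion. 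The truncation issue, that the system is infinite, is handled by noting that each matrix coefficient of $R'$ between fixed weight spaces is determined by finitely many equations, by the triangular induction from the top weight.

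I expect generic cyclicity to be the main obstacle. To obtain it, I would use the decomposition of Proposition \ref{prop:decomposition} together with the description $L(\bpsi)=\CS^-/J^{\neq0}(\bpsi)$. The argument is that, for $u$ avoiding the finitely many ratios of poles and zeros of $\bpsi,\bpsi'$, the residue conditions \eqref{eqn:j x} defining the submodule generated by $\vac\otimes\vac$ in the tensor product separate into those for each factor. The supports $\bx$ of the two factors (scaled by $u$) are disjoint, so by the eigenspace decomposition \eqref{eqn:eigenspace decomposition} and the multiplicativity of Proposition \ref{prop:multiplicative}, the cyclic submodule has the full $q$-character $\chi_q(V)\chi_q(W)$ and hence equals the whole tensor product. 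The same argument applied to duals gives cocyclicity. Should this fail in general, the fallback is to work directly over $\BK(u)$: there $V^u\otimes W$ has a loop-weight decomposition with distinct generalized eigenvalues between the "$u$-shifted" and "unshifted" parts, and cyclicity is argued generically.
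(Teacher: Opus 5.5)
Your argument is logically sound only \emph{conditionally} on one claim, and that claim carries the whole difficulty and is not established. The claim is that $V^u\otimes_{\bp^1}W$, after extending scalars to $\BK(u)$, is generated by $\vac\otimes\vac$. If you grant it, the linear-system and triangular-induction layers are unnecessary. Write $v\otimes w=\sum_j c_j(u)\,X_j\cdot(\vac\otimes\vac)$ with $c_j\in\BK(u)$. Then $R'(v\otimes w)=\sum_j c_j(u)\,\Delta^{\mathrm{op}}_{\bp^2}(X_j)(\vac\otimes\vac)$, whose coefficients are visibly rational.

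The route you sketch for cyclicity does not work.
\begin{itemize}
\item Proposition \ref{prop:decomposition} concerns a single simple module, and it is vacuous here, since $L^{\bp}(z^{\b0})=\BK$ for regular$^{\neq 0}$ weights.
\item The residue conditions \eqref{eqn:j x} describe $J^{\neq 0}(\bpsi)\subset\CS^-$ for one module, where $\CS^-$ acts by left multiplication and $\CS^+$ acts through the pairing. Nothing comparable is available for the submodule of $V\otimes W$ generated by $\vac\otimes\vac$. There the action goes through $\Delta_{\bp^1}$, which is defined only implicitly by duality in \eqref{eqn:declare 1 def}--\eqref{eqn:declare 2 def}. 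Beyond its restriction to slope subalgebras and the triangularity \eqref{eqn:id} of the Cartan currents, you have no explicit handle on it.
\item Proposition \ref{prop:multiplicative} computes the $q$-character of the entire tensor product, not of the cyclic submodule. Disjointness of the loop-weight supports of $V^u$ and $W$ over $\BK(u)$ only controls the generalized eigenspace decomposition of $V\otimes W$. It does not rule out that the quotient of $V\otimes W$ by the cyclic submodule is nonzero, i.e.\ contains a vector of maximal weight killed by everything of positive horizontal degree. Ruling that out \emph{is} the cyclicity statement.
\end{itemize}
Even for finite-dimensional $U_q(\widehat{\fg})$-modules, generic cyclicity is a genuine theorem. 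For the infinite-dimensional objects $L(\bpsi)$ of category $\CO$ over a general $\U$, it is not available, and your ``fallback'' merely restates the goal.

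The paper's proof uses no cyclicity or irreducibility at all. It factors $_{\bar{\bp}^2}R_{\bp^1}(u)=X_1(u)X_2(u)X_3(u)$ via \eqref{eqn:factorization r-matrix 3}.
\begin{itemize}
\item The Cartan factor \eqref{eqn:double prime} is evaluated on generalized eigenspaces using \eqref{eqn:p acts again} and \eqref{eqn:equations}. After the normalization \eqref{eqn:reduced r-matrix}, it becomes an explicit product of rational functions of $u$ and of the multiplication operators $z_{ka}$, $z'_{k'a'}$.
\item For the infinite slope products, the paper chooses a periodic catty-corner curve $\bp'(t+1)=\bp'(t)+\br$. By \eqref{eqn:slope shift} and Lemma \ref{lem:nabla}, each horizontal component of $X_1(u)$ is then a finite combination of sums $\sum_{\ell_1,\dots,\ell_m\geq 0}T^{\ell_1}Y_1\cdots T^{\ell_m}Y_mT^{-\ell_1-\dots-\ell_m}u^{a_1\ell_1+\dots+a_m\ell_m+b}$, with $T=\nabla^{\br}\otimes\nabla^{\br}$ acting on a finite-dimensional space. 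These sums are rational by the Jordan decomposition \eqref{eqn:jordan}.
\end{itemize}
To salvage your approach, you would need an independent proof of generic cyclicity of $L(\bpsi)^u\otimes_{\bp^1}L(\bpsi')$, which is a substantial result in its own right.
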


\medskip 

\noindent We note that the theorem above also requires the following technical condition, which holds automatically if one replaces the ground field $\BK$ by its algebraic closure: the zeta functions that define $\U$ are fully factored, in the sense of \eqref{eqn:fully factored}, and the rational loop weights $\bpsi,\bpsi'$ are also fully factored as follows:
\begin{equation}
\label{eqn:factored loop weight}
\psi_i(z) = \ell_i\frac {(z - s_{i|1})\dots (z-s_{i|\flat_i})}{(z - t_{i|1})\dots (z-t_{i|\flat_i})} \quad \text{and} \quad \psi'_i(z) = \ell_i'\frac {(z - s'_{i|1})\dots (z-s'_{i|\flat'_i})}{(z - t'_{i|1})\dots (z-t'_{i|\flat'_i})}
\end{equation}
for various $\ell_i,\ell_i',s_{i|\bullet},t_{i|\bullet},s'_{i|\bullet},t'_{i|\bullet} \in \BK^*$.

\medskip 

\begin{proof} For any $\br = (r_i)_{i \in I} \in \zz$, we consider the following shift automorphism 
\begin{equation}
	\label{eqn:shift}
\sigma_{\br} : \U \rightarrow \U, \qquad \sigma_{\br}(e_{i,d}) = e_{i,d+r_i}, \ \sigma_{\br}(f_{i,d}) = f_{i,d-r_i}, \ \sigma_{\br}(\ph^\pm_{i,d}) = \ph^\pm_{i,d} 
\end{equation}
In terms of the shuffle algebra realization of $\Upm \cong \CS^\pm$, we have
\begin{align*} 
&\sigma_{\br}(E) = E(z_{i1},\dots,z_{in_i})_{i \in I} \prod_{i \in I} \prod_{a=1}^{n_i} z_{ia}^{r_i} \\ 
&\sigma_{\br}(F) = F(z_{i1},\dots,z_{in_i})_{i \in I} \prod_{i \in I} \prod_{a=1}^{n_i} z_{ia}^{-r_i} 
\end{align*}
for all $E \in \CS_{\bn}$, $F \in \CS_{-\bn}$. Due to formula \eqref{eqn:hart}, it is clear that $\sigma_{\br}$ sends $J(\bpsi) := J^{\neq 0}(\bpsi) \subset \CS^-$ to itself, and then \eqref{eqn:iso 1} implies that $\sigma_{\br}$ descends to a linear map
\begin{equation}
\label{eqn:nabla}
\nabla^{\br} : L(\bpsi) \rightarrow L(\bpsi)
\end{equation}
The following result is straightforward, and we leave it as an exercise to the reader (it generalizes the particular case of quantum toroidal $\fgl_1$, where the role of $\nabla^{\br}$ was played by the nabla operator of \cite{BG}, and Lemma \ref{lem:nabla} can be found in \cite[(46)]{GN}).

\medskip

\begin{lemma}
\label{lem:nabla}

For any $x \in \U$, we have
\begin{equation}
\label{eqn:gn}
\sigma_{\br}(x) = \nabla^{\br} x  \nabla^{-\br}
\end{equation}
as endomorphisms of $L(\bpsi)$. 

\end{lemma}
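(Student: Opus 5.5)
Since both sides of \eqref{eqn:gn} are multiplicative in $x$ (if it holds for $x$ and $y$, then $\sigma_{\br}(xy) = \nabla^{\br} x \nabla^{-\br}\nabla^{\br} y \nabla^{-\br}$), and $\nabla^{\br}$ is invertible with inverse $\nabla^{-\br}$ (because $\sigma_{-\br} = \sigma_{\br}^{-1}$ preserves $J(\bpsi)$ as well), I would check the equivalent identity $\nabla^{\br} x = \sigma_{\br}(x) \nabla^{\br}$ on the generators. These are $F \in \CS^-$, $E \in \CS^+$ and the loop Cartan elements $\ph^\pm_{i,d}$. The computation takes place in the model $L(\bpsi) = L^{\text{sh}}(\bpsi) = \CS^-/J(\bpsi)$ from \eqref{eqn:iso 1} and Lemma \ref{lem:iso}. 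There $\CS^-$ acts by left shuffle multiplication, $\CS^+$ acts via \eqref{eqn:action verma 2}, and $\nabla^{\br}(F\vac) = \sigma_{\br}(F)\vac$.

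\emph{The $f$'s.} For $G \in \CS^-$ I will use that $\sigma_{\br}$ is an algebra automorphism of the shuffle algebra. Concretely, multiplying by $\prod z_{ia}^{-r_i}$ commutes with the shuffle product \eqref{eqn:mult}, because this factor is symmetric and factorizes over the two groups of variables. Hence
$$\nabla^{\br}(G\cdot F\vac) = \sigma_{\br}(G)\sigma_{\br}(F)\vac = \sigma_{\br}(G)\cdot\nabla^{\br}(F\vac).$$

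\emph{The Cartan elements.} By \eqref{eqn:rel quantum 7}--\eqref{eqn:rel quantum 8} and the formula \eqref{eqn:p acts again}, each $p_{i,\pm d}$ (and each $\kappa_i^\pm$) acts on $F\vac$ as a scalar plus multiplication of $F$ by a color-symmetric polynomial. Multiplication by $\prod z^{-r}$ commutes with this, and $\sigma_{\br}$ fixes the $\ph$'s, so the identity holds for these generators.

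\emph{The $e$'s.} This is the main step. Take $E\in\CS^+$ and apply \eqref{eqn:action verma 2}:
$$E\cdot F\vac = F_2\vac\,\langle E_1,F_1\rangle\langle E_2\psi,S(F_3)\rangle .$$
I would use three properties of the coproduct, the pairing and the antipode. First, the coproduct \eqref{eqn:coproduct shuffle minus} commutes with $\sigma_{\br}\otimes\sigma_{\br}$, since the monomial factor splits between the tensor factors and the $\ph^-$ are fixed. Second, the pairing satisfies $\langle\sigma_{\br}E,\sigma_{\br}F\rangle = \langle E,F\rangle$, which follows from \eqref{eqn:pair formula} because the monomials cancel. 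Third, $S$ commutes with $\sigma_{\br}$, since $\sigma_{\br}$ is a bialgebra automorphism of $\CS^\geq,\CS^\leq$ and the antipode is unique.

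Writing $\sigma = \sigma_{\br}$, these give
$$\sigma(E)\cdot\sigma(F)\vac = \sigma(F_2)\vac\,\langle\sigma(E_1),\sigma(F_1)\rangle\langle\sigma(E_2)\psi,S(\sigma(F_3))\rangle .$$
By invariance of the pairing this equals $\nabla^{\br}(E\cdot F\vac)$. The point needing care is that the Cartan factors $\ph^+$ appearing in $\Delta(E)$ and the $\ph^-$ appearing in $\Delta(F)$ are untouched by $\sigma$ but are paired through \eqref{eqn:pairing h's}; this is compatible because the relevant pairings only involve ratios of variables. I also need the infinite sums coming from the expansions in \eqref{eqn:expansion} to be respected: multiplication by the monomial commutes with the expansion regions, so they are.

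Finally, everything is well defined on the quotient because $\sigma_{\pm\br}$ preserve $J(\bpsi)$, as already noted via \eqref{eqn:hart}.
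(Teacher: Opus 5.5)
Your argument is correct. The paper gives no proof of this lemma (it is left as an exercise), so there is no written argument to match, and your generator-by-generator check is a legitimate way to carry it out. The three compatibilities you use in the $e$-step all hold:
\[
\Delta\circ\sigma_{\br} = (\sigma_{\br}\otimes\sigma_{\br})\circ\Delta, \qquad \langle \sigma_{\br}(\cdot),\sigma_{\br}(\cdot)\rangle = \langle\cdot,\cdot\rangle, \qquad S\circ\sigma_{\br} = \sigma_{\br}\circ S .
\]
The reason is that $\sigma_{\br}$ fixes the $\ph^\pm$ and acts on shuffle elements by a monomial that splits over any partition of the variables. Your remark that Cartan pairings ``only involve ratios of variables'' is vague. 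The precise reason invariance extends to the Cartan-extended pairing is different: $\sigma_{\br}$ is a bialgebra automorphism fixing the $\ph^\pm$, and the pairing is invariant on generators, so invariance propagates through the bialgebra axioms.

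There is also a shorter route that needs none of the coproduct, pairing or antipode machinery. Since $\sigma_{\br}$ is an algebra automorphism of $\U$ that preserves $\CS^\pm$ and fixes the loop Cartan subalgebra, take $x \in \U$ and $F \in \CS^-$. Write $xF = \sum_k F_k c_k E_k$, a finite sum by the triangular decomposition, with $F_k \in \CS^-$, $c_k$ loop Cartan and $E_k \in \CS^+$. Then
\[
\sigma_{\br}(x)\sigma_{\br}(F) = \sum_k \sigma_{\br}(F_k)\, c_k\, \sigma_{\br}(E_k).
\]
Now apply both sides to $\vac$, using three facts:
\begin{itemize}
\item $E\vac = \varepsilon(E)\vac$ for $E \in \CS^+$, by taking $F = 1$ in \eqref{eqn:action verma 2};
\item each $c_k$ acts on $\vac$ by a scalar, since the horizontal degree $0$ part of $L(\bpsi)$ is $\BK\vac$;
\item $\varepsilon\circ\sigma_{\br} = \varepsilon$.
\end{itemize}
This gives $\sigma_{\br}(x)\nabla^{\br}(F\vac) = \nabla^{\br}(xF\vac)$ in one line. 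Equivalently, $\nabla^{\br}$ is the isomorphism $L(\bpsi) \to \sigma_{\br}^* L(\bpsi)$ normalized by $\vac \mapsto \vac$. This route avoids the topological bookkeeping in your $e$-step: infinite Sweedler sums, the $\psi$-twisted pairing with its expansion regions, and uniqueness of the antipode in a completed setting. Your route has the merit of working directly from the explicit formulas that define the module.
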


\medskip 

\noindent As explained in Subsection \ref{sub:eigenspaces}, the finite-dimensional vector spaces $L(\bpsi)_{\bx}$ of \eqref{eqn:l x} are the generalized eigenspaces for the action of the loop Cartan subalgebra $\BK[\ph^\pm_{i,d}]$ in $L(\bpsi)$. By analogy with \eqref{eqn:kappa acts} and \eqref{eqn:p acts}, the eigenvalue of $\nabla^{\br}$ in $L(\bpsi)_{\bx}$ is
\begin{equation}
\label{eqn:scalar}
\bx^{-\br} = \prod_{i \in I} \prod_{a=1}^{n_i} x_{ia}^{-r_i}
\end{equation}
Thus, we conclude the following Jordan decomposition
\begin{equation}
\label{eqn:jordan}
\nabla^{\br} = \bx^{-\br} + \text{nilpotent operator}
\end{equation}
where $\bx^{-\br}$ denotes (by abuse of notation) the diagonal operator which acts in any generalized eigenspace $L(\bpsi)_{\bx}$ of \eqref{eqn:l x} by the scalar \eqref{eqn:scalar}. Moreover, it is clear from the definition of slope subalgebras in Subsections \ref{sub:slope}-\ref{sub:slope subalgebras} that we have
\begin{equation}
\label{eqn:slope shift}
\sigma_{\br}(\CB_{\boldsymbol{s}}) = \CB_{\boldsymbol{s}+\br}
\end{equation}
for all $\boldsymbol{s} \in \rr$ and $\br \in \zz$. We now have all the tools we need to prove that the intertwiner \eqref{eqn:reduced r-matrix} is a rational function of $u$ for $V = L(\bpsi)$ and $W = L(\bpsi')$. To this end, note that formula \eqref{eqn:factorization r-matrix 3} implies that $_{\bar{\bp}^2}R_{\bp^1}(u)$ is a product of three operators:
	
\medskip 

\begin{itemize}[leftmargin=*]
	
\item $X_1(u) = \prod_{t \in [t_2,\infty)}^{\rightarrow} P_{\bp'(t)}(u)$, for a catty-corner curve $\bp' : [t_2,\infty) \rightarrow \rr$,

\medskip 

\item $X_2(u) = P_{\binfty}(u) = P'_{\binfty}P''_{\binfty}(u)$ with the notation of Subsection \ref{sub:infinite slope 2}, and

\medskip 

\item $X_3(u) = \prod_{t \in (-\infty,t_1)}^{\rightarrow} P_{\bp(t)}^{\text{op}}(u)$, for a catty-corner curve $\bp : (-\infty,t_1) \rightarrow \rr$.

\medskip 

\end{itemize}

\noindent In the formulas above, if for any $\boldsymbol{s} \in \rr$ the slope universal $R$-matrix \eqref{eqn:slope r-matrix} is
$$
\CP_{\boldsymbol{s}} = \sum_k A_k \otimes B_k \in \CB^+_{\boldsymbol{s}} \ \bar{\otimes} \ \CB^{-}_{\boldsymbol{s}}
$$
then we write
$$
P_{\boldsymbol{s}}(u) = \sum_k A_k u^{\vdeg A_k} \otimes B_k \in \text{End}(L(\bpsi)) \otimes \text{End}(L(\bpsi'))((u))
$$
Similarly, $P'_{\binfty}$ and $P''_{\binfty}(u)$ are the images in $\text{End}(L(\bpsi)) \otimes \text{End}(L(\bpsi'))((u))$ of the canonical tensors of the pairings \eqref{eqn:first pairing} and \eqref{eqn:second pairing}, respectively; note that we must assume these pairings to be non-degenerate to even define the universal $R$-matrices, see Subsection \ref{sub:infinite slope 2}. As the former of these pairings does not depend on $u$, we will only deal with the latter, which is explicitly
\begin{equation}
\label{eqn:double prime}
P''_{\binfty}(u) = \exp \left( \sum_{i,j \in I} \sum_{d=1}^{\infty} \frac {\beta^{(d)}_{ij}}d p_{i,d} u^d \otimes p_{j,-d} \right)
\end{equation}
The scalars $\beta_{ij}^{(d)}$ that appear in the formula above must satisfy the equations
\begin{equation}
\label{eqn:equations}
\sum_{\bullet \in I} \alpha^{(d)}_{i \bullet} \beta^{(d)}_{\bullet j} = \beta^{(d)}_{i \bullet} \alpha^{(d)}_{\bullet j} = \delta_{ij}, \quad \forall i,j \in I, \forall d \geq 1 
\end{equation}
with the notation as in \eqref{eqn:notation},
in order for \eqref{eqn:double prime} to give rise to the canonical tensor of the pairing \eqref{eqn:second pairing}. By \eqref{eqn:p acts again}, the action of $p_{i,d}$ on $L(\bpsi)_{\bn}$ is given by
$$
p_{i,d} (F \text{ mod }J(\bpsi)_{\bn}) = F \left( \sum_{\bullet=1}^{\flat_i} (t_{i|\bullet}^d - s_{i|\bullet}^d) - \sum_{k \in I} \sum_{a=1}^{n_k} \alpha_{ki}^{(d)} z_{ka}^d \right) \text{ mod }J(\bpsi)_{\bn}
$$
for any $F(z_{k1},\dots,z_{kn_k})_{k \in I} \in \CS_{-\bn}$. By analogy, we have 
$$
p_{j,-d} (F' \text{ mod }J(\bpsi')_{\bn'}) = F' \left( \sum_{\bullet=1}^{\flat'_j} ({t'_{j|\bullet}}^{-d} - {s'_{j|\bullet}}^{-d}) - \sum_{k' \in I} \sum_{a'=1}^{n'_{k'}} \alpha_{jk'}^{(d)} z_{k'a'}^{'-d} \right) \text{ mod }J(\bpsi')_{\bn'}
$$
for any $F'(z'_{k1},\dots,z'_{kn'_k})_{k \in I} \in \CS_{-\bn'}$, where the $\alpha$'s and the $s,t$'s are defined in \eqref{eqn:notation} and \eqref{eqn:factored loop weight}, respectively (we use different notation for the variables of $F$ and $F'$ in the formulas above in order to emphasize the fact that they represent elements of the different modules $L(\bpsi)$ and $L(\bpsi')$). Putting the formulas above together, we conclude that \eqref{eqn:double prime} sends a tensor $(F \text{ mod }J(\bpsi)_{\bn}) \otimes (F' \text{ mod }J(\bpsi')_{\bn'})$ to \footnote{Note that term in the expression below involving the product of $(t_{i|\bullet}^d - s_{i|\bullet}^d)$ and $({t'_{j|\bullet}}^{-d} - {s'_{j|\bullet}}^{-d})$ is missing because we renormalized the $R$-matrix in the right-hand side of \eqref{eqn:reduced r-matrix}.}
\begin{align*} 
\exp &\left( \sum_{i,j,k' \in I} \sum_{d = 1}^{\infty} \sum_{\bullet=1}^{\flat_i} \sum_{a'=1}^{n'_{k'}} \frac {\beta_{ij}^{(d)} \alpha_{jk'}^{(d)}}d \left[ \left(\frac {s_{i|\bullet} u}{z'_{k'a'}} \right)^d - \left(\frac {t_{i|\bullet} u}{z'_{k'a'}} \right)^d \right]  \right. \\ 
&+\sum_{i,j,k \in I} \sum_{d = 1}^{\infty} \sum_{\bullet=1}^{\flat'_j} \sum_{a=1}^{n_k} \frac {\beta_{ij}^{(d)} \alpha_{ki}^{(d)}}d \left[ \left(\frac {z_{ka} u}{s'_{j|\bullet}}\right)^d - \left(\frac {z_{ka} u}{t'_{j|\bullet}}\right)^d \right] \\
&+ \left.\sum_{i,j,k,k' \in I} \sum_{d=1}^{\infty} \sum_{a=1}^{n_k} \sum_{a'=1}^{n_{k'}'} \frac {\beta_{ij}^{(d)} \alpha_{ki}^{(d)} \alpha_{jk'}^{(d)}}d \left(\frac {z_{ka}u}{z'_{k'a'}} \right)^d \right) \stackrel{\eqref{eqn:equations}}= \\
\exp &\left( \sum_{i \in I} \sum_{d = 1}^{\infty} \sum_{\bullet=1}^{\flat_i} \sum_{a'=1}^{n'_i} \frac 1d \left[ \left(\frac {s_{i|\bullet} u}{z'_{ia'}} \right)^d - \left(\frac {t_{i|\bullet} u}{z'_{ia'}} \right)^d \right]  \right. \\ 
&+\sum_{j \in I} \sum_{d = 1}^{\infty} \sum_{\bullet=1}^{\flat'_j} \sum_{a=1}^{n_j} \frac 1d \left[ \left(\frac {z_{ja} u}{s'_{j|\bullet}}\right)^d - \left(\frac {z_{ja} u}{t'_{j|\bullet}}\right)^d \right] \\
&+ \left.\sum_{k,k' \in I} \sum_{d=1}^{\infty} \sum_{a=1}^{n_k} \sum_{a'=1}^{n_{k'}'} \frac {\alpha_{kk'}^{(d)}}d \left(\frac {z_{ka}u}{z'_{k'a'}} \right)^d \right) \stackrel{\eqref{eqn:consequence}}=
\end{align*} 
$$
\prod_{i \in I} \prod_{\bullet=1}^{\flat_i} \prod_{a'=1}^{n_i'} \frac {z_{ia'}' - t_{i|\bullet} u}{z_{ia'}' - s_{i|\bullet} u} \prod_{j \in I} \prod_{\bullet=1}^{\flat'_j}\prod_{a=1}^{n_j}  \frac {z_{ja}u - t'_{j|\bullet}}{z_{ja}u - s'_{j|\bullet}} \prod_{k,k' \in I} \prod_{a=1}^{n_k}  \prod_{a'=1}^{n'_{k'}}\prod_b \frac {z'_{k'a'}-z_{ka}s_{kk'|b}u}{z'_{k'a'}-z_{ka}s^{-1}_{k'k|b}u}
$$
where in the latter product, we have $b \in \{1,\dots,\#_{kk'}+\#_{k'k}+\delta_{kk'}\}$, as in \eqref{eqn:consequence}. The formula above for the operator \eqref{eqn:double prime} is a rational function in $u$, precisely as we needed to show.

\medskip 

\noindent As for the operators $X_1(u)$ and $X_3(u)$, it suffices to show that the former is rational in $u$, as the latter is treated analogously. To this end, we choose a catty-corner curve $\bp'$ which has the property that 
\begin{equation}
\label{eqn:catty}
\bp'(t+1) = \bp'(t)+\br
\end{equation} 
for all $t \geq t_2$, for some henceforth fixed $\br \in \BZ_{>0}^I$. If we write
$$
\prod_{t \in [t_2,t_2+1)}^{\rightarrow} \CP_{\bp'(t)} = \sum_k A_k \otimes B_k \in \U^+ \ \bar{\otimes} \ \U^-
$$
where the sum over $k$ is finite in any horizontal degree, then formulas \eqref{eqn:slope shift} and  \eqref{eqn:catty} imply that
$$
\prod_{t \in [t_2,\infty)}^{\rightarrow} \CP_{\bp'(t)} = \prod_{\ell = 0}^{\infty} \left( \sum_k \sigma_{\br}^{\ell}(A_k) \otimes \sigma_{\br}^{\ell}(B_k) \right) = \sum_{k_0,k_1,k_2,\dots} \left[ {\prod_{\ell=0}^{\infty}}^* \sigma_{\br}^{\ell}(A_{k_\ell} \otimes B_{k_\ell}) \right]
$$
(by definition, the products denoted $\prod^*$ above have the property that $A_{k_\ell} = B_{k_\ell} = 1$ for all but finitely many $\ell$'s, which resolves any possible convergence issue). We may then use formula \eqref{eqn:gn}, together with the obvious fact that $\vdeg \sigma_{\br}^{\ell}(A) = \vdeg A + \ell \br \cdot \hdeg A$, to deduce from the display above the following equality of endomorphisms of $L(\bpsi) \otimes L(\bpsi')$ with coefficients in $\BK((u))$:
$$
X_1(u) = \sum_{k_0,k_1,k_2,\dots} \left[ {\prod_{\ell=0}^{\infty}}^* (\nabla^{\ell \br} \otimes \nabla^{\ell \br} ) (A_{k_\ell} u^{\vdeg A_k + \ell \br \cdot \hdeg A_k} \otimes B_k) (\nabla^{-\ell \br} \otimes \nabla^{-\ell \br}) \right]
$$
In other words, in any horizontally graded component of $L(\bpsi) \otimes L(\bpsi')$, the operator $X_1(u)$ is equal to a finite linear combination of sums of the form
\begin{align*} 
\sum_{0 \leq \ell < \ell' < \dots < \ell'' < \ell'''} &(\nabla^{\ell \br} \otimes \nabla^{\ell \br}) (A_k u^{\vdeg A_k+\ell \br \cdot \hdeg A_k} \otimes B_k) \\ &(\nabla^{(\ell'-\ell) \br} \otimes \nabla^{(\ell'-\ell) \br})(A_{k'} u^{\vdeg A_{k'}+\ell' \br \cdot \hdeg A_{k'}} \otimes B_{k'}) \\ &\dots \\
&(\nabla^{(\ell'''-\ell'') \br} \otimes \nabla^{(\ell'''-\ell'') \br})(A_{k'''} u^{\vdeg A_{k'''}+\ell''' \br \cdot \hdeg A_{k'''}} \otimes B_{k'''}) (\nabla^{-\ell''' \br} \otimes \nabla^{-\ell''' \br})
\end{align*}
for various indices $k,k',\dots,k'',k'''$. However, we claim that any infinite sum as in the display above is actually a rational function in $u$. This follows from the general claim that for any finite-dimensional vector space $S$, linear maps $T \in GL(S)$ and $X_1,\dots,X_m \in \text{End}(S)$ and integers $a_1,\dots,a_m \in \BZ_{>0}$, $b \in \BZ$, we have that
$$
\sum_{\ell_1,\dots,\ell_m = 0}^{\infty} T^{\ell_1} X_1 T^{\ell_2} X_2 \dots T^{\ell_m} X_m T^{-\ell_1-\dots-\ell_m} u^{a_1\ell_1+\dots+a_m\ell_m+b}
$$
is a rational function in $u$ (which one proves by writing the Jordan decomposition of $T$ into a diagonal plus a nilpotent matrix, thus allowing one to reduce the above statement to the elementary case when $T$ is diagonal). \end{proof}

\medskip 

\noindent One could also define $u$-twisted versions of the intertwiner \eqref{eqn:isomorphism 1}, but it would actually be Laurent polynomial in $u$:
\begin{equation}
\label{eqn:isomorphism 1 bis}
_{\bp^2}R_{\bp^1} : V^u \otimes_{\bp^1} W \rightarrow V^u \otimes_{\bp^2} W[u^{\pm 1}]
\end{equation}
There is a host of interesting problems that can be asked about the $R$-matrices above, such as the Bethe ansatz, calculation of transfer matrices and XXZ-type Hamiltonians, see \cite{FH} and numerous other works. The infrastructure we have set up in the present paper shows that it is reasonable to ask these questions in the generality of quantum loop algebras for all $(I,\BK,\zeta_{ij}(x))$, which goes beyond the quantum affine or toroidal algebras that have been studied so far (\cite{FJMM, FJMM2}).

\medskip 

\begin{remark}
	
It would be interesting to study the version of \eqref{eqn:isomorphism 1}-\eqref{eqn:isomorphism 2} when one of the slopes is $\binfty$. In more detail, the tensors \eqref{eqn:partial 1}-\eqref{eqn:partial 2} give rise to operators
\begin{align}
&\CR_{\bp} \in \U \ \bar{\otimes} \ \U \quad \leadsto \quad R_{\bp}(u) \in \emph{End}(V) \otimes \emph{End}(W)((u)) \label{eqn:remark 1} \\
&_{\bar{\bp}}\CR \in \U \ \bar{\otimes} \ \U \quad \leadsto \quad _{\bar{\bp}}R(u) \in \emph{End}(V) \otimes \emph{End}(W)((u)) \label{eqn:remark 2} 
\end{align}	
which produce $\U$-intertwiners
\begin{align}
&R_{\bp}(u) : V^u \otimes_{\bp} W \rightarrow V^u \otimes W((u)) \label{eqn:isomorphism 3} \\
&_{\bar{\bp}}R(u) : V^u \otimes W \rightarrow V^u \otimes^{\emph{op}}_{\bp} W((u)) \label{eqn:isomorphism 4} 
\end{align}
where $\otimes$ without any subscript denotes the Drinfeld coproduct. The latter coproduct was used in \cite{H1, H2} to define the fusion product of the modules $V$ and $W$, and it would be very interesting to compare the fusion products of \loccit with $V \otimes_{\b0} W$.

\end{remark}

\end{document}